%% file: main.tex
\documentclass[twoside]{article}
\usepackage{amsthm}

\usepackage[preprint]{aistats2027}
\usepackage{amssymb}
\usepackage[hidelinks]{hyperref}
\usepackage{bm}
\usepackage{booktabs}
\newtheorem{assumption}{Assumption}
\newtheorem{definition}{Definition}
\newtheorem{theorem}{Theorem}
\newtheorem{corollary}{Corollary}
\newtheorem{proposition}{Proposition}
\newtheorem{lemma}{Lemma}
\usepackage[round]{natbib}

\newcommand{\R}{\mathbb{R}}
\newcommand{\E}{\mathbb{E}}
\newcommand{\TV}{\mathrm{TV}}
\newcommand{\KL}{\mathrm{KL}}
\newcommand{\law}{\mathrm{Law}}

\newcommand{\diag}{\mathrm{diag}}

\newcommand{\N}{\mathcal{N}}

\newcommand{\norm}[1]{\left\|#1\right\|}

\newcommand{\bLambda}{\bm{\Lambda}}

\newcommand{\op}{\mathrm{op}}
\newcommand{\Cov}{\operatorname{Cov}}

\DeclareMathOperator{\tr}{tr}

\usepackage{times}
\usepackage{tikz}
\usetikzlibrary{positioning,arrows.meta,fit,backgrounds,calc}
\begin{document}

\runningtitle{How Many Directions Must a Truncated Diffusion Sampler Retain?}
\runningauthor{Radmehr Karimian, Ali Mohades, Johannes Lederer}

%

%

\twocolumn[

\aistatstitle{How Many Directions Must a Truncated Diffusion Sampler Retain?\\
Matching Bounds Under Power-Law Spectra}

\aistatsauthor{ Radmehr Karimian \And Ali Mohades \And Johannes Lederer }

\aistatsaddress{ Université de Genève \And Universität Hamburg \And Universität Hamburg } ]

\begin{abstract}
Diffusion samplers can reduce computation by generating selected spectral coordinates and filling the remaining directions with noise. How many directions must they retain? We study this question for data with power-law covariance spectra. For Gaussian data compared to a smoothed target, we prove matching bounds on the required number of retained directions, provided that the ambient dimension is sufficiently large. The truncation error depends on the combined Wiener gains of the omitted directions, regardless of the accuracy of the sampler on the retained coordinates. Keeping only directions whose signal exceeds the output noise level can therefore leave a non-vanishing error: many individually weak directions remain significant in aggregate. Combining this characterization with a diffusion convergence bound yields sufficient sampling-step complexity under exact scores. The upper bounds also extend to estimated principal components and, componentwise, to Gaussian mixtures. The practical prescription is to select the retained subspace using an aggregate spectral-tail error budget, then to choose the diffusion noise level accordingly.
\end{abstract}

\section{Introduction}
\label{sec:introduction}
Diffusion models generate data by learning to reverse a process that
gradually adds noise \citep{song2019generative,song2020score}. Their
computational cost typically grows with the dimension of the space in
which the diffusion runs, motivating approaches that exploit alternative
representations of the data. Examples include diffusion in wavelet
coordinates \citep{guth2022wavelet}, frequency-dependent processes such as
blurring diffusion and inverse-heat-dissipation models
\citep{hoogeboom2023blurring,rissanen2023generative}, and diffusion
restricted to lower-dimensional subspaces \citep{jing2022subspace}.
Related approaches operate in learned autoencoder representations,
including VAE-based latent spaces \citep{rombach2022},
which need not correspond to orthogonal transformations. Here, we focus
on orthogonal representations, such as principal-component, Fourier,
and wavelet bases, in which the distribution of signal across directions
can be described explicitly.

An orthogonal change of basis preserves the ambient dimension, but it
can reveal a structure that a sampler can exploit. For images, a central
example is spectral decay: natural-image spectra are often approximately
described by power laws
\citep{field1987relations,ruderman1994statistics,hyvarinen2009natural}.
In a corresponding coefficient model, the variances satisfy
$\sigma_i^2\asymp i^{-\beta}$, so a small number of directions carry much
of the variance, while many others contain progressively weaker signals.
This unequal distribution of signals suggests that the effort needed to
generate different coordinates should depend on their statistical
importance.

These observations motivate the question studied in this paper:
how does spectral structure affect the convergence of diffusion samplers
in an orthogonal representation and when can it reduce the sampling
complexity? More specifically, under power-law spectral decay, how many
directions must a sampler retain to achieve a prescribed accuracy at a
given resolution, and how many sampling steps are sufficient on those
directions? Answering these questions requires controlling both the
convergence of the sampler on the retained coordinates and the
approximation error from replacing the remaining coordinates with
noise. Our analysis connects these two costs, quantifying when sampling
complexity can be governed by the number of required directions rather
than by the ambient dimension.
\paragraph{Contributions.}
\begin{itemize}
\item \textbf{The exact cost of truncation.} For Gaussian data, replacing the
discarded directions by noise costs, in total variation, a quantity of the
order of the Euclidean norm of their Wiener gains
$g_i=\sigma_i^2/(\sigma_i^2+\eta^2)$, with matching upper and lower bounds
(Theorem~\ref{thm:truncation-cost}). The lower bound holds for every sampler
of this kind, however accurate it is on the retained directions.
 
\item \textbf{How many directions must be kept.} Under a power law, accuracy
$\varepsilon$ at resolution $\eta$ requires, and is achieved with 
$K\asymp_\beta\bigl(C/(\eta^2\varepsilon)\bigr)^{2/(2\beta-1)}$ retained
directions (Theorem~\ref{thm:num-directions}), strictly more than the number
of directions above the noise level.
 
\item \textbf{End-to-end sampling complexity.} For the reverse diffusion on
the retained coordinates, with direction-dependent rates, the number of steps
depends on the number of retained directions and not on the ambient dimension
(Theorem~\ref{thm:convergence}). Choosing the diffusion noise level from $K$
gives $\widetilde O\bigl(\varepsilon^{-4\beta/(2\beta-1)}\bigr)$ reverse steps
with an exact score. The upper bounds extend to the principal components
estimated from the data and to patch-based Gaussian mixtures
(Section~\ref{sec:extensions}).
\end{itemize}
 
\paragraph{Related work.}
Our lower bound is based on the two-sided total-variation bound for Gaussians
of \citet{devroye2018total}; what we add is the identification of the
relevant covariance ratios with the Wiener gains of the discarded directions,
and a projection argument that makes the bound hold for every truncated
sampler. The convergence analysis extends the Girsanov argument of
\citet{chen2022sampling} to anisotropic Ornstein-Uhlenbeck processes. In
the retained subspace, isotropic diffusion attains the same step count up to
constants (Proposition~\ref{prop:isotropic}), so the gain comes from choosing
which directions to keep, not from how they are diffused. Preconditioned
samplers \citep{ma2022accelerating} use anisotropy to speed up sampling
rather than to decide what to discard. A longer discussion is in
Appendix~\ref{app:related}.
 
\paragraph{Organisation.}
Section~\ref{sec:setup} introduces the data model and the truncated sampler.
Section~\ref{sec:sharp} proves the sharp bounds on truncation, and
Section~\ref{sec:convergence} the convergence of the sampler and its
end-to-end complexity. Section~\ref{sec:extensions} treats empirical bases
and Gaussian mixtures. Section ~\ref{sec:experiments} reports experiments, and Section ~\ref{sec:conclusion} discusses implications.
\section{Problem Setup and Truncated Sampler}
\label{sec:setup}
 
We first fix the data model, then the diffusion that generates the retained
coordinates, and finally the class of truncated samplers and the target
against which they are judged.
 
\subsection{Data model}
\label{sec:setup-data}
\label{sec:gauss_model}

The covariance spectrum describes how the signal is distributed in different directions.
We use a power-law envelope to allow unequal variances without imposing a
low-dimensional support; Gaussianity is an additional condition for the sharp
truncation results.

Let $X\in\R^d$ be a centered random vector with law $q$ and covariance
$C=V\diag(\sigma_1^2,\ldots,\sigma_d^2)V^\top$, where $V=[v_1,\ldots,v_d]$ is
orthogonal and $\sigma_1^2\ge\cdots\ge\sigma_d^2\ge0$. In our application
$X$ is an image patch of dimension $d=D_p$ and $V$ is its principal-component
basis. A full image split into $N_P$ non-overlapping patches is handled patch
by patch; generating all patches jointly replaces $K$ by $N_PK$ below, while
generating them independently adds a cross-patch error that we do not analyze
(Appendix~\ref{app:patches}).
 
\begin{assumption}[Power-law spectrum]
\label{ass:powerlaw}
There are constants $0<c_{\mathrm{sp}}\le C_{\mathrm{sp}}$ and $\beta>1$ such
that
\begin{equation}\label{eq:powerlaw}
 c_{\mathrm{sp}}\,i^{-\beta}\;\le\;\sigma_i^2\;\le\;C_{\mathrm{sp}}\,i^{-\beta},
 \qquad i=1,\ldots,d.
\end{equation}
\end{assumption}
 
The two-sided envelope allows for departures from an exact power law by fixed
multiplicative factors. Every direction may carry positive variance, so no
rank cut-off is imposed. The condition $\beta>1$ makes the total variance
bounded independently of $d$, which is the property used in the convergence
bound. It is not needed for the Gaussian comparison itself: the truncation
rates require only $\beta>1/2$ (Appendix~\ref{app:beta}). The sharp results of Section~\ref{sec:sharp}
further assume that the data are Gaussian with an exact power law,
\begin{equation}\label{eq:powerlaw2}
 q=\N(0,C),
 \qquad
 \sigma_i^2=C_{\mathrm{sp}}\,i^{-\beta},
\end{equation}
which is the case $c_{\mathrm{sp}}=C_{\mathrm{sp}}$ of
Assumption~\ref{ass:powerlaw}. Gaussian patches are also the components of
the patch Gaussian-mixture models commonly used for natural images
\citep{zoran2012natural}, which we treat in Section~\ref{sec:extensions}.
For natural images, the exponent should be estimated from patch covariances;
two-dimensional Fourier spectra suggest values of $\beta$ close to one
(Appendix~\ref{app:beta}).
 
\subsection{Diffusion on the retained coordinates}
\label{sec:setup-diffusion}

Once the retained subspace is chosen, sampling becomes a problem in the $K$
coordinates. We specify the forward process and its reverse discretisation
here so that the later convergence bound applies to an explicit sampler.

Fix a number $K\le d$ of the retained directions, let
$V_K=[v_1,\ldots,v_K]$, and write $U=V_K^\top X\in\R^K$ for the retained
coefficients, with the law $q_K$. (The convergence theory of
Section~\ref{sec:convergence} holds for any $V_K$ with orthonormal columns.)
The diffusion runs on these coefficients rather than in the data space,
where it would be degenerate (Appendix~\ref{app:pixel}).
The retained coefficients are diffused by the diagonal Ornstein-Uhlenbeck
process
\begin{equation}\label{eq:forward-OU-coeff}
 \mathrm dU_t=-\Lambda U_t\,\mathrm dt+\sigma\sqrt{2\Lambda}\,\mathrm dB_t,
 \qquad U_0\sim q_K,
\end{equation}
for $t\in[0,T]$, where $\Lambda=\diag(\lambda_1,\ldots,\lambda_K)$ with $\lambda_i>0$, and
$(B_t)$ is a standard Brownian motion in $\R^K$. Each coordinate relaxes at
its own rate towards the stationary law $\gamma_K=\N(0,\sigma^2I_K)$; we write
$q_{K,t}$ for the law of $U_t$. Equal rates $\lambda_i\equiv\lambda_0$ give
the standard Ornstein-Uhlenbeck process of score-based diffusion models
\citep{song2020score}, up to a time change and a rescaling.
 
\paragraph{Wiener gains and rates.}
For a noise level $s>0$, the Wiener gain of direction $i$ is
\begin{equation}\label{eq:wiener-gain}
 g_i(s):=\frac{\sigma_i^2}{\sigma_i^2+s^2}\in[0,1),
\end{equation}
the optimal linear shrinkage factor for recovering the $i$-th coefficient
from an observation corrupted by $\N(0,s^2)$ noise; for arbitrary data it
also bounds the data-dependent part of the score at noise level $s$
(Appendix~\ref{app:wiener}). It is close to one when
the signal dominates and close to the signal-to-noise ratio $\sigma_i^2/s^2$
when the noise dominates. The Wiener gain appears twice in this paper: at the
diffusion noise level $\sigma$ it sets the rates,
\begin{equation}\label{eq:wiener-rates}
 \lambda_i=\lambda_0\,g_i(\sigma),
 \qquad i=1,\ldots,K,
\end{equation}
and at the evaluation level $\eta$ of Section~\ref{sec:setup-truncated} it
measures the cost of discarding a direction (Section~\ref{sec:sharp}). The
number of directions whose variance exceeds the diffusion noise level is, up
to the envelope constants,
\begin{equation}\label{eq:deff}
 d_{\mathrm{eff}}^{\sigma}
 :=\min\Bigl\{d,\ \bigl\lfloor(C_{\mathrm{sp}}/\sigma^2)^{1/\beta}\bigr\rfloor\Bigr\}.
\end{equation}
If $K\le d_{\mathrm{eff}}^{\sigma}$, then
$\sigma_i^2\ge(c_{\mathrm{sp}}/C_{\mathrm{sp}})\sigma^2$ for every retained
direction, so all rates \eqref{eq:wiener-rates} lie in
$[\lambda_0c_{\mathrm{sp}}/(c_{\mathrm{sp}}+C_{\mathrm{sp}}),\lambda_0]$ and
their ratio $\kappa:=\lambda_{\max}/\lambda_{\min}$ is at most
$(c_{\mathrm{sp}}+C_{\mathrm{sp}})/c_{\mathrm{sp}}$, which equals $2$ for an
exact power law.
 
\paragraph{Reverse sampler.}
Let $t_k=kT/N$ for $k=0,\ldots,N$ and let $t_k^\leftarrow=T-t_{N-k}$ be the
reverse grid. Given score estimates
$\widehat s_{t_j}\approx\nabla\log q_{K,t_j}$ for $j=1,\ldots,N$, the sampler
starts from $\widehat U_0\sim\gamma_K$ and, at each interval
$[t_k^\leftarrow,t_{k+1}^\leftarrow)$, solves
\begin{equation}\label{eq:frozen-drift-coeff}
 \mathrm d\widehat U_t
 =\Bigl(\Lambda\widehat U_t
 +2\sigma^2\Lambda\,\widehat s_{t_{N-k}}\bigl(\widehat U_{t_k^\leftarrow}\bigr)\Bigr)\mathrm dt
 +\sigma\sqrt{2\Lambda}\,\mathrm d\overline B_t .
\end{equation}
This is the exact reverse-time dynamics of \eqref{eq:forward-OU-coeff} with
the score frozen at the start of each interval, an exponential-integrator
scheme that can be simulated exactly (explicit update in
Appendix~\ref{app:reverse}). We write $p:=\law(\widehat U_T)$ for
the law it produces on $\R^K$, and measure the step size by
$\delta:=\max_{k,i}\bigl(1-e^{-\lambda_i(t_k-t_{k-1})}\bigr)$.
 
\subsection{Truncated samplers and their target}
\label{sec:setup-truncated}
 
A truncated sampler generates the retained coefficients and fills every other
direction with noise. To compare it with the full-dimensional data, we add
Gaussian noise of a fixed level $\eta>0$ to all outputs: given
$\widehat U\sim p$, it returns
\[
 \widetilde X^\eta=V_K\bigl(\widehat U+\xi\bigr)+V_\perp Z,
\]
where $\xi\sim\N(0,\eta^2I_K)$, $Z\sim\N(0,\eta^2I_{d-K})$,
$V_\perp=[v_{K+1},\ldots,v_d]$, and $\widehat U,\xi,Z$ are independent.
 
\begin{definition}[Truncated samplers]
\label{def:truncated-class}
For $K\le d$, $\eta>0$ and a probability law $p$ on $\R^K$, the truncated
sampler is the law
\begin{equation}\label{eq:output-law}
 \widetilde p^\eta
 =\bigl(p*\N(0,\eta^2I_K)\bigr)\otimes\N(0,\eta^2I_{d-K}),
\end{equation}
written in the coordinates of the basis $V$. They are compared with the
smoothed data law, expressed in the same coordinates,
\begin{equation}\label{eq:smoothed-target}
 q^\eta:=q*\N(0,\eta^2I_d)=\law(X+\eta Z'),
\end{equation}
where $Z'\sim\N(0,I_d)$ is independent of $X$.
The ideal truncated sampler is the choice $p=q_K$; we denote its law by
$\widetilde q_K^{\,\eta}$.
\end{definition}
 
The level $\eta$ acts as a resolution: a sampler is judged on the structure
that remains visible after noise of level $\eta$ is added, and is not required
to reproduce finer detail. Some such smoothing is unavoidable, because
without it a truncated sampler produces laws supported on a $K$-dimensional
subspace, whose total-variation distance to a full-rank data law equals one
however accurate the sampler is. The same smoothed target arises for
early-stopped diffusion samplers, which return, up to a rescaling, the data
law corrupted by a small amount of Gaussian noise
\citep{chen2022sampling,benton2023nearly}; Appendix~\ref{app:early-stop}
makes this precise.
 
For every $p$, the output \eqref{eq:output-law} and the ideal truncated
sampler share their last $d-K$ factor, and convolving with a common Gaussian
does not increase total variation (Appendix~\ref{app:decomposition}). Hence
\begin{equation}\label{eq:error-decomposition}
 \TV\bigl(\widetilde p^\eta,q^\eta\bigr)
 \;\le\;
 \underbrace{\TV\bigl(p,q_K\bigr)}_{\text{sampling error}}
 \;+\;
 \underbrace{\TV\bigl(\widetilde q_K^{\,\eta},q^\eta\bigr)}_{\text{truncation error}} .
\end{equation}
The sampling error depends on the reverse sampler and is the subject of
Section~\ref{sec:convergence}. The truncation error depends only on $q$,
$K$ and $\eta$; Section~\ref{sec:sharp} computes it up to universal constants
for Gaussian data, and shows that it is then also a lower bound for every
truncated sampler. For non-Gaussian data the truncation error additionally
contains the dependence between retained and discarded coordinates
(Section~\ref{sec:extensions}).
 
\paragraph{Two noise levels.}
The diffusion noise level $\sigma$ belongs to the sampler: it sets the
stationary law, the rates \eqref{eq:wiener-rates} and the count
$d_{\mathrm{eff}}^{\sigma}$. The evaluation level $\eta$ belongs to the
target. The two are chosen separately: Section~\ref{sec:sharp} shows that
$\eta$ and the accuracy determine how many directions must be retained, and
Section~\ref{sec:convergence} then chooses $\sigma$ small enough that all of
them lie above the diffusion noise level. Together, the two sections answer
the question of this paper: \emph{given a resolution $\eta$ and an accuracy
$\varepsilon$, how many directions $K$ must a truncated sampler retain, and
how many reverse steps $N$ does the sampler \eqref{eq:frozen-drift-coeff}
need, to achieve
$\TV(\widetilde p^\eta,q^\eta)\le\varepsilon$}?
\section{Sharp Bounds on Spectral Truncation}
\label{sec:sharp}

This section computes the truncation error in \eqref{eq:error-decomposition}
for Gaussian data $q=\N(0,C)$. Nothing here depends on how the retained
coordinates are generated, so the results apply to every truncated sampler of
Definition~\ref{def:truncated-class}. Throughout, we write
\[
 x_i:=\frac{\sigma_i^2}{\eta^2}
 \qquad\text{and}\qquad
 g_i:=g_i(\eta)=\frac{x_i}{1+x_i}
\]
for the signal-to-noise ratio and the Wiener gain of direction $i$ at the
evaluation level $\eta$.

\subsection{The cost of truncation}

We first isolate the error that remains even when the retained coordinates
are sampled exactly. The same quantity will give a lower bound for every
sampler in our class, because all of them use the same discarded-coordinate
distribution.

\begin{theorem}[Cost of spectral truncation]
\label{thm:truncation-cost}
Let $q=\N(0,C)$, $1\le K<d$ and $\eta>0$.
\begin{enumerate}
\item[(a)] With $G_K:=\min\bigl\{1,\|(g_i)_{i>K}\|_2\bigr\}$, the ideal
truncated sampler satisfies
\[
 \tfrac1{100}\,G_K
 \;\le\;
 \TV\bigl(\widetilde q_K^{\,\eta},q^\eta\bigr)
 \;\le\;
 \tfrac32\,G_K,
\]
and, with explicit constants,
$\TV(\widetilde q_K^{\,\eta},q^\eta)\le\bigl(\frac18\sum_{i>K}x_i^2/(1+x_i)\bigr)^{1/2}$.
\item[(b)] For every probability law $p$ on $\R^K$, the truncated sampler
$\widetilde p^\eta$ of \eqref{eq:output-law} satisfies
\[
 \TV\bigl(\widetilde p^\eta,q^\eta\bigr)
 \;\ge\;
 \TV\bigl(\widetilde q_K^{\,\eta},q^\eta\bigr).
\]
\end{enumerate}
\end{theorem}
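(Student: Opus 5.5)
In the coordinates of $V$ both laws are centered Gaussians with diagonal covariances. The target is $q^\eta=\N(0,\diag(\sigma_i^2+\eta^2))$. The ideal truncated sampler is $\widetilde q_K^{\,\eta}=\N(0,\Sigma_1)$ with $\Sigma_1=\diag(\sigma_1^2+\eta^2,\ldots,\sigma_K^2+\eta^2,\eta^2,\ldots,\eta^2)$. For part (a) the plan is to invoke the two-sided total-variation bound of \citet{devroye2018total} for centered Gaussians. It states that $\TV(\N(0,\Sigma_1),\N(0,\Sigma_2))$ lies between $\frac1{100}$ and $\frac32$ times $\min\{1,\|\Sigma_1^{-1/2}\Sigma_2\Sigma_1^{-1/2}-I\|_F\}$.

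The matrix $\Sigma_1^{-1/2}\Sigma_2\Sigma_1^{-1/2}-I$ is diagonal. Its entries vanish for $i\le K$ and equal $(\sigma_i^2+\eta^2)/\eta^2-1=x_i$ for $i>K$. This gives $x_i$ rather than $g_i$, so I will take the reversed ordering instead, with $\Sigma_1=q^\eta$'s covariance. Then the entries for $i>K$ are $\eta^2/(\sigma_i^2+\eta^2)-1=-g_i$, and the Frobenius norm is exactly $\|(g_i)_{i>K}\|_2$. If the cited bound is stated for the particular orientation that produces $x_i$, I will note that TV is symmetric and that the Devroye et al.\ statement allows either choice of reference. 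This identification is the whole content of the matching bounds with constants $1/100$ and $3/2$.

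For the explicit bound I will use Pinsker's inequality with the KL divergence oriented as $\KL(q^\eta\,\|\,\widetilde q_K^{\,\eta})$. The laws are products, so this divergence equals $\sum_{i>K}\tfrac12\bigl(x_i-\log(1+x_i)\bigr)$, which gives $\TV\le\bigl(\tfrac14\sum_{i>K}(x_i-\log(1+x_i))\bigr)^{1/2}$. It then remains to check the scalar inequality $x-\log(1+x)\le \tfrac12 x^2/(1+x)$ for $x\ge0$. At $x=0$ both sides vanish with matching derivatives. Differentiating gives $x/(1+x)$ on the left and $\tfrac12(x^2+2x)/(1+x)^2$ on the right, and the comparison reduces to $2x(1+x)\le x^2+2x$. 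That fails, so instead I will use the other orientation, $\KL(\widetilde q_K^{\,\eta}\|q^\eta)=\tfrac12\sum_{i>K}\bigl(\log(1+x_i)-g_i\bigr)$. There the derivative comparison $\tfrac1{1+x}-\tfrac1{(1+x)^2}=\tfrac{x}{(1+x)^2}\le\tfrac{x^2+2x}{2(1+x)^2}$ holds. Hence $\log(1+x)-x/(1+x)\le \tfrac12x^2/(1+x)$ after checking at $x=0$, and Pinsker gives $\TV\le(\tfrac14\cdot\tfrac12\sum x_i^2/(1+x_i))^{1/2}$, matching the constant $\tfrac18$.

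For part (b) I will use a projection argument. Write points as $(u,w)\in\R^K\times\R^{d-K}$. Under $q^\eta$ the coordinates $u$ and $w$ are independent, with $w\sim\N(0,\diag(\sigma_i^2+\eta^2)_{i>K})$. Under every $\widetilde p^\eta$ the marginal of $w$ is $\N(0,\eta^2I_{d-K})$ and is independent of $u$. The key step is the identity
\[
\TV\bigl(\widetilde q_K^{\,\eta},q^\eta\bigr)=\TV\bigl(\N(0,\eta^2I_{d-K}),\N(0,\diag(\sigma_i^2+\eta^2)_{i>K})\bigr).
\]
It holds because both laws share the same first factor $q_K*\N(0,\eta^2I_K)$, and TV between products with a common factor equals TV between the differing factors (integrate out the common factor). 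The data-processing inequality under the projection $(u,w)\mapsto w$ then gives $\TV(\widetilde p^\eta,q^\eta)\ge$ the TV of the $w$-marginals, which is this same quantity.

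I expect the main obstacle to be bookkeeping rather than ideas. The work is in matching the exact orientation and form of the Devroye et al.\ statement, so that the $g_i$ (not the $x_i$) appear, and in getting the KL orientation right for the explicit constant.
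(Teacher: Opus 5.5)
Your proposal is correct and follows the paper's proof essentially step for step. For (a) you apply \citet{devroye2018total} with the covariance of $q^\eta$ as reference, so the diagonal entries are $-g_i$, and you get the explicit bound from Pinsker applied to $\KL(\widetilde q_K^{\,\eta}\|q^\eta)=\frac12\sum_{i>K}\bigl[\log(1+x_i)-x_i/(1+x_i)\bigr]\le\frac14\sum_{i>K}x_i^2/(1+x_i)$. For (b) you use data processing under the projection onto the discarded coordinates together with the product identity for a shared first factor, as the paper does.
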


Part (a) determines the truncation error up to universal constants. The
second upper bound is the one we use in Theorem~\ref{thm:num-directions},
because its constants are explicit; since $x_i^2/(1+x_i)=g_i^2(1+x_i)\le2g_i^2$
when $x_i\le1$, it is of the same order when the discarded directions lie
below the evaluation level. The two-sided bound
follows from the total-variation theorem for Gaussians of
\citet{devroye2018total}: the two covariances differ only in the discarded
directions, where the eigenvalues of the covariance ratio minus the identity
are exactly $-g_i$. The explicit upper bound follows from the exact
divergence
$\KL(\widetilde q_K^{\,\eta}\|q^\eta)=\frac12\sum_{i>K}\bigl[-g_i-\log(1-g_i)\bigr]$
and Pinsker's inequality. Part (b) holds because every truncated sampler
outputs the same noise $\N(0,\eta^2)$ in the discarded directions, and total
variation cannot increase when we look only at those coordinates. Proofs are
in Appendix~\ref{app:proofs-sharp}.

Theorem~\ref{thm:truncation-cost} has three consequences. First, the cost of
discarding a direction is a denoising quantity: it depends on the direction
only through its Wiener gain at the evaluation level. A direction whose
signal dominates the noise ($g_i$ close to one) cannot be dropped without a
constant error. Second, the costs of weak directions add up in the
Euclidean norm, as $\sum_{i>K}x_i^2$, rather than through the total discarded
variance $\sum_{i>K}\sigma_i^2$: a weak direction costs much less than its
variance suggests, but many weak directions together are not free. Third, by
part (b), no accuracy on the retained coordinates can compensate for
discarded ones; the bound holds even for a perfect sampler. It concerns the
construction of Section~\ref{sec:setup-truncated}, in which the discarded
directions are filled with Gaussian noise, and is not a lower bound for models
that represent those directions in some other way.

\subsection{How many directions must be kept}

The preceding comparison turns a choice of subspace into a spectral-tail
error budget. We now convert that budget into a number of retained directions,
giving both a sufficient choice and a matching necessary order.
We specialise to the exact power law \eqref{eq:powerlaw2} and write
\[
 k_\eta:=\bigl\lfloor(C_{\mathrm{sp}}/\eta^2)^{1/\beta}\bigr\rfloor
\]
for the number of directions whose variance is at least $\eta^2$, that is,
whose signal-to-noise ratio is at least one.

\begin{theorem}[Number of retained directions]
\label{thm:num-directions}
Let $q=\N(0,C)$ with the exact power law \eqref{eq:powerlaw2}, and let
$\eta>0$.
\begin{enumerate}
\item[(a)] \emph{Sufficiency.} For every $1\le K<d$,
\begin{equation}\label{eq:sharp-tv}
 \TV\bigl(\widetilde q_K^{\,\eta},q^\eta\bigr)
 \le\frac{C_{\mathrm{sp}}}{\eta^2\sqrt{8(2\beta-1)}}\,K^{-(2\beta-1)/2}.
\end{equation}
In particular, the truncation error is at most $\varepsilon/2$ whenever
$K_\varepsilon(\eta)\le K<d$, where
\[
 K_\varepsilon(\eta)
 :=\biggl\lceil
 \Bigl(\frac{C_{\mathrm{sp}}}{\eta^2\,\varepsilon\sqrt{2(2\beta-1)}}\Bigr)^{2/(2\beta-1)}
 \biggr\rceil.
\]
\item[(b)] \emph{Necessity.} Let $\varepsilon<1/200$ and $2K\le d$. If some
truncated sampler that retains $K$ directions satisfies
$\TV(\widetilde p^\eta,q^\eta)\le\varepsilon$, then
\[
 K\ge k_\eta
 \qquad\text{and}\qquad
 K\ge\Bigl(\frac{C_{\mathrm{sp}}}{2^{\beta}\cdot200\,\eta^2\,\varepsilon}\Bigr)^{2/(2\beta-1)}.
\]
\item[(c)] \emph{Where to cut.} If $k_\eta\le K$ and $2K\le d$, then the
first discarded direction has $x_{K+1}<1$, and
\begin{align*}
 \TV\bigl(\widetilde q_K^{\,\eta},q^\eta\bigr)
 &\ge\tfrac1{100}\min\bigl\{1,\ 2^{-\beta-1}x_{K+1}\sqrt K\bigr\},\\
 \TV\bigl(\widetilde q_K^{\,\eta},q^\eta\bigr)
 &\le\tfrac32\min\bigl\{1,\ c_\beta\,x_{K+1}\sqrt K\bigr\},
\end{align*}
with $c_\beta:=\sqrt{(2\beta+1)/(2\beta-1)}$.
\end{enumerate}
\end{theorem}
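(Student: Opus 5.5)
The plan is to derive all three parts from Theorem~\ref{thm:truncation-cost}, combined with elementary integral comparisons for the power-law tail $x_i=(C_{\mathrm{sp}}/\eta^2)i^{-\beta}$.

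\textbf{Part (a).} Start from the explicit bound of Theorem~\ref{thm:truncation-cost}(a) and use $x_i^2/(1+x_i)\le x_i^2$. This gives
\[
\TV\le\Bigl(\tfrac18\sum_{i>K}x_i^2\Bigr)^{1/2}=\frac{C_{\mathrm{sp}}}{\eta^2\sqrt8}\Bigl(\sum_{i>K}i^{-2\beta}\Bigr)^{1/2}.
\]
Since $i^{-2\beta}$ is decreasing, $\sum_{i>K}i^{-2\beta}\le\int_K^\infty t^{-2\beta}\,dt=K^{-(2\beta-1)}/(2\beta-1)$, which is \eqref{eq:sharp-tv}. To obtain the $\varepsilon/2$ statement, set the right-hand side to at most $\varepsilon/2$ and solve for $K$. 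The factor $2$ combines with $\sqrt8$ to give $\sqrt{2(2\beta-1)}$ after rearranging, which yields exactly $K_\varepsilon(\eta)$.

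\textbf{Part (b).} By Theorem~\ref{thm:truncation-cost}(b), $\TV(\widetilde q_K^{\,\eta},q^\eta)\le\varepsilon<1/200$. The lower bound in part (a) then gives $G_K\le100\varepsilon<1/2$, so $\|(g_i)_{i>K}\|_2<1/2$ and the minimum in $G_K$ is attained by the norm.

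For the first claim, argue by contradiction. If $K<k_\eta$, then $x_{K+1}\ge1$, so $g_{K+1}\ge1/2$ and $G_K\ge1/2$, which is impossible. Hence $K\ge k_\eta$.

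For the second claim, use $2K\le d$ so that the indices $K<i\le2K$ all exist. For each such $i$ we have $x_i\ge x_{2K}$, and with $x_i\le x_{K+1}<1$ this gives $g_i\ge x_i/2$. Therefore
\[
\|g\|_2^2\ge K\,x_{2K}^2/4,\qquad x_{2K}=\frac{C_{\mathrm{sp}}}{\eta^2}(2K)^{-\beta}.
\]
Combining with $100\varepsilon\ge\|g\|_2$ gives
\[
100\varepsilon\ge\tfrac12\sqrt K\,\frac{C_{\mathrm{sp}}}{\eta^2}2^{-\beta}K^{-\beta}.
\]
Rearranging, $K^{(2\beta-1)/2}\ge C_{\mathrm{sp}}/(2^\beta\cdot200\,\eta^2\varepsilon)$, which is the stated bound. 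The main care here is bookkeeping: the $1/2$ from $g_i\ge x_i/2$ has to be absorbed into the constant $200$, and this works as written.

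\textbf{Part (c).} That $x_{K+1}<1$ follows from $K+1>k_\eta$ and the definition of the floor.

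For the lower bound, reuse the computation from (b) with $x_{2K}=x_{K+1}\bigl((K+1)/(2K)\bigr)^\beta\ge2^{-\beta}x_{K+1}$. This gives $\|g\|_2\ge\tfrac12\sqrt K\,2^{-\beta}x_{K+1}=2^{-\beta-1}x_{K+1}\sqrt K$, and the lower bound of Theorem~\ref{thm:truncation-cost}(a) finishes it.

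For the upper bound, use $g_i\le x_i$ and $x_i=x_{K+1}\bigl((K+1)/i\bigr)^\beta$, so that
\[
\sum_{i>K}g_i^2\le x_{K+1}^2(K+1)^{2\beta}\sum_{i\ge K+1}i^{-2\beta}.
\]
Bound the sum by $(K+1)^{-2\beta}+\int_{K+1}^\infty t^{-2\beta}\,dt$. This gives
\[
\sum_{i>K}g_i^2\le x_{K+1}^2\Bigl(1+\frac{K+1}{2\beta-1}\Bigr).
\]
The remaining task, and the step most likely to need adjustment, is to check that
\[
1+\frac{K+1}{2\beta-1}\le\frac{2\beta+1}{2\beta-1}\,K
\quad\Longleftrightarrow\quad
2\beta+K\le(2\beta+1)K,
\]
which holds for $K\ge1$ since it reduces to $2\beta\le2\beta K$. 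Then $\|g\|_2\le c_\beta x_{K+1}\sqrt K$, and the upper bound $\tfrac32G_K$ of Theorem~\ref{thm:truncation-cost}(a) gives the claim.
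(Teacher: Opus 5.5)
Your proof is correct and follows essentially the same route as the paper. For (a) you use the explicit Pinsker bound of Theorem~\ref{thm:truncation-cost}(a) with a tail integral. For (b) you combine Theorem~\ref{thm:truncation-cost}(b) with the lower bound $\tfrac1{100}G_K$ and the block $K<i\le2K$. For (c) you use the same block and the same integral comparison, including the check $1+(K+1)/(2\beta-1)\le\frac{2\beta+1}{2\beta-1}K$.
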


Parts (a) and (b) show that, for accuracy $\varepsilon$ at resolution $\eta$,
the number of directions a truncated sampler must retain is
\[
 K=\Theta_\beta\Bigl(\bigl(C_{\mathrm{sp}}/(\eta^2\varepsilon)\bigr)^{2/(2\beta-1)}\Bigr),
\]
whenever this number is at most $d/2$. The implicit constants in (a) and (b)
differ by a large $\beta$-dependent factor, inherited mainly from the
constant $1/100$ of \citet{devroye2018total}; the dependence on $\eta$ and
$\varepsilon$ is what matches. This is larger than the number $k_\eta$
of directions above the evaluation level by a factor of order
$(k_\eta/\varepsilon^2)^{1/(2\beta-1)}$, so the gap grows both as the accuracy
improves and as the resolution becomes finer. Part (c) turns this
into a cut-off rule: accuracy $\varepsilon$ requires, and is achieved by,
discarding directions from the point where the signal-to-noise ratio falls to
order $\varepsilon/\sqrt K$, far below one. The natural rule of discarding
every direction below the evaluation level fails.

\begin{corollary}[Truncating at the noise level is not enough]
\label{cor:noise-floor}
Under the exact power law, let $K=k_\eta\ge1$ and $2k_\eta\le d$. Every
truncated sampler that retains exactly the directions with variance at least
$\eta^2$ satisfies
\[
 \TV\bigl(\widetilde p^\eta,q^\eta\bigr)
 \;\ge\;
 \frac1{100}\min\bigl\{1,\ 2^{-\beta-1}\sqrt{k_\eta}\bigr\},
\]
which is at least $1/100$ once $k_\eta\ge4^{\beta+1}$, however accurate the
sampler is in the retained directions.
\end{corollary}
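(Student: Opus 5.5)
The corollary follows from Theorem~\ref{thm:num-directions}(c) applied at $K=k_\eta$, together with Theorem~\ref{thm:truncation-cost}(b), which transfers the lower bound from the ideal truncated sampler to every truncated sampler. I will carry out three steps.

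\emph{Hypotheses of part (c).} With $K=k_\eta$ we have $k_\eta\le K$ trivially, and $2K\le d$ is assumed. Part (c) therefore gives
\[
\TV\bigl(\widetilde q_{k_\eta}^{\,\eta},q^\eta\bigr)\ge\tfrac1{100}\min\bigl\{1,\ 2^{-\beta-1}x_{k_\eta+1}\sqrt{k_\eta}\bigr\}.
\]

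\emph{Lower bound on $x_{k_\eta+1}$.} Under the exact power law, $x_i=(C_{\mathrm{sp}}/\eta^2)\,i^{-\beta}$. Write $a:=(C_{\mathrm{sp}}/\eta^2)^{1/\beta}$, so that $k_\eta=\lfloor a\rfloor$ and $k_\eta+1>a$. Then
\[
x_{k_\eta+1}=\Bigl(\frac{a}{k_\eta+1}\Bigr)^{\beta}.
\]
Since $k_\eta\ge1$ gives $a\ge k_\eta\ge (k_\eta+1)/2$, we obtain $x_{k_\eta+1}\ge 2^{-\beta}$. This loses a factor $2^{-\beta}$, which would produce $2^{-2\beta-1}$ in place of the stated $2^{-\beta-1}$.

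\emph{Main obstacle: matching the stated constant.} The stated constant should come from a careful lower bound on the discarded Wiener gains, not from the crude estimate above. I would redo the relevant part of the proof of (c) directly at $K=k_\eta$. For the $K$ discarded indices $i\in(k_\eta,2k_\eta]$, we have $i\le 2k_\eta\le 2a$, so $x_i\ge 2^{-\beta}$. Since $x\mapsto x/(1+x)$ is increasing and $x_i\le1$ for these indices,
\[
g_i\ge\frac{2^{-\beta}}{1+2^{-\beta}}\ge 2^{-\beta-1}.
\]
Hence $\|(g_i)_{i>K}\|_2\ge 2^{-\beta-1}\sqrt{k_\eta}$, and Theorem~\ref{thm:truncation-cost}(a) yields the ideal-sampler bound with exactly the constant $2^{-\beta-1}$. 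If (c)'s statement is instead taken at face value, I would check whether its proof bounds $g_i$ via $x_{K+1}$ relative to $2K$ (i.e.\ $x_{2K}\ge2^{-\beta}x_{K+1}$) and simply plug in. Either way, the direct Wiener-gain computation suffices.

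\emph{Extension to every sampler and the threshold.} Theorem~\ref{thm:truncation-cost}(b) gives $\TV(\widetilde p^\eta,q^\eta)\ge\TV(\widetilde q_{k_\eta}^{\,\eta},q^\eta)$ for every law $p$ on $\R^{k_\eta}$. Finally, if $k_\eta\ge4^{\beta+1}$, then $\sqrt{k_\eta}\ge2^{\beta+1}$, so the minimum equals $1$ and the bound is $1/100$, regardless of $p$.
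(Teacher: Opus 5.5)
Your final argument is the paper's proof. It lower-bounds the $k_\eta$ Wiener gains with $k_\eta<i\le 2k_\eta$ by $2^{-\beta-1}$, using $C_{\mathrm{sp}}k_\eta^{-\beta}\ge\eta^2$, then applies Theorem~\ref{thm:truncation-cost}(a) and transfers the bound to every sampler via part~(b). The detour through Theorem~\ref{thm:num-directions}(c) is unnecessary, and, as you noticed, your crude bound $x_{k_\eta+1}\ge2^{-\beta}$ would cost an extra factor $2^{-\beta}$ on that route.
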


The reason is that under a power law the directions just below the evaluation
level are as numerous as those above it, and each has a Wiener gain bounded
below by a constant. The proofs of Theorem~\ref{thm:num-directions} and
Corollary~\ref{cor:noise-floor} are in Appendix~\ref{app:proofs-sharp}; they
use the power law only through tail sums $\sum_{i>K}i^{-2\beta}$ and therefore
remain valid for $\beta>1/2$ (Appendix~\ref{app:beta}). Section~\ref{sec:convergence}
supplies the other half of \eqref{eq:error-decomposition}: a sampler that
reaches the sampling error $\varepsilon/2$ in the $K_\varepsilon(\eta)$ retained
directions.
\section{Diffusion Convergence and Sampling Complexity}
\label{sec:convergence}

This section bounds the sampling error $\TV(p,q_K)$ in
\eqref{eq:error-decomposition} for the reverse sampler
\eqref{eq:frozen-drift-coeff}, and then combines it with
Section~\ref{sec:sharp} into an end-to-end guarantee.

\subsection{Assumptions}

The assumptions separate properties of the target distribution from the
accuracy and growth of the score used by the sampler. They concern only the
 coefficients retained in $\R^K$ and follow the Girsanov framework of
\citet{chen2022sampling}. We state them for sampling down to time $0$.
For data without a smooth density, Appendix~\ref{app:proofs-convergence}
instead targets a positive-time marginal: regularity is then required of that
smoothed law, rather than of the raw data. Positive-time smoothing alone does
not assert the uniform Lipschitz bound below for every data law.

\begin{assumption}[Score regularity]
\label{ass:lipschitz}
For every $t\in[0,T]$, the law $q_{K,t}$ has a $C^2$ density whose score
$\nabla\log q_{K,t}$ is $L$-Lipschitz, and the tail and regularity conditions
of Appendix~\ref{app:general-rates} hold.
\end{assumption}

\begin{assumption}[Score accuracy]
\label{ass:score_error}
The score estimates used by the sampler satisfy
\[
 \frac1N\sum_{j=1}^N
 \E_{U\sim q_{K,t_j}}\bigl\|\widehat s_{t_j}(U)-\nabla\log q_{K,t_j}(U)\bigr\|_\Lambda^2
 \le\varepsilon_{\mathrm{score},\Lambda}^2,
\]
where $\|z\|_\Lambda^2:=z^\top\Lambda z$.
\end{assumption}

\begin{assumption}[Admissibility]
\label{ass:girsanov}
The change of measure between the exact reverse process and the sampler is
valid, i.e.\ the corresponding stochastic exponential is a true martingale.
This holds, for instance, when the score estimates grow at most linearly,
which can be enforced by clipping, and for exact Gaussian scores
(Appendix~\ref{app:general-rates}).
\end{assumption}

\begin{assumption}[Finite initial divergence]
\label{ass:finite-kl}
$\KL(q_K\|\gamma_K)<\infty$.
\end{assumption}

\paragraph{Why these conditions are reasonable.}
The Gaussian model provides an explicit example satisfying all four
conditions. Its forward variance in coordinate $i$ is
\[
 v_i(t)=e^{-2\lambda_i t}\sigma_i^2
   +(1-e^{-2\lambda_i t})\sigma^2,
\]
so the score is $-u_i/v_i(t)$ and is uniformly Lipschitz with
$L\le\max\{\sigma_K^{-2},\sigma^{-2}\}$. For the exact power law and
$K\le d_{\mathrm{eff}}^\sigma$, this gives $L\le\sigma^{-2}$.
Gaussian tails justify the integrations by parts; positive variances give
a finite initial KL; and the exact score is linear, hence admissible, with zero
score error. Thus, the end-to-end Gaussian result verifies these conditions
directly rather than imposing an additional regularity model.

For learned scores, Assumption~\ref{ass:score_error} specifies the accuracy
needed by the sampler. Its weighted mean-square norm is exactly the norm in
the Girsanov drift mismatch, so errors in slower directions receive less
weight. It is an accuracy requirement on the score, not a statement about how many
training samples are needed to achieve it. A linear-growth constraint or clipping supplies admissibility;
the accuracy condition must hold for the score actually used after any such
modification. Finally, the spectral assumption already supplies the moment
bound $\E\|U\|^2=\sum_{i\le K}\sigma_i^2\le C_{\mathrm{sp}}\zeta(\beta)$,
where $\zeta(\beta)=\sum_{i\ge1}i^{-\beta}$.

\subsection{Convergence on the retained coordinates}

With these conditions in place, we can bound the sampling part of the error.
The next theorem separates initialization, score estimation and time
discretisation, making explicit how each depends on the retained dimension.

Recall $\kappa_0:=(c_{\mathrm{sp}}+C_{\mathrm{sp}})/c_{\mathrm{sp}}$ from
Section~\ref{sec:setup-diffusion}, and let
$\lambda_\star:=\lambda_0c_{\mathrm{sp}}/(c_{\mathrm{sp}}+C_{\mathrm{sp}})$
be the resulting lower bound on the rates.

\begin{theorem}[Convergence on the retained coordinates]
\label{thm:convergence}
Let Assumption~\ref{ass:powerlaw} hold, let $K\le d_{\mathrm{eff}}^{\sigma}$,
use the Wiener-gain rates \eqref{eq:wiener-rates}, and let
Assumptions~\ref{ass:lipschitz}-\ref{ass:finite-kl} hold. If
\begin{equation}\label{eq:small-step-cor2}
 \delta\le\min\Bigl\{\frac18,\ \frac1{128\sqrt{\kappa_0}\,L\sigma^2}\Bigr\},
\end{equation}
then, for a constant $C'$ that depends only on $\beta$ and $\kappa_0$,
\begin{align}
 \TV(p,q_K)
 &\le e^{-\lambda_\star T}\sqrt{\tfrac12\KL(q_K\|\gamma_K)}
 \label{eq:tv-cor2}\\
 &\quad+C'\sigma\sqrt T\Bigl(
  \varepsilon_{\mathrm{score},\Lambda}
  +L\sigma\sqrt{\lambda_0\delta K}
 \nonumber\\
 &\qquad
  +L\delta\sqrt{\lambda_0C_{\mathrm{sp}}\zeta(\beta)+\lambda_0\sigma^2K}
 \nonumber\\
 &\qquad
  +(1+L\sigma^2)\sqrt L\,\delta\sqrt{\lambda_0K}
 \Bigr).
 \nonumber
\end{align}
In particular, if
$\varepsilon_{\mathrm{score},\Lambda}\le c_1\varepsilon/(\sigma\sqrt T)$ for a
small universal constant $c_1$ and
\[
 T\ge\lambda_\star^{-1}\log\bigl(2\sqrt{\KL(q_K\|\gamma_K)/2}\,/\varepsilon\bigr),
\]
then $N=\widetilde O(K/\varepsilon^2)$ steps on a uniform grid give
$\TV(p,q_K)\le\varepsilon$. Here $\widetilde O$ hides logarithmic factors and
the constants $\lambda_0,\sigma,L,\beta,\kappa_0$, but not $K$ or $d$.
\end{theorem}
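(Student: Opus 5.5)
We follow the Girsanov route of \citet{chen2022sampling}, adapted to the diagonal Ornstein--Uhlenbeck process \eqref{eq:forward-OU-coeff}. Write $(U^\leftarrow_t)_{t\in[0,T]}$ for the exact reverse process started from $q_{K,T}$, with drift $\Lambda u+2\sigma^2\Lambda\nabla\log q_{K,T-t}(u)$, and let $\widehat U$ be the sampler \eqref{eq:frozen-drift-coeff} started from $\gamma_K$.

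\textbf{Initialization.} We first insert an intermediate process, identical to the sampler but started from $q_{K,T}$. By the data-processing inequality, the initialization contributes $\TV(q_{K,T},\gamma_K)$. Each coordinate contracts in KL at rate at least $\lambda_{\min}\ge\lambda_\star$. This follows because the diagonal OU semigroup with stationary law $\gamma_K$ satisfies a log-Sobolev inequality coordinatewise, so $\KL(q_{K,T}\|\gamma_K)\le e^{-2\lambda_\star T}\KL(q_K\|\gamma_K)$. Pinsker's inequality then gives the first term of \eqref{eq:tv-cor2}. Note that the LSI for $\gamma_K$ is with respect to the $\Lambda$-weighted Dirichlet form, whose rate is $\min_i\lambda_i$.

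\textbf{Path comparison.} For the remaining error we apply Girsanov, which is valid by Assumption~\ref{ass:girsanov}. The diffusion matrix is $2\sigma^2\Lambda$, so the drift mismatch is measured in the norm $\|\cdot\|^2_{(2\sigma^2\Lambda)^{-1}}$. Since the mismatch equals $2\sigma^2\Lambda\,\Delta$, this yields
\[
\KL\le \sigma^2\sum_k\int_{t_k^\leftarrow}^{t_{k+1}^\leftarrow}\E\bigl\|\widehat s_{t_{N-k}}(U^\leftarrow_{t_k^\leftarrow})-\nabla\log q_{K,T-t}(U^\leftarrow_t)\bigr\|_\Lambda^2\,\mathrm dt .
\]
We split the integrand into three parts:
\begin{enumerate}
\item[(i)] the score error at grid points, which gives $\sigma^2 T\varepsilon^2_{\mathrm{score},\Lambda}$ by Assumption~\ref{ass:score_error};
\item[(ii)] the spatial increment $\|\nabla\log q_{K,s}(U_{t_k})-\nabla\log q_{K,s}(U_t)\|_\Lambda^2\le\lambda_0L^2\E\|U_t-U_{t_k}\|^2$;
\item[(iii)] the temporal change of the score between $s$ and $t_{N-k}$ at a fixed point.
\end{enumerate}

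For (ii) we compute the forward OU increment coordinatewise. Over a step of length $h$ it has second moment $(1-e^{-\lambda_ih})^2\E U_i^2+(1-e^{-2\lambda_ih})\sigma^2\lesssim\delta^2\E U_i^2+\delta\sigma^2$. Summing over $i$ and using $\E\|U_t\|^2\le C_{\mathrm{sp}}\zeta(\beta)+\sigma^2K$ produces the terms $L\sigma\sqrt{\lambda_0\delta K}$ and $L\delta\sqrt{\lambda_0(C_{\mathrm{sp}}\zeta(\beta)+\sigma^2K)}$ after multiplying by $\sigma^2T$ and taking square roots. Because the rates are comparable, $\delta$ also controls $\lambda_0 h$ up to the factor $\kappa_0$.

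\textbf{Main obstacle.} The hardest part is (iii). Following \citet{chen2022sampling}, we would write $\nabla\log q_{K,s}(u)$ via the semigroup identity $q_{K,s}=$ (rescaled $q_{K,t}$) convolved with a Gaussian. Unlike the isotropic case, the rescaling is the diagonal matrix $e^{-\Lambda(s-t)}$ and the Gaussian covariance is $\sigma^2(I-e^{-2\Lambda(s-t)})$. The score at time $s$ is then a conditional expectation of $e^{\Lambda(s-t)}\nabla\log q_{K,t}$ under a posterior. Its difference from the score at time $t$ splits into two pieces. One is a rescaling error of size $\delta\|\nabla\log q_{K,t}\|$. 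The other is a posterior-spread error, controlled through the Lipschitz constant and the identity $\E\|\nabla\log q_{K,t}\|^2\le LK$ (integration by parts, justified by the tail conditions of Appendix~\ref{app:general-rates}). The step condition \eqref{eq:small-step-cor2} is needed exactly here: it makes the posterior well-conditioned, since the perturbed Hessian stays comparable when $\delta L\sigma^2\sqrt{\kappa_0}$ is small, and so yields the term $(1+L\sigma^2)\sqrt L\,\delta\sqrt{\lambda_0K}$.

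\textbf{Combining and step count.} Combining the three parts via $\sqrt{a+b+c}\le\sqrt a+\sqrt b+\sqrt c$ and Pinsker's inequality gives \eqref{eq:tv-cor2}, with $C'$ absorbing the $\kappa_0$ and $\beta$ factors. For the complexity claim, the choice of $T$ makes the first term at most $\varepsilon/2$, and the condition on the score error makes that term small. The dominant remaining term is $\sigma\sqrt T\,L\sigma\sqrt{\lambda_0\delta K}\le c\varepsilon$, which requires $\delta\asymp\varepsilon^2/(TK)$. With a uniform grid, $\delta\approx\lambda_0T/N$, so $N\asymp \lambda_0T^2K/\varepsilon^2=\widetilde O(K/\varepsilon^2)$, since $T$ is logarithmic. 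The terms linear in $\delta$ are then of lower order.
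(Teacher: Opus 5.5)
Your architecture matches the paper's. It uses an intermediate sampler started at $q_{K,T}$ with data processing, and KL contraction from the log-Sobolev inequality of $\gamma_K$ for the $\Lambda$-weighted Dirichlet form. Girsanov gives $\KL\le\sigma^2\int_0^T\E\|e_t\|_\Lambda^2\,\mathrm{d}t$, followed by the same three-way split, a posterior/log-concavity one-step estimate, and the Fisher bound by integration by parts. Your exact forward-increment formula in (ii) is a slightly cleaner substitute for the paper's movement lemma.

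However, one step in (iii) fails as written. In your split, the temporal comparison $\nabla\log q_{K,t_{N-k}}(u)-\nabla\log q_{K,T-t}(u)$ is taken at $u=U_{t_k^\leftarrow}$. Under the exact reverse process, this point has law $q_{K,t_{N-k}}$, the \emph{later} forward marginal. Your perturbation sketch bounds the difference through $\delta\|\nabla\log q_{K,T-t}(u)\|$, the score of the \emph{earlier} marginal, and you then use $\E\|\nabla\log q_{K,T-t}\|^2\le LK$. That integration-by-parts identity holds only when the argument is distributed as $q_{K,T-t}$. So the term $(1+L\sigma^2)\sqrt L\,\delta\sqrt{\lambda_0K}$ is not yet justified.

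The paper handles this with the later-time form of Lemma~\ref{lem:trace_aware_weighted}. It writes the estimate as $\|\tilde s(u)-s(u)\|_\Lambda\le 8\sqrt\kappa L\zeta\|u\|_\Lambda+b\|s(u)\|_\Lambda+c\sqrt\kappa L\sqrt{\tr(\Lambda\Sigma)}$ with $b=(32\sqrt\kappa L\sigma^2+2)\zeta$. It then substitutes $\|s(u)\|_\Lambda\le\|\tilde s(u)\|_\Lambda+\|\tilde s(u)-s(u)\|_\Lambda$ and absorbs the last term because $b\le 1/2$. This absorption is what forces the constants $1/8$ and $128$ in \eqref{eq:small-step-cor2}; well-conditioning of the posterior needs only $16\sqrt\kappa L\sigma^2\delta\le1$. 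Your explanation of why the step condition is needed is therefore incomplete.

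A simpler repair is to reorder the split. Compare scores in time at $U_t\sim q_{K,T-t}$, and take the spatial increment with the grid-time score $\nabla\log q_{K,t_{N-k}}$, which is also $L$-Lipschitz. Then the basic form of the perturbation estimate suffices, and the Fisher bound is applied under the correct law. With either fix your argument yields \eqref{eq:tv-cor2}. The additional $L\delta\|u\|_\Lambda$ and $L\sqrt{\tr(\Lambda\Sigma)}$ pieces that (iii) produces have the same form as your (ii) terms, so they do not change the final bound.
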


The ambient dimension $d$ does not appear: the bound depends on the data only
through the number $K$ of retained directions and the summable spectrum.
Keeping $K\le d_{\mathrm{eff}}^{\sigma}$ matters because it keeps every rate
above $\lambda_\star$, so the mixing term decays at a rate independent of
$K$; a direction with a very small rate would slow down the mixing of the
whole process. The proof, in Appendix~\ref{app:proofs-convergence}, applies a
general bound for arbitrary diagonal rates (Theorem~\ref{thm:tv-trace}), in
which dimension enters only through $\tr(\Lambda)$ and
$\E\|U\|_\Lambda^2$. That bound extends the Girsanov argument of
\citet{chen2022sampling}: the drift mismatch is controlled in the
$\Lambda$-weighted norm by a one-step perturbation estimate for the score of a
diagonal Ornstein-Uhlenbeck step. On the retained subspace, equal rates
$\lambda_i\equiv\lambda_0$ satisfy the same bound up to constants
(Proposition~\ref{prop:isotropic}); what reduces the cost from $d$ to $K$ is
the choice of the retained directions, not the choice of rates.

\subsection{End-to-end complexity}

We now allocate the total error budget between truncation and sampling.
Theorems~\ref{thm:num-directions} and~\ref{thm:convergence} combine
through \eqref{eq:error-decomposition}: first choose $K$ to control the
discarded tail, then choose the reverse horizon and grid to sample those
$K$ coordinates accurately. The remaining choice is the diffusion
noise level $\sigma$, which also enters the Lipschitz constant and the
initial divergence.

\begin{theorem}[End-to-end complexity]
\label{thm:end-to-end}
Let $q=\N(0,C)$ with the exact power law \eqref{eq:powerlaw2}, let $\eta>0$
and $\varepsilon\in(0,1)$, and let the sampler use the exact scores
$\widehat s_{t_j}=\nabla\log q_{K,t_j}$. Set
\[
 K:=K_\varepsilon(\eta),
 \qquad
 \sigma^2:=C_{\mathrm{sp}}K^{-\beta},
\]
\[
 T:=\frac2{\lambda_0}\log\frac{4\sqrt{C_{\mathrm{sp}}\zeta(\beta)}}{\sigma\varepsilon},
\]
assume $K<d$, and use a uniform grid with
\[
 N\ \ge\ c\,\max\Bigl\{
 \frac{\lambda_0^2T^2K}{\varepsilon^2},\
 \frac{\lambda_0^{3/2}T^{3/2}\sqrt{C_{\mathrm{sp}}\zeta(\beta)}}{\sigma\,\varepsilon},\
 \lambda_0T
 \Bigr\}
\]
for an absolute constant $c$. Then the truncated sampler satisfies
$\TV(\widetilde p^\eta,q^\eta)\le\varepsilon$, with
\[
 N=\widetilde O\bigl(\varepsilon^{-4\beta/(2\beta-1)}\bigr),
\]
where $\widetilde O$ hides logarithmic factors and constants depending only
on $\lambda_0$, $C_{\mathrm{sp}}$, $\beta$ and $\eta$.
\end{theorem}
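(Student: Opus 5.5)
The plan is to split the error via the decomposition \eqref{eq:error-decomposition} and show each term is at most $\varepsilon/2$.

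\textbf{Truncation term.} Since $K=K_\varepsilon(\eta)<d$, Theorem~\ref{thm:num-directions}(a) gives $\TV(\widetilde q_K^{\,\eta},q^\eta)\le\varepsilon/2$ immediately.

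\textbf{Sampling term: checking the hypotheses of Theorem~\ref{thm:convergence}.} We need $\TV(p,q_K)\le\varepsilon/2$. With $\sigma^2=C_{\mathrm{sp}}K^{-\beta}$ we have $(C_{\mathrm{sp}}/\sigma^2)^{1/\beta}=K$, so $K\le d_{\mathrm{eff}}^\sigma$. Under the exact power law $\kappa_0=2$ and $\lambda_\star=\lambda_0/2$. The Gaussian discussion after the assumptions verifies Assumptions~\ref{ass:lipschitz}--\ref{ass:finite-kl}. Exact scores give $\varepsilon_{\mathrm{score},\Lambda}=0$. Since $\sigma_K^2=\sigma^2$, we get $L\le\sigma^{-2}$, so $L\sigma^2\le1$.

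\textbf{Initialization term.} Compute the initial divergence explicitly:
\[
\KL(q_K\|\gamma_K)=\tfrac12\sum_{i\le K}\bigl[\sigma_i^2/\sigma^2-1-\log(\sigma_i^2/\sigma^2)\bigr]\le\tfrac12\sum_{i\le K}\sigma_i^2/\sigma^2\le C_{\mathrm{sp}}\zeta(\beta)/(2\sigma^2).
\]
This holds because $\sigma_i^2\ge\sigma^2$, so each logarithm is nonnegative. Hence $\sqrt{\KL/2}\le\sqrt{C_{\mathrm{sp}}\zeta(\beta)}/(2\sigma)$. With $\lambda_\star T=\log\bigl(4\sqrt{C_{\mathrm{sp}}\zeta(\beta)}/(\sigma\varepsilon)\bigr)$, the first term of \eqref{eq:tv-cor2} is at most $\varepsilon/8$.

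\textbf{Discretisation terms.} On a uniform grid, $\delta\le\lambda_0T/N$. The condition $N\ge c\lambda_0T$ therefore enforces the step condition \eqref{eq:small-step-cor2}, because $L\sigma^2\le1$ makes its right side a constant. Substituting $L\le\sigma^{-2}$ and $\delta\le\lambda_0T/N$, the three discretisation terms multiplied by $C'\sigma\sqrt T$ become, up to constants:
\[
\sqrt{\lambda_0^2T^2K/N},\qquad
(\lambda_0T)^{3/2}\sqrt{C_{\mathrm{sp}}\zeta(\beta)}/(\sigma N)+(\lambda_0T)^{3/2}\sqrt K/N,\qquad
(\lambda_0T)^{3/2}\sqrt{K}/N.
\]
For the last of these, $(1+L\sigma^2)\sqrt L\,\sigma\le2$. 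Each is at most $\varepsilon/(8\cdot 3)$ under the stated lower bounds on $N$. The terms $(\lambda_0T)^{3/2}\sqrt K/N$ are dominated by the first one once $N\ge\lambda_0^2T^2K/\varepsilon^2$ and $N\ge\lambda_0T$. This holds because their ratio to $\sqrt{\lambda_0^2T^2K/N}$ is $\sqrt{\lambda_0 T/N}\le1$.

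\textbf{Main obstacle: careful bookkeeping, and the constant $C'$.} Theorem~\ref{thm:convergence} asks for $\varepsilon$-scale statements and hides $\sigma$ in its $\widetilde O$. Here $\sigma$ depends on $\varepsilon$, so I must track $\sigma$ explicitly, as above, rather than cite its ``in particular'' clause. Since $\kappa_0=2$ and $\beta$ is fixed, $C'$ is a $\beta$-dependent constant absorbed into $c$. Strictly, this makes $c$ depend on $\beta$. I would note this, or absorb it into the $\widetilde O$.

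\textbf{Complexity count.} Finally, $K\asymp_\beta(C_{\mathrm{sp}}/(\eta^2\varepsilon))^{2/(2\beta-1)}$. Also $T=O(\log(1/\varepsilon))$, because $1/\sigma=K^{\beta/2}/\sqrt{C_{\mathrm{sp}}}$ is polynomial in $1/\varepsilon$. The first term of the max is then
\[
\widetilde O(K/\varepsilon^2)=\widetilde O\bigl(\varepsilon^{-2-2/(2\beta-1)}\bigr)=\widetilde O\bigl(\varepsilon^{-4\beta/(2\beta-1)}\bigr).
\]
The second term is $\widetilde O(K^{\beta/2}/\varepsilon)=\widetilde O\bigl(\varepsilon^{-1-\beta/(2\beta-1)}\bigr)=\widetilde O\bigl(\varepsilon^{-(3\beta-1)/(2\beta-1)}\bigr)$. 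This exponent is smaller than $4\beta/(2\beta-1)$ because $3\beta-1<4\beta$. The third term is logarithmic. Hence $N=\widetilde O(\varepsilon^{-4\beta/(2\beta-1)})$.
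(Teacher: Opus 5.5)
Your proposal is correct and follows essentially the paper's route: the decomposition \eqref{eq:error-decomposition}, Theorem~\ref{thm:num-directions}(a) for the truncation term, and a $\sigma$-explicit specialization of the sampling bound using $L\sigma^2=1$, $\kappa\le 2$, $\KL(q_K\|\gamma_K)\le C_{\mathrm{sp}}\zeta(\beta)/(2\sigma^2)$ and $\delta\le\lambda_0T/N$, with each term budgeted at $\varepsilon/8$. The only difference is packaging. The paper states this specialization as Lemma~\ref{lem:active-sigma}, derived directly from Corollary~\ref{cor:topK}, whose constant $C_0(2)$ depends only on $\kappa$; this makes $c$ genuinely absolute and removes the $\beta$-dependence of $C'$ that you flag. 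That dependence is only nominal in Theorem~\ref{thm:convergence}, since $\zeta(\beta)$ is kept explicit in its bound.
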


The theorem makes the roles of the two noise levels concrete. The evaluation
level $\eta$ and the accuracy $\varepsilon$ fix how many directions are
retained (Theorem~\ref{thm:num-directions}); this number in turn fixes the
diffusion noise level at the variance of the last retained direction, so that
every retained direction lies above it and all rates are at least
$\lambda_0/2$. Since $\sigma^2/\eta^2=(C_{\mathrm{sp}}/\eta^2)K^{-\beta}
\approx(k_\eta/K)^\beta$, and $K$ is typically much larger than $k_\eta$
(Section~\ref{sec:sharp}), the diffusion runs at a much finer noise level than
the one at which its output is judged. The dominant term
in $N$ is $K/\varepsilon^2$; the term in $1/\sigma$, which grows with $K$
through the choice of $\sigma$, is of lower order. By
Theorem~\ref{thm:num-directions}(b), when $\varepsilon<1/200$ and $2K\le d$
no truncated sampler can retain a number of directions of smaller order in
$\eta$ and $\varepsilon$, so any further improvement would have to come from
the step count, not from the truncation. The theorem assumes exact scores and counts discretisation steps
only; with learned scores, Theorem~\ref{thm:convergence} adds the score error
$\varepsilon_{\mathrm{score},\Lambda}$. The proof is in
Appendix~\ref{app:proofs-convergence}.
\section{Extensions Beyond the Population Gaussian Model}
\label{sec:extensions}

The sharp truncation and end-to-end results use Gaussian data and a known
principal-component basis; the retained-coordinate convergence theorem also
applies to non-Gaussian laws. We now explain how the error decomposition
accommodates general data, estimated bases and Gaussian mixtures. Precise
statements and proofs are in
Appendix~\ref{app:extensions}.

\paragraph{Non-Gaussian data.}
Theorem~\ref{thm:convergence} does not use Gaussianity: for any data law
with a power-law spectrum that satisfies its assumptions, the sampling error on the retained coordinates is
at most $\varepsilon$ after $\widetilde O(K/\varepsilon^2)$ steps. What
changes is the truncation error. Let a truncated sampler output
$V_K\widehat U+Z$, with $\widehat U\sim p$ and the discarded coordinates $Z$
drawn independently from a law $\nu_\perp$.

\begin{proposition}[Truncation error for general data]
\label{prop:full-bound}
For any data law $q$ and any law $\nu_\perp$ on the discarded coordinates,
the output law $\widetilde p=p\otimes\nu_\perp$ satisfies
\[
 \TV(\widetilde p,q)\le\TV(p,q_K)+\varepsilon_{\mathrm{PCA}},
\]
where $\varepsilon_{\mathrm{PCA}}:=\TV(q_K\otimes\nu_\perp,\,q)$.
If $\nu_\perp$ is the true law $q_\perp$ of the discarded coordinates, then
$\varepsilon_{\mathrm{PCA}}\le\sqrt{I/2}$, where $I$ is the mutual information
between the retained and the discarded coordinates.
\end{proposition}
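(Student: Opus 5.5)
The plan is to insert the intermediate law $q_K\otimes\nu_\perp$ and apply the triangle inequality for total variation:
\[
 \TV(\widetilde p,q)\le\TV\bigl(p\otimes\nu_\perp,\,q_K\otimes\nu_\perp\bigr)+\TV\bigl(q_K\otimes\nu_\perp,\,q\bigr).
\]
The second term is $\varepsilon_{\mathrm{PCA}}$ by definition, so all the work is in the first term. Throughout, laws on $\R^d$ are written in the coordinates of the basis $V$. This is legitimate because the map $x\mapsto V^\top x$ is a bijection, and total variation is invariant under bijective measurable maps.

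For the first term, I will show that tensoring with a common factor does not change total variation, so that $\TV(p\otimes\nu_\perp,q_K\otimes\nu_\perp)=\TV(p,q_K)$. Let $\mu$ dominate both $p$ and $q_K$, for example $\mu=p+q_K$. Then $p\otimes\nu_\perp$ and $q_K\otimes\nu_\perp$ have densities $f_p(u)$ and $f_q(u)$ with respect to $\mu\otimes\nu_\perp$. By Fubini,
\[
 \tfrac12\int|f_p-f_q|\,d\mu\,d\nu_\perp=\tfrac12\int|f_p-f_q|\,d\mu=\TV(p,q_K).
\]
Alternatively, the inequality $\le$ alone suffices, and it follows from the data-processing inequality applied to the Markov kernel $u\mapsto\delta_u\otimes\nu_\perp$. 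This step is the same argument the paper already uses for the decomposition \eqref{eq:error-decomposition} in Appendix~\ref{app:decomposition}, so it can be cited.

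For the second claim, take $\nu_\perp=q_\perp$. The data law $q$, written in basis coordinates, is the joint law of $(U,W)$ with $U=V_K^\top X$ and $W=V_\perp^\top X$. Its marginals are $q_K$ and $q_\perp$, so $q_K\otimes q_\perp$ is the product of its marginals. Pinsker's inequality then gives
\[
 \TV\bigl(q_K\otimes q_\perp,q\bigr)\le\sqrt{\tfrac12\KL\bigl(q\,\|\,q_K\otimes q_\perp\bigr)}=\sqrt{I/2},
\]
because the mutual information is by definition $I=I(U;W)=\KL(q\|q_K\otimes q_\perp)$. This uses the symmetry of total variation and puts the divergence in the orientation that defines $I$. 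If $I=\infty$, the bound holds trivially.

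There is no real obstacle here. The only point needing care is measure-theoretic: the densities in the first step must be taken with respect to a common dominating measure, since $p$ and $q_K$ need not have Lebesgue densities. The data-processing formulation avoids this issue entirely, so I would present that version.
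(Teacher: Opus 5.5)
Your proof is correct and follows the paper's argument. The shared steps are the triangle inequality through $q_K\otimes\nu_\perp$, the product identity $\TV(\mu_1\otimes\nu,\mu_2\otimes\nu)=\TV(\mu_1,\mu_2)$, and Pinsker's inequality with $I=\KL(q\,\|\,q_K\otimes q_\perp)$; the paper proves the product identity by factorizing the variation measure, $|\Delta\otimes\nu|=|\Delta|\otimes\nu$, which is the same computation as your Fubini step.

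One small point: you should not cite Appendix~\ref{app:decomposition} for the product identity. In the paper, the proof of \eqref{eq:error-decomposition} there itself uses \eqref{eq:app-product-tv}, which is established inside this very proof, so citing it would be circular. Present your self-contained Fubini or data-processing argument instead.
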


The truncation error $\varepsilon_{\mathrm{PCA}}$ is small when two things
hold: the retained and discarded coordinates are nearly independent, and the
residual law $\nu_\perp$ is close to $q_\perp$. For Gaussian data the first
holds exactly in the principal-component basis, and the second is what
Section~\ref{sec:sharp} quantifies for the residual $\N(0,\eta^2)$. Extending the matching lower bound of
Theorem~\ref{thm:truncation-cost} to non-Gaussian laws is a direction for
further work. The Wiener gain nevertheless retains a distribution-free
interpretation for each coordinate: it bounds the data-dependent part of the score at the corresponding noise
level (Proposition~\ref{prop:learnable}).

\paragraph{Estimated principal components.}
In practice the basis and the rates are estimated from $n_{\mathrm{patch}}$
training patches.
Suppose the sampler uses the exact score of the process defined by the
estimated basis and rates (an oracle assumption, since this score depends on
the unknown covariance). Then, with probability at least $1-2e^{-d}$ and
under a small-perturbation condition, Theorem~\ref{thm:empirical} gives
\begin{align*}
 \TV\bigl(\widetilde p^{\,\eta}_{\mathrm{emp}},q^\eta\bigr)
 &\le
 \widetilde O\Bigl(\sqrt{\tfrac KN}\Bigr)
 +C_1\,\frac{\sigma_1^2\sqrt K}{\eta^2\,\delta_{\mathrm{gap}}}\,\Delta_n\\
 &\quad+\TV\bigl(\widetilde q_K^{\,\eta},q^\eta\bigr),
\end{align*}
where $\Delta_n=C_\Delta\sigma_1^2\sqrt{d/n_{\mathrm{patch}}}$,
$\delta_{\mathrm{gap}}=\sigma_K^2-\sigma_{K+1}^2$, and $C_1$ does not
depend on $d$. Compared with the population case, the only new term is the
error from estimating the basis. It is below $\varepsilon/4$ once
$n_{\mathrm{patch}}$ exceeds
a threshold that depends on the spectrum, $\sigma$, $\eta$, $\varepsilon$ and
$d$ but not on the number of reverse steps (Theorem~\ref{thm:decouple}), so for a
fixed diffusion noise level the choices of $K$ and $N$ from the population
analysis remain valid. Under the exact power law,
$\delta_{\mathrm{gap}}\asymp K^{-\beta-1}$, so resolving larger subspaces
requires more training data. The result gives a conservative consistency
guarantee at fixed $\sigma$. The joint choice of $\sigma$ and $K$ in
Theorem~\ref{thm:end-to-end} is established for the population basis;
extending it to estimated bases requires tracking the noise dependence of
the empirical sampling term.

\paragraph{Patch Gaussian mixtures.}
Natural-image patches are well described by Gaussian mixtures
\citep{zoran2012natural}, $q=\sum_{m}\pi_m\N(0,C_m)$. Running, with
probability $\pi_m$, a truncated sampler adapted to component $m$ in its own
principal-component basis gives, by convexity of total variation,
\[
 \TV\bigl(\widetilde p^{\,\eta},q^\eta\bigr)\le\sum_m\pi_m\varepsilon_m,
\]
where $\varepsilon_m$ is the error for component $m$
(Proposition~\ref{prop:gmm-lift}). Theorems~\ref{thm:num-directions}
and~\ref{thm:end-to-end} therefore apply component by component, for
components whose spectra satisfy their assumptions, with a number of retained
directions chosen for each component. This is an upper
bound only: the lower bound of Theorem~\ref{thm:truncation-cost} does not
transfer to the mixture, because the error of a mixture can be smaller than
the average error of its components. A basis shared across components adds
an error measuring how far the component covariances are from being jointly
diagonal (Appendix~\ref{app:extensions}).

\paragraph{Whole images.}
All results apply patch by patch. Generating all patches jointly replaces
$K$ by $N_PK$; generating them independently adds a cross-patch error that we
do not analyze (Appendix~\ref{app:patches}).
\section{Experimental Results}
\label{sec:experiments}

We test the three modeling ingredients of the paper, a power-law spectrum,
the Gaussian truncation bounds and Gaussian mixtures, in three settings
(Appendix~\ref{app:exp}). (i)~Covariance spectra of natural-image
patches from CIFAR-10/100 \citep{krizhevsky2009learning}, DIV2K
\citep{Agustsson_2017_CVPR_Workshops} and BSDS500 \citep{MartinFTM01}, with
patch sizes from $8\times8$ to $32\times32$, to check Assumption~\ref{ass:powerlaw}
out of sample; audio frames from FMA \citep{fma_dataset} are included for comparison(Appendix~\ref{app:exp}). (ii)~Synthetic Gaussian data with an exact power law, on which every result of Sections~3-4 is checked
(Appendix~\ref{app:exp-gaussian}). (iii)~$8\times8\times3$ patches of
CIFAR-10 and CIFAR-100 modeled by Gaussian mixtures with up to $32$ components
\citep{zoran2012natural}. In (ii) and (iii) all densities are in closed form,
so truncation errors are estimated by Monte Carlo without a learned
discriminator. The code is available
\textcolor{magenta}{\href{https://anonymous.4open.science/r/Truncated-Diffusion-Sampler-Under-Power-Law-67BB/README.md}{here}}.




\paragraph{Experiment 1: Power-law spectra.}
On all ten image cases the power law fits the patch spectra $2.4$-$20$
times better than an exponential law, so the spectra are not
low-rank-like. The fitted exponents lie in $\hat\beta\in[1.24,1.85]$,
decreasing with patch size towards the Fourier value $\beta\approx1$ of
Appendix~B.5. On data not used to fit $\hat\beta$, the envelope ratio
$C/c$ is at most $2.42$ on the fitting window and $4.11$ on the spectral
bulk. The spectra depart from a single power law in two ways. They bend
gradually, which a broken power law captures and which satisfies
Assumption~1 by construction (Proposition~\ref{prop:broken-envelope}).
Beyond the bulk they also roll off below the envelope, which affects only
the lower side, and the analysis needs the lower side only on the bulk
(Appendix~\ref{app:e1}). The components of the CIFAR mixtures used in
Experiment~3 also satisfy Assumption~1, with
$\hat\beta_m\in[1.56,2.57]$ and envelope ratios at most $1.89$
(Appendix~\ref{app:exp-data}).

\paragraph{Experiment 2: Synthetic Gaussian data.}
For $\beta\in\{1.25,1.5,2\}$ and $d=2^{18}$, every bound of
Sections~3-4 holds at every tested point. The truncation error is
$0.276$-$0.302$ times $\|(g_i)_{i>K}\|_2$. This is close to the constant
$1/(2\sqrt\pi)\approx0.282$ predicted by a central-limit heuristic
(Appendix~\ref{app:exp-constant}) and far inside the constants $1/100$
and $3/2$ of Theorem~1(a). The fitted exponents of $K^\star$ in
$1/\varepsilon$ and in $C_{\mathrm{sp}}/\eta^2$ match $2/(2\beta-1)$ to
within $1\%$. Whenever $2K^\star\le d$, $K^\star$ changes by at most
$6.7\%$ over $d\in\{2^{12},\dots,2^{18}\}$. The sufficient choice of
Theorem~2(a) exceeds $K^\star$ by a factor $1.15$-$1.51$. The necessary
bound of Theorem~2(b) lies a factor $26$-$528$ below $K^\star$, which
reflects its constants. Truncating at the noise level leaves an error of
$0.57$ at $k_\eta=16$ ($\beta=1.5$), and of at least $0.99$ for every
$\beta$ at $k_\eta=256$. With exact scores in the settings of Theorem~4,
the total error reaches $\varepsilon$ after $32$-$128$ reverse steps,
far fewer than the theorem requires. The number of steps needed for
sampling accuracy grows like $K^{0.58}$, not linearly, and equal rates
show the same exponent (Appendix~\ref{app:e2}).

\paragraph{Experiment 3: CIFAR patch mixtures.}
The mixture error matches the Gaussian prediction at coarse resolution
(median ratio $1.03$-$1.05$ at $k_\eta=2$). At fine resolution it exceeds
the prediction (median $1.80$ at $k_\eta=32$), tracking how non-Gaussian
the smoothed data are, as Proposition~1 allows. Because the error decays
steeply in $K$, the required number of directions is nevertheless within
$19\%$ of the Gaussian prediction. At coarse resolution it scales as
$\varepsilon^{-0.79}$ to $\varepsilon^{-0.80}$, against
$\varepsilon^{-0.75}$ to $\varepsilon^{-0.76}$ from Theorem~2 with the
fitted exponent. Retaining only the $k_\eta$ directions above the noise
level leaves errors of $0.16$-$0.56$, consistent with Corollary~1 although
its hypotheses are not met here. Among rules that choose $K$ before
training, the explicit bound of Theorem~1(a), evaluated on the measured
covariance, is the most economical: it uses a median of $1.11K^\star$
directions and misses $13$ of $60$ targets, by at most a factor $1.78$.
Theorem~2(a) with a fitted power law always meets the target, with a
median of $1.5K^\star$ directions. Explained-variance thresholds miss
the target at fine resolution and keep far too many directions at coarse
resolution (Appendices~\ref{app:exp-gaussian}-\ref{app:exp-rules}). The
bound of Proposition~4 holds in every setting, with the mixture error
between $0.58$ and $0.88$ times the bound. Truncating each component in
its own basis uses $8$-$31\%$ fewer directions than a shared basis at
$\varepsilon=0.01$ (Appendix~\ref{app:exp-components}).
\section{Discussion and Conclusion}
\label{sec:conclusion}

Truncating a diffusion model to a few spectral directions is attractive
because natural data concentrate their variance, but it has a cost that no
amount of training can remove: every truncated sampler pays at least the
error of its discarded directions (Theorem~\ref{thm:truncation-cost}). We
quantify that cost and hence where to cut. Under a power-law spectrum the
directions just below the noise level are as numerous as those above it, so
their errors add up, and keeping only what is visible above the noise leaves
a non-vanishing error (Corollary~\ref{cor:noise-floor}).

\paragraph{What this means in practice.}
The results suggest four rules for designing a truncated sampler.
\begin{itemize}
\item \emph{Set the truncation level by the target accuracy, not by the noise
level.} Retain directions until their signal-to-noise ratio at the
evaluation level falls to order $\varepsilon/\sqrt K$
(Theorem~\ref{thm:num-directions}(c)). The resulting number of directions,
of order $(C_{\mathrm{sp}}/(\eta^2\varepsilon))^{2/(2\beta-1)}$, can be
estimated before training from the eigenvalues of the patch covariance, up to
constants that depend on $\beta$.
\item \emph{Run the diffusion below the evaluation level.} Once the number of
retained directions is fixed, the diffusion noise level should be set near the
variance of the last retained direction, so that all retained directions stay
above it and mix at comparable rates (Theorem~\ref{thm:end-to-end}).
\item \emph{Spend effort on the choice of directions rather than on the
rates.} On a fixed retained subspace, direction-dependent rates improve the
sampling bound only by constants over equal rates
(Proposition~\ref{prop:isotropic}); what reduces the cost from the ambient
dimension to the number of retained directions is which directions are kept.
\item \emph{Diagnose which error dominates.} The error splits into a
sampling part and a truncation part \eqref{eq:error-decomposition}. A better
score network reduces only the first; if the second dominates, the remedy is
to retain more directions or to model the discarded ones.
\end{itemize}

\paragraph{Scope and limitations.}
The Gaussian replacement model isolates the cost of spectral truncation and
admits matching bounds. For general data and mixtures,
Section~\ref{sec:extensions} provides upper bounds with explicit residual
modeling terms. The end-to-end theorem uses exact scores to separate
sampling and truncation from score learning; Theorem~\ref{thm:convergence}
quantifies the additional error for learned scores. The condition $\beta>1$
ensures a dimension-independent moment bound, while the truncation analysis
extends to $\beta>1/2$. Image spectra near $\beta=1$ therefore call for
retaining the finite spectral sums described in Appendix~\ref{app:beta}.
The empirical PCA result establishes consistency under an oracle score and a
small-perturbation condition. Patch-level guarantees extend to jointly
modeled patches; independent generation incurs the cross-patch term in
Appendix~\ref{app:patches}.

\paragraph{Further directions.}
The error decomposition suggests several extensions. For non-Gaussian data
or learned residuals, useful lower bounds would account for both dependence
and residual modeling accuracy; such results would complement the present
Gaussian replacement benchmark. A coordinate-wise contraction analysis
could also clarify when direction-dependent rates improve on equal rates,
especially if weak directions are retained at small rates. Finally, wavelets
\citep{guth2022wavelet} and projection-pursuit directions
\citep{friedman1974projection,huber1985projection} offer alternative bases
in which to study the same resolution-dependent trade-off.

\section*{AI Statement}
We used Claude Opus 5.5, an AI assistant developed by Anthropic, in the following ways.

\paragraph{Manuscript review.} Claude reviewed a draft of the paper in the
style of a conference review and pointed out weaknesses
in positioning, presentation and constants. All resulting changes were decided on and made by the authors.

\paragraph{Experiments: figures, tables and text.} The authors designed and ran
all experiments. Claude analyzed their output files, generated the figures and
tables of Section~6 and Appendix~\ref{app:exp} directly from those files, and wrote the
text describing the results. Every number in the text was checked against the
experimental outputs, and the authors reviewed and edited all text.

\paragraph{Theory and other writing.} Opus 5.5 was used to improve the grammar and
clarity of author-written text in Sections~1-5; all such edits were reviewed
by the authors.
No AI tool was used to generate data. The authors take full responsibility
for the content of the paper, including the correctness of its results and the
accuracy of its citations.

\bibliography{bib} 

\clearpage
\appendix
\thispagestyle{empty}

\onecolumn
\aistatstitle{Appendix}

\paragraph{Organisation of the appendix.}
Appendix~\ref{app:related} discusses related work.
Appendix~\ref{app:setup} gives details of the setup of Section~\ref{sec:setup}.
Appendices~\ref{app:proofs-sharp}, \ref{app:proofs-convergence}
and~\ref{app:extensions} contain the proofs and full statements for
Sections~\ref{sec:sharp}, \ref{sec:convergence} and~\ref{sec:extensions}.
The remaining appendices contain the technical lemmas: the estimates used in
the Girsanov argument (Appendix~\ref{sec:app:lemmas}), the proofs of the
general convergence theorems (Appendix~\ref{sec:app:C}), the covariance and
principal-component perturbation lemmas (Appendix~\ref{app:lemmas}), and the
proofs of the empirical and decoupling results
(Appendix~\ref{app:proofs-empirical}). Experimental details are available in Appendix~\ref{app:exp}.

\input{roadmap_figure}

\section{Extended Related Work and Positioning}
\label{app:related}

\subsection{Convergence theory for score-based diffusion models}

Score-based diffusion models estimate the scores of noised data marginals and
use them in a reverse-time stochastic differential equation
\citep{anderson1982reverse,song2019generative,song2020score}. Polynomial
convergence guarantees were obtained by
\citet{lee2022convergence,chen2022sampling,chen2023improved}, and
\citet{benton2023nearly} proved bounds that are nearly linear in the ambient
dimension under finite second moments only. These results measure the cost of
sampling in the dimension of the space in which the diffusion runs. Our
convergence analysis (Theorem~\ref{thm:convergence}) extends the Girsanov
argument of \citet{chen2022sampling} to diagonal anisotropic
Ornstein-Uhlenbeck processes, keeping the rate matrix throughout, so that
the dimension enters through $\tr(\Lambda)$ and the weighted second moment
(Theorem~\ref{thm:tv-trace}).

\paragraph{Relation to truncation baselines.}
In the regime where our rates are used, all retained rates lie within a
constant factor of $\lambda_0$, so $\tr(\Lambda)=\Theta(K)$. The step count
of Theorem~\ref{thm:convergence} therefore coincides, up to constants, with
what an existing dimension-linear bound gives when applied to the
$K$-dimensional coefficient law, for instance \citet{benton2023nearly} or
Theorem~\ref{thm:tv-dim} with $r=K$; Proposition~\ref{prop:isotropic} states
this formally for equal rates on the same retained subspace. Our
contribution is therefore not a faster sampler for a fixed $K$. It is the
analysis of the truncation itself: the cost of discarding directions
(Theorem~\ref{thm:truncation-cost}), the number of directions that must be
retained (Theorem~\ref{thm:num-directions}), and the resulting end-to-end
prescription (Theorem~\ref{thm:end-to-end}). Whether direction-dependent rates
can strictly improve on equal rates, for example by retaining weak directions
with small rates, is left open (Section~\ref{sec:conclusion}).

\subsection{Low-dimensional and adaptive diffusion theory}

Several works show that diffusion models adapt to low-dimensional structure.
\citet{chen2023score} analyze score approximation and distribution recovery
for data supported on a low-dimensional subspace, and \citet{liang2025low}
obtain total-variation convergence governed by an intrinsic dimension.
analyzes under the manifold hypothesis include
\citet{debortoli2022convergence,potaptchik2024linear,azangulov2024convergence}.
A power-law covariance is different: it is generally full rank, so there is
no intrinsic dimension to adapt to. Instead it defines a resolution-dependent
count of relevant directions, which grows as the target resolution becomes
finer. Our results quantify this count and show that it is larger than the
number of directions visible above the noise level.

\subsection{Spectral, subspace and multiscale diffusion}

Diffusion processes that treat spectral directions differently have been
proposed and studied empirically. Subspace diffusion restricts later stages
of the diffusion to lower-dimensional subspaces, such as principal or
low-frequency components \citep{jing2022subspace}. Blurring and
inverse-heat-dissipation models damp high frequencies faster than low ones
\citep{hoogeboom2023blurring,rissanen2023generative}. Wavelet-based score
models generate coefficients scale by scale \citep{guth2022wavelet}. Matrix
preconditioning has been used to accelerate score-based sampling
\citep{ma2022accelerating}; there anisotropy serves speed, whereas here it
enters the forward and reverse processes together and the question is which
directions can be discarded. Our patch principal-component process with
Wiener-gain rates belongs to this family. We do not propose a new
architecture, and we do not claim that the Wiener-gain profile is an optimal
schedule; our contribution is a theory that says how many directions such
processes must retain and what discarding the rest costs.

\subsection{Gaussian total-variation bounds and denoising}

Our lower bound relies on the two-sided total-variation bound for Gaussians of
\citet{devroye2018total}, and the interpretation of the Wiener gain as the
size of the data-dependent part of the score
(Proposition~\ref{prop:learnable}) is an elementary consequence of Tweedie's
formula; the Wiener gain itself is the classical optimal linear shrinkage
factor \citep{torralba2024foundations}. What we add is the combination of
these tools into statements about truncated diffusion samplers.
\begin{enumerate}
\item[(i)] For samplers that fill the discarded directions with noise, the
eigenvalues that govern the Gaussian comparison are exactly the Wiener gains
of the discarded directions, so the truncation error is a denoising quantity
(Theorem~\ref{thm:truncation-cost}(a)).
\item[(ii)] A projection argument turns the comparison into a lower bound for
\emph{every} sampler in this class: no accuracy on the retained directions
compensates for discarded ones (Theorem~\ref{thm:truncation-cost}(b)).
\item[(iii)] Under a power law this yields matching necessary and sufficient
numbers of retained directions and an interpretable cut-off rule: the first
discarded direction should have signal-to-noise ratio of order
$\varepsilon/\sqrt K$, not of order one, so cutting at the noise level is
provably insufficient (Theorem~\ref{thm:num-directions},
Corollary~\ref{cor:noise-floor}).
\item[(iv)] Combined with the convergence analysis, the result gives an
end-to-end prescription in which the resolution $\eta$ and accuracy
$\varepsilon$ determine $K$, and $K$ in turn determines a diffusion noise
level well below $\eta$ (Theorem~\ref{thm:end-to-end}).
\end{enumerate}
The sharp truncation results assume Gaussian data and apply regardless of
the active sampler; exact scores are used for the end-to-end step count.
The mixture extension gives an upper bound. Extending the matching lower
bounds beyond Gaussian replacement is a direction for further work.

\subsection{Natural-image statistics and patch models}

Natural images have approximately power-law spectra
\citep{field1987relations,ruderman1994statistics,hyvarinen2009natural}, and
patch Gaussian mixtures are a classical model of local image statistics
\citep{zoran2012natural}. We use these facts to motivate the coefficient
model of Section~\ref{sec:setup-data}; they do not imply that a complete image
is Gaussian or that its patch law factorises, and the corresponding residual
and cross-patch errors remain explicit in our statements
(Appendix~\ref{app:patches}). The exponent that enters our bounds is the
decay rate of the sorted patch-covariance eigenvalues, which differs from the
Fourier exponent (Appendix~\ref{app:beta}). Projection pursuit
\citep{friedman1974projection,huber1985projection} offers an alternative to
principal components for choosing directions, which we leave for future work.

\section{Details of the Setup}
\label{app:setup}

This appendix collects the material behind Section~\ref{sec:setup}: notation,
the reverse-time dynamics and an explicit form of the sampler, the
pixel-space and patch-level views, the exponent $\beta$ for images, the link
between the smoothed target and early stopping, the proof of the error
decomposition, and the relation between Wiener gains and scores.

\subsection{Notation}
\label{app:notation}

\begin{center}
\small
\begin{tabular}{@{}lp{0.62\linewidth}l@{}}
\hline
Symbol & Meaning & Defined in\\
\hline
$d$ & ambient dimension (patch dimension $D_p$ in the application) & \S\ref{sec:setup-data}\\
$C,\ \sigma_i^2,\ v_i$ & data covariance, its eigenvalues and eigenvectors & \S\ref{sec:setup-data}\\
$\beta,\ c_{\mathrm{sp}},\ C_{\mathrm{sp}}$ & power-law exponent and envelope constants & \eqref{eq:powerlaw}\\
$K,\ V_K,\ q_K$ & number of retained directions, their basis, law of their coefficients & \S\ref{sec:setup-diffusion}\\
$\sigma$ & diffusion noise level (stationary law $\gamma_K=\N(0,\sigma^2I_K)$) & \eqref{eq:forward-OU-coeff}\\
$\eta$ & evaluation level (resolution of the target) & \eqref{eq:smoothed-target}\\
$g_i(s)$ & Wiener gain $\sigma_i^2/(\sigma_i^2+s^2)$ at noise level $s$ & \eqref{eq:wiener-gain}\\
$\lambda_0,\ \lambda_i,\ \Lambda,\ \kappa$ & base rate, rates, rate matrix, rate ratio $\lambda_{\max}/\lambda_{\min}$ & \eqref{eq:wiener-rates}\\
$d_{\mathrm{eff}}^{\sigma}$ & number of directions above the diffusion noise level & \eqref{eq:deff}\\
$T,\ N,\ \delta$ & time horizon, number of reverse steps, step-size parameter & \eqref{eq:frozen-drift-coeff}\\
$\widehat s_{t_j}$ & score estimates, $j=1,\ldots,N$ & \eqref{eq:frozen-drift-coeff}\\
$p,\ \widetilde p^\eta$ & law of the sampler on $\R^K$, and of the truncated output & \eqref{eq:output-law}\\
$q^\eta,\ \widetilde q_K^{\,\eta}$ & smoothed data law, ideal truncated sampler & Def.~\ref{def:truncated-class}\\
$\varepsilon$ & target accuracy in total variation & \S\ref{sec:setup-truncated}\\
$N_P$ & number of patches in an image & App.~\ref{app:patches}\\
\hline
\end{tabular}
\end{center}

\subsection{Reverse-time dynamics and the explicit sampler update}
\label{app:reverse}

For a forward diffusion $\mathrm dX_t=b_t(X_t)\,\mathrm dt+G\,\mathrm dB_t$
with constant invertible $G$, the time-reversed process
$\overline X_t:=X_{T-t}$ solves, under mild regularity conditions
\citep{anderson1982reverse},
\[
 \mathrm d\overline X_t
 =\bigl(-b_{T-t}(\overline X_t)+GG^\top\nabla\log q_{T-t}(\overline X_t)\bigr)\mathrm dt
 +G\,\mathrm d\overline B_t,
\]
where $q_t$ is the law of $X_t$. For the forward process
\eqref{eq:forward-OU-coeff}, $b_t(u)=-\Lambda u$ and $G=\sigma\sqrt{2\Lambda}$,
so the exact reverse drift is
\[
 b(t,u)=\Lambda u+2\sigma^2\Lambda\nabla\log q_{K,T-t}(u),
\]
and the sampler \eqref{eq:frozen-drift-coeff} replaces
$\nabla\log q_{K,T-t}(u)$ by the estimate $\widehat s_{t_{N-k}}$ evaluated at
the start $\widehat U_{t_k^\leftarrow}$ of the current interval. The estimates
used are therefore exactly $\widehat s_{t_N},\ldots,\widehat s_{t_1}$; this is
the index set in the score-error assumption of
Section~\ref{sec:convergence}. The sampler is started at
$\gamma_K=\N(0,\sigma^2I_K)$ instead of the unknown $q_{K,T}$; the resulting
error is the mixing term of the convergence theorems.

On an interval of length $h$ the frozen score $s:=\widehat
s_{t_{N-k}}(\widehat U_{t_k^\leftarrow})$ is constant, and each coordinate of
\eqref{eq:frozen-drift-coeff} is a linear stochastic differential equation.
Writing $W=\widehat U+2\sigma^2s$, which solves
$\mathrm dW_i=\lambda_iW_i\,\mathrm dt+\sigma\sqrt{2\lambda_i}\,\mathrm d\overline
B_{t,i}$, gives the exact update
\begin{equation}\label{eq:sampler-update}
 \widehat U_{t_{k+1}^\leftarrow,i}
 =e^{\lambda_ih}\,\widehat U_{t_k^\leftarrow,i}
 +2\sigma^2\bigl(e^{\lambda_ih}-1\bigr)s_i
 +\sigma\sqrt{e^{2\lambda_ih}-1}\;\zeta_i,
 \qquad \zeta_i\sim\N(0,1)\ \text{i.i.d.},
\end{equation}
so the sampler can be run without further discretisation error. On the
uniform grid $h=T/N$, the step-size parameter is
$\delta=1-e^{-\lambda_{\max}T/N}\le\lambda_{\max}T/N$.

\subsection{Pixel-space view}
\label{app:pixel}

The retained coefficients map back to the data space through
$\widehat X_t:=V_KU_t$. With $\widetilde\Lambda:=V_K\Lambda V_K^\top$ one may
formally write
$\mathrm d\widehat X_t=-\widetilde\Lambda\widehat X_t\,\mathrm dt
+\sigma\sqrt{2\widetilde\Lambda}\,\mathrm dW_t$, but for $K<d$ this equation
is degenerate: $\widetilde\Lambda$ has rank $K$ and the process stays in the
span of $V_K$. The reverse-time formula and the Girsanov argument of
Section~\ref{sec:convergence} require an invertible diffusion coefficient, so
all of the analysis is carried out for the non-degenerate coefficient process
\eqref{eq:forward-OU-coeff}, and data-space statements follow by the push-forward
$u\mapsto V_Ku$, which is injective and hence preserves total variation.

\subsection{Images made of patches}
\label{app:patches}

Let an image $x\in\R^{N_PD_p}$ consist of $N_P$ non-overlapping, centered
patches $x^{(j)}\in\R^{D_p}$ (in practice the empirical patch mean is removed
before training and restored after generation). The basis $V$ is the
eigenbasis of the pooled patch covariance
$\Sigma_p=\E\,x^{(j)}(x^{(j)})^\top$, and each patch is represented by its
retained coefficients $u^{(j)}=V_K^\top x^{(j)}$.

\emph{Joint generation.} Stacking the coefficients of all patches gives a
vector in $\R^{N_PK}$ with basis $I_{N_P}\otimes V_K$ and rate matrix
$I_{N_P}\otimes\Lambda$. All results apply with $K$ replaced by $N_PK$; the
trace quantities acquire a factor $N_P$, e.g.
$\tr(I_{N_P}\otimes\Lambda)=N_P\tr(\Lambda)$. The number of reverse steps
therefore still grows linearly with the number of patches, hence with the
number of pixels; what the spectral analysis improves is the factor per
patch, from $D_p$ to the number of retained directions.

\emph{Independent generation.} If patches are trained or generated
independently, the sampler targets the product of the patch marginals, and a
full-image guarantee must add
\[
 \varepsilon_{\mathrm{cross}}
 :=\TV\Bigl(\law\bigl(u^{(1)},\ldots,u^{(N_P)}\bigr),\
 \textstyle\bigotimes_{j=1}^{N_P}\law\bigl(u^{(j)}\bigr)\Bigr).
\]
We do not analyze $\varepsilon_{\mathrm{cross}}$. Non-overlapping patch bases
also ignore correlations between patches, which under a power-law image
spectrum carry much of the low-frequency energy; a global basis would capture
them.

\subsection{The exponent \texorpdfstring{$\beta$}{beta} for image patches}
\label{app:beta}

The exponent in Assumption~\ref{ass:powerlaw} is the decay exponent of the
\emph{sorted} covariance eigenvalues, not of the radial Fourier spectrum. For
a stationary two-dimensional field with power spectrum
$S(f)\propto|f|^{-\alpha}$, the number of frequencies with $|f|\le F$ grows
like $F^2$, so the $i$-th largest eigenvalue behaves like $i^{-\alpha/2}$.
The classical estimate $\alpha\approx2$ for natural images
\citep{field1987relations,ruderman1994statistics} therefore suggests
$\beta\approx1$ for image patches, at the boundary of
Assumption~\ref{ass:powerlaw}, and $\beta$ should be estimated from patch
covariances before the bounds are read quantitatively.

The two halves of the analysis depend on $\beta$ differently. The truncation
bounds of Section~\ref{sec:sharp} use the power law only through tail sums
$\sum_{i>K}i^{-2\beta}$, which are finite whenever $\beta>1/2$. The
convergence bounds of Section~\ref{sec:convergence} use $\beta>1$ through
$\zeta(\beta)=\sum_{i\ge1}i^{-\beta}$, which bounds the weighted second moment
and the initial divergence; for $\beta\le1$ these quantities are instead
bounded by the partial sum $\sum_{i\le K}i^{-\beta}$, which grows like $\log K$
for $\beta=1$ and like $K^{1-\beta}$ for $\beta<1$.

\subsection{The smoothed target and early stopping}
\label{app:early-stop}

Run the isotropic process $\mathrm dX_t=-\lambda_0X_t\,\mathrm dt
+\sigma\sqrt{2\lambda_0}\,\mathrm dB_t$ on all $d$ coordinates from
$X_0\sim q$ up to a time $\tau_0>0$. Since
$X_{\tau_0}=e^{-\lambda_0\tau_0}X_0+\sigma\sqrt{1-e^{-2\lambda_0\tau_0}}\,Z$
with $Z\sim\N(0,I_d)$ independent of $X_0$,
\begin{equation}\label{eq:early-stop}
 e^{\lambda_0\tau_0}X_{\tau_0}
 =X_0+\sigma\sqrt{e^{2\lambda_0\tau_0}-1}\;Z
 \ \sim\ q^\eta,
 \qquad
 \eta^2=\sigma^2\bigl(e^{2\lambda_0\tau_0}-1\bigr).
\end{equation}
Total variation is invariant under the bijection $x\mapsto e^{\lambda_0\tau_0}x$,
so judging a sampler against $q^\eta$ is the same as judging its rescaled
output against the law at which an early-stopped diffusion sampler stops.
With direction-dependent rates, the effective smoothing level differs across
directions; our target uses a single level $\eta$ in all directions.

\subsection{Proof of the error decomposition}
\label{app:decomposition}

Let $\mu,\mu'$ be probability laws on $\R^K$ and $G=\N(0,\eta^2I_K)$. The
convolution $\mu*G$ is the push-forward of $\mu\otimes G$ under
$(u,\xi)\mapsto u+\xi$, so Lemma~\ref{lem:tv-dp} and the product identity
\eqref{eq:app-product-tv} give
\[
 \TV(\mu*G,\mu'*G)\le\TV(\mu\otimes G,\mu'\otimes G)=\TV(\mu,\mu').
\]
Applying \eqref{eq:app-product-tv} once more, with the common factor
$\N(0,\eta^2I_{d-K})$, and then the case $\mu=p$, $\mu'=q_K$,
\[
 \TV\bigl(\widetilde p^\eta,\widetilde q_K^{\,\eta}\bigr)
 =\TV\bigl(p*G,\ q_K*G\bigr)
 \le\TV(p,q_K).
\]
The triangle inequality
$\TV(\widetilde p^\eta,q^\eta)\le\TV(\widetilde p^\eta,\widetilde q_K^{\,\eta})
+\TV(\widetilde q_K^{\,\eta},q^\eta)$ gives \eqref{eq:error-decomposition}.
\hfill$\square$

\subsection{Wiener gains and the score}
\label{app:wiener}

The Wiener gain of Section~\ref{sec:setup-diffusion} also measures, for
arbitrary data, how far the score at noise level $s$ is from the score of a
Gaussian reference. We state it at the evaluation level $\eta$.

\begin{proposition}[Correction to the Gaussian reference score]
\label{prop:learnable}
Let $X\in\R^{d}$ satisfy $\E X=0$ and $\E\|X\|^2<\infty$, and write
$\sigma_i^2:=\E X_i^2$ for its coordinate variances in a fixed orthonormal
basis. For a noise level $\eta>0$ let $Y=X+\eta Z$ with
$Z\sim\N(0,I_d)$ independent of $X$, and let $p_\eta:=\law(Y)$ (the smoothed
law $q^\eta$ of \eqref{eq:smoothed-target} when $X\sim q$). Then, for every
coordinate $i$,
\[
 \partial_i\log p_\eta(y)
 =\underbrace{-\frac{y_i}{\eta^2}}_{\text{Gaussian reference}}
 +\underbrace{\frac{m_i(y)}{\eta^2}}_{\text{data-dependent correction}},
 \qquad
 m_i(y):=\E[X_i\mid Y=y],
\]
and, writing $g_i(\eta):=\sigma_i^2/(\sigma_i^2+\eta^2)$ for the Wiener gain,
\begin{equation}\label{eq:learnable-fraction}
 g_i(\eta)^2
 \;\le\;
 \frac{\E\,m_i(Y)^2}{\E\,Y_i^2}
 \;\le\;
 g_i(\eta).
\end{equation}
The lower bound is attained when $X$ is Gaussian with covariance diagonal in
the chosen basis.
\end{proposition}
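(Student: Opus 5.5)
The plan is to prove the identity by Tweedie's formula and then obtain the two inequalities from elementary variance decompositions, coordinate by coordinate.

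\emph{Score identity.} Since $Y=X+\eta Z$, the density is $p_\eta(y)=\E\,\varphi_\eta(y-X)$, where $\varphi_\eta$ is the $\N(0,\eta^2I_d)$ density. This density is smooth and positive, and we may differentiate under the expectation: the Gaussian derivative is dominated locally uniformly in $y$ by an integrable function, using $\E\|X\|<\infty$. Differentiating gives
\[
\nabla p_\eta(y)=\E\bigl[-(y-X)\eta^{-2}\varphi_\eta(y-X)\bigr].
\]
Dividing by $p_\eta(y)$ and recognising the posterior weights yields
\[
\nabla\log p_\eta(y)=\bigl(-y+\E[X\mid Y=y]\bigr)/\eta^2,
\]
which is the claimed decomposition.

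\emph{Upper bound.} Note that $\E Y_i^2=\sigma_i^2+\eta^2$, because $X$ and $Z$ are independent and centred. By Jensen's inequality for conditional expectations, $\E m_i(Y)^2\le\E X_i^2=\sigma_i^2$. Dividing gives $\E m_i(Y)^2/\E Y_i^2\le\sigma_i^2/(\sigma_i^2+\eta^2)=g_i$.

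\emph{Lower bound.} The point is that $m_i(Y)$ is the best $L^2$ predictor of $X_i$ from $Y$, so it beats the best linear predictor based on $Y_i$ alone. Concretely, write
\[
\E m_i(Y)^2=\sigma_i^2-\E\bigl(X_i-m_i(Y)\bigr)^2.
\]
Optimality of the conditional mean gives $\E(X_i-m_i(Y))^2\le\E(X_i-aY_i)^2$ for every $a\in\R$. Taking $a=g_i$, a direct expansion using $\E X_iY_i=\sigma_i^2$ gives
\[
\E(X_i-g_iY_i)^2=\sigma_i^2-\sigma_i^4/(\sigma_i^2+\eta^2)=\sigma_i^2\eta^2/(\sigma_i^2+\eta^2).
\]
Substituting, $\E m_i(Y)^2\ge\sigma_i^4/(\sigma_i^2+\eta^2)$, and dividing by $\sigma_i^2+\eta^2$ gives exactly $g_i^2$.

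\emph{Attainment.} If $X$ is Gaussian with covariance diagonal in the chosen basis, then the coordinates $(X_i,Y_i)$ are independent across $i$. Hence $m_i(Y)=\E[X_i\mid Y_i]=g_iY_i$, which is the linear predictor, and $\E m_i(Y)^2=g_i^2(\sigma_i^2+\eta^2)$, so the lower bound holds with equality.

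The only mildly delicate step is justifying the interchange of differentiation and expectation in Tweedie's formula under the sole assumption $\E\|X\|^2<\infty$. I would handle it by dominated convergence on compact sets of $y$, using the bound $\|y-x\|\varphi_\eta(y-x)\le C_\eta$ uniformly in $x$. All remaining steps are routine.
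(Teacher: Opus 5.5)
Your proposal is correct and follows essentially the same route as the paper: Tweedie's formula for the decomposition, conditional Jensen for the upper bound, and the orthogonality identity combined with MSE-optimality of the conditional mean against the linear predictor $g_iY_i$ for the lower bound. Your added justification of differentiating under the expectation is a harmless refinement the paper leaves implicit. The uniform bound $\|y-x\|\varphi_\eta(y-x)\le C_\eta$ already suffices for that step without invoking $\E\|X\|<\infty$.
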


\begin{proof}
The decomposition is Tweedie's formula
$\nabla\log p_\eta(y)=(\E[X\mid Y=y]-y)/\eta^2$, and
$\E Y_i^2=\sigma_i^2+\eta^2$ because $X$ and $Z$ are independent and centered.
Upper bound: by Jensen's inequality for conditional expectations,
$\E\,m_i(Y)^2\le\E X_i^2=\sigma_i^2$, so the ratio is at most
$\sigma_i^2/(\sigma_i^2+\eta^2)=g_i$.
Lower bound: by orthogonality of the conditional mean,
$\E\,m_i(Y)^2=\sigma_i^2-\E\bigl(X_i-m_i(Y)\bigr)^2$, and since the
conditional mean minimises the mean-squared error over all measurable
functions of $Y$,
\[
 \E\bigl(X_i-m_i(Y)\bigr)^2
 \le\E\bigl(X_i-g_iY_i\bigr)^2
 =\sigma_i^2-2g_i\sigma_i^2+g_i^2(\sigma_i^2+\eta^2)
 =\sigma_i^2(1-g_i).
\]
Hence $\E\,m_i(Y)^2\ge g_i\sigma_i^2$, and the ratio is at least
$g_i\sigma_i^2/(\sigma_i^2+\eta^2)=g_i^2$. If $X$ is Gaussian with diagonal
covariance, then $m_i(y)=g_iy_i$ and $\E\,m_i(Y)^2=g_i^2(\sigma_i^2+\eta^2)$,
which is the lower bound.
\end{proof}

Since $\E\,Y_i^2/\eta^4$ is the energy of the Gaussian reference score
$-y_i/\eta^2$, the proposition compares the correction with the
\emph{reference} score, not with the actual score: for $X\sim\N(0,1)$ and
$\eta=1$ the correction $y/2$ and the score $-y/2$ have equal energy although
$g=1/2$. The statement is that, at noise level $\eta$, a direction with
$\sigma_i^2\ll\eta^2$ differs from the known reference only by a correction
of relative energy at most $g_i\approx\sigma_i^2/\eta^2$; it says nothing
about a particular learning procedure. Non-Gaussian data carry a larger
correction than the Gaussian lower bound. The Wiener-gain rates
\eqref{eq:wiener-rates} scale the diffusion rate of each direction by this
upper bound, taken at the diffusion level $\sigma$. How the corrections of
many discarded directions combine is the subject of Section~\ref{sec:sharp}.

\section{Proofs for Section~\ref{sec:sharp}}
\label{app:proofs-sharp}

Throughout, $q=\N(0,C)$, $x_i=\sigma_i^2/\eta^2$ and $g_i=x_i/(1+x_i)$. In
the eigenbasis of $C$, both $q^\eta$ and $\widetilde q_K^{\,\eta}$ are centered
Gaussians with diagonal covariance: the entries are $\sigma_i^2+\eta^2$ for
$q^\eta$, and $\sigma_i^2+\eta^2$ for $i\le K$ and $\eta^2$ for $i>K$ for
$\widetilde q_K^{\,\eta}$. The two laws therefore agree on the first $K$
coordinates and are products over coordinates.

\subsection{Exact divergence and self-contained bounds}

The following lemma records the exact Kullback-Leibler cost of truncation and
bounds that do not rely on \citet{devroye2018total}.

\begin{lemma}[Exact divergence of truncation]
\label{lem:sharp-residual}
\label{prop:residual-kl-sharpness}
For every $1\le K<d$ and $\eta>0$,
\begin{equation}\label{eq:sharp-kl}
 \KL\bigl(\widetilde q_K^{\,\eta}\,\|\,q^\eta\bigr)
 =\frac12\sum_{i>K}\Bigl[\log(1+x_i)-\frac{x_i}{1+x_i}\Bigr]
 =\frac12\sum_{i>K}\bigl[-g_i-\log(1-g_i)\bigr]
 \le\frac14\sum_{i>K}\frac{x_i^2}{1+x_i}.
\end{equation}
If $x_i\le1$ for all $i>K$, then also
$\KL(\widetilde q_K^{\,\eta}\|q^\eta)\ge\frac1{16}\sum_{i>K}x_i^2$.
Moreover, with $S:=\sum_{i>K}\min\{x_i^2,1\}$,
\begin{equation}\label{eq:tv-lower}
 \frac12\min\Bigl\{1,\ \frac{S}{35}\Bigr\}
 \;\le\;
 \TV\bigl(\widetilde q_K^{\,\eta},q^\eta\bigr)
 \;\le\;
 \Bigl(\frac18\sum_{i>K}\frac{x_i^2}{1+x_i}\Bigr)^{1/2}.
\end{equation}
\end{lemma}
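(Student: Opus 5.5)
The lemma reduces to a one-dimensional Gaussian computation, because in the eigenbasis of $C$ both $\widetilde q_K^{\,\eta}$ and $q^\eta$ are centered product Gaussians that coincide on the first $K$ coordinates. The plan is: compute the KL divergence coordinatewise and bound it by elementary calculus; obtain the TV upper bound from Pinsker's inequality; and obtain the TV lower bound from the Bhattacharyya coefficient, which factorises over products.

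\textbf{KL identity and its bounds.} KL is additive over independent coordinates, and identical coordinates contribute zero. For $i>K$ we apply the standard formula
\[
\KL\bigl(\N(0,a)\,\|\,\N(0,b)\bigr)=\tfrac12\bigl[a/b-1+\log(b/a)\bigr]
\]
with $a=\eta^2$ and $b=\sigma_i^2+\eta^2$. This gives $\tfrac12[\log(1+x_i)-x_i/(1+x_i)]$. Since $1-g_i=1/(1+x_i)$, we have $\log(1+x_i)=-\log(1-g_i)$, which is the second form in \eqref{eq:sharp-kl}. Set $f(x):=\log(1+x)-x/(1+x)$, so that $f(0)=0$ and $f'(x)=x/(1+x)^2$.
\begin{itemize}
\item \emph{Upper bound.} Let $h(x):=x^2/(2(1+x))$. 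Then $h(0)=0$ and $h'(x)=(x^2+2x)/(2(1+x)^2)\ge f'(x)$. Hence $f\le h$, which is the bound $\KL\le\frac14\sum x_i^2/(1+x_i)$.
\item \emph{Lower bound for $x\le1$.} On $[0,1]$ we have $f'(x)\ge x/4$, so $f(x)\ge x^2/8$. This gives $\KL\ge\frac1{16}\sum x_i^2$.
\end{itemize}
The upper TV bound in \eqref{eq:tv-lower} then follows from Pinsker's inequality, $\TV\le\sqrt{\KL/2}$.

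\textbf{TV lower bound.} We use the Bhattacharyya coefficient $\mathrm{BC}(\mu,\nu)=\int\sqrt{\mu\nu}$ and the standard inequality $\TV\ge1-\mathrm{BC}$. For one-dimensional centered Gaussians,
\[
\mathrm{BC}_i=\Bigl(\frac{2\sqrt{1+x_i}}{2+x_i}\Bigr)^{1/2}=:\phi(x_i).
\]
Since BC is multiplicative over product measures and $1-\phi\le e^{-(1-\phi)}$ fails in the wrong direction, we instead use $\phi\le e^{-(1-\phi)}$ coordinatewise. This gives
\[
\TV\ge1-\prod_{i>K}\phi(x_i)\ge1-\exp\Bigl(-\sum_{i>K}\bigl(1-\phi(x_i)\bigr)\Bigr).
\]
The key step is the elementary inequality
\[
1-\phi(x)\ge\tfrac1{35}\min\{x^2,1\}\qquad\text{for all }x\ge0.
\]
\begin{itemize}
\item \emph{Case $x\ge1$.} The map $x\mapsto 2\sqrt{1+x}/(2+x)$ is decreasing on $x\ge0$, so $\phi(x)\le\phi(1)=(2\sqrt2/3)^{1/2}\approx0.9710$. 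Hence $1-\phi(x)\ge0.029>1/35$.
\item \emph{Case $x\in[0,1]$.} A Taylor expansion gives $\log\phi(x)=-x^2/16+O(x^3)$, so the constant is comfortable near $0$. I would show that $(1-\phi(x))/x^2$ is decreasing on $[0,1]$, so that its minimum is its value at $x=1$, namely $0.029\ge1/35$. If that monotonicity is awkward to verify, an alternative is to bound $-\log\phi(x)=\tfrac12[\log(2+x)-\log2-\tfrac12\log(1+x)]$ from below by $cx^2$ via a second-derivative estimate, and then use $1-e^{-u}\ge u(1-u/2)$.
\end{itemize}
Summing over $i>K$ gives $\sum_{i>K}(1-\phi(x_i))\ge S/35$. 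Finally, $1-e^{-u}\ge\tfrac12\min\{1,u\}$: for $u\le1$ this follows from concavity, since $1-e^{-u}\ge(1-e^{-1})u$, and for $u\ge1$ from $1-e^{-u}\ge1-e^{-1}>\tfrac12$. This yields the left inequality in \eqref{eq:tv-lower}.

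\textbf{Main obstacle.} The only delicate point is the inequality $1-\phi(x)\ge\frac{1}{35}\min\{x^2,1\}$ on $[0,1]$. The constant $1/35\approx0.0286$ is nearly tight at $x=1$, so the monotonicity (or second-derivative) argument must be carried out carefully rather than through a crude Taylor bound. Everything else is exact computation with product Gaussians.
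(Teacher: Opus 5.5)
Your proposal is correct and follows the paper's proof essentially step for step: coordinatewise KL for the product Gaussians, Pinsker for the upper TV bound, and for the lower TV bound the Bhattacharyya coefficient with $1-u\le e^{-u}$ and $1-\phi(x)\ge\min\{x^2,1\}/35$, which the paper also obtains by asserting that $(1-\phi(x))/x^2$ decreases on $(0,1]$ to $\approx0.0290$ at $x=1$; the only cosmetic difference is that you prove the KL upper bound by comparing derivatives, where the paper uses the series $\sum_{k\ge2}g^k/k$. To close the step you flag, write $-2\log\phi(x)=w(x):=\log(2+x)-\log 2-\frac{1}{2}\log(1+x)$ and note that $w'(t)/t=1/\bigl(2(1+t)(2+t)\bigr)$ is decreasing, so $w(x)/x^2$ is decreasing; combined with the fact that $(1-e^{-v})/v$ decreases in $v$, this gives the claimed monotonicity, whereas your fallback of a uniform lower bound on $w''$ (which drops to $1/72$ at $x=1$) would be too crude to reach the constant $1/35$.
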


\begin{proof}
\emph{Identity.} Since the two laws agree on the first $K$ coordinates and
are products, the divergence is the sum over $i>K$ of the divergences between
$\N(0,\eta^2)$ and $\N(0,\eta^2(1+x_i))$. The one-dimensional formula
$\KL(\N(0,a)\|\N(0,b))=\frac12(a/b-1-\log(a/b))$ with $a/b=1/(1+x_i)=1-g_i$
gives $\frac12[\log(1+x_i)-x_i/(1+x_i)]=\frac12[-g_i-\log(1-g_i)]$.

\emph{Upper bound.} For $g\in[0,1)$,
\[
 -g-\log(1-g)=\sum_{k\ge2}\frac{g^k}{k}
 \le\frac{g^2}{2}\sum_{j\ge0}g^j
 =\frac{g^2}{2(1-g)}
 =\frac{x^2}{2(1+x)} .
\]

\emph{Lower bound in divergence.} Let $\phi(x)=\log(1+x)-x/(1+x)$. On
$[0,1]$ the function $\phi(x)-x^2/8$ vanishes at $0$ and has derivative
$x\bigl((1+x)^{-2}-\frac14\bigr)\ge0$, so $\phi(x)\ge x^2/8$ there; the outer
factor $\frac12$ gives the claim.

\emph{Upper bound in total variation.} Pinsker's inequality
$\TV\le\sqrt{\KL/2}$ and \eqref{eq:sharp-kl}.

\emph{Lower bound in total variation.} Let
$\mathrm{BC}(\mu,\nu)=\int\sqrt{\mathrm d\mu\,\mathrm d\nu}$ be the
Bhattacharyya coefficient. It is multiplicative over independent
coordinates, and $1-\mathrm{BC}(\mu,\nu)\le\TV(\mu,\nu)$ (Le Cam's inequality
$H^2\le\TV$ with $H^2=1-\mathrm{BC}$). For centered one-dimensional Gaussians
with variances $a,b$, $\mathrm{BC}=\bigl(2\sqrt{ab}/(a+b)\bigr)^{1/2}$; with
$a=\eta^2$ and $b=\eta^2(1+x_i)$ this equals $1-\psi(x_i)$, where
\[
 \psi(x):=1-\Bigl(\frac{2\sqrt{1+x}}{2+x}\Bigr)^{1/2}.
\]
The function $\psi$ is increasing on $[0,\infty)$, because the ratio of the
geometric to the arithmetic mean of $1$ and $1+x$ decreases in $x$. The ratio
$\psi(x)/x^2$ decreases on $(0,1]$ from its limit $1/16$ at $0$ to
$\psi(1)\approx0.0290>1/35$. Hence $\psi(x)\ge\min\{x^2,1\}/35$ for all
$x\ge0$, and
\[
 \TV\bigl(\widetilde q_K^{\,\eta},q^\eta\bigr)
 \ge1-\prod_{i>K}\bigl(1-\psi(x_i)\bigr)
 \ge1-\exp\Bigl(-\sum_{i>K}\psi(x_i)\Bigr)
 \ge1-e^{-S/35}
 \ge\frac12\min\Bigl\{1,\frac{S}{35}\Bigr\},
\]
using $1-e^{-u}\ge\frac12\min\{u,1\}$ for $u\ge0$.
\end{proof}

\subsection{Proof of Theorem~\ref{thm:truncation-cost}}

We identify the covariance ratios in the Gaussian comparison with Wiener gains, then project onto the discarded coordinates to obtain a bound independent of the active sampler.

\emph{Part (a).} Let $\Sigma_1=\mathrm{Cov}(q^\eta)$ and
$\Sigma_2=\mathrm{Cov}(\widetilde q_K^{\,\eta})$; both are positive definite.
In the eigenbasis of $C$, the matrix $\Sigma_1^{-1}\Sigma_2-I$ is diagonal
with entries $0$ for $i\le K$ and $\eta^2/(\sigma_i^2+\eta^2)-1=-g_i$ for
$i>K$. Theorem~1.1 of \citet{devroye2018total} states that, for centered
Gaussians with positive definite covariances,
\[
 \frac1{100}
 \le
 \frac{\TV\bigl(\N(0,\Sigma_1),\N(0,\Sigma_2)\bigr)}
  {\min\bigl\{1,\bigl(\sum_i\lambda_i^2\bigr)^{1/2}\bigr\}}
 \le\frac32,
\]
where $\lambda_i$ are the eigenvalues of $\Sigma_1^{-1}\Sigma_2-I$. With
$\sum_i\lambda_i^2=\sum_{i>K}g_i^2$ this is the two-sided bound. The bound
with explicit constants is the upper bound in \eqref{eq:tv-lower}.

\emph{Part (b).} Let $\Pi_\perp$ map a point to its coordinates $i>K$ in the
eigenbasis of $C$. Under every truncated sampler these coordinates have law
$\N(0,\eta^2I_{d-K})$, whatever $p$ is, and under $q^\eta$ they have law
$\N\bigl(0,\diag(\sigma_i^2+\eta^2)_{i>K}\bigr)$. By data processing
(Lemma~\ref{lem:tv-dp}),
\[
 \TV\bigl(\widetilde p^\eta,q^\eta\bigr)
 \ge\TV\Bigl(\N(0,\eta^2I_{d-K}),\ \N\bigl(0,\diag(\sigma_i^2+\eta^2)_{i>K}\bigr)\Bigr).
\]
Since $\widetilde q_K^{\,\eta}$ and $q^\eta$ are products with the same first
factor, the product identity \eqref{eq:app-product-tv} shows that the
right-hand side equals $\TV(\widetilde q_K^{\,\eta},q^\eta)$.
\hfill$\square$

\subsection{Proof of Theorem~\ref{thm:num-directions}}

The upper bound follows from a tail integral; the lower bound uses a block of discarded directions of comparable variance. Together they determine the required order of $K$.

Under the exact power law, $x_i=(C_{\mathrm{sp}}/\eta^2)\,i^{-\beta}$.

\emph{Part (a).} By \eqref{eq:tv-lower} and $x_i^2/(1+x_i)\le x_i^2$,
\[
 \TV\bigl(\widetilde q_K^{\,\eta},q^\eta\bigr)^2
 \le\frac18\sum_{i>K}x_i^2
 \le\frac{C_{\mathrm{sp}}^2}{8\eta^4}\int_K^\infty t^{-2\beta}\,\mathrm dt
 =\frac{C_{\mathrm{sp}}^2}{8\eta^4}\,\frac{K^{1-2\beta}}{2\beta-1},
\]
which is \eqref{eq:sharp-tv}. The right-hand side of \eqref{eq:sharp-tv} is
at most $\varepsilon/2$ if and only if
\[
 K^{(2\beta-1)/2}
 \ge\frac{2C_{\mathrm{sp}}}{\eta^2\varepsilon\sqrt{8(2\beta-1)}}
 =\frac{C_{\mathrm{sp}}}{\eta^2\varepsilon\sqrt{2(2\beta-1)}},
\]
which holds for $K\ge K_\varepsilon(\eta)$.

\emph{Part (b).} Suppose $\TV(\widetilde p^\eta,q^\eta)\le\varepsilon<1/200$.
If $K<k_\eta$, then $K+1\le k_\eta\le(C_{\mathrm{sp}}/\eta^2)^{1/\beta}$, so
$x_{K+1}\ge1$ and $g_{K+1}\ge1/2$; Theorem~\ref{thm:truncation-cost} then
gives $\TV(\widetilde p^\eta,q^\eta)\ge1/200$, a contradiction. Hence
$K\ge k_\eta$, so every discarded direction has $x_i<1$ and therefore
$g_i\ge x_i/2$. If $\|(g_i)_{i>K}\|_2\ge1$, Theorem~\ref{thm:truncation-cost}
gives $\TV\ge1/100$, again a contradiction; hence
$\|(g_i)_{i>K}\|_2\le100\,\varepsilon$ and
$\sum_{i>K}x_i^2\le4\sum_{i>K}g_i^2\le4\cdot10^4\varepsilon^2$. On the other
hand, since $2K\le d$,
\[
 \sum_{i>K}x_i^2
 \ge\sum_{i=K+1}^{2K}\frac{C_{\mathrm{sp}}^2}{\eta^4}\,i^{-2\beta}
 \ge K\,\frac{C_{\mathrm{sp}}^2}{\eta^4}\,(2K)^{-2\beta}.
\]
Combining the two displays gives
$K^{2\beta-1}\ge C_{\mathrm{sp}}^2/(2^{2\beta}\cdot4\cdot10^4\,\eta^4\varepsilon^2)$,
which is the stated bound.

\emph{Part (c).} Since $K\ge k_\eta$, we have
$K+1>(C_{\mathrm{sp}}/\eta^2)^{1/\beta}$ and hence $x_{K+1}<1$; all discarded
directions then satisfy $x_i<1$ and $x_i/2\le g_i\le x_i$. For $i>K$,
$x_i=x_{K+1}\bigl((K+1)/i\bigr)^{\beta}$. Using the $K$ terms with
$K<i\le2K$, each at least $x_{K+1}2^{-\beta}$, and an integral comparison for
the tail,
\[
 2^{-2\beta}K\,x_{K+1}^2
 \le\sum_{i>K}x_i^2
 \le x_{K+1}^2\Bigl(1+\frac{K+1}{2\beta-1}\Bigr)
 \le\frac{2\beta+1}{2\beta-1}\,K\,x_{K+1}^2,
\]
where the last step uses $K\ge1$. Hence
$2^{-\beta-1}x_{K+1}\sqrt K\le\|(g_i)_{i>K}\|_2
\le\sqrt{(2\beta+1)/(2\beta-1)}\,x_{K+1}\sqrt K$, and
Theorem~\ref{thm:truncation-cost}(a) gives the claim.
\hfill$\square$

\subsection{Proof of Corollary~\ref{cor:noise-floor}}

At the noise-level cut-off, the next block of coordinates still has gains bounded away from zero. Summing their contributions gives the claimed obstruction.

With $K=k_\eta$, every discarded direction has $x_i<1$, and for
$k_\eta<i\le2k_\eta$,
\[
 x_i\ge\frac{C_{\mathrm{sp}}}{\eta^2}(2k_\eta)^{-\beta}
 \ge2^{-\beta},
\]
because $C_{\mathrm{sp}}k_\eta^{-\beta}\ge\eta^2$. Hence
$g_i\ge x_i/2\ge2^{-\beta-1}$ for these $k_\eta$ directions, so
$\|(g_i)_{i>k_\eta}\|_2\ge2^{-\beta-1}\sqrt{k_\eta}$, and
Theorem~\ref{thm:truncation-cost} gives the bound for every truncated sampler.
The final claim follows since $2^{-\beta-1}\sqrt{k_\eta}\ge1$ exactly when
$k_\eta\ge4^{\beta+1}$.
\hfill$\square$

\section{General Results and Proofs for Section~\ref{sec:convergence}}
\label{app:proofs-convergence}

\subsection{Convergence for arbitrary diagonal rates}
\label{app:general-rates}

We work with a generic coefficient dimension $r$ and the diagonal
Ornstein-Uhlenbeck process
\begin{equation}\label{eq:generic-forward-OU}
 \mathrm dU_t=-\Lambda U_t\,\mathrm dt+\sigma\sqrt{2\Lambda}\,\mathrm dB_t,
 \qquad U_0\sim q_r,
 \qquad \Lambda=\diag(\lambda_1,\ldots,\lambda_r),\ \lambda_i>0,
\end{equation}
on $\R^r$, with stationary law $\gamma_r=\N(0,\sigma^2I_r)$ and marginals
$q_{r,t}$. The case $r=K$, $q_r=q_K$ is the retained-coordinate process of
Section~\ref{sec:setup-diffusion}; the case $r=d$ is a full-dimensional
diffusion. Assumptions~\ref{ass:lipschitz}-\ref{ass:finite-kl} are
understood with $K$ replaced by $r$.

\emph{Tail and regularity conditions in Assumption~\ref{ass:lipschitz}.}
For every $t$ and every coordinate $i$, the integration-by-parts boundary term
vanishes,
\[
 \lim_{R\to\infty}
 \int_{\partial[-R,R]^r}q_{r,t}(u)\,\partial_i\log q_{r,t}(u)\,n_i(u)\,\mathrm dS(u)=0,
\]
where $n_i$ is the $i$-th component of the outward unit normal; this holds
under sub-Gaussian or polynomial tails and gives the Fisher bound
$\E\|\nabla\log q_{r,t}\|_\Lambda^2\le L\tr(\Lambda)$
(Lemma~\ref{lem:fisher}). In addition, the differentiation under the
integral, integration by parts and conditional Brascamp-Lieb steps in the
proofs of the one-step perturbation lemmas (Lemmas~\ref{lem:score-perturb}
and~\ref{lem:trace_aware_weighted}) are valid for the marginals $q_{r,t}$;
this holds under standard smoothness and tail conditions and in particular for
Gaussian data.

\emph{Early stopping.} If the data law has no smooth density, all results
below hold on $[0,T-\tau_0]$ for any $\tau_0>0$ after replacing $q_r$ by
$q_{r,\tau_0}$, $T$ by $T-\tau_0$ and $\KL(q_r\|\gamma_r)$ by
$\KL(q_{r,\tau_0}\|\gamma_r)$, with the assumptions required only for
$t\ge\tau_0$. The sampler is then compared with $q_{r,\tau_0}$ rather than
$q_r$.

\emph{Admissibility.} Assumption~\ref{ass:girsanov} requires that, for the
exact reverse drift $b$ and the sampler drift $\widetilde b$, with
$G=\sigma\sqrt{2\Lambda}$, the stochastic exponential of
$G^{-1}\bigl(b(t,U_t)-\widetilde b(t,U_t)\bigr)$ is a true martingale on
$[0,T]$ under the law $Q_T^\leftarrow$ of the exact reverse process.
Two sufficient conditions are standard. (i) Novikov's condition,
\[
\mathbb E_{Q_T^\leftarrow}
\exp\left(
\frac12\int_0^T
\left\|
G^{-1}\bigl(b(t,U_t)-\widetilde b(t,U_t)\bigr)
\right\|^2\,dt
\right)
<\infty .
\]
(ii) Linear growth of the learned scores,
$\|\widehat s_{t_j}(u)\|\le A(1+\|u\|)$ for all $j$ and $u$: together with the
Lipschitz exact score of Assumption~\ref{ass:lipschitz}, both drifts then grow
at most linearly in $\sup_{s\le t}\|U_s\|$, and Bene\v{s}' criterion (see
e.g.\ \citealp[Section~3.5]{karatzas1991brownian}) makes the stochastic
exponential a true martingale. Condition (ii) can be enforced by score
clipping, and it holds automatically for exact Gaussian scores, which are
linear. Assumptions~\ref{ass:lipschitz}
and~\ref{ass:second_moment} alone do not imply
Assumption~\ref{ass:girsanov}, since they do not constrain the learned
score.

\begin{assumption}[Finite second moment]
\label{ass:second_moment}
$m_{2,r}^2:=\E_{U\sim q_r}\|U\|^2<\infty$. Under
Assumption~\ref{ass:powerlaw} with $r=K$, $m_{2,K}^2\le C_{\mathrm{sp}}\zeta(\beta)$.
\end{assumption}

The following two theorems are proved in Appendix~\ref{sec:app:C}.

\begin{theorem}[TV convergence, dimension-dependent warm-up]
\label{thm:tv-dim}
Assume Assumptions~\ref{ass:lipschitz}, \ref{ass:second_moment}, \ref{ass:score_error},
\ref{ass:girsanov} and~\ref{ass:finite-kl} in $\R^r$, with the unweighted Fisher bound $\mathbb
E_{U\sim q_{r,t}}\|\nabla\log q_{r,t}(U)\|^2\le C_F^{(0)}r$ from
Lemma~\ref{lem:fisher} ($C_F^{(0)}=L$). We additionally assume that
the one-step score-perturbation bound of Lemma~\ref{lem:score-perturb}
applies uniformly to the coefficient-space marginals $q_{r,t}$,
$t\in[0,T]$ - that is, the differentiation-under-the-integral,
integration-by-parts, and conditional Brascamp-Lieb steps used in its
proof are valid for these marginals. This holds under standard
smoothness and tail-decay conditions and is the only regularity used
beyond Assumption~\ref{ass:lipschitz}. If $\delta\le\min\{\tfrac18,
\tfrac{1}{128L\sigma^2}\}$
then, writing
\[
 \varepsilon_{\mathrm{score}}^2 := \frac1N\sum_{j=1}^{N}
 \mathbb E_{U\sim q_{r,t_j}}\left\|\widehat s_{t_j}(U)-\nabla\log q_{r,t_j}(U)\right\|^2,
\]
there is a universal constant $\widetilde C>0$ with
\begin{equation*}
\TV(p_{\mathrm{out}},q_r) \le e^{-\lambda_{\min}T}\sqrt{\tfrac12\KL(q_r\|\gamma_r)}
+ \widetilde C\sigma\sqrt{\lambda_{\max}T}\Big(\varepsilon_{\mathrm{score}}
+ Lm_{2,r}\delta + (1+L\sigma^2)\sqrt{C_F^{(0)}r}\,\delta + L\sigma\sqrt{r\delta}\Big)
\end{equation*}
where $\lambda_{\min},\lambda_{\max}$ are the extreme rates of
$\Lambda$ and $m_{2,r}^2:=\mathbb E_{U\sim q_r}\|U\|^2$.
\end{theorem}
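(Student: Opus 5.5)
We follow the Girsanov template of \citet{chen2022sampling}, applied to the anisotropic process \eqref{eq:generic-forward-OU}. Let $Q_T^\leftarrow$ be the path law of the exact reverse process started at $q_{r,T}$, and $P_T$ the path law of the sampler started at $\gamma_r$. The first step is to split the error at the initial law. Let $P_T'$ denote the sampler started at $q_{r,T}$ instead of $\gamma_r$. By the triangle inequality and data processing (Lemma~\ref{lem:tv-dp}),
\[
\TV(p_{\mathrm{out}},q_r)\le\TV(q_{r,T},\gamma_r)+\TV(P_T',Q_T^\leftarrow).
\]
The first term is handled by Pinsker's inequality and exponential KL contraction of the diagonal Ornstein--Uhlenbeck semigroup toward $\gamma_r$. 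Each coordinate of \eqref{eq:generic-forward-OU} contracts at rate at least $\lambda_{\min}$, so
\[
\KL(q_{r,T}\|\gamma_r)\le e^{-2\lambda_{\min}T}\KL(q_r\|\gamma_r).
\]
This follows from a log-Sobolev argument with the $\Lambda$-weighted Dirichlet form, whose spectral gap is at least $\lambda_{\min}/\sigma^2$ for $\gamma_r$. It gives the mixing term.

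For the second term, Assumption~\ref{ass:girsanov} permits Girsanov's theorem. With $G=\sigma\sqrt{2\Lambda}$ and $G^{-1}(b-\widetilde b)=\sigma\sqrt{2\Lambda}\,(\nabla\log q_{r,T-t}(U_t)-\widehat s_{t_{N-k}}(U_{t_k^\leftarrow}))$, we obtain
\[
\KL(Q_T^\leftarrow\|P_T')=\sigma^2\sum_k\int_{t_k^\leftarrow}^{t_{k+1}^\leftarrow}\E\bigl\|\nabla\log q_{r,T-t}(U_t)-\widehat s_{t_{N-k}}(U_{t_k^\leftarrow})\bigr\|_\Lambda^2\,\mathrm dt.
\]
Bounding $\|\cdot\|_\Lambda^2\le\lambda_{\max}\|\cdot\|^2$ produces the factor $\sigma\sqrt{\lambda_{\max}T}$ after Pinsker. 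We then split the integrand into three pieces:
\begin{enumerate}
\item the score error at the grid point, which averages to $\varepsilon_{\mathrm{score}}^2$;
\item the spatial increment $\|\nabla\log q_{r,s}(U_t)-\nabla\log q_{r,s}(U_{t_k^\leftarrow})\|\le L\|U_t-U_{t_k^\leftarrow}\|$, with $s$ the grid time;
\item the temporal increment $\|\nabla\log q_{r,T-t}-\nabla\log q_{r,s}\|$ evaluated at a common point.
\end{enumerate}

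For the spatial piece we use the forward representation of the exact reverse process over one step. The increment of $U$ over a step is $(e^{-\lambda_i h}-1)U_i$ plus Gaussian noise of variance $\sigma^2(1-e^{-2\lambda_ih})\lesssim\sigma^2\delta$. Hence
\[
\E\|U_t-U_{t_k^\leftarrow}\|^2\lesssim\delta^2m_{2,r}^2+\sigma^2 r\delta,
\]
since the second moment along the process stays at most $m_{2,r}^2+\sigma^2r$. Multiplying by $L^2$ and taking square roots yields the terms $Lm_{2,r}\delta$ and $L\sigma\sqrt{r\delta}$. The residual $L\delta\sigma\sqrt r$ contribution is absorbed into these terms using $\delta\le1$.

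The main obstacle is the temporal piece, which requires comparing scores of $q_{r,s}$ and $q_{r,s'}$ for $s'-s\le h$. Here we invoke the one-step perturbation bound of Lemma~\ref{lem:score-perturb}, which the hypothesis allows us to apply uniformly. The marginal $q_{r,s'}$ is a rescaled Gaussian smoothing of $q_{r,s}$ with per-coordinate variance $O(\sigma^2\delta)$ and rescaling $1-O(\delta)$. The lemma should bound the squared score difference by
\[
(1+L\sigma^2)^2\delta^2\,\E\|\nabla\log q_{r,s}\|^2\ \text{plus lower-order terms}.
\]
Combined with the Fisher bound $\E\|\nabla\log q_{r,s}\|^2\le C_F^{(0)}r$ of Lemma~\ref{lem:fisher}, this gives the term $(1+L\sigma^2)\sqrt{C_F^{(0)}r}\,\delta$.

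The step-size condition $\delta\le1/(128L\sigma^2)$ is exactly what keeps the smoothing perturbation small relative to $L$. This matters because the conditional Brascamp--Lieb step needs the conditional law to be strongly log-concave, which requires the smoothing variance times $L$ to be small. Finally, we sum over steps, apply Pinsker, and combine with the mixing term to obtain the stated bound, with $\widetilde C$ absorbing all numerical factors.
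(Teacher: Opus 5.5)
Your proposal is correct and follows the paper's proof step for step. It uses the same path-space triangle inequality, the same log-Sobolev mixing term, the Girsanov bound with $\|\cdot\|_\Lambda^2\le\lambda_{\max}\|\cdot\|^2$, and a three-way split of the drift error handled by the averaged score error, Lipschitz continuity plus a one-step movement bound, and Lemma~\ref{lem:score-perturb} combined with the Fisher bound. The one difference is the order of the telescoping: you evaluate the temporal piece at $U_t\sim q_{r,T-t}$, so the basic form of Lemma~\ref{lem:score-perturb} already pairs $\|\nabla\log q_{r,T-t}(U_t)\|$ with its own law in the Fisher bound. The paper instead evaluates it at $U_{t_k^\leftarrow}\sim q_{r,t_{N-k}}$ and therefore needs the lemma's later-time form, which is where the $1/(128L\sigma^2)$ step condition comes from. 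Since you use the letter $s$ for two different times, make sure the Fisher bound is applied there to the earlier-time score.
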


\begin{theorem}[Trace-dependent TV convergence]
\label{thm:tv-trace}
Assume Assumptions~\ref{ass:lipschitz}, \ref{ass:second_moment}, \ref{ass:score_error},
\ref{ass:girsanov} and~\ref{ass:finite-kl} in $\R^r$, with the weighted Fisher bound $\mathbb
E_{U\sim q_{r,t}}\|\nabla\log q_{r,t}(U)\|_\Lambda^2\le C_F\tr(\Lambda)$
from Lemma~\ref{lem:fisher} ($C_F=L$). As in
Theorem~\ref{thm:tv-dim}, we additionally assume that the weighted
one-step score-perturbation bound of
Lemma~\ref{lem:trace_aware_weighted} applies uniformly to the
marginals $q_{r,t}$, $t\in[0,T]$; this is the only regularity used
beyond Assumption~\ref{ass:lipschitz}. Let
$\kappa:=\lambda_{\max}/\lambda_{\min}$; suppose $\lambda_{\max}
\Delta\tau_{\max}\le1$ and $\delta\le\min\{\tfrac18,
\tfrac{1}{128\sqrt\kappa\,L\sigma^2}\}$. Then there is a constant $C(\kappa)=O(\sqrt\kappa)$
with
\begin{align*}
&\TV(p_{\mathrm{out}},q_r) \le e^{-\lambda_{\min}T}\sqrt{\tfrac12\KL(q_r\|\gamma_r)}
\nonumber\\
&\quad+ C(\kappa)\sigma\sqrt T\Big(\varepsilon_{\mathrm{score},\Lambda}
+ L\delta\sqrt{m_{2,\Lambda}^2+\sigma^2\tr(\Lambda)}
+ (1+L\sigma^2)\sqrt{C_F}\,\delta\sqrt{\tr(\Lambda)}
+ L\sigma\sqrt{\delta\tr(\Lambda)}\Big),
\end{align*}
where $m_{2,\Lambda}^2:=\mathbb E_{U\sim q_r}\|U\|_\Lambda^2$ and
$\varepsilon_{\mathrm{score},\Lambda}^2$ is defined as in
Assumption~\ref{ass:score_error}.
\end{theorem}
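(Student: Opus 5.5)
We will prove Theorem~\ref{thm:tv-trace} with the Girsanov approach of \citet{chen2022sampling}, keeping the rate matrix $\Lambda$ in every estimate. Let $Q_T^\leftarrow$ be the path law of the exact reverse process, started from $q_{r,T}$. Let $P$ be the path law of the sampler \eqref{eq:frozen-drift-coeff}, started from $\gamma_r$, and let $P'$ be the same sampler started from $q_{r,T}$. Then
\[
 \TV(p_{\mathrm{out}},q_r)\le\TV(\gamma_r,q_{r,T})+\TV(P',Q_T^\leftarrow).
\]

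\emph{Initialization term.} The first term is handled by exponential contraction of the diagonal OU semigroup. Each coordinate contracts in KL at least at rate $\lambda_{\min}$, because the log-Sobolev constant of $\gamma_r$ is $\sigma^2$ and the generator has diffusion $\sigma^2\Lambda\succeq\sigma^2\lambda_{\min}I$. This gives $\KL(q_{r,T}\|\gamma_r)\le e^{-2\lambda_{\min}T}\KL(q_r\|\gamma_r)$, and Pinsker's inequality then yields the first summand.

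\emph{Girsanov term.} For the second term, Assumption~\ref{ass:girsanov} lets us apply Girsanov's theorem with $G=\sigma\sqrt{2\Lambda}$. Combined with Pinsker's inequality this gives
\[
 \TV(P',Q_T^\leftarrow)^2\le\tfrac12\KL(Q_T^\leftarrow\|P')
 =\tfrac14\sum_k\int_{t_k^\leftarrow}^{t_{k+1}^\leftarrow}\E\bigl\|G^{-1}(b-\widetilde b)\bigr\|^2\,\mathrm dt .
\]
Since $G^{-1}\cdot2\sigma^2\Lambda=\sigma\sqrt{2\Lambda}$, the integrand equals $2\sigma^2\,\E\|\widehat s_{t_{N-k}}(U_{t_k^\leftarrow})-\nabla\log q_{r,T-t}(U_t)\|_\Lambda^2$. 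This is exactly where the $\Lambda$-weighted norm enters. We split it into three terms with the triangle inequality. The first is the score error $\|\widehat s-\nabla\log q\|_\Lambda$ at the grid point, which after time-averaging gives $\varepsilon_{\mathrm{score},\Lambda}^2 T$. The second is the time perturbation $\nabla\log q_{r,t_{N-k}}-\nabla\log q_{r,T-t}$ at a fixed point. The third is the space perturbation $\nabla\log q_{r,T-t}(U_{t_k^\leftarrow})-\nabla\log q_{r,T-t}(U_t)$.

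\emph{Space term.} The space term is at most $L^2\,\E\|U_t-U_{t_k^\leftarrow}\|_\Lambda^2$, using $\|\cdot\|_\Lambda\le\sqrt{\lambda_{\max}}\|\cdot\|$. Here it matters to weight before applying Lipschitz, and the factor $\kappa$ absorbs the mismatch. The increment over a step is computed from the explicit OU transition. Coordinatewise it equals $(e^{-\lambda_i h}-1)U_i$ plus a noise of variance $\sigma^2(1-e^{-2\lambda_i h})$. Its weighted second moment is therefore $\lesssim\delta^2 m_{2,\Lambda}^2+\sigma^2\delta\tr(\Lambda)$, with the drift contribution bounded through the Fisher term $\delta^2\sigma^4 C_F\tr(\Lambda)$. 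These produce the terms $L\delta\sqrt{m_{2,\Lambda}^2+\sigma^2\tr(\Lambda)}$ and $L\sigma\sqrt{\delta\tr(\Lambda)}$.

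\emph{Time term.} The time perturbation is the main obstacle. We invoke the weighted one-step perturbation Lemma~\ref{lem:trace_aware_weighted}. It writes $q_{r,s}$ as a diagonal OU step from $q_{r,t}$ and differentiates under the integral. The score change is then a rescaling $e^{\Lambda h}$ of the score plus a conditional-covariance correction, which is controlled by conditional Brascamp--Lieb and the Lipschitz bound. This gives
\[
 \E\|\nabla\log q_{r,s}-\nabla\log q_{r,t}\|_\Lambda^2\lesssim(1+L\sigma^2)^2\delta^2\,\E\|\nabla\log q_{r,t}\|_\Lambda^2 .
\]
The weighted Fisher bound $C_F\tr(\Lambda)$ of Lemma~\ref{lem:fisher} then closes it. The step restriction $\delta\le1/(128\sqrt\kappa L\sigma^2)$ is what keeps the Taylor remainder in that lemma absorbable. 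Here I expect the bookkeeping of $\kappa$ to be delicate, since the conditional covariance bound is naturally unweighted.

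\emph{Conclusion.} Summing over the $N$ intervals, using $\lambda_{\max}\Delta\tau_{\max}\le1$ to compare $\delta$ with step lengths, and taking square roots gives the stated bound with $C(\kappa)=O(\sqrt\kappa)$.
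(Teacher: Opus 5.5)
Your route is the paper's route. It uses the path-space triangle inequality and log-Sobolev contraction of the diagonal OU semigroup for the mixing term; your Dirichlet-form comparison is an equivalent substitute for tensorization. It then applies Girsanov in the $\Lambda$-weighted norm, makes the same three-way split of $e_t$, time-averages the score term, and uses the weighted forward increment for the space term. The gap is in the time-discretization term, in two places.

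First, your estimate $\E\|\nabla\log q_{r,s}-\nabla\log q_{r,t}\|_\Lambda^2\lesssim(1+L\sigma^2)^2\delta^2\,\E\|\nabla\log q_{r,t}\|_\Lambda^2$ is not what Lemma~\ref{lem:trace_aware_weighted} gives, and it is false in general. With $\Sigma=\sigma^2(I-D^2)$, the lemma also contains two further terms:
\begin{itemize}
\item $C(\kappa)L\zeta\|u\|_\Lambda$, which comes from the rescaling $D^{-1}s(D^{-1}u)$;
\item $C(\kappa)L\sqrt{\tr(\Lambda\Sigma)}$, which comes from the conditional Brascamp--Lieb variance.
\end{itemize}
The second is of order $L\sigma\sqrt{\delta\tr(\Lambda)}$, i.e.\ $\sqrt\delta$ rather than $\delta$, and this order does occur. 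Take a score dominated by small oscillations on the length scale $\sqrt{\Sigma}\asymp\sigma\sqrt\delta$, so its amplitude is about $L\sigma\sqrt\delta$. One OU step damps these oscillations by a constant factor, so the score moves by about $L\sigma\sqrt\delta$. Your right-hand side is only of order $(1+L\sigma^2)\delta\cdot L\sigma\sqrt\delta$, so no constant rescues the display as $\delta\to0$. Hence the time term does not contribute only the Fisher term. Through the weighted moment bound and $\tr(\Lambda\Sigma)\le2\sigma^2\delta\tr(\Lambda)$, it also produces $L^2\delta^2(m_{2,\Lambda}^2+\sigma^2\tr(\Lambda))$ and $L^2\sigma^2\delta\tr(\Lambda)$. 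These have the same form as your space-term contributions, so the stated bound survives, but your accounting does not establish it.

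Second, the Fisher bound does not close your estimate as written. In your split, the time term is evaluated at $U_{t_k^\leftarrow}$. Under $Q_T^\leftarrow$ its law is the \emph{later} forward marginal $q_{r,t_{N-k}}$, while your right-hand side involves the \emph{earlier} score $\nabla\log q_{r,T-t}$. The quantity $\E_{U\sim q_{r,t_{N-k}}}\|\nabla\log q_{r,T-t}(U)\|_\Lambda^2$ is not a Fisher information, so Lemma~\ref{lem:fisher} does not apply to it.

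The paper instead uses the later-time form \eqref{eq:weighted-perturb-later}. Its middle term is $(1+L\sigma^2)\zeta\|\nabla\log q_{r,t_{N-k}}(U_{t_k^\leftarrow})\|_\Lambda$, so the weighted Fisher bound applies under the correct law. That form comes from inserting $\|s\|_\Lambda\le\|\tilde s\|_\Lambda+\|\tilde s-s\|_\Lambda$ and moving $b\|\tilde s-s\|_\Lambda$ to the left, with $b=(32\sqrt\kappa L\sigma^2+2)\zeta$. The hypothesis $\delta\le\min\{1/8,1/(128\sqrt\kappa L\sigma^2)\}$ is exactly what makes $b\le1/2$. That is its role; it does not absorb a Taylor remainder.

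Alternatively, you could reorder the split so that the time term is evaluated at $U_t\sim q_{r,T-t}$ and the Lipschitz step uses $\nabla\log q_{r,t_{N-k}}$. Then the earlier-time form of the lemma and Lemma~\ref{lem:fisher} fit together, and a weaker step condition suffices for this step.
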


Theorem~\ref{thm:tv-dim} with $r=K$ already gives
$N=\widetilde O(K/\varepsilon^2)$ on the retained coordinates.
Theorem~\ref{thm:tv-trace} replaces $r$ by the rate-weighted quantities
$\tr(\Lambda)$ and $m_{2,\Lambda}^2$ and reduces to the same dependence for
equal rates. Specialised to the retained coordinates, it gives the
following bound, which holds for any $K$ and any rates.

\begin{corollary}[Top-$K$ TV bound]
\label{cor:topK}
Let \(V_K\in\R^{d\times K}\) be any matrix with orthonormal columns, and
let \(\lambda_1\ge\cdots\ge\lambda_K>0\) be any rates. Let
\(\kappa:=\lambda_{\max}/\lambda_{\min}=\lambda_1/\lambda_K\) denote the
condition number. Under Assumptions~\ref{ass:lipschitz}, \ref{ass:second_moment}, \ref{ass:score_error},
\ref{ass:girsanov} and~\ref{ass:finite-kl}, with the weighted Fisher bound
$\mathbb E\|\nabla\log q_{K,t}(U)\|_\Lambda^2\le C_F\tr(\Lambda)$
furnished by Lemma~\ref{lem:fisher} ($C_F=L$), under the one-step
score-perturbation hypothesis of Theorem~\ref{thm:tv-trace}, and under the small-step
condition
\begin{equation}
\delta\le\min\!\Bigl(
 \tfrac{1}{8},\,
 \tfrac{1}{128\sqrt{\kappa}\,L\sigma^2}
\Bigr),
\label{eq:small-step-cor1}
\end{equation}
there exists a constant \(C_0(\kappa)>0\), depending only on \(\kappa\) and
universal constants, such that
\begin{align}
\TV(p,q_K)
&\le
e^{-\lambda_K T}
\sqrt{\frac12\,\KL(q_K\|\gamma_K)}
\nonumber\\
&\quad+C_0(\kappa)\,\sigma\sqrt{T}\Bigl(
 \varepsilon_{\mathrm{score},\bLambda}
 +L\delta\sqrt{\mathfrak{m}_{2,\bLambda}^2
 +\sigma^2\tr(\bLambda)}
 \nonumber\\
&\qquad\qquad
 +(1+L\sigma^2)\sqrt{C_F}\,\delta\sqrt{\tr(\bLambda)}
 +L\sigma\sqrt{\delta\,\tr(\bLambda)}
\Bigr),
\label{eq:tv-cor1}
\end{align}
where \(\lambda_K=\min_i\lambda_i\) is the smallest active rate,
\[
\mathfrak{m}_{2,\bLambda}^2
=\sum_{i=1}^K\lambda_i\sigma_i^2,
\qquad
\tr(\bLambda)=\sum_{i=1}^K\lambda_i,
\qquad
\tr(\bLambda^2)=\sum_{i=1}^K\lambda_i^2.
\]
For non-PCA bases, the expression for \(\mathfrak m_{2,\bLambda}^2\) should be
read as \(\mathbb E\|U_0\|_\Lambda^2\). The bound depends on \(K\) and the
rates \(\{\lambda_i\}_{i=1}^K\) only through these weighted trace quantities,
and is independent of the ambient dimension \(d\).
\end{corollary}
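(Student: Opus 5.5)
This corollary is a direct specialisation of Theorem~\ref{thm:tv-trace} with $r=K$ and $q_r=q_K$, so I will mostly check that the hypotheses transfer and identify the weighted quantities. The coefficient law $q_K$ is the push-forward of $q$ under $u=V_K^\top x$. The forward process \eqref{eq:forward-OU-coeff} with rates $\lambda_1\ge\cdots\ge\lambda_K>0$ is exactly the generic process \eqref{eq:generic-forward-OU} on $\R^K$. Hence Assumptions~\ref{ass:lipschitz}, \ref{ass:second_moment}, \ref{ass:score_error}, \ref{ass:girsanov} and~\ref{ass:finite-kl}, the weighted Fisher bound of Lemma~\ref{lem:fisher} with $C_F=L$, and the one-step perturbation hypothesis of Lemma~\ref{lem:trace_aware_weighted} are precisely the inputs Theorem~\ref{thm:tv-trace} requires with $r=K$. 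The small-step condition \eqref{eq:small-step-cor1} coincides with the theorem's condition on $\delta$. Since $\lambda_{\min}=\lambda_K$, the mixing term $e^{-\lambda_{\min}T}\sqrt{\KL(q_K\|\gamma_K)/2}$ becomes the first term of \eqref{eq:tv-cor1}.

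The theorem also assumes $\lambda_{\max}\Delta\tau_{\max}\le1$, which the corollary does not state. I would derive it from $\delta\le1/8$. By the definition of $\delta$,
\[
1-e^{-\lambda_{\max}h}\le\tfrac18 \quad\text{for every step length } h,
\]
which gives $\lambda_{\max}h\le\log(8/7)<1$.

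Next I identify the weighted moment. For the principal-component basis, $U_i=v_i^\top X$ has variance $\sigma_i^2$, so
\[
\mathfrak m_{2,\bLambda}^2=\E\|U\|_\Lambda^2=\sum_{i\le K}\lambda_i\sigma_i^2 .
\]
For a general orthonormal $V_K$ this expression is simply read as $\E\|U_0\|_\Lambda^2$. Substituting these quantities into Theorem~\ref{thm:tv-trace} gives \eqref{eq:tv-cor1} with $C_0(\kappa):=C(\kappa)=O(\sqrt\kappa)$. The constant depends only on $\kappa$ and universal constants, and the quantity $\tr(\bLambda^2)$ is merely recorded for later use.

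For dimension independence I would note that every quantity in the bound is defined on $\R^K$: $\KL(q_K\|\gamma_K)$, $\tr(\Lambda)$, $\mathfrak m_{2,\bLambda}^2$, $L$ and $\varepsilon_{\mathrm{score},\Lambda}$. Since $d$ enters only through the law $q_K$, it does not appear in the bound.

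The main obstacle is confirming that the regularity hypotheses hold for the projected law $q_K$ rather than for $q$: the Lipschitz score, the boundary terms, and the conditional Brascamp--Lieb step. The corollary sidesteps this by assuming them directly on $q_{K,t}$, so the proof only needs to state that they are imposed in coefficient space. If more is wanted, I would remark that for Gaussian $q$ the marginal $q_K$ is Gaussian with diagonal covariance, and all these conditions hold explicitly.
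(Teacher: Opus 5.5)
Your proposal is correct and matches the paper's proof, which is the one-line specialisation of Theorem~\ref{thm:tv-trace} with $r=K$, $q_r=q_K$ and $\lambda_{\min}=\lambda_K$. Your derivation of the unstated hypothesis $\lambda_{\max}\Delta\tau_{\max}\le1$ from $\delta\le1/8$ fills in a check the paper leaves implicit: you get $\lambda_{\max}h\le\log(8/7)<1$, and $\Delta\tau$ never exceeds a step length. Only your closing aside needs care: for a non-PCA $V_K$ the Gaussian coefficient law need not have diagonal covariance, but this plays no role in the argument.
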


\begin{proof}[Proof of Corollary~\ref{cor:topK}]
Apply Theorem~\ref{thm:tv-trace} with $r=K$, $q_r=q_K$ and
$\lambda_{\min}=\lambda_K$.
\end{proof}

\begin{proposition}[Isotropic rates on the same retained subspace]
\label{prop:isotropic}
Let $1\le K\le d_{\mathrm{eff}}^{\sigma}$ and replace the Wiener-gain rates by the
constant rates $\lambda_i\equiv\lambda_0$, $i\le K$. Assume the hypotheses
of Corollary~\ref{cor:topK} for the resulting isotropic process, with
Lipschitz constant $L_{\mathrm{iso}}$ in Assumption~\ref{ass:lipschitz} and
score error $\varepsilon_{\mathrm{score},\lambda_0I}$ in
Assumption~\ref{ass:score_error}. Then the bound \eqref{eq:tv-cor2} of
Theorem~\ref{thm:convergence} holds for the isotropic sampler with $L$
replaced by $L_{\mathrm{iso}}$, $\kappa_0$ replaced by $1$, and the mixing
factor $e^{-\lambda_\star T}$ replaced by $e^{-\lambda_0T}$.
Consequently the step count $N=\widetilde O(K/\varepsilon^2)$ of
Theorem~\ref{thm:convergence} also holds for isotropic diffusion on the same
subspace. Moreover, the weighted norms of the
two schemes are equivalent on the retained subspace:
$\lambda_\star\|z\|^2\le\|z\|_{\Lambda}^2\le\lambda_0\|z\|^2$ for the
Wiener-gain matrix $\Lambda$; for a fixed error vector and a fixed
evaluation distribution the two error norms therefore differ by at most the
factor $\kappa_0$. This does not compare how hard the two score
functions are to learn: the two processes have different forward marginals and
different target scores. For Gaussian data with the exact
power law, $L_{\mathrm{iso}}=L=\sigma^{-2}$.
\end{proposition}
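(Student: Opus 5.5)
The plan is to apply Corollary~\ref{cor:topK} directly to the isotropic process on the same retained subspace, with $\Lambda=\lambda_0 I_K$, and then specialise the trace quantities. With equal rates we have $\kappa=1$, so the small-step condition \eqref{eq:small-step-cor1} becomes $\delta\le\min\{1/8,\,1/(128L_{\mathrm{iso}}\sigma^2)\}$. This is implied by \eqref{eq:small-step-cor2} with $\kappa_0$ replaced by $1$. The constant $C_0(1)$ is then a universal constant, and the smallest active rate is $\lambda_0$, which gives the mixing factor $e^{-\lambda_0T}$.

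Next we evaluate the weighted quantities. We have $\tr(\Lambda)=\lambda_0K$ and $\mathfrak m_{2,\Lambda}^2=\lambda_0\sum_{i\le K}\sigma_i^2\le\lambda_0C_{\mathrm{sp}}\zeta(\beta)$ by Assumption~\ref{ass:powerlaw}. The Fisher constant is $C_F=L_{\mathrm{iso}}$ by Lemma~\ref{lem:fisher}. Substituting these into \eqref{eq:tv-cor1} reproduces, term by term, the four terms of \eqref{eq:tv-cor2} with $L\to L_{\mathrm{iso}}$:
\begin{itemize}
\item the score-error term is unchanged in form;
\item $L\sigma\sqrt{\lambda_0\delta K}$ arises from $L\sigma\sqrt{\delta\tr(\Lambda)}$;
\item $L\delta\sqrt{\lambda_0C_{\mathrm{sp}}\zeta(\beta)+\lambda_0\sigma^2K}$ arises from the moment term;
\item $(1+L\sigma^2)\sqrt L\,\delta\sqrt{\lambda_0K}$ arises from the Fisher term.
\end{itemize}
The resulting constant depends only on $\beta$ through $\zeta(\beta)$, which is absorbed as in Theorem~\ref{thm:convergence}. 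The step-count consequence then follows exactly as in that theorem. Choose $T\ge\lambda_0^{-1}\log(2\sqrt{\KL/2}/\varepsilon)$ and a uniform grid with $\delta\le\lambda_0T/N$. The dominant term $\sigma\sqrt T L\sigma\sqrt{\lambda_0 K T/N}$ is then at most $\varepsilon/4$ once $N\gtrsim K T^2/\varepsilon^2$. The remaining terms are linear in $\delta$ and are of lower order, so $N=\widetilde O(K/\varepsilon^2)$.

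For the norm equivalence, the Wiener-gain rates satisfy $\lambda_\star\le\lambda_i\le\lambda_0$ when $K\le d_{\mathrm{eff}}^\sigma$, as noted after \eqref{eq:deff}. Hence $\lambda_\star\|z\|^2\le\|z\|_\Lambda^2\le\lambda_0\|z\|^2$. Comparing with $\|z\|_{\lambda_0I}^2=\lambda_0\|z\|^2$ shows that the two weighted errors differ by at most the factor $\lambda_0/\lambda_\star=\kappa_0$.

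For the Gaussian claim, the isotropic forward variance is $v_i(t)=e^{-2\lambda_0t}\sigma_i^2+(1-e^{-2\lambda_0t})\sigma^2$. This is a convex combination of $\sigma_i^2$ and $\sigma^2$, and under the exact power law with $K\le d_{\mathrm{eff}}^\sigma$ both are at least $\sigma^2$. So the score $-u_i/v_i(t)$ has Lipschitz constant at most $\sigma^{-2}$, with equality at $t\to\infty$, and the same computation applies to the Wiener-gain process.

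The main obstacle is only bookkeeping: checking that the hypotheses of Corollary~\ref{cor:topK}, in particular Assumption~\ref{ass:girsanov} and the one-step perturbation hypothesis, are assumed rather than derived for the isotropic process. The statement takes them as hypotheses, so nothing further is needed.
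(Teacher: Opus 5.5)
Your proposal is correct and follows the paper's proof: you specialise Corollary~\ref{cor:topK} to $\Lambda=\lambda_0I_K$, so that $\kappa=1$, $\lambda_K=\lambda_0$, $\tr(\Lambda)=\lambda_0K$ and $\mathfrak m_{2,\Lambda}^2\le\lambda_0C_{\mathrm{sp}}\zeta(\beta)$; you reuse the step-count argument of Theorem~\ref{thm:convergence}; you read the norm equivalence off $\lambda_\star\le\lambda_i\le\lambda_0$; and you bound the Gaussian isotropic marginal variances below by $\sigma^2$. One cosmetic point: nothing needs to be absorbed in $\beta$ here, because $\zeta(\beta)$ appears explicitly in \eqref{eq:tv-cor2} and $C_0(1)$ is already a universal constant.
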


\begin{proof}
With $\lambda_i\equiv\lambda_0$ we have $\kappa=1$, $\lambda_K=\lambda_0$,
$\tr(\Lambda)=\lambda_0K$, and
$m_{2,\Lambda}^2=\lambda_0\sum_{i\le K}\sigma_i^2\le\lambda_0C_{\mathrm{sp}}\zeta(\beta)$
by the upper power-law envelope. These are the bounds of
Lemma~\ref{lem:traces}, so substituting them into Corollary~\ref{cor:topK}
gives \eqref{eq:tv-cor2} with the stated replacements, and the step-count
calculation in the proof of Theorem~\ref{thm:convergence} is unchanged. The norm
equivalence follows from $\lambda_\star\le\lambda_i\le\lambda_0$ for the
Wiener-gain rates on $i\le K\le d_{\mathrm{eff}}^{\sigma}$. For Gaussian data the
isotropic marginal covariance has diagonal entries
$e^{-2\lambda_0t}\sigma_i^2+\sigma^2(1-e^{-2\lambda_0t})\ge\sigma^2$, since
$\sigma_i^2\ge\sigma^2$ for $i\le d_{\mathrm{eff}}^{\sigma}$, so the score is
$\sigma^{-2}$-Lipschitz as in the Wiener-gain case.
\end{proof}

\subsection{Proof of Theorem~\ref{thm:convergence}}

\begin{lemma}[Trace bounds under Wiener-gain rates and power-law decay]
\label{lem:traces}
Under the upper side of Assumption~\ref{ass:powerlaw}, namely
$\sigma_i^2\le C_{\mathrm{sp}}i^{-\beta}$ with $\beta>1$, use the Wiener-gain
rates~\eqref{eq:wiener-rates} and let
$1\le K\le d_{\mathrm{eff}}^{\sigma}$. Then
\begin{align}
 \tr(\Lambda)
 &\le \lambda_0K,
 \label{eq:trL}\\[3pt]
 \tr(\Lambda^2)
 &\le \lambda_0^2K,
 \label{eq:trL2}\\[3pt]
 m_{2,\Lambda}^2
 &\le \lambda_0C_{\mathrm{sp}}\zeta(\beta),
 \label{eq:m2L}
\end{align}
where $\zeta(\beta)=\sum_{i=1}^{\infty}i^{-\beta}<\infty$. In particular,
the bounds do not depend on the ambient dimension $d$.
\end{lemma}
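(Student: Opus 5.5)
The plan is to bound each quantity termwise using only two facts: every Wiener gain lies in $[0,1)$, and the variances obey the upper power-law envelope. By \eqref{eq:wiener-rates}, each rate is $\lambda_i=\lambda_0 g_i(\sigma)=\lambda_0\sigma_i^2/(\sigma_i^2+\sigma^2)$. Since $\sigma_i^2\ge0$ and $\sigma>0$, this gives $0\le g_i(\sigma)<1$, hence $0<\lambda_i\le\lambda_0$ for every retained direction. (Positivity comes from $\sigma_i^2>0$, which the lower envelope ensures; for the upper bounds only $\lambda_i\le\lambda_0$ matters.)

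\emph{Trace bounds.} Summing $\lambda_i\le\lambda_0$ over $i=1,\ldots,K$ gives $\tr(\Lambda)\le\lambda_0K$, which is \eqref{eq:trL}. Summing $\lambda_i^2\le\lambda_0^2$ gives $\tr(\Lambda^2)\le\lambda_0^2K$, which is \eqref{eq:trL2}. Neither step uses $K\le d_{\mathrm{eff}}^{\sigma}$.

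\emph{Weighted second moment.} Since $V$ diagonalises $C$, the retained coefficients $U=V_K^\top X$ have diagonal covariance $\diag(\sigma_1^2,\ldots,\sigma_K^2)$. Therefore
\[
 m_{2,\Lambda}^2=\E\|U\|_\Lambda^2=\sum_{i\le K}\lambda_i\E U_i^2=\sum_{i\le K}\lambda_i\sigma_i^2 .
\]
Applying $\lambda_i\le\lambda_0$ and the upper envelope $\sigma_i^2\le C_{\mathrm{sp}}i^{-\beta}$ gives
\[
 m_{2,\Lambda}^2\le\lambda_0C_{\mathrm{sp}}\sum_{i\le K}i^{-\beta}\le\lambda_0C_{\mathrm{sp}}\zeta(\beta),
\]
and $\zeta(\beta)$ is finite because $\beta>1$. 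This is \eqref{eq:m2L}.

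None of the three bounds involves $d$. The hypothesis $K\le d_{\mathrm{eff}}^{\sigma}$ is not needed for these upper bounds; it matters only for the lower bound $\lambda_i\ge\lambda_\star$ used elsewhere. The argument has no real obstacle. The one point to state explicitly is that $\E U_i^2=\sigma_i^2$ in the principal-component basis; for a general orthonormal $V_K$, $m_{2,\Lambda}^2$ would be read as $\E\|U\|_\Lambda^2\le\lambda_0\E\|U\|^2$, which is bounded in the same way.
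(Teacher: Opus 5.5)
Your proposal is correct and is essentially the paper's proof: the paper also uses $\lambda_i=\lambda_0 g_i$ with $0\le g_i\le1$ to get the two trace bounds, and writes $m_{2,\Lambda}^2=\lambda_0\sum_{i\le K}g_i\sigma_i^2\le\lambda_0\sum_{i\le K}\sigma_i^2\le\lambda_0C_{\mathrm{sp}}\zeta(\beta)$. Your observation that the hypothesis $K\le d_{\mathrm{eff}}^{\sigma}$ is not used is also consistent with the paper, whose proof never invokes it either.
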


\begin{proof}
Throughout, write
\[
 g_i := \frac{\sigma_i^2}{\sigma_i^2+\sigma^2}\in[0,1)
\]
for the Wiener gain.

\medskip\noindent\textbf{Bounding \(\tr(\Lambda)\).}

Since \(\lambda_i=\lambda_0g_i\) and \(0\le g_i\le1\),
\[
 \tr(\Lambda)
 =
 \lambda_0\sum_{i=1}^K g_i
 \le
 \lambda_0 K.
\]

\medskip\noindent\textbf{Bounding \(\tr(\Lambda^2)\).}

Since \(0\le g_i\le1\), we have \(g_i^2\le g_i\le1\). Hence
\[
 \tr(\Lambda^2)
 =
 \lambda_0^2\sum_{i=1}^K g_i^2
 \le
 \lambda_0^2 K.
\]

\medskip\noindent\textbf{Bounding \(m_{2,\Lambda}^2\).}

By definition,
\[
 m_{2,\Lambda}^2
 =
 \sum_{i=1}^K \lambda_i\sigma_i^2
 =
 \lambda_0\sum_{i=1}^K g_i\sigma_i^2.
\]
Since \(g_i\le1\),
\[
 m_{2,\Lambda}^2
 \le
 \lambda_0\sum_{i=1}^K\sigma_i^2.
\]
Using the upper power-law bound \(\sigma_i^2\le C_{\mathrm{sp}} i^{-\beta}\),
\[
 m_{2,\Lambda}^2
 \le
 \lambda_0 C_{\mathrm{sp}}\sum_{i=1}^{K}i^{-\beta}
 \le
 \lambda_0 C_{\mathrm{sp}}\sum_{i=1}^{\infty}i^{-\beta}
 =
 \lambda_0 C_{\mathrm{sp}}\zeta(\beta).
\]
Since \(\beta>1\), the zeta sum is finite. Therefore
\[
 m_{2,\Lambda}^2
 \le
 \lambda_0 C_{\mathrm{sp}}\zeta(\beta),
\]
independently of \(K\) and \(d\).
\end{proof}

\begin{proof}[Proof of Theorem~\ref{thm:convergence}]
\emph{Rates.} Since $K\le d_{\mathrm{eff}}^{\sigma}\le(C_{\mathrm{sp}}/\sigma^2)^{1/\beta}$,
the lower envelope gives, for every $i\le K$,
$\sigma_i^2\ge c_{\mathrm{sp}}i^{-\beta}\ge c_{\mathrm{sp}}K^{-\beta}
\ge(c_{\mathrm{sp}}/C_{\mathrm{sp}})\sigma^2$, and therefore
\[
 \lambda_i\ge\lambda_0\,\frac{c_{\mathrm{sp}}}{c_{\mathrm{sp}}+C_{\mathrm{sp}}}
 =\lambda_\star,
 \qquad
 \kappa\le\frac{\lambda_0}{\lambda_\star}=\kappa_0 .
\]

\emph{Bound.} Apply Corollary~\ref{cor:topK}. Its mixing term is at most
$e^{-\lambda_\star T}\sqrt{\frac12\KL(q_K\|\gamma_K)}$ because
$\lambda_K\ge\lambda_\star$. By Lemma~\ref{lem:fisher}, $C_F=L$. Substituting
the trace bounds of Lemma~\ref{lem:traces} and absorbing $C_0(\kappa)\le
C_0(\kappa_0)$, which uses that $C_0$ is nondecreasing in $\kappa$ (one can
take $C_0(\kappa)=c\sqrt\kappa$), into a constant $C'$ depending only on
$\beta$ and $\kappa_0$ gives \eqref{eq:tv-cor2}.

\emph{Step count.} The choice of $T$ makes the mixing term at most
$\varepsilon/2$, and the assumption on $\varepsilon_{\mathrm{score},\Lambda}$
makes the score term at most a small constant times $\varepsilon$. On a
uniform grid, $\delta\le\lambda_{\max}T/N\le\lambda_0T/N$. The largest of the
three discretisation terms for small $\delta$ is
$C'\sigma\sqrt T\,L\sigma\sqrt{\lambda_0\delta K}$, which is at most a small
constant times $\varepsilon$ once
$\delta\lesssim\varepsilon^2/(\sigma^4TL^2\lambda_0K)$, i.e.\ once
\[
 N\gtrsim\frac{(L\sigma^2)^2\lambda_0^2T^2K}{\varepsilon^2}.
\]
The two remaining discretisation terms are linear in $\delta$ and are smaller
under the same choice for small $\varepsilon$, and the small-step condition
\eqref{eq:small-step-cor2} holds once
$N\ge\lambda_0T\max\{8,\,128\sqrt{\kappa_0}\,L\sigma^2\}$.
Since $T$ is logarithmic in $1/\varepsilon$, this is
$N=\widetilde O(K/\varepsilon^2)$.
\end{proof}

\subsection{Proof of Theorem~\ref{thm:end-to-end}}

We first keep the dependence on $\sigma$ explicit in the active sampling bound. This is necessary because the theorem chooses $\sigma$ from $K$, after which we combine sampling and truncation errors.

\begin{lemma}[$\sigma$-explicit active error, Gaussian case]
\label{lem:active-sigma}
Let $q=\N(0,C)$ with the exact power law \eqref{eq:powerlaw2}, let
$1\le K\le d_{\mathrm{eff}}^{\sigma}$, use the Wiener-gain rates, exact scores and a uniform grid, and
assume $N\ge128\sqrt2\,\lambda_0T$. Then, with $C_0=C_0(2)$ the constant of
Corollary~\ref{cor:topK},
\[
 \TV(p,q_K)
 \le e^{-\lambda_0T/2}\frac{\sqrt{C_{\mathrm{sp}}\zeta(\beta)}}{2\sigma}
 +C_0\Bigl(\lambda_0T\sqrt{\tfrac KN}
 +3\lambda_0^{3/2}T^{3/2}\frac{\sqrt K}{N}
 +\lambda_0^{3/2}T^{3/2}\frac{\sqrt{C_{\mathrm{sp}}\zeta(\beta)}}{\sigma N}\Bigr).
\]
\end{lemma}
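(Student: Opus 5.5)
Apply Corollary~\ref{cor:topK} with $\kappa\le2$ and substitute explicit Gaussian quantities, keeping every $\sigma$ visible instead of absorbing it into $C'$. Exact scores give $\varepsilon_{\mathrm{score},\Lambda}=0$, and exact linear scores are admissible. The Gaussian marginals make the regularity hypotheses routine. For $K\le d_{\mathrm{eff}}^{\sigma}$ under the exact power law, every retained $\sigma_i^2\ge\sigma^2$. Hence each forward variance $v_i(t)\ge\sigma^2$, so $L\le\sigma^{-2}$ and $L\sigma^2\le1$. The rates $\lambda_i=\lambda_0g_i(\sigma)$ lie in $[\lambda_0/2,\lambda_0]$, so $\kappa\le2$ and $\lambda_K\ge\lambda_0/2$. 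This explains both the constant $C_0=C_0(2)$ (by monotonicity of $C_0$) and the factor $e^{-\lambda_0T/2}$.

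\emph{Mixing term.} Compute $\KL(q_K\|\gamma_K)=\frac12\sum_{i\le K}\bigl[\sigma_i^2/\sigma^2-1-\log(\sigma_i^2/\sigma^2)\bigr]\le\frac12\sum_{i\le K}\sigma_i^2/\sigma^2$. The inequality holds because $\sigma_i^2/\sigma^2\ge1$ makes the dropped terms $-1-\log(\cdot)$ nonpositive. The upper bound is at most $C_{\mathrm{sp}}\zeta(\beta)/(2\sigma^2)$. Hence $\sqrt{\KL/2}\le\sqrt{C_{\mathrm{sp}}\zeta(\beta)}/(2\sigma)$, which is the first term.

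\emph{Discretisation terms.} On the uniform grid, $\delta\le\lambda_0T/N$. The hypothesis $N\ge128\sqrt2\lambda_0T$ then gives $\delta\le1/(128\sqrt2)$. This implies \eqref{eq:small-step-cor1}, since $L\sigma^2\le1$ and $\kappa\le2$ (and $1/(128\sqrt2)<1/8$). Insert $L=\sigma^{-2}$ together with Lemma~\ref{lem:traces}: $\tr(\Lambda)\le\lambda_0K$ and $m_{2,\Lambda}^2\le\lambda_0C_{\mathrm{sp}}\zeta(\beta)$. The bracket in \eqref{eq:tv-cor1}, multiplied by $\sigma\sqrt T$, yields three terms.
\begin{itemize}
\item The term $\sigma\sqrt T\,L\sigma\sqrt{\delta\tr\Lambda}\le\sqrt T\sqrt{\lambda_0K\cdot\lambda_0T/N}=\lambda_0T\sqrt{K/N}$.
\item The term $\sigma\sqrt T(1+L\sigma^2)\sqrt{L}\,\delta\sqrt{\tr\Lambda}\le2\sqrt T\,\delta\sqrt{\lambda_0K}\le2\lambda_0^{3/2}T^{3/2}\sqrt K/N$.
\item The term $\sigma\sqrt T\,L\delta\sqrt{m_{2,\Lambda}^2+\sigma^2\tr\Lambda}$ is split by $\sqrt{a+b}\le\sqrt a+\sqrt b$. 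This gives $\lambda_0^{3/2}T^{3/2}\sqrt{C_{\mathrm{sp}}\zeta(\beta)}/(\sigma N)$ plus $\lambda_0^{3/2}T^{3/2}\sqrt K/N$.
\end{itemize}
Collecting the $\sqrt K/N$ pieces gives the coefficient $3$, and this reproduces the stated bound.

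\emph{Main obstacle.} The main obstacle is bookkeeping rather than new estimates. One must check that no $\sigma$-dependence is hidden in $C_0$. This holds because the corollary's constant depends only on $\kappa$, and $L\sigma^2\le1$ is used exactly where $L$ appears. One must also check that the $\sigma$-dependence of $L$ cancels the explicit factors $\sigma$ in every term except the $m_{2,\Lambda}$ term, which carries the residual $1/\sigma$.
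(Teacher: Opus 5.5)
Your proposal is correct and follows the paper's proof essentially line by line. It uses the same constants ($\kappa\le2$, $\lambda_K\ge\lambda_0/2$, $L=C_F=\sigma^{-2}$), the same KL bound via $\mu_i-1-\log\mu_i\le\mu_i$, Lemma~\ref{lem:traces} with $\sqrt{a+b}\le\sqrt a+\sqrt b$, and the same three discretisation bounds whose $\sqrt K/N$ terms sum to the coefficient $3$; the one hypothesis you leave implicit, $\lambda_{\max}\Delta\tau_{\max}\le1$ from Theorem~\ref{thm:tv-trace}, follows from the same estimate $\lambda_0T/N\le1/(128\sqrt2)$.
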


\begin{proof}
We first collect the constants. For $i\le K\le d_{\mathrm{eff}}^{\sigma}$ the
exact power law gives $\sigma_i^2\ge\sigma^2$, hence $g_i(\sigma)\ge1/2$,
$\lambda_i\in[\lambda_0/2,\lambda_0]$ and $\kappa\le2$. The marginal
$q_{K,t}$ is $\N(0,D_t)$ with diagonal entries
$e^{-2\lambda_it}\sigma_i^2+\sigma^2(1-e^{-2\lambda_it})\ge\sigma^2$, so its
score $-D_t^{-1}u$ is $\sigma^{-2}$-Lipschitz: $L=C_F=\sigma^{-2}$ and
$L\sigma^2=1$. Exact scores give $\varepsilon_{\mathrm{score},\Lambda}=0$;
they are linear, so Assumption~\ref{ass:girsanov} holds by the linear-growth
criterion of Appendix~\ref{app:general-rates}, and Gaussian marginals satisfy
the tail and regularity conditions. Lemma~\ref{lem:traces} gives
$\tr(\Lambda)\le\lambda_0K$ and
$m_{2,\Lambda}^2\le\lambda_0C_{\mathrm{sp}}\zeta(\beta)$. Finally, with
$\mu_i=\sigma_i^2/\sigma^2\ge1$,
$\KL(q_K\|\gamma_K)=\frac12\sum_{i\le K}(\mu_i-1-\log\mu_i)
\le\frac12\sum_{i\le K}\mu_i\le C_{\mathrm{sp}}\zeta(\beta)/(2\sigma^2)$. On a uniform
grid $\delta\le\lambda_{\max}T/N\le\lambda_0T/N\le1/(128\sqrt2)$, so the
small-step condition of Corollary~\ref{cor:topK} (with $\kappa\le2$ and
$L\sigma^2=1$) and $\lambda_{\max}\Delta\tau_{\max}\le1$ hold. Substituting
into \eqref{eq:tv-cor1}, the mixing term is at most
$e^{-\lambda_0T/2}\sqrt{C_{\mathrm{sp}}\zeta(\beta)}/(2\sigma)$, and the
three discretisation terms are bounded as follows:
\begin{align*}
 \sigma\sqrt T\,L\delta\sqrt{m_{2,\Lambda}^2+\sigma^2\tr(\Lambda)}
 &\le\lambda_0^{3/2}T^{3/2}\Bigl(\frac{\sqrt{C_{\mathrm{sp}}\zeta(\beta)}}
  {\sigma N}+\frac{\sqrt K}{N}\Bigr),\\
 \sigma\sqrt T\,(1+L\sigma^2)\sqrt{C_F}\,\delta\sqrt{\tr(\Lambda)}
 &\le2\lambda_0^{3/2}T^{3/2}\frac{\sqrt K}{N},\\
 \sigma\sqrt T\,L\sigma\sqrt{\delta\tr(\Lambda)}
 &\le\lambda_0T\sqrt{\frac KN},
\end{align*}
using $\sqrt{a+b}\le\sqrt a+\sqrt b$. Collecting terms gives the claim.
\end{proof}

\begin{proof}[Proof of Theorem~\ref{thm:end-to-end}]
By \eqref{eq:error-decomposition}, it suffices to make the truncation error
and the sampling error each at most $\varepsilon/2$.

\emph{Truncation error.} With $K=K_\varepsilon(\eta)<d$,
Theorem~\ref{thm:num-directions}(a) gives
$\TV(\widetilde q_K^{\,\eta},q^\eta)\le\varepsilon/2$.

\emph{Admissibility of $K$.} With $\sigma^2=C_{\mathrm{sp}}K^{-\beta}$ we have
$(C_{\mathrm{sp}}/\sigma^2)^{1/\beta}=K$, so $d_{\mathrm{eff}}^{\sigma}=K$
(using $K<d$) and Lemma~\ref{lem:active-sigma} applies.

\emph{Sampling error.} We require each of the four terms of
Lemma~\ref{lem:active-sigma} to be at most $\varepsilon/8$. The mixing term
is at most $\varepsilon/8$ by the choice of $T$, since
$e^{-\lambda_0T/2}=\sigma\varepsilon/(4\sqrt{C_{\mathrm{sp}}\zeta(\beta)})$. The
three discretisation terms are at most $\varepsilon/8$ once
$N\ge64C_0^2\lambda_0^2T^2K/\varepsilon^2$,
$N\ge24C_0\lambda_0^{3/2}T^{3/2}\sqrt K/\varepsilon$ and
$N\ge8C_0\lambda_0^{3/2}T^{3/2}\sqrt{C_{\mathrm{sp}}\zeta(\beta)}/(\sigma\varepsilon)$,
and the step-size condition of the lemma holds once
$N\ge128\sqrt2\,\lambda_0T$. Because $\sigma^2\le C_{\mathrm{sp}}\le
C_{\mathrm{sp}}\zeta(\beta)$ and $\varepsilon<1$, we have $\lambda_0T\ge1$, and
then the second requirement is implied by the first. This gives the
condition on $N$ in the theorem.

\emph{Order of $N$.} We have $1/\sigma=K^{\beta/2}/\sqrt{C_{\mathrm{sp}}}$ and
$K=K_\varepsilon(\eta)\asymp\varepsilon^{-2/(2\beta-1)}$, so
\[
 \frac{K}{\varepsilon^2}\asymp\varepsilon^{-\frac{4\beta}{2\beta-1}},
 \qquad
 \frac{K^{\beta/2}}{\varepsilon}\asymp\varepsilon^{-\frac{3\beta-1}{2\beta-1}},
\]
and $3\beta-1<4\beta$, so the first term dominates; $T=O(\lambda_0^{-1}
\log(1/\varepsilon))$ contributes only logarithmic factors.
\end{proof}

\section{Details and Proofs for Section~\ref{sec:extensions}}
\label{app:extensions}

\subsection{General data}

\begin{proof}[Proof of Proposition~\ref{prop:full-bound}]

The sampler outputs
$\tilde{X} = V_K\hat{X} + Z$ with $\hat{X} \sim p$ and
$Z \sim \nu_\perp$ independent, supported on the orthogonal
subspaces $S_K$ and $S_K^\perp$ respectively. The law of
$\tilde{X}$ is $\tilde{p} = p \otimes \nu_\perp$ (product on
$S_K \times S_K^\perp \cong \R^d$).

\begin{align}
 \TV(\tilde{p},\,q)
 &= \TV(p\otimes\nu_\perp,\;q) \notag\\
 &\le \TV(p\otimes\nu_\perp,\;q_K\otimes\nu_\perp)
 + \TV(q_K\otimes\nu_\perp,\;q).
 \label{eq:app-triangle}
\end{align}

The distributions $p\otimes\nu_\perp$ and $q_K\otimes\nu_\perp$ share
the factor $\nu_\perp$. We claim:
\begin{equation}\label{eq:app-product-tv}
 \TV(\mu_1\otimes\nu,\;\mu_2\otimes\nu)
 = \TV(\mu_1,\,\mu_2)
\end{equation}
for any probability measures $\mu_1,\mu_2$ and any probability measure $\nu$.
Indeed, let $\Delta:=\mu_1-\mu_2$ be the signed measure difference. Since
$\nu$ is a positive probability measure, the variation measure factorizes:
\[
 |\Delta\otimes\nu|=|\Delta|\otimes\nu .
\]
Therefore
\[
\begin{aligned}
 \TV(\mu_1\otimes\nu,\mu_2\otimes\nu)
 &=\frac12 |(\mu_1-\mu_2)\otimes\nu|(\mathcal X\times\mathcal Y)\\
 &=\frac12 |\mu_1-\mu_2|(\mathcal X)\,\nu(\mathcal Y)\\
 &=\TV(\mu_1,\mu_2),
\end{aligned}
\]
which proves~\eqref{eq:app-product-tv}.

Applying~\eqref{eq:app-product-tv} with $\mu_1 = p$,
$\mu_2 = q_K$, $\nu = \nu_\perp$ to the first term
of~\eqref{eq:app-triangle}:
\[
 \TV(p\otimes\nu_\perp,\;q_K\otimes\nu_\perp)
 = \TV(p,\,q_K).
\]
Substituting into~\eqref{eq:app-triangle} and using the
definition of $\varepsilon_{\mathrm{PCA}}$:
\[
 \TV(\tilde{p},\,q)
 \le \TV(p,\,q_K) + \varepsilon_{\mathrm{PCA}}.
\]
\end{proof}

\emph{Mutual-information bound.} If $\nu_\perp=q_\perp$, then
$\KL(q\,\|\,q_K\otimes q_\perp)=I$, the mutual information between the
retained and discarded coordinates, and Pinsker's inequality gives
$\varepsilon_{\mathrm{PCA}}=\TV(q_K\otimes q_\perp,q)\le\sqrt{I/2}$.

\paragraph{Filling with stationary noise.}
A sampler that runs the diffusion on all $d$ coordinates but uses, for the
discarded ones, the score $-u_i/\sigma^2$ of the stationary law has reverse
drift $\lambda_iu_i+2\sigma^2\lambda_i(-u_i/\sigma^2)=-\lambda_iu_i$ in those
coordinates, which leaves $\N(0,\sigma^2)$ invariant. Its output is therefore
$p\otimes\N(0,\sigma^2I_{d-K})$, the construction above with
$\nu_\perp=\N(0,\sigma^2I)$. This residual is not small against the
unsmoothed target: a discarded Gaussian coordinate has law
$\N(0,\sigma_i^2)$, and $\TV(\N(0,\sigma_i^2),\N(0,\sigma^2))\to1$ as
$\sigma_i/\sigma\to0$. Adding $\N(0,\eta^2)$ noise afterwards gives variance
$\sigma^2+\eta^2$ instead of $\sigma_i^2+\eta^2$, which differs from the
residual $\N(0,\eta^2)$ analyzed in Section~\ref{sec:sharp}.

\paragraph{Retaining directions below the diffusion noise level.}
To reduce $\varepsilon_{\mathrm{PCA}}$ one may retain $K>d_{\mathrm{eff}}^{\sigma}$
directions. Extending the Wiener-gain rates would let $\lambda_{\min}\to0$,
so the extra directions should get a fixed rate $\lambda_\star$, which keeps
$\kappa\le\kappa_0$ and gives $\tr(\Lambda)\le\lambda_0K$. The Lipschitz
constant is, however, no longer uniform in $K$: for Gaussian data the
time-zero score is $-\diag(\sigma_i^{-2})u$, so $L\ge\sigma_K^{-2}$ and
$L\sigma^2$ grows like $(K/d_{\mathrm{eff}}^{\sigma})^{\beta}$. The step count
of Theorem~\ref{thm:convergence} then becomes
$\widetilde O\bigl((L\sigma^2)^2K/\varepsilon^2\bigr)$ with $L=L(K)$, and the
trade-off between sampling and truncation must track this growth. Moreover,
$\varepsilon_{\mathrm{PCA}}$ need not decrease monotonically in $K$: moving the
cut can separate dependent coordinates that previously lay together in the
residual.

\subsection{Estimated principal components}

In the patch application $d = D_p$ and $C = \Sigma$, the population patch
covariance; $\widehat{\Sigma}$ denotes its empirical estimate.
\begin{assumption}[Independent patch sampling]
\label{ass:iid-patches}
The \(n_{\mathrm{patch}}\) patches
\(x^{(1)},\ldots,x^{(n_{\mathrm{patch}})}\) used to form the empirical covariance
\(\widehat\Sigma\) are drawn independently from the population patch marginal
\(\mathcal N(0,\Sigma)\).
\end{assumption}

Extracting all $N_P$ non-overlapping patches from each of $n$ training
images gives $n_{\mathrm{patch}}=nN_P$ pooled patches, which are not
necessarily independent since patches from the same image may share
global scene statistics. The i.i.d.\ concentration bound of
Lemma~\ref{lem:bernstein} applies directly under
Assumption~\ref{ass:iid-patches}; under a conservative
block-dependence view, one instead treats each image as one
independent unit, so the effective sample size is $n$ and
$\Delta_n=C_\Delta\sigma_1^2\sqrt{D_p/n}$. Any sufficient training-data
condition stated in terms of $n_{\mathrm{patch}}$ under i.i.d.\ patch
sampling should then be read as a condition on the number of
independent images $n$.

Here $d=D_p$ is the patch dimension. On the concentration event of
Lemma~\ref{lem:bernstein}, which has probability at least $1-2e^{-D_p}$ when
$n_{\mathrm{patch}}\ge D_p$, the empirical covariance satisfies
$\|\widehat\Sigma-\Sigma\|_{\mathrm{op}}\le\Delta_n$ with
$\Delta_n=C_\Delta\sigma_1^2\sqrt{D_p/n_{\mathrm{patch}}}$. The perturbation lemmas of
Appendix~\ref{app:lemmas} (Weyl, Davis-Kahan, stability of the Wiener gains,
trace perturbation, minimum empirical rate, off-diagonal covariance) propagate
this bound to the empirical basis, eigenvalues and rates.

\begin{theorem}[Smoothed Gaussian reconstruction, empirical basis and rates, oracle score]
\label{thm:empirical}
Assume Assumption~\ref{ass:iid-patches}, use Wiener-gain rates, and assume
\[
 q=\mathcal N(0,\mathcal C),
 \qquad
 \mathcal C=V\diag(\sigma_1^2,\ldots,\sigma_d^2)V^\top,
 \qquad
 \sigma_i^2=C_{\mathrm{sp}}i^{-\beta},
\]
with \(C_{\mathrm{sp}}>0\) and \(\beta>1\). Let
\[
 V_K=[v_1,\ldots,v_K],
 \qquad
 P_K:=V_KV_K^\top
\]
be the population top-\(K\) eigenspace and projector. Let \(\widehat V_K\) be
the empirical top-\(K\) PCA basis estimated from \(n_{\mathrm{patch}}\) training
patches, and define
\[
 \widehat P_K:=\widehat V_K\widehat V_K^\top.
\]
Let
\[
 \widehat\lambda_i
 =
 \lambda_0
 \frac{\widehat\sigma_i^2}{\widehat\sigma_i^2+\sigma^2},
 \qquad
 \widehat\Lambda:=\diag(\widehat\lambda_1,\ldots,\widehat\lambda_K).
\]
Assume $1\le K\le d_{\mathrm{eff}}^{\sigma}$ and that the covariance
concentration event
\[
 \|\widehat\Sigma-\Sigma\|_{\mathrm{op}}\le \Delta_n
\]
holds, where \(\Sigma\) is the population patch covariance and
\(\widehat\Sigma\) is the empirical patch covariance. Suppose further that
\[
 \Delta_n
 \le
 c_0
 \min\left\{
  \sigma^2,\,
  \sigma_K^2,\,
  \delta_{\mathrm{gap}},\,
  \frac{\eta^2\delta_{\mathrm{gap}}}{\sigma_1^2\sqrt K}
 \right\},
\]
for a sufficiently small universal constant \(c_0>0\), where
\[
 \delta_{\mathrm{gap}}:=\sigma_K^2-\sigma_{K+1}^2.
\]

In the empirical active coordinates, the true projected covariance is generally
non-diagonal:
\[
 \widehat C_K:=\widehat V_K^\top\mathcal C\,\widehat V_K.
\]
Let \(\widehat q_{K,t}\) denote the Gaussian marginal of the empirical
active-space OU process with initial covariance \(\widehat C_K\) and rate matrix
\(\widehat\Lambda\), namely
\[
 \widehat q_{K,t}
 =
 \mathcal N(0,\widehat D_t),
 \qquad
 \widehat D_t
 =
 e^{-\widehat\Lambda t}\widehat C_K e^{-\widehat\Lambda t}
 +
 \sigma^2(I_K-e^{-2\widehat\Lambda t}).
\]
Assume that the empirical active-space score is oracle-exact (note that
$\widehat D_t$ involves the unknown population covariance $\mathcal C$, so this
is an oracle assumption, not a data-derived score):
\[
 \widehat s_t(u)=\nabla\log \widehat q_{K,t}(u)
 =
 -\widehat D_t^{-1}u.
\]

Then, with the same probability as the covariance concentration event,
\[
 \TV(\widetilde p_{\mathrm{emp}}^\eta,\,q^\eta)
 \le
 \underbrace{
 \widetilde O\!\left(\sqrt{\frac K N}\right)
 }_{\mathrm{(I)}\ \mathrm{sampling}}
 +
 \underbrace{
 C_1
 \frac{\sigma_1^2\sqrt K}{\eta^2\,\delta_{\mathrm{gap}}}
 \Delta_n
 }_{\mathrm{(II)}\ \mathrm{basis/covariance\ mismatch}}
 +
 \underbrace{
 \TV\bigl(\widetilde q_K^{\,\eta},q^\eta\bigr)
 }_{\mathrm{(III)}\ \mathrm{truncation}},
\]
where \(C_1>0\) is independent of the ambient dimension \(d\), and
(III) is the truncation error of Section~\ref{sec:sharp}, bounded by
Theorem~\ref{thm:num-directions}(a). The hidden
constants in the \(\widetilde O(\cdot)\) term may depend on fixed model
parameters such as \((\lambda_0,\sigma,\eta,\beta,T,m_2)\), but not on \(d\).
The small-perturbation conditions imply that the empirical Gaussian marginals
also satisfy the coefficient-space Lipschitz-score and weighted Fisher
assumptions with constants independent of \(d\). The sampling term is applied
to the empirical OU process itself; no Girsanov comparison between the
population rate matrix \(\Lambda\) and the empirical rate matrix
\(\widehat\Lambda\) is used.
\end{theorem}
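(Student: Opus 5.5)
The plan is to decompose the error with a triangle inequality through two intermediate laws. The empirical truncated output is
\[
\widetilde p^\eta_{\mathrm{emp}}=\law\bigl(\widehat V_K(\widehat U+\xi)+\widehat V_\perp Z\bigr),\qquad \widehat U\sim p_{\mathrm{emp}} .
\]
Let $\widetilde{\widehat q}^{\,\eta}_K$ be the same construction with $\widehat U$ replaced by an exact draw from $\widehat q_{K,0}=\N(0,\widehat C_K)$, the law of $\widehat V_K^\top X$. Then
\[
\TV(\widetilde p^\eta_{\mathrm{emp}},q^\eta)\le \TV(\widetilde p^\eta_{\mathrm{emp}},\widetilde{\widehat q}^{\,\eta}_K)+\TV(\widetilde{\widehat q}^{\,\eta}_K,\widetilde q^{\,\eta}_K)+\TV(\widetilde q^{\,\eta}_K,q^\eta),
\]
and the three pieces will be (I), (II), (III). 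Term (III) is already bounded by Theorem~\ref{thm:num-directions}(a). For the first piece, the argument of Appendix~\ref{app:decomposition} (data processing plus the product identity \eqref{eq:app-product-tv}, in the rotated basis $[\widehat V_K,\widehat V_\perp]$) reduces it to $\TV(p_{\mathrm{emp}},\widehat q_{K,0})$.

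\emph{Term (I).} I would bound $\TV(p_{\mathrm{emp}},\widehat q_{K,0})$ by applying Corollary~\ref{cor:topK} to the empirical OU process itself, with rate matrix $\widehat\Lambda$, initial law $\N(0,\widehat C_K)$ and exact (oracle) score. The score error is therefore zero, and admissibility holds because the score is linear. The constants must be checked using Weyl's inequality, $|\widehat\sigma_i^2-\sigma_i^2|\le\Delta_n\le c_0\sigma^2$.
\begin{itemize}
\item The rates satisfy $\widehat\lambda_i\in[\lambda_0/3,\lambda_0]$, so $\kappa\le3$ and $\tr(\widehat\Lambda)\le\lambda_0K$.
\item $\widehat C_K$ is within $O(\Delta_n)$ in operator norm of $\diag(\widehat\sigma_i^2)$, which is $\succeq\tfrac12\sigma_K^2 I$ since $\Delta_n\le c_0\sigma_K^2$. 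Hence $\widehat D_t\succeq c\min\{\sigma_K^2,\sigma^2\}I=c\,\sigma^2 I$, giving $L\lesssim\sigma^{-2}$ and, by Lemma~\ref{lem:fisher}, the weighted Fisher bound.
\item $m_{2,\widehat\Lambda}^2\le\lambda_0\tr(\mathcal C)\le\lambda_0C_{\mathrm{sp}}\zeta(\beta)$.
\item The initial divergence $\KL(\widehat q_{K,0}\|\gamma_K)$ is bounded as in Lemma~\ref{lem:active-sigma}.
\end{itemize}
Substituting into \eqref{eq:tv-cor1} gives a bound of the form $\lambda_0T\sqrt{K/N}+O(1/N)$ plus a mixing term. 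The mixing term is absorbed by the fixed choice of $T$, which the $\widetilde O$ is allowed to hide.

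\emph{Term (II).} This is the step I expect to be the main obstacle. Both laws are centered Gaussians on $\R^d$:
\[
\widetilde{\widehat q}^{\,\eta}_K=\N\bigl(0,\widehat P_K\mathcal C\widehat P_K+\eta^2 I\bigr),\qquad \widetilde q^{\,\eta}_K=\N\bigl(0,P_K\mathcal C P_K+\eta^2 I\bigr).
\]
I would use the upper half of \citet{devroye2018total}, or Pinsker together with the Gaussian KL formula to second order, to get
\[
\TV\le\tfrac32\,\|\Sigma_2^{-1/2}(\Sigma_1-\Sigma_2)\Sigma_2^{-1/2}\|_F\le\tfrac{3}{2\eta^2}\,\|\widehat P_K\mathcal C\widehat P_K-P_K\mathcal C P_K\|_F,
\]
since $\Sigma_2\succeq\eta^2I$. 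The difference has rank at most $2K$, so its Frobenius norm is at most $\sqrt{2K}$ times its operator norm. Expanding the difference as
\[
(\widehat P_K-P_K)\mathcal C\widehat P_K+P_K\mathcal C(\widehat P_K-P_K)
\]
bounds the operator norm by $2\sigma_1^2\|\widehat P_K-P_K\|_{\op}$. Davis--Kahan gives $\|\widehat P_K-P_K\|_{\op}\le2\Delta_n/\delta_{\mathrm{gap}}$, valid because $\Delta_n\le c_0\delta_{\mathrm{gap}}$. Altogether this yields $C_1\sigma_1^2\sqrt K\,\Delta_n/(\eta^2\delta_{\mathrm{gap}})$. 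The last smallness condition in the hypothesis keeps this quantity at most a small constant, so the linearized (Frobenius) regime of the Gaussian TV bound is legitimate rather than the saturated $\min\{1,\cdot\}$ one. The delicate points are obtaining the $\sqrt K$ rather than a $\sqrt d$ factor via the rank argument, and checking that every constant, including $C_1$, is free of $d$.
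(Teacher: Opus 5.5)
Your proposal is correct and follows essentially the same route as the paper: the same three-term triangle inequality through $\N(0,\widehat P_K\mathcal C\widehat P_K+\eta^2I_d)$ and $\widetilde q_K^{\,\eta}$, and the top-$K$ corollary applied to the empirical OU process itself with $\widehat\kappa\le3$, $m_{2,\widehat\Lambda}^2\le\lambda_0\tr(\mathcal C)$, $\widehat D_t\succeq c\,\sigma^2I_K$ and zero oracle score error. The basis term is likewise handled by a Gaussian covariance comparison with denominator $\eta^2$, and the differences from the paper are minor. You lower-bound $\widehat C_K$ via Weyl and $\|\widehat V_K^\top(\mathcal C-\widehat\Sigma)\widehat V_K\|_{\op}\le\Delta_n$, using $\Delta_n\le c_0\sigma_K^2$, whereas the paper uses a Davis--Kahan projection argument. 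You obtain the $\sqrt K$ from a rank-$2K$ bound combined with operator-norm Davis--Kahan, whereas the paper applies the Frobenius Davis--Kahan bound and Pinsker after a second-order KL expansion; that expansion is where the last smallness condition is genuinely needed, while your Devroye-based option does not need it.
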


\paragraph{No separate rate-mismatch term.}
The oracle-exact Gaussian assumption eliminates the need for a
rate-mismatch term: the empirical rates $\widehat\lambda_i$ define the
empirical process itself, and $\widehat s_t=\nabla\log\widehat
q_{K,t}$ by assumption, so once
$\widehat\lambda_{\min}\ge\lambda_0/3$ and $\widehat\kappa\le3$ are
established (Lemma~\ref{lem:minrate}), the empirical sampling error
remains $\widetilde O(\sqrt{K/N})$. A separate Girsanov comparison
between $\Lambda$ and $\widehat\Lambda$ would be inappropriate here,
since changing the rates changes the diffusion coefficient as well as
the drift.

\paragraph{Dependence on the smoothing level.}
The basis/covariance mismatch term contains $\eta^{-2}$ because the
empirical and population PCA subspaces differ: without output
smoothing, the two Gaussian laws are supported on different
$K$-dimensional subspaces and are generally mutually singular. The
added covariance $\eta^2I_d$ supplies the uniform spectral lower bound
needed to compare them in total variation.

\paragraph{Explicit dependence on the empirical eigengap.}
Under the exact power-law spectrum, $\delta_{\mathrm{gap}}(K) =
C_{\mathrm{sp}}(K^{-\beta}-(K+1)^{-\beta}) \asymp C_{\mathrm{sp}}\beta
K^{-\beta-1}$, so the basis/covariance mismatch term in
Theorem~\ref{thm:empirical} scales as
$O(\sigma_1^2 K^{\beta+3/2}\Delta_n/(\eta^2 C_{\mathrm{sp}}\beta))$ -
independent of $d$, but imposing a potentially strong sample-size
requirement as $K$ grows. The empirical PCA guarantee should
accordingly be read mainly as a consistency result, since Davis-Kahan
perturbation scales with the inverse eigengap.

\paragraph{Optional diagonal empirical approximation.}\label{rem:nondiag-tv}
Replacing the true empirical-coordinate covariance $\hat
C_K:=\hat V_K^\top C\hat V_K$ by the diagonal $\hat\Sigma_K$ introduces
an additional error: under $\Delta_n\leq \hat\sigma_K^2/2$, the
standard Gaussian KL bound and Pinsker's inequality give
\[
 \TV\!\left(\mathcal{N}(0,\hat{V}_K^\top C\hat{V}_K),\;
 \mathcal{N}(0,\hat{\Sigma}_K)\right)
 \leq \frac{\sqrt{K}\,\Delta_n}{2\hat{\sigma}_K^2}.
\]
This term is not part of Theorem~\ref{thm:empirical}, which uses the
oracle score of the correct empirical-coordinate covariance $\hat C_K$.

\paragraph{Sample size.}
Let $M_K:=\min\{\sigma^2,\sigma_K^2,\delta_{\mathrm{gap}}(K),
\eta^2\delta_{\mathrm{gap}}(K)/(\sigma_1^2\sqrt K)\}$, so that
$\Delta_n\le c_0M_K$ is the small-perturbation requirement of
Theorem~\ref{thm:empirical}. Under the exact power law, $\sigma_K^2$ and
$\delta_{\mathrm{gap}}(K)$ decrease in $K$, so the most restrictive condition
over $K\le K_{\max}$ is attained at $K_{\max}$.

\begin{assumption}[Sufficient training data for empirical Gaussian reconstruction]
\label{ass:sufficient_data}
Fix \(K_{\max}\le d_{\mathrm{eff}}^{\sigma}\) and \(\varepsilon>0\). Assume that
the number of independent training patches satisfies
\[
 n_{\mathrm{patch}}
 \ge
 \widetilde\Omega\left(
 \max\left\{
  \frac{\sigma_1^4D_p}{M_{K_{\max}}^2},
  \frac{\sigma_1^8K_{\max}D_p}
  {\varepsilon^2\eta^4\delta_{\mathrm{gap}}(K_{\max})^2}
 \right\}
 \right),
\]
where
\[
 M_{K_{\max}}
 :=
 \min\left\{
  \sigma^2,\,
  \sigma_{K_{\max}}^2,\,
  \delta_{\mathrm{gap}}(K_{\max}),\,
  \frac{\eta^2\delta_{\mathrm{gap}}(K_{\max})}
  {\sigma_1^2\sqrt{K_{\max}}}
 \right\}.
\]
\end{assumption}

\begin{theorem}[Decoupling of data collection and sampling]
\label{thm:decouple}
Fix $\varepsilon>0$ and $K_{\max}=d_{\mathrm{eff}}^{\sigma}$. Under
Assumption~\ref{ass:sufficient_data}, the basis-estimation term of
Theorem~\ref{thm:empirical} is at most $\varepsilon/4$ uniformly over
$K\le K_{\max}$, so that
\begin{equation}\label{eq:reduced}
 \TV\bigl(\widetilde p_{\mathrm{emp}}^{\,\eta},q^\eta\bigr)
 \le
 \widetilde O\Bigl(\sqrt{\tfrac KN}\Bigr)
 +\TV\bigl(\widetilde q_K^{\,\eta},q^\eta\bigr)
 +\frac{\varepsilon}{4}.
\end{equation}
For a fixed diffusion noise level $\sigma$, any choice of $K\le K_{\max}$ and
$N$ that makes the population bound at most $3\varepsilon/4$ therefore gives
error at most $\varepsilon$ with the estimated basis and rates. The required
number of patches depends on $d$, the spectrum, $\sigma$, $\eta$ and
$\varepsilon$, but not on $N$.
\end{theorem}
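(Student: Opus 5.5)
The plan is to derive Theorem~\ref{thm:decouple} directly from Theorem~\ref{thm:empirical}. The only work is to show that, under Assumption~\ref{ass:sufficient_data}, the basis/covariance mismatch term (II) is at most $\varepsilon/4$ simultaneously for every $K\le K_{\max}$. It is also necessary to check that the small-perturbation hypothesis of Theorem~\ref{thm:empirical} holds for all such $K$.

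\textbf{Step 1: the concentration event.} Since $n_{\mathrm{patch}}\ge D_p$ is implied by the first term of the assumption (its denominator $M_{K_{\max}}^2\le\sigma^4\le\sigma_1^4$), Lemma~\ref{lem:bernstein} applies. With probability at least $1-2e^{-D_p}$ it gives $\|\widehat\Sigma-\Sigma\|_{\op}\le\Delta_n=C_\Delta\sigma_1^2\sqrt{D_p/n_{\mathrm{patch}}}$. Everything below is on this event, so the conclusion holds with the same probability.

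\textbf{Step 2: uniformity in $K$.} Under the exact power law, $\sigma_K^2$ and $\delta_{\mathrm{gap}}(K)=C_{\mathrm{sp}}(K^{-\beta}-(K+1)^{-\beta})$ are decreasing in $K$, and $\sqrt K$ is increasing. Hence $M_K\ge M_{K_{\max}}$ and $\sqrt K/\delta_{\mathrm{gap}}(K)\le\sqrt{K_{\max}}/\delta_{\mathrm{gap}}(K_{\max})$ for all $K\le K_{\max}$.

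The monotonicity of the gap needs a line: $t\mapsto t^{-\beta}-(t+1)^{-\beta}$ has derivative $-\beta(t^{-\beta-1}-(t+1)^{-\beta-1})<0$. With this, it suffices to verify both requirements at $K_{\max}$:
\begin{itemize}
\item The first term of the assumption gives $\Delta_n\le C_\Delta\sigma_1^2\sqrt{D_p}\,M_{K_{\max}}/(\sigma_1^2\sqrt{D_p}\cdot\text{const})\le c_0M_{K_{\max}}$, once the hidden constant in $\widetilde\Omega$ is chosen as $(C_\Delta/c_0)^2$.
\item The second term gives $C_1\sigma_1^2\sqrt{K_{\max}}\Delta_n/(\eta^2\delta_{\mathrm{gap}}(K_{\max}))\le C_1C_\Delta\sigma_1^4\sqrt{K_{\max}D_p/n_{\mathrm{patch}}}/(\eta^2\delta_{\mathrm{gap}})\le\varepsilon/4$, with the hidden constant chosen as $(4C_1C_\Delta)^2$.
\end{itemize}
Thus for every $K\le K_{\max}=d_{\mathrm{eff}}^\sigma$ the hypotheses of Theorem~\ref{thm:empirical} hold, and (II) $\le\varepsilon/4$, which yields \eqref{eq:reduced}.

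\textbf{Step 3: the consequence.} Fix $\sigma$ and pick any $K\le K_{\max}$ and $N$ such that the population bound $\widetilde O(\sqrt{K/N})+\TV(\widetilde q_K^{\,\eta},q^\eta)$ is at most $3\varepsilon/4$. Then \eqref{eq:reduced} gives total error at most $\varepsilon$.

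The main obstacle is making this step literally valid. The $\widetilde O(\sqrt{K/N})$ term of Theorem~\ref{thm:empirical} comes from the empirical process, whose rates $\widehat\lambda_i$ and Lipschitz constants differ from the population ones. I would argue that Lemma~\ref{lem:minrate} ($\widehat\lambda_{\min}\ge\lambda_0/3$, $\widehat\kappa\le3$) and the Lipschitz/Fisher stability in Theorem~\ref{thm:empirical} make its hidden constants depend only on fixed model parameters, uniformly in $K$ and $n_{\mathrm{patch}}$. Its constant therefore coincides, up to an absolute factor, with the population one, and "population bound" should be read with these common constants.

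Finally, the required $n_{\mathrm{patch}}$ depends on $D_p=d$, the spectrum (through $\sigma_1$, $M_{K_{\max}}$, $\delta_{\mathrm{gap}}$), $\sigma$ (through $K_{\max}$ and $M$), $\eta$ and $\varepsilon$, and $N$ never enters. This establishes the decoupling claim.
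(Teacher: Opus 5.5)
Your proposal is correct and follows the paper's proof. Like the paper, you reduce everything to $K_{\max}$ by monotonicity of $M_K$ and of $\sqrt K/\delta_{\mathrm{gap}}(K)$, read off $\Delta_n\le c_0M_{K_{\max}}$ and the $\varepsilon/4$ bound on the mismatch term from the two terms of Assumption~\ref{ass:sufficient_data}, and substitute into Theorem~\ref{thm:empirical}. Your direct monotonicity argument for $\sqrt K/\delta_{\mathrm{gap}}(K)$ is slightly cleaner than the paper's two-sided $K^{-\beta-1}$ comparison, since it gives constant $1$ instead of $C_\beta'$, and your explicit checks of $n_{\mathrm{patch}}\ge D_p$ and of the uniformity of the empirical sampling constants fill in points the paper leaves implicit.
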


\paragraph{Proofs.}
Theorems~\ref{thm:empirical} and~\ref{thm:decouple} are proved in Appendix~\ref{app:proofs-empirical}.

\subsection{Patch Gaussian mixtures}
Let $q=\sum_{m=1}^M\pi_m\N(0,C_m)$ with components eigenbases $V_m$, and let
$p_m^\eta$ be a truncated sampler for component $m$ in its basis $V_m$. The
mixture sampler draws a component $m$ with probability $\pi_m$ and then runs
$p_m^\eta$; its law is $p^\eta=\sum_m\pi_mp_m^\eta$, and the target is
$q^\eta=\sum_m\pi_m\N(0,C_m+\eta^2I)$.

\begin{proposition}[Mixture lifting of the Gaussian bound]
\label{prop:gmm-lift}
Let $q_{\mathrm{GMM}}^\eta=\sum_{m=1}^M\pi_mq_m^\eta$ and
$p_{\mathrm{GMM}}^\eta=\sum_{m=1}^M\pi_mp_m^\eta$. If
$\TV(p_m^\eta,q_m^\eta)\le\varepsilon_m$ for all components, then
\[
 \TV(p_{\mathrm{GMM}}^\eta,q_{\mathrm{GMM}}^\eta)
 \le\sum_{m=1}^M\pi_m\varepsilon_m.
\]
Here $p_m^\eta$ is a truncated sampler adapted to component $m$, run with
probability $\pi_m$.
\end{proposition}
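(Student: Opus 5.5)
The proof uses only convexity of total variation, so I would work directly with the definition $\TV(\mu,\nu)=\sup_A|\mu(A)-\nu(A)|$, or equivalently with half the total variation norm of the signed measure $\mu-\nu$. Both mixtures use the same weights $\pi_m$, so the difference of the two laws splits componentwise:
\[
 p_{\mathrm{GMM}}^\eta-q_{\mathrm{GMM}}^\eta=\sum_{m=1}^M\pi_m\bigl(p_m^\eta-q_m^\eta\bigr)
\]
as signed measures on $\R^d$.

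Next I would bound this difference event by event. For any Borel set $A$, the triangle inequality for absolute values gives
\[
 \bigl|p_{\mathrm{GMM}}^\eta(A)-q_{\mathrm{GMM}}^\eta(A)\bigr|\le\sum_m\pi_m\bigl|p_m^\eta(A)-q_m^\eta(A)\bigr|\le\sum_m\pi_m\TV(p_m^\eta,q_m^\eta).
\]
Taking the supremum over $A$ and then inserting the hypotheses $\TV(p_m^\eta,q_m^\eta)\le\varepsilon_m$ yields the claim. An equivalent route is a coupling argument: draw one shared component label $m$, then use a maximal coupling within component $m$. Under that coupling the two outputs disagree with probability $\sum_m\pi_m\TV(p_m^\eta,q_m^\eta)$, so the same bound follows.

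There is no real obstacle in this argument. The one thing to check is that the target decomposes with the same weights. This holds because convolution with $\N(0,\eta^2I)$ is linear, so $q^\eta=\sum_m\pi_m\N(0,C_m+\eta^2I)$ and $q_m^\eta=\N(0,C_m+\eta^2I)$. Given that, each $\varepsilon_m$ can be instantiated by Theorem~\ref{thm:truncation-cost} or Theorem~\ref{thm:end-to-end} applied to component $m$ in its own eigenbasis $V_m$, via the decomposition \eqref{eq:error-decomposition}.
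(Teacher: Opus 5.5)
Your proposal is correct and is the same argument as the paper's, whose proof is a single appeal to convexity of total variation, $\TV(\sum_m\pi_mp_m^\eta,\sum_m\pi_mq_m^\eta)\le\sum_m\pi_m\TV(p_m^\eta,q_m^\eta)$. Your setwise triangle inequality, or the shared-label maximal coupling, simply proves that convexity step explicitly.
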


\begin{proof}
The convexity of the total variation gives
\[
 \TV\!\left(\sum_{m=1}^M\pi_mp_m^\eta,
    \sum_{m=1}^M\pi_mq_m^\eta\right)
 \le\sum_{m=1}^M\pi_m\TV(p_m^\eta,q_m^\eta)
 \le\sum_{m=1}^M\pi_m\varepsilon_m.
\]
\end{proof}

\paragraph{Shared bases.}
Using a pooled basis $V$ for all components is exact only when the
component covariances are diagonal in $V$. Otherwise, an
additional error
\[
 \varepsilon_{\mathrm{shared}}
 :=\sum_{m=1}^M\pi_m\,
 \TV\bigl(\N(0,V^\top C_mV),\,\N(0,\diag(V^\top C_mV))\bigr)
\]
must be added. The mixture result is a componentwise upper bound; it is not
a claim that one global basis diagonalises natural-image patch statistics,
and it does not give a lower bound for the mixture.

\section{Auxiliary lemmas for the Girsanov argument}\label{sec:app:lemmas}
\begin{lemma}[Data processing for TV]\label{lem:tv-dp}
Let $\mu,\nu$ be probability measures on a measurable space
$(\Omega,\mathcal{F})$ and let $\Phi:\Omega\to S$ be measurable.
Then
\[
\TV(\Phi_\#\mu,\Phi_\#\nu)\le \TV(\mu,\nu).
\]
\end{lemma}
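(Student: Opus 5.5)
The plan is to derive the statement from the definition of total variation as a supremum over measurable sets. For any measurable $A\subseteq S$, its preimage $\Phi^{-1}(A)$ lies in $\mathcal F$ because $\Phi$ is measurable. By the definition of the push-forward,
\[
 \Phi_\#\mu(A)-\Phi_\#\nu(A)=\mu\bigl(\Phi^{-1}(A)\bigr)-\nu\bigl(\Phi^{-1}(A)\bigr).
\]

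I will use the characterisation $\TV(\mu,\nu)=\sup_{B\in\mathcal F}|\mu(B)-\nu(B)|$. This is equivalent to the definition $\frac12|\mu-\nu|(\Omega)$ used in the proof of \eqref{eq:app-product-tv}. The equivalence follows from the Hahn decomposition of the signed measure $\mu-\nu$: since $(\mu-\nu)(\Omega)=0$, the positive and negative parts have equal mass. That common mass equals both the supremum and half the total variation.

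Combining the two facts gives
\[
 \sup_{A}\bigl|\Phi_\#\mu(A)-\Phi_\#\nu(A)\bigr|=\sup_A\bigl|\mu(\Phi^{-1}(A))-\nu(\Phi^{-1}(A))\bigr|\le\sup_{B\in\mathcal F}|\mu(B)-\nu(B)|.
\]
The inequality holds because the sets $\Phi^{-1}(A)$ form a subfamily of $\mathcal F$. The left-hand side is $\TV(\Phi_\#\mu,\Phi_\#\nu)$, which proves the claim.

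The only step needing care is the equivalence of the two total-variation definitions, and that is standard. Everything else is immediate.
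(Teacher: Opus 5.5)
Your proof is correct and is essentially the paper's argument: both write $\TV(\Phi_\#\mu,\Phi_\#\nu)$ as a supremum over measurable $A\subseteq S$, rewrite it through the preimages $\Phi^{-1}(A)\in\mathcal F$, and bound it by the supremum over all of $\mathcal F$. Your Hahn-decomposition remark is a useful addition. It reconciles the set-supremum definition with $\frac12|\mu-\nu|(\Omega)$, which the paper uses in \eqref{eq:app-product-tv} without comment.
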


\begin{proof}
By definition,
\begin{align*}
\TV(\Phi_\# \mu, \Phi_\# \nu)
&= \sup_{A\subseteq S} \bigl| (\Phi_\# \mu)(A) - (\Phi_\# \nu)(A) \bigr| \\
&= \sup_{A\subseteq S} \bigl| \mu(\Phi^{-1}(A)) - \nu(\Phi^{-1}(A)) \bigr| \\
&\le \sup_{B\subseteq \Omega} \bigl| \mu(B) - \nu(B) \bigr| \\
&= \TV(\mu,\nu).
\end{align*}
\end{proof}

\begin{lemma}[Forward second moment bound]\label{lem:moment}
For the forward OU \eqref{eq:generic-forward-OU} on $\R^r$,
\[
\sup_{\tau\in[0,T]}\E\norm{U_\tau}^2 \le m_{2,r}^2 + r\sigma^2.
\]
\end{lemma}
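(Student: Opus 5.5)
Write $U_\tau$ in closed form and take second moments coordinatewise. For the diagonal process \eqref{eq:generic-forward-OU}, each coordinate is a scalar Ornstein--Uhlenbeck process. Variation of constants gives
\[
 U_{\tau,i}=e^{-\lambda_i\tau}U_{0,i}+\sigma\sqrt{1-e^{-2\lambda_i\tau}}\,\zeta_i ,
\]
where $\zeta_i\sim\N(0,1)$ is independent of $U_0$. One can obtain this by applying Itô's formula to $e^{\lambda_i\tau}U_{\tau,i}$; the Itô isometry then gives the variance of the stochastic integral as $\sigma^2\cdot2\lambda_i\int_0^\tau e^{-2\lambda_i(\tau-s)}\,\mathrm ds=\sigma^2(1-e^{-2\lambda_i\tau})$.

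Square and take expectations. The cross term vanishes because $\zeta_i$ is centered and independent of $U_0$, so
\[
 \E U_{\tau,i}^2=e^{-2\lambda_i\tau}\E U_{0,i}^2+\sigma^2\bigl(1-e^{-2\lambda_i\tau}\bigr).
\]
This is a convex combination of $\E U_{0,i}^2$ and $\sigma^2$. It is therefore at most $\max\{\E U_{0,i}^2,\sigma^2\}\le\E U_{0,i}^2+\sigma^2$. Summing over $i=1,\dots,r$ gives $\E\|U_\tau\|^2\le\E\|U_0\|^2+r\sigma^2=m_{2,r}^2+r\sigma^2$. The right-hand side does not depend on $\tau$, so the supremum over $[0,T]$ obeys the same bound.

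No step is a real obstacle. The only point needing care is that the bound is finite, which requires $m_{2,r}<\infty$ (Assumption~\ref{ass:second_moment}); that hypothesis is exactly what makes the expectations well defined and justifies dropping the cross term.
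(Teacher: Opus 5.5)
Your proof is correct and follows essentially the same route as the paper's: both write the explicit OU solution, observe that the cross term vanishes because the stochastic part is centered and independent of $U_0$, and compute the noise variance via the It\^o isometry as $\sum_i\sigma^2(1-e^{-2\lambda_i\tau})\le r\sigma^2$. The only differences are cosmetic. You bound the contracted initial term coordinatewise via the convex-combination observation, where the paper uses $\|e^{-\Lambda\tau}\|_{\mathrm{op}}\le1$. Your version incidentally gives the slightly sharper $\sum_i\max\{\E U_{0,i}^2,\sigma^2\}$ before you loosen it to the stated bound.
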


\begin{proof}
Solve \eqref{eq:generic-forward-OU} explicitly:
\[
U_\tau = e^{-\bLambda\tau}U_0
 + \int_0^\tau e^{-\bLambda(\tau-s)}\sqrt{2\bLambda}\,\sigma\,dB_s.
\]
Then:
\[
\E\left[\norm{U_\tau}^2\right]
= \E\left[\norm{e^{-\bLambda\tau}U_0}^2\right]
+ \E\left[\Big\|\int_0^\tau e^{-\bLambda(\tau-s)}\sqrt{2\bLambda}\,\sigma\,dB_s\Big\|^2\right].
\]
The cross term vanishes because the stochastic integral has zero
mean and is independent of $U_0$.
For the first term, $\|e^{-\bLambda\tau}\|_{\mathrm{op}}\le 1$,
so $\E\norm{e^{-\bLambda\tau}U_0}^2\le \E\norm{U_0}^2$.
For the second term, by It\^{o}'s isometry:
\begin{align*}
\E\left[\Big\|\int_0^\tau e^{-\bLambda(\tau-s)}\sqrt{2\bLambda}\,\sigma\,dB_s\Big\|^2\right]
&= \int_0^\tau \tr\!\Bigl(e^{-\bLambda(\tau-s)}(2\bLambda\sigma^2)
 e^{-\bLambda(\tau-s)}\Bigr)\,ds.
\end{align*}
\begin{align*}
\int_0^\tau \tr\!\Bigl(e^{-\bLambda(\tau-s)}(2\bLambda\sigma^2)
e^{-\bLambda(\tau-s)}\Bigr)\,ds
&= \sum_{i=1}^r \int_0^\tau 2\lambda_i\sigma^2 e^{-2\lambda_i(\tau-s)}\,ds \\
&= \sum_{i=1}^r \sigma^2\!\left(1 - e^{-2\lambda_i\tau}\right) \\
&\le \sum_{i=1}^r \sigma^2 = r\sigma^2.
\end{align*}
Finally, $\E\norm{U_0}^2 = m_{2,r}^2$ by definition.
Combining gives $\sup_{\tau}\E\norm{U_\tau}^2\le m_{2,r}^2+r\sigma^2$.
\end{proof}

\begin{lemma}[Fisher bound from score regularity]\label{lem:fisher}
Under Assumption~\ref{ass:lipschitz} (applied to the coefficient-space
marginals $q_{r,t}$ on $\mathbb{R}^r$), for every $t \in [\tau_0, T]$ and every
diagonal positive semidefinite matrix
$M = \mathrm{diag}(m_1, \dots, m_r)$, $m_i \ge 0$,
\[
  \mathbb{E}_{U \sim q_{r,t}}\bigl[\|\nabla \log q_{r,t}(U)\|_M^2\bigr]
  \le L \,\mathrm{tr}(M).
\]
In particular, for $M = I_r$,
$\mathbb{E}_{U \sim q_{r,t}}\bigl[\|\nabla \log q_{r,t}(U)\|^2\bigr] \le L r$.
\end{lemma}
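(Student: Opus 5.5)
The plan is to integrate by parts coordinatewise. Write $s=\nabla\log q_{r,t}$ and $q=q_{r,t}$ for a fixed $t$. Then
\[
 \E_{U\sim q}\|s(U)\|_M^2=\sum_{i=1}^r m_i\int q\,(\partial_i\log q)^2\,\mathrm du=\sum_{i=1}^r m_i\int \partial_i q\;\partial_i\log q\,\mathrm du .
\]
This uses the identity $q\,\partial_i\log q=\partial_i q$. For each $i$ we integrate by parts on the cube $[-R,R]^r$. This gives
\[
 \int_{[-R,R]^r}\partial_i q\,\partial_i\log q\,\mathrm du
 =\int_{\partial[-R,R]^r}q\,\partial_i\log q\,n_i\,\mathrm dS-\int_{[-R,R]^r}q\,\partial_{ii}\log q\,\mathrm du .
\]
By the tail condition of Assumption~\ref{ass:lipschitz}, stated in Appendix~\ref{app:general-rates}, the boundary term vanishes as $R\to\infty$. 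The $C^2$ regularity of $q$ justifies taking second derivatives.

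Next we bound the second term. Since $s=\nabla\log q$ is $L$-Lipschitz and $C^1$, its Jacobian $\nabla^2\log q$ has operator norm at most $L$ everywhere. In particular each diagonal entry satisfies $-\partial_{ii}\log q(u)=-e_i^\top\nabla^2\log q(u)\,e_i\le L$. Hence
\[
 -\int q\,\partial_{ii}\log q\,\mathrm du\le L .
\]
Multiplying by $m_i\ge0$ and summing over $i$ yields $\E\|s\|_M^2\le L\sum_i m_i=L\,\tr(M)$. Taking $M=I_r$ gives the unweighted statement.

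The main obstacle is making the limit $R\to\infty$ rigorous, because the integrands need not be absolutely integrable a priori. I would handle this with monotone convergence on the left-hand side, since $q(\partial_i\log q)^2\ge0$. On the right-hand side I would split $-\partial_{ii}\log q=L-(L+\partial_{ii}\log q)$, where the second piece is nonnegative. The truncated integrals then converge, and the boundary-term hypothesis closes the argument. The restriction to $t\ge\tau_0$ only matters in the early-stopping variant, where the assumptions are imposed on those marginals.
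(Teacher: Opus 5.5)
Your proposal is correct and follows the paper's proof essentially step for step: coordinatewise integration by parts on the cubes $[-R,R]^r$, the boundary term vanishing by the tail condition, the bound $-\partial_{ii}\log q\le L$ from the Lipschitz score, monotone convergence on the left, and summation against the weights $m_i$. The only difference is cosmetic. The paper passes to the limit in the Hessian term by dominated convergence, since $|q\,\partial_{ii}\log q|\le Lq$ is integrable, so your splitting device is unnecessary there, though not wrong.
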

\begin{proof}
Write $q_{r,t}=e^{-V_t}$ for some $V_t\in C^2(\mathbb R^r)$, as
guaranteed by Assumption~\ref{ass:lipschitz}. Fix a coordinate
\(i\in\{1,\ldots,r\}\). Since \(q_{r,t}=e^{-V_t}\), we have
\[
 \partial_i q_{r,t}(x)
 =
 -q_{r,t}(x)\,\partial_i V_t(x).
\]
Therefore
\[
 \partial_i\!\left(q_{r,t}\,\partial_i V_t\right)
 =
 (\partial_i q_{r,t})\,\partial_i V_t
 +
 q_{r,t}\,\partial_{ii}^2 V_t
 =
 -q_{r,t}(\partial_i V_t)^2
 +
 q_{r,t}\,\partial_{ii}^2 V_t.
\]
Rearranging gives the pointwise identity
\[
 q_{r,t}(\partial_i V_t)^2
 =
 q_{r,t}\,\partial_{ii}^2 V_t
 -
 \partial_i\!\left(q_{r,t}\,\partial_i V_t\right).
\]

Integrating over the cube \(Q_R=[-R,R]^r\) and applying the divergence theorem,
\[
\begin{aligned}
 \int_{Q_R} q_{r,t}(x)(\partial_i V_t(x))^2\,dx
 &=
 \int_{Q_R} q_{r,t}(x)\,\partial_{ii}^2 V_t(x)\,dx \\
 &\quad
 -
 \int_{\partial Q_R}
 q_{r,t}(x)\,\partial_i V_t(x)\,n_i(x)\,dS(x).
\end{aligned}
\]
By the boundary condition in Assumption~\ref{ass:lipschitz}, the
boundary term converges to zero as \(R\to\infty\).

Since the score is \(L\)-Lipschitz and \(V_t\in C^2\), we have
\[
 \|\nabla^2 V_t(x)\|_{\mathrm{op}}\le L,
\]
and hence, for every \(i\),
\[
 \partial_{ii}^2 V_t(x)\le L.
\]
Moreover \(|\partial_{ii}^2 V_t(x)|\le L\), so dominated convergence applies to
the term
\[
 \int_{Q_R} q_{r,t}(x)\,\partial_{ii}^2V_t(x)\,dx,
\]
while the left-hand side converges by monotone convergence, its integrand
being nonnegative.
Letting \(R\to\infty\), we obtain
\[
 \mathbb E_{U\sim q_{r,t}}
 \left[
  (\partial_i V_t(U))^2
 \right]
 =
 \mathbb E_{U\sim q_{r,t}}
 \left[
  \partial_{ii}^2 V_t(U)
 \right]
 \le
 L.
\]

Since
\[
 \nabla\log q_{r,t} = -\nabla V_t,
\]
we have
\[
 (\partial_i\log q_{r,t})^2
 =
 (\partial_i V_t)^2.
\]
Therefore
\[
 \mathbb E_{U\sim q_{r,t}}
 \left[
  (\partial_i\log q_{r,t}(U))^2
 \right]
 \le
 L.
\]

Multiplying by \(m_i\ge0\) and summing over \(i=1,\ldots,r\) gives
\[
\begin{aligned}
 \mathbb E_{U\sim q_{r,t}}
 \left[
  \|\nabla\log q_{r,t}(U)\|_M^2
 \right]
 &=
 \sum_{i=1}^r
 m_i\,
 \mathbb E_{U\sim q_{r,t}}
 \left[
  (\partial_i\log q_{r,t}(U))^2
 \right] \\
 &\le
 \sum_{i=1}^r
 m_i L
 =
 L\,\tr(M).
\end{aligned}
\]
Taking \(M=I_r\) gives
\[
 \mathbb E_{U\sim q_{r,t}}
 \left[
  \|\nabla\log q_{r,t}(U)\|^2
 \right]
 \le
 Lr.
\]
This completes the proof.
\end{proof}
\begin{lemma}[Forward movement bound]\label{lem:move-forward}
Let $0\le\tau<\tau'\le T$ and set $\Delta:=\tau'-\tau$. Then
\[
\E\left[\norm{U_{\tau'}-U_\tau}^2\right]
\le 2\lambda_{\max}^2\Delta^2\!\left(\E[\|U_\tau\|^2]+r\sigma^2\right)
+ 4\sigma^2\tr(\bLambda)\Delta.
\]
In particular, using Lemma~\ref{lem:moment} and
$\tr(\bLambda)\le r\lambda_{\max}$,
\[
\E\left[\norm{U_{\tau'}-U_\tau}^2\right]
\le 4\Bigl(\lambda_{\max}^2\Delta^2(m_{2,r}^2+2r\sigma^2)
 +\sigma^2 r\lambda_{\max}\Delta\Bigr).
\]
\end{lemma}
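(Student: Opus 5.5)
We solve the forward equation explicitly over the interval $[\tau,\tau']$ and split the increment into a drift part and a noise part. Since $\Lambda$ is diagonal and the dynamics are linear,
\[
 U_{\tau'}=e^{-\Lambda\Delta}U_\tau+\int_\tau^{\tau'}e^{-\Lambda(\tau'-s)}\sigma\sqrt{2\Lambda}\,\mathrm dB_s,
\]
so that
\[
 U_{\tau'}-U_\tau=\bigl(e^{-\Lambda\Delta}-I\bigr)U_\tau+M,
\]
where $M$ is the stochastic integral. The integral $M$ has mean zero and is independent of $U_\tau$, so the cross term vanishes in expectation, exactly as in Lemma~\ref{lem:moment}. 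This gives
\[
 \E\|U_{\tau'}-U_\tau\|^2=\E\|(e^{-\Lambda\Delta}-I)U_\tau\|^2+\E\|M\|^2 .
\]
One could instead use $\|a+b\|^2\le2\|a\|^2+2\|b\|^2$ to avoid invoking independence; that only costs a factor $2$, which the stated constants $2$ and $4$ leave room for.

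\emph{Drift part.} Coordinatewise we have $|1-e^{-\lambda_i\Delta}|\le\lambda_i\Delta\le\lambda_{\max}\Delta$. Hence
\[
 \E\|(e^{-\Lambda\Delta}-I)U_\tau\|^2\le\lambda_{\max}^2\Delta^2\,\E\|U_\tau\|^2 .
\]
With the factor $2$ from the elementary inequality, this is at most $2\lambda_{\max}^2\Delta^2(\E\|U_\tau\|^2+r\sigma^2)$; the extra $r\sigma^2$ is harmless slack.

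\emph{Noise part.} By It\^o's isometry, as in Lemma~\ref{lem:moment},
\[
 \E\|M\|^2=\sum_i\sigma^2\bigl(1-e^{-2\lambda_i\Delta}\bigr)\le\sum_i2\lambda_i\sigma^2\Delta=2\sigma^2\tr(\Lambda)\Delta .
\]
Doubling this bound gives the stated $4\sigma^2\tr(\Lambda)\Delta$.

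\emph{Second display.} We insert $\E\|U_\tau\|^2\le m_{2,r}^2+r\sigma^2$ from Lemma~\ref{lem:moment}, which turns the first term into $2\lambda_{\max}^2\Delta^2(m_{2,r}^2+2r\sigma^2)$. We then use $\tr(\Lambda)\le r\lambda_{\max}$ in the second term and factor out $4$.

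There is no real obstacle. The only point needing care is the independence of $M$ from $\mathcal F_\tau$, which holds by the Markov/independent-increments property of Brownian motion. Alternatively, the factor-$2$ splitting inequality sidesteps independence entirely.
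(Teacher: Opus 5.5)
Your proof is correct, but it takes a somewhat different route from the paper's.

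\textbf{The paper's route.} It writes the increment in integral form, $U_{\tau'}-U_\tau=-\int_\tau^{\tau'}\Lambda U_s\,\mathrm ds+\int_\tau^{\tau'}\sigma\sqrt{2\Lambda}\,\mathrm dB_s$, and splits it with $\|a+b\|^2\le2\|a\|^2+2\|b\|^2$. It applies Cauchy--Schwarz in time to the drift integral, which gives $\Delta\int_\tau^{\tau'}\E\|\Lambda U_s\|^2\,\mathrm ds$. That step requires a moment bound $\E\|U_s\|^2\le\E\|U_\tau\|^2+r\sigma^2$ at every intermediate $s$, obtained from the explicit OU solution started at $\tau$; this is where the $r\sigma^2$ inside the first term comes from. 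It. \^o's isometry on the raw Brownian increment then gives $2\sigma^2\tr(\Lambda)\Delta$, which the splitting doubles to $4\sigma^2\tr(\Lambda)\Delta$.

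\textbf{Your route.} You use the explicit transition over the whole interval. You write the increment as $(e^{-\Lambda\Delta}-I)U_\tau+M$, with $M$ mean-zero and independent of $U_\tau$, and apply $1-e^{-x}\le x$ to both pieces. This yields the sharper estimate
$\E\|U_{\tau'}-U_\tau\|^2\le\lambda_{\max}^2\Delta^2\E\|U_\tau\|^2+2\sigma^2\tr(\Lambda)\Delta$.
It shows that both the factor $2$ and the $r\sigma^2$ term in the stated bound are slack. Your phrase ``with the factor $2$ from the elementary inequality'' mixes your two alternatives: with the independence identity no factor is needed, and the stated bound follows either way.

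\textbf{What each buys.} The paper's argument uses only the integral form of the SDE plus a uniform second-moment bound along the interval. It would therefore carry over to a nonlinear Lipschitz drift with no closed-form transition. Yours exploits the linearity of the OU drift, and for this case it is shorter and tighter.
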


\begin{proof}
\[
U_{\tau'}-U_\tau
= \int_\tau^{\tau'}(-\bLambda U_s)\,ds
+ \int_\tau^{\tau'}\sqrt{2\bLambda}\,\sigma\,dB_s.
\]
Since $(a+b)^2\le 2a^2+2b^2$,
\[
\E\left[\norm{U_{\tau'}-U_\tau}^2\right]
\le 2\E\left[\Big\|\int_\tau^{\tau'}\bLambda U_s\,ds\Big\|^2\right]
+2\E\left[\Big\|\int_\tau^{\tau'}\sqrt{2\bLambda}\,\sigma\,dB_s\Big\|^2\right].
\]
By Cauchy-Schwarz and Jensen:
\begin{align*}
\E\left[\Big\|\int_\tau^{\tau'}\bLambda U_s\,ds\Big\|^2\right]
&= \E\left[\left\langle\int_\tau^{\tau'}\bLambda U_s\,ds,
 \int_\tau^{\tau'}\bLambda U_s\,ds\right\rangle\right] \\
&\le \E\left[\left(\int_\tau^{\tau'}\norm{\bLambda U_s}\,ds\right)^2\right] \\
&\le \E\left[\left(\int_\tau^{\tau'}1\,ds\right)
 \left(\int_\tau^{\tau'}\norm{\bLambda U_s}^2\,ds\right)\right] \\
&= \Delta\int_\tau^{\tau'}\E\!\left[\norm{\bLambda U_s}^2\right]ds.
\end{align*}
By It\^{o}'s isometry:
\[
\E\left[\Big\|\int_\tau^{\tau'}\sqrt{2\bLambda}\,\sigma\,dB_s\Big\|^2\right]
= \int_\tau^{\tau'}\tr(2\bLambda\sigma^2)\,ds
= 2\sigma^2\tr(\bLambda)\Delta.
\]
Therefore:
\begin{align*}
\E\left[\norm{U_{\tau'}-U_\tau}^2\right]
&\le 2\Delta\int_\tau^{\tau'}\lambda_{\max}^2\E\!\left[\norm{U_s}^2\right]ds
+4\sigma^2\tr(\bLambda)\Delta \\
&= 2\lambda_{\max}^2\Delta\int_\tau^{\tau'}\E\!\left[\norm{U_s}^2\right]ds
+4\sigma^2\tr(\bLambda)\Delta.
\end{align*}
From the OU explicit solution:
\[
U_s = e^{-\bLambda(s-\tau)}U_\tau
+\int_\tau^s e^{-\bLambda(s-r)}\sqrt{2\bLambda}\,\sigma\,dB_r,
\]
so
\begin{align*}
\E[\|U_s\|^2]
&\le \E[\|e^{-\bLambda(s-\tau)}U_\tau\|^2]
 +\sigma^2\sum_{i=1}^r(1-e^{-2\lambda_i(s-\tau)}) \\
&\le \E[\|U_\tau\|^2]+r\sigma^2.
\end{align*}
Substituting:
\begin{align*}
\E\left[\norm{U_{\tau'}-U_\tau}^2\right]
&\le 2\lambda_{\max}^2\Delta\int_\tau^{\tau'}
 \!\left(\E[\|U_\tau\|^2]+r\sigma^2\right)ds
+4\sigma^2\tr(\bLambda)\Delta \\
&= 2\lambda_{\max}^2\Delta^2\!\left(\E[\|U_\tau\|^2]+r\sigma^2\right)
+4\sigma^2\tr(\bLambda)\Delta.
\end{align*}
From Lemma~\ref{lem:moment}:
\[
\E\left[\norm{U_{\tau'}-U_\tau}^2\right]
\le 2\lambda_{\max}^2\Delta^2\!\left(m_{2,r}^2+2r\sigma^2\right)
+4\sigma^2\tr(\bLambda)\Delta.
\]
Since $\tr(\bLambda)\le r\lambda_{\max}$,
\[
\E\left[\norm{U_{\tau'}-U_\tau}^2\right]
\le 2\lambda_{\max}^2\Delta^2\!\left(m_{2,r}^2+2r\sigma^2\right)
+4\sigma^2 r\lambda_{\max}\Delta.
\]
\end{proof}
\begin{lemma}[$\bLambda$-weighted forward movement bound]
\label{lem:move-forward-weighted}
Let \(0 \leq \tau < \tau' \leq T\) and set \(\Delta := \tau' - \tau\).
For the forward OU process in \(\mathbb{R}^K\) with rate matrix
\(\Lambda = \mathrm{diag}(\lambda_1,\ldots,\lambda_K)\), all \(\lambda_i > 0\):
\begin{equation}\label{eq:weighted-movement}
 \mathbb{E}\bigl[\|U_{\tau'} - U_\tau\|^2_\Lambda\bigr]
 \;\leq\;
 4\Bigl[\lambda_{\max}^2\Delta^2
 \Bigl(m_{2,\Lambda}^2 + \sigma^2\,\mathrm{tr}(\Lambda)\Bigr)
 + \sigma^2\Delta\,\mathrm{tr}(\Lambda^2)\Bigr].
\end{equation}
\end{lemma}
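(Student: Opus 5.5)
We follow the proof of Lemma~\ref{lem:move-forward}, but measure every quantity in the $\Lambda$-weighted norm. Since $\Lambda$ is diagonal, the weighted norm separates coordinatewise, and this keeps the trace structure visible. First write the increment through the integral form of \eqref{eq:forward-OU-coeff},
\[
 U_{\tau'}-U_\tau=-\int_\tau^{\tau'}\Lambda U_s\,\mathrm ds+\int_\tau^{\tau'}\sigma\sqrt{2\Lambda}\,\mathrm dB_s .
\]
Then apply $\|a+b\|_\Lambda^2\le2\|a\|_\Lambda^2+2\|b\|_\Lambda^2$ and bound the drift part and the martingale part separately.

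\emph{Martingale part.} By It\^o's isometry in the weighted norm,
\[
 \E\Bigl\|\int_\tau^{\tau'}\sigma\sqrt{2\Lambda}\,\mathrm dB_s\Bigr\|_\Lambda^2
 =\int_\tau^{\tau'}\tr\bigl(\Lambda^{1/2}(2\sigma^2\Lambda)\Lambda^{1/2}\bigr)\,\mathrm ds
 =2\sigma^2\Delta\,\tr(\Lambda^2).
\]
After the factor $2$, this gives the term $4\sigma^2\Delta\tr(\Lambda^2)$.

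\emph{Drift part.} By Cauchy--Schwarz in time, as in Lemma~\ref{lem:move-forward},
\[
 \E\Bigl\|\int_\tau^{\tau'}\Lambda U_s\,\mathrm ds\Bigr\|_\Lambda^2\le\Delta\int_\tau^{\tau'}\E\|\Lambda U_s\|_\Lambda^2\,\mathrm ds .
\]
Next use $\|\Lambda u\|_\Lambda^2=\sum_i\lambda_i^3u_i^2\le\lambda_{\max}^2\|u\|_\Lambda^2$.

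It remains to bound the weighted forward moment $\E\|U_s\|_\Lambda^2$ uniformly in $s$. This is the weighted analogue of Lemma~\ref{lem:moment}. Coordinatewise,
\[
 \E U_{s,i}^2=e^{-2\lambda_is}\E U_{0,i}^2+\sigma^2\bigl(1-e^{-2\lambda_is}\bigr)\le\E U_{0,i}^2+\sigma^2 .
\]
Multiplying by $\lambda_i$ and summing gives
\[
 \E\|U_s\|_\Lambda^2\le m_{2,\Lambda}^2+\sigma^2\tr(\Lambda).
\]
This bound is applied directly at time $s$ and bypasses the intermediate step of Lemma~\ref{lem:move-forward} that restarted from $U_\tau$ and paid an extra $r\sigma^2$. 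The drift contribution is therefore at most $\lambda_{\max}^2\Delta^2\bigl(m_{2,\Lambda}^2+\sigma^2\tr(\Lambda)\bigr)$, and after the factor $2$ it fits within the stated constant $4$.

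Adding the two contributions gives \eqref{eq:weighted-movement}. The calculation is routine, and the only point requiring care is the weighted It\^o isometry. There the weight must appear as $\tr(\Lambda^2)$ rather than $\lambda_{\max}\tr(\Lambda)$, which follows from computing the trace $\tr\bigl(\Lambda\cdot2\sigma^2\Lambda\bigr)$ exactly for diagonal matrices. The cross term needs no independence argument, because the elementary inequality $\|a+b\|_\Lambda^2\le2\|a\|_\Lambda^2+2\|b\|_\Lambda^2$ removes it.
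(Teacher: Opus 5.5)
Your proposal is correct and follows the paper's proof essentially step by step. Both split the increment with $\|a+b\|_\Lambda^2\le2\|a\|_\Lambda^2+2\|b\|_\Lambda^2$ and apply Cauchy--Schwarz in time with $\|\Lambda u\|_\Lambda^2\le\lambda_{\max}^2\|u\|_\Lambda^2$. Both use the pointwise bound $\E\|U_s\|_\Lambda^2\le m_{2,\Lambda}^2+\sigma^2\tr(\Lambda)$ from the explicit OU marginal and the weighted It\^o isometry $2\sigma^2\Delta\tr(\Lambda^2)$, which yields the sharper drift constant $2$ that is then loosened to $4$.
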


\begin{proof}
From the SDE
$dU_t=-\bLambda U_t\,dt+\sigma\sqrt{2\bLambda}\,dB_t$,
the increment satisfies
\[
U_{\tau'}-U_\tau
=\int_\tau^{\tau'}(-\bLambda U_s)\,ds
+\int_\tau^{\tau'}\sigma\sqrt{2\bLambda}\,dB_s.
\]
By $(a+b)^2\le 2(a^2+b^2)$ applied in the
$\bLambda$-norm:
\begin{equation}
\E\|U_{\tau'}-U_\tau\|^2_{\bLambda}
\le 2\,\E\!\left\|
 \int_\tau^{\tau'}\bLambda U_s\,ds
 \right\|^2_{\bLambda}
+2\,\E\!\left\|
 \int_\tau^{\tau'}\sigma\sqrt{2\bLambda}\,dB_s
 \right\|^2_{\bLambda}.
\label{eq:split-weighted}
\end{equation}

\medskip\noindent
\textbf{Drift term.}
By the Cauchy-Schwarz inequality:
\[
\left\|\int_\tau^{\tau'}\bLambda U_s\,ds
\right\|^2_{\bLambda}
\le\Delta\int_\tau^{\tau'}
 \|\bLambda U_s\|^2_{\bLambda}\,ds.
\]
Now:
\[
\|\bLambda U_s\|^2_{\bLambda}
=\sum_i\lambda_i(\lambda_i U_{s,i})^2
=\sum_i\lambda_i^3 U_{s,i}^2
\le\lambda_{\max}\sum_i\lambda_i^2 U_{s,i}^2
=\lambda_{\max}\|\bLambda^{1/2}U_s\|^2_{\bLambda}.
\]
Since $\|\bLambda^{1/2}U_s\|^2_{\bLambda}
=\sum_i\lambda_i\cdot\lambda_i U_{s,i}^2
=\|U_s\|^2_{\bLambda^2}
\le\lambda_{\max}\|U_s\|^2_{\bLambda}$,
we have:
\[
\|\bLambda U_s\|^2_{\bLambda}
\le\lambda_{\max}^2\|U_s\|^2_{\bLambda}.
\]
Therefore:
\begin{equation}
\mathbb{E}\left\|\int_{\tau}^{\tau'} \Lambda U_s\,ds\right\|^2_\Lambda
\;\leq\;
\lambda_{\max}^2 \Delta^2
\Bigl(m_{2,\Lambda}^2 + \sigma^2\,\mathrm{tr}(\Lambda)\Bigr),
\label{eq:drift-weighted}
\end{equation}
here we used the pointwise bound
\begin{equation*}
 \mathbb{E}\|U_s\|^2_\Lambda
 \;=\; \sum_i \lambda_i\,\mathbb{E}[U_{s,i}^2]
 \;=\; \sum_i \lambda_i
   \Bigl[e^{-2\lambda_i s}\mathbb{E}[U_{0,i}^2]
    + \sigma^2(1 - e^{-2\lambda_i s})\Bigr]
 \;\leq\; \sum_i \lambda_i\,\mathbb{E}[U_{0,i}^2]
   + \sigma^2\sum_i \lambda_i
\end{equation*}
\begin{equation}\label{eq:ou-second-moment-lambda}
 \;=\; m_{2,\Lambda}^2 + \sigma^2\,\mathrm{tr}(\Lambda),
\end{equation}
which follows from the explicit OU marginal
$\mathbb{E}[U_{s,i}^2] = e^{-2\lambda_i s}\mathbb{E}[U_{0,i}^2]
+ \sigma^2(1-e^{-2\lambda_i s})$ by multiplying by $\lambda_i$ and summing;
the bound holds for all $s \geq 0$ and requires no knowledge of $T$.

\medskip\noindent
\textbf{Noise term.}
By It\^o's isometry in the $\bLambda$-norm:
\begin{align}
\E\!\left\|\int_\tau^{\tau'}
 \sigma\sqrt{2\bLambda}\,dB_s
 \right\|^2_{\bLambda}
&=\int_\tau^{\tau'}
 \tr\!\bigl(\bLambda\cdot
 \sigma\sqrt{2\bLambda}\cdot
 (\sigma\sqrt{2\bLambda})^\top\bigr)\,ds
\nonumber\\
&=\int_\tau^{\tau'}
 \tr(2\sigma^2\bLambda^2)\,ds
\nonumber\\
&=2\sigma^2\Delta\,\tr(\bLambda^2).
\label{eq:noise-weighted}
\end{align}
Here we used that for a diagonal diffusion
coefficient $G=\sigma\sqrt{2\bLambda}$,
the It\^o isometry gives
$\E\|\int_\tau^{\tau'}G\,dB_s\|^2_{\bLambda}
=\int_\tau^{\tau'}\tr(\bLambda GG^\top)\,ds
=\int_\tau^{\tau'}\tr(2\sigma^2\bLambda^2)\,ds$,
since $\bLambda GG^\top
=\bLambda\cdot 2\sigma^2\bLambda=2\sigma^2\bLambda^2$.

\medskip\noindent
\textbf{Combining.}
Substituting~\eqref{eq:drift-weighted}
and~\eqref{eq:noise-weighted}
into~\eqref{eq:split-weighted} gives
\[
 \E\|U_{\tau'}-U_\tau\|^2_{\bLambda}
 \le
 2\lambda_{\max}^2\Delta^2
 \bigl(m_{2,\bLambda}^2+\sigma^2\tr(\bLambda)\bigr)
 +
 4\sigma^2\Delta\,\tr(\bLambda^2).
\]
Loosening constants yields the stated bound
\[
 \E\|U_{\tau'}-U_\tau\|^2_{\bLambda}
 \le
 4\Bigl[
  \lambda_{\max}^2\Delta^2
  \bigl(m_{2,\bLambda}^2+\sigma^2\tr(\bLambda)\bigr)
  +
  \sigma^2\Delta\,\tr(\bLambda^2)
 \Bigr].
\]
\end{proof}

The next two lemmas and one corollary control how the score changes
over one diagonal OU step. They share the same proof skeleton (change
of variables, denoising identity, conditional Brascamp-Lieb). They
differ only in the norm and in how the Gaussian covariance
$\Sigma=\sigma^2(I-D^2)$ enters the remainder:
\begin{itemize}
\item Lemma~\ref{lem:score-perturb} is the unweighted Euclidean bound, with remainder
 $L\sigma\sqrt{d\zeta}$;
\item Corollary~\ref{lem:trace_aware} is the same estimate written in trace form, with
 $L\sqrt{\mathrm{tr}(\Sigma)}$ in place of $L\sigma\sqrt{d\zeta}$;
\item Lemma~\ref{lem:trace_aware_weighted} is the $\Lambda$-weighted, $\kappa$-dependent version
 needed for the anisotropic analysis.
\end{itemize}

Throughout Lemmas~\ref{lem:score-perturb}-\ref{lem:trace_aware_weighted} and
Corollary~\ref{lem:trace_aware}, $d$ denotes a generic dimension; in
Appendix~\ref{sec:app:C} they are applied with $d = r$, the dimension of
the retained coefficients, and not the ambient dimension.
\begin{lemma}[Score perturbation for a diagonal OU step]
\label{lem:score-perturb}
Let $Y\sim q$ on $\R^d$ with a strictly positive $C^2$ density and
$L$-Lipschitz score $s(y):=\nabla\log q(y)$. Assume that the differentiations
under the integral sign, integrations by parts, and conditional Brascamp-Lieb
steps used below are justified; for instance, this holds under standard
smoothness and tail-decay conditions ensuring that the required boundary terms
vanish.
Let $D=\diag(\rho_1,\dots,\rho_d)$ with
\[
\rho_i\in[1-\zeta,1),\qquad \zeta\in(0,1/2].
\]
Define
\[
\Sigma:=\sigma^2(I-D^2),
\qquad
Y':=DY,
\qquad
X:=Y'+\varepsilon,
\qquad
\varepsilon\sim\mathcal N(0,\Sigma)
\ \text{independent of }Y.
\]
Let $\tilde q:=\law(X)$ and $\tilde s(u):=\nabla\log \tilde q(u)$.
If
\[
16L\sigma^2\zeta\le 1,
\]
then for every $u\in\R^d$,
\[
\|\tilde s(u)-s(u)\|
\;\le\;
C\Bigl(
L\zeta\|u\|
+
(1+L\sigma^2)\zeta\,\|s(u)\|
+
L\sigma\sqrt{d\zeta}
\Bigr),
\]
for a universal constant $C>0$. If moreover
$\zeta\le\min\{1/8,\,1/(128L\sigma^2)\}$, then, with a larger universal
constant, the same perturbation bound may be written with the later-time score
\(\tilde s\) in the middle term:
\[
\|\tilde s(u)-s(u)\|
\;\le\;
C\Bigl(
L\zeta\|u\|
+
(1+L\sigma^2)\zeta\,\|\tilde s(u)\|
+
L\sigma\sqrt{d\zeta}
\Bigr).
\]
\end{lemma}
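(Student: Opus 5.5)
The proof rests on the convolution form of $\widetilde q$. It writes $\widetilde s$ as a posterior expectation of the rescaled score of $Y'$, then controls the posterior spread of $Y'$ given $X=u$ with a conditional Brascamp--Lieb inequality. Since $Y'=DY$, the law of $Y'$ has density $q_{Y'}(y')=q(D^{-1}y')/\det D$ and score $s_{Y'}(y')=D^{-1}s(D^{-1}y')$. Then $\widetilde q=q_{Y'}*\N(0,\Sigma)$, and differentiating under the integral gives the denoising identity
\[
 \widetilde s(u)=\E\bigl[D^{-1}s(D^{-1}Y')\,\big|\,X=u\bigr].
\]
Differentiating the Gaussian factor instead gives the Gaussian Tweedie form $\widetilde s(u)=\Sigma^{-1}\E[Y'-u\mid X=u]$. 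Hence
\[
 \E[Y'-u\mid X=u]=\Sigma\,\widetilde s(u).
\]

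\emph{Step 1: decomposition.} Write
\[
 \widetilde s(u)-s(u)=\E\bigl[(D^{-1}-I)s(D^{-1}Y')\mid X=u\bigr]+\E\bigl[s(D^{-1}Y')-s(u)\mid X=u\bigr].
\]
Since $\rho_i\ge1-\zeta$ and $\zeta\le1/2$, we have $\|D^{-1}-I\|_{\op}\le2\zeta$. The Lipschitz property gives
\[
 \|s(D^{-1}Y')-s(u)\|\le L\|D^{-1}Y'-Y'\|+L\|Y'-u\|\le 2L\zeta\bigl(\|u\|+\|Y'-u\|\bigr)+L\|Y'-u\|.
\]
Similarly, $\|s(D^{-1}Y')\|\le\|s(u)\|+L\|D^{-1}Y'-u\|$. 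So everything reduces to bounding the posterior mean $m(u):=\E[\|Y'-u\|\mid X=u]$.

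\emph{Step 2: posterior spread (main obstacle).} Bound
\[
 m(u)\le\|\E[Y'-u\mid X=u]\|+\bigl(\tr\Cov(Y'\mid X=u)\bigr)^{1/2}.
\]
The first term is at most $\|\Sigma\|_{\op}\|\widetilde s(u)\|\le2\sigma^2\zeta\|\widetilde s(u)\|$, using $1-\rho_i^2\le2\zeta$. For the second term, note that the posterior density is proportional to $q_{Y'}(y')\exp\bigl(-\tfrac12(u-y')^\top\Sigma^{-1}(u-y')\bigr)$. The Hessian of its negative log-density is at least
\[
 \Sigma^{-1}-LD^{-2}\succeq\Bigl(\frac1{2\sigma^2\zeta}-4L\Bigr)I\succeq\frac1{4\sigma^2\zeta}I
\]
once $16L\sigma^2\zeta\le1$. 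Brascamp--Lieb then gives $\tr\Cov(Y'\mid X=u)\le4d\sigma^2\zeta$. I would first try to justify this step exactly as the statement allows, that is, by assuming the integrations by parts for the posterior. If needed, the fallback is to verify strong log-concavity directly from $\|\nabla^2\log q\|_{\op}\le L$. Altogether,
\[
 m(u)\le2\sigma^2\zeta\|\widetilde s(u)\|+2\sigma\sqrt{d\zeta}.
\]

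\emph{Step 3: assembly and absorption.} Inserting Step 2 into Step 1 gives a bound of the form
\[
 \|\widetilde s-s\|\le C\bigl(L\zeta\|u\|+\zeta\|s(u)\|+L\sigma^2\zeta\|\widetilde s(u)\|+L\sigma\sqrt{d\zeta}\bigr),
\]
where $\zeta\cdot L\sigma\sqrt{d\zeta}$-type cross terms are dominated. For the first form, substitute $\|\widetilde s\|\le\|s\|+\|\widetilde s-s\|$. Because $CL\sigma^2\zeta\le1/2$ under the smallness hypothesis (after adjusting $C$, or under the stronger $\zeta\le1/(128L\sigma^2)$), the resulting $\|\widetilde s-s\|$ term on the right is absorbed into the left. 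This yields the middle term $(1+L\sigma^2)\zeta\|s(u)\|$. For the second form, substitute instead $\|s\|\le\|\widetilde s\|+\|\widetilde s-s\|$ and absorb $C\zeta\|\widetilde s-s\|$, using $\zeta\le1/8$ and a larger universal constant. This yields the version with $\widetilde s$.
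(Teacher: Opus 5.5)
Your proposal is correct and follows essentially the paper's route: the change of variables $s_{Y'}=D^{-1}s(D^{-1}\,\cdot)$, the denoising and Tweedie identities, conditional Brascamp--Lieb, and then absorption. The one real difference is that you bound the posterior mean shift directly by $\|\Sigma\widetilde s(u)\|$, whereas the paper re-expands it through $s_{Y'}(u)$ and solves a self-consistent inequality for $\E_{\pi_u}\|\varepsilon\|$. Your version gives a mixed intermediate bound, from which the later-time form needs only $\zeta\le1/8$ on top of $16L\sigma^2\zeta\le1$, rather than the paper's $\zeta\le1/(128L\sigma^2)$.

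For the first form, make the absorption constant explicit instead of ``adjusting $C$''. Your own estimates put the coefficient $2(1+2\zeta)^2L\sigma^2\zeta\le8L\sigma^2\zeta\le\tfrac12$ in front of $\|\widetilde s(u)\|$, and this is exactly what $16L\sigma^2\zeta\le1$ guarantees and what makes the absorption legitimate. Do not fall back on $\zeta\le1/(128L\sigma^2)$ here, since that would prove the first form only under a stronger hypothesis than the one stated.
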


\begin{proof}
Since $\rho_i<1$ for all $i$, the covariance matrix
$\Sigma=\sigma^2(I-D^2)$ is positive definite.

\medskip
\noindent
Let $q_{Y'}$ be the density of $Y'=DY$. By the change-of-variables formula,
\[
q_{Y'}(u)=|\det D|^{-1}q(D^{-1}u),
\]
hence
\[
s_{Y'}(u):=\nabla\log q_{Y'}(u)=D^{-1}s(D^{-1}u).
\]
Since $\rho_i\in[1-\zeta,1)$ and $\zeta\le 1/2$, we have
$\rho_i\ge 1/2$, so
\[
\|D^{-1}\|_{\op}\le 2,
\qquad
\|D^{-1}-I\|_{\op}\le 2\zeta.
\]
Therefore
\begin{align}
\|s_{Y'}(u)-s(u)\|
&=\|D^{-1}s(D^{-1}u)-s(u)\| \notag\\
&\le \|D^{-1}\|_{\op}\,\|s(D^{-1}u)-s(u)\|
 +\|D^{-1}-I\|_{\op}\,\|s(u)\| \notag\\
&\le 2L\|D^{-1}u-u\|+2\zeta\|s(u)\| \notag\\
&\le 4L\zeta\|u\|+2\zeta\|s(u)\|.
\label{eq:syprime-minus-s}
\end{align}
In particular,
\begin{equation}
\|s_{Y'}(u)\|
\le \|s(u)\|+\|s_{Y'}(u)-s(u)\|
\le 4L\zeta\|u\|+(1+2\zeta)\|s(u)\|.
\label{eq:syprime-bound}
\end{equation}

\medskip
\noindent
Since $\Sigma\succ 0$, the density of $X=Y'+\varepsilon$ is
\[
\tilde q(u)=\int_{\R^d} q_{Y'}(y)\,\varphi_\Sigma(u-y)\,dy,
\]
where $\varphi_\Sigma$ is the Gaussian density with covariance $\Sigma$.

Differentiating under the integral sign and using
$\nabla_u\varphi_\Sigma(u-y)=-\Sigma^{-1}(u-y)\varphi_\Sigma(u-y)$,
we obtain
\[
\tilde s(u)
=
-\Sigma^{-1}
\frac{\int_{\R^d}(u-y)\,q_{Y'}(y)\varphi_\Sigma(u-y)\,dy}
{\tilde q(u)}
=
-\Sigma^{-1}\bigl(u-\E[Y'\mid X=u]\bigr).
\]
Equivalently, if we set
\[
\bar\varepsilon:=\E[u-Y'\mid X=u],
\]
then
\begin{equation}
\tilde s(u)=-\Sigma^{-1}\bar\varepsilon.
\label{eq:posterior-score}
\end{equation}

Also, integrating by parts in $y$,
\begin{align*}
\nabla \tilde q(u)
&=
\int q_{Y'}(y)\,\nabla_u\varphi_\Sigma(u-y)\,dy \\
&=
-\int q_{Y'}(y)\,\nabla_y\varphi_\Sigma(u-y)\,dy \\
&=
\int \nabla_y q_{Y'}(y)\,\varphi_\Sigma(u-y)\,dy \\
&=
\int q_{Y'}(y)\,s_{Y'}(y)\,\varphi_\Sigma(u-y)\,dy.
\end{align*}
Dividing by $\tilde q(u)$ yields
\begin{equation}
\tilde s(u)=\E[s_{Y'}(Y')\mid X=u].
\label{eq:denoising-identity}
\end{equation}

For all $u,v\in\R^d$,
\[
\|s_{Y'}(u)-s_{Y'}(v)\|
=
\|D^{-1}(s(D^{-1}u)-s(D^{-1}v))\|
\le
\|D^{-1}\|_{\op}^2 L \|u-v\|.
\]
Hence $s_{Y'}$ is $L'$-Lipschitz with
\begin{equation}
L'\le 4L.
\label{eq:Lprime}
\end{equation}

Let
\[
\varepsilon:=u-Y'.
\]
Conditioned on $X=u$, the variable $\varepsilon$ has density
\[
\pi_u(\varepsilon)\propto
q_{Y'}(u-\varepsilon)
\exp\!\Bigl(-\frac12\varepsilon^\top\Sigma^{-1}\varepsilon\Bigr).
\]
Write
\[
V_u(\varepsilon)
:=
-\log q_{Y'}(u-\varepsilon)
+\frac12\varepsilon^\top\Sigma^{-1}\varepsilon.
\]
Then
\[
\nabla^2 V_u(\varepsilon)
=
-\nabla^2 \log q_{Y'}(u-\varepsilon)+\Sigma^{-1}.
\]
Because $s_{Y'}=\nabla\log q_{Y'}$ is $L'$-Lipschitz,
\[
-\nabla^2\log q_{Y'}(z)\succeq -L' I
\qquad\text{for all }z\in\R^d,
\]
so
\[
\nabla^2V_u(\varepsilon)\succeq \Sigma^{-1}-L'I.
\]
Now
\[
\|\Sigma\|_{\op}
=
\sigma^2\max_i(1-\rho_i^2)
\le 2\sigma^2\zeta.
\]
Using \eqref{eq:Lprime} and the hypothesis $16L\sigma^2\zeta\le 1$,
\[
L'\|\Sigma\|_{\op}
\le 4L\cdot 2\sigma^2\zeta
\le \frac12.
\]
Therefore
\[
\nabla^2V_u(\varepsilon)\succeq \frac12\,\Sigma^{-1}
\qquad\text{for all }\varepsilon.
\]
By the Brascamp-Lieb inequality,
\[
\Cov_{\pi_u}(\varepsilon)\preceq 2\Sigma.
\]
Hence
\begin{equation}
\E_{\pi_u}\|\varepsilon-\bar\varepsilon\|^2
\le 2\,\tr(\Sigma),
\qquad
\bar\varepsilon:=\E_{\pi_u}[\varepsilon].
\label{eq:posterior-var}
\end{equation}
By Jensen,
\begin{equation}
\E_{\pi_u}\|\varepsilon\|
\le
\|\bar\varepsilon\|+\sqrt{2\,\tr(\Sigma)}.
\label{eq:Eeps-basic}
\end{equation}

From \eqref{eq:posterior-score} and \eqref{eq:denoising-identity},
\[
\bar\varepsilon
=
-\Sigma \tilde s(u)
=
-\Sigma \E_{\pi_u}[s_{Y'}(u-\varepsilon)].
\]
Therefore
\begin{align}
\|\bar\varepsilon\|
&\le
\|\Sigma\|_{\op}\,
\E_{\pi_u}\|s_{Y'}(u-\varepsilon)\| \notag\\
&\le
\|\Sigma\|_{\op}
\Bigl(
\|s_{Y'}(u)\| + L' \E_{\pi_u}\|\varepsilon\|
\Bigr).
\label{eq:bareps-1}
\end{align}
Combining \eqref{eq:Eeps-basic} with \eqref{eq:bareps-1},
\[
\|\bar\varepsilon\|
\le
\|\Sigma\|_{\op}\|s_{Y'}(u)\|
+
\|\Sigma\|_{\op}L'
\Bigl(\|\bar\varepsilon\|+\sqrt{2\,\tr(\Sigma)}\Bigr).
\]
Since $L'\|\Sigma\|_{\op}\le 1/2$,
\[
\|\bar\varepsilon\|
\le
2\|\Sigma\|_{\op}\|s_{Y'}(u)\|
+
2L'\|\Sigma\|_{\op}\sqrt{2\,\tr(\Sigma)}.
\]
Substituting into \eqref{eq:Eeps-basic} and using again
$L'\|\Sigma\|_{\op}\le 1/2$,
\begin{equation}
\E_{\pi_u}\|\varepsilon\|
\le
2\|\Sigma\|_{\op}\|s_{Y'}(u)\|
+
2\sqrt{2\,\tr(\Sigma)}.
\label{eq:Eeps-final}
\end{equation}

Using \eqref{eq:denoising-identity}, the Lipschitz property of $s_{Y'}$,
and \eqref{eq:Eeps-final},
\begin{align*}
\|\tilde s(u)-s_{Y'}(u)\|
&=
\Bigl\|
\E_{\pi_u}\bigl[s_{Y'}(u-\varepsilon)-s_{Y'}(u)\bigr]
\Bigr\| \\
&\le
L' \E_{\pi_u}\|\varepsilon\| \\
&\le
2L'\|\Sigma\|_{\op}\|s_{Y'}(u)\|
+
2L'\sqrt{2\,\tr(\Sigma)}.
\end{align*}
Using $L'\le 4L$, $\|\Sigma\|_{\op}\le 2\sigma^2\zeta$, and
\[
\tr(\Sigma)
=
\sigma^2\sum_{i=1}^d(1-\rho_i^2)
\le 2\sigma^2\zeta d,
\]
we get
\begin{equation}
\|\tilde s(u)-s_{Y'}(u)\|
\le
16L\sigma^2\zeta\,\|s_{Y'}(u)\|
+
C_1 L\sigma\sqrt{d\zeta},
\label{eq:tilde-minus-syprime}
\end{equation}
for a universal constant $C_1>0$.

Now substitute \eqref{eq:syprime-bound} into \eqref{eq:tilde-minus-syprime}:
\begin{align*}
\|\tilde s(u)-s_{Y'}(u)\|
&\le
16L\sigma^2\zeta
\Bigl(
4L\zeta\|u\|+(1+2\zeta)\|s(u)\|
\Bigr)
+
C_1L\sigma\sqrt{d\zeta}.
\end{align*}
Since $16L\sigma^2\zeta\le 1$ and $\zeta\le 1/2$,
\[
16L\sigma^2\zeta\cdot 4L\zeta\|u\|
\le 4L\zeta\|u\|,
\qquad
16L\sigma^2\zeta(1+2\zeta)\|s(u)\|
\le 32L\sigma^2\zeta\,\|s(u)\|.
\]
Hence
\begin{equation}
\|\tilde s(u)-s_{Y'}(u)\|
\le
4L\zeta\|u\|
+
32L\sigma^2\zeta\,\|s(u)\|
+
C_1L\sigma\sqrt{d\zeta}.
\label{eq:tilde-minus-syprime-2}
\end{equation}

By \eqref{eq:syprime-minus-s} and \eqref{eq:tilde-minus-syprime-2},
\begin{align*}
\|\tilde s(u)-s(u)\|
&\le
\|\tilde s(u)-s_{Y'}(u)\|+\|s_{Y'}(u)-s(u)\| \\
&\le
\Bigl(4L\zeta\|u\|
+
32L\sigma^2\zeta\,\|s(u)\|
+
C_1L\sigma\sqrt{d\zeta}\Bigr)
+
\Bigl(4L\zeta\|u\|+2\zeta\|s(u)\|\Bigr) \\
&\le
C\Bigl(
L\zeta\|u\|
+
(1+L\sigma^2)\zeta\,\|s(u)\|
+
L\sigma\sqrt{d\zeta}
\Bigr),
\end{align*}
for a universal constant $C>0$.

For the alternative form, the display before the last one gives
\[
 \|\tilde s(u)-s(u)\|
 \le8L\zeta\|u\|+b\,\|s(u)\|+C_1L\sigma\sqrt{d\zeta},
 \qquad b:=(32L\sigma^2+2)\zeta.
\]
Substituting $\|s(u)\|\le\|\tilde s(u)\|+\|\tilde s(u)-s(u)\|$ yields
$(1-b)\|\tilde s(u)-s(u)\|\le8L\zeta\|u\|+b\|\tilde s(u)\|
+C_1L\sigma\sqrt{d\zeta}$. Under
$\zeta\le\min\{1/8,1/(128L\sigma^2)\}$ we have $32L\sigma^2\zeta\le1/4$
and $2\zeta\le1/4$, so $b\le1/2$, and dividing by $1-b\ge1/2$ gives the
alternative form with the constant doubled. (Without the extra condition the
coefficient $b$ can exceed one: $L\sigma^2=1$, $\zeta=1/32$ gives
$b=34/32$.)
\end{proof}

\begin{corollary}[Trace-aware one-step score perturbation]
\label{lem:trace_aware}
Under the hypotheses of Lemma~\ref{lem:score-perturb},
\[
\bigl\|\tilde s(u)-s(u)\bigr\|
\le C\Bigl(
 L\zeta\|u\|
 +(1+L\sigma^2)\zeta\|s(u)\|
 +L\sqrt{\mathrm{tr}(\Sigma)}
\Bigr),
\]
for a universal constant $C>0$.
\end{corollary}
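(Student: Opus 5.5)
The plan is to reread the proof of Lemma~\ref{lem:score-perturb} and check where the dimension $d$ enters. It enters at one step only: the bound \eqref{eq:posterior-var} gives $\E_{\pi_u}\|\varepsilon-\bar\varepsilon\|^2\le2\tr(\Sigma)$, and only after that is $\tr(\Sigma)$ replaced by $2\sigma^2\zeta d$. I will therefore keep $\tr(\Sigma)$ unexpanded throughout. The first two terms of the bound, $L\zeta\|u\|$ and $(1+L\sigma^2)\zeta\|s(u)\|$, depend only on $\|\Sigma\|_{\op}\le2\sigma^2\zeta$, $\|D^{-1}\|_{\op}\le2$, $\|D^{-1}-I\|_{\op}\le2\zeta$ and $L'\le4L$. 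These are used exactly as before.

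Concretely, the steps are as follows.
\begin{enumerate}
\item Repeat \eqref{eq:syprime-minus-s} and \eqref{eq:syprime-bound} verbatim.
\item Repeat the denoising identities \eqref{eq:posterior-score} and \eqref{eq:denoising-identity}.
\item Apply the Brascamp--Lieb step. Under $16L\sigma^2\zeta\le1$ it still gives $\Cov_{\pi_u}(\varepsilon)\preceq2\Sigma$.
\item Use the fixed-point argument to obtain \eqref{eq:Eeps-final}, $\E_{\pi_u}\|\varepsilon\|\le2\|\Sigma\|_{\op}\|s_{Y'}(u)\|+2\sqrt{2\tr(\Sigma)}$.
\item Deduce $\|\tilde s(u)-s_{Y'}(u)\|\le16L\sigma^2\zeta\|s_{Y'}(u)\|+8\sqrt2\,L\sqrt{\tr(\Sigma)}$, stopping before the substitution $\tr(\Sigma)\le2\sigma^2\zeta d$.
\item Combine with \eqref{eq:syprime-bound} and the triangle inequality with \eqref{eq:syprime-minus-s}, as in the original proof. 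This yields the claim with remainder $C L\sqrt{\tr(\Sigma)}$.
\end{enumerate}

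A direct alternative is to derive the corollary from the lemma's final display together with $\tr(\Sigma)\le2\sigma^2\zeta d$. That route fails, because the inequality goes the wrong way: it would give $\sqrt{\tr\Sigma}\lesssim\sigma\sqrt{d\zeta}$, not the reverse. So the argument really has to reopen the proof at the intermediate inequality rather than cite the lemma's conclusion.

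The only real obstacle is bookkeeping: confirming that no other step silently uses the scalar bound $\tr(\Sigma)\le2\sigma^2\zeta d$, in particular in how the constant $C_1$ was formed. The remaining constants are universal, so the new $C$ is universal as well. If one also wants the variant with $\|\tilde s(u)\|$ in the middle term, the absorption argument at the end of Lemma~\ref{lem:score-perturb} carries over unchanged under $\zeta\le\min\{1/8,1/(128L\sigma^2)\}$. The reason is that the coefficient $b=(32L\sigma^2+2)\zeta$ does not involve the trace term.
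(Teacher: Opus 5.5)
Your proposal is correct and matches the paper's own proof. The paper likewise notes that $d$ enters the proof of Lemma~\ref{lem:score-perturb} only through the final substitution $\tr(\Sigma)\le 2\sigma^2\zeta d$, and obtains the corollary by stopping just before that substitution; your constants in step~5 ($16L\sigma^2\zeta$ and $8\sqrt2\,L$) and your remark that the corollary cannot be read off from the lemma's stated conclusion are both right.
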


\begin{proof}
The proof of Lemma~\ref{lem:score-perturb} never uses the dimension $d$ except through the estimate
\[
\mathrm{tr}(\Sigma)
=\sigma^2\sum_{i=1}^d(1-\rho_i^2)
\le 2\sigma^2\zeta d,
\]
which produces the remainder $L\sigma\sqrt{d\zeta}$.
Stopping at $\mathrm{tr}(\Sigma)$ yields the claim.
The later-time form with $\|\tilde s(u)\|$ in place of $\|s(u)\|$
follows exactly as in Lemma~\ref{lem:score-perturb}.
\end{proof}

\begin{lemma}[$\bLambda$-weighted score perturbation,
$\kappa$-dependent version]
\label{lem:trace_aware_weighted}
Let $Y\sim q$ on $\R^d$ with density $q=e^{-V}/Z$, where
$V\in C^2(\R^d)$, and assume that the score
$s(y):=\nabla\log q(y)$ is $L$-Lipschitz. Assume also that the differentiations
under the integral sign, integrations by parts, and conditional Brascamp-Lieb
steps used below are justified; for instance, this holds under standard
smoothness and tail-decay conditions ensuring that the required boundary terms
vanish.
Let $D=\diag(\rho_1,\dots,\rho_d)$ with
$\rho_i\in[1-\zeta,1)$, $\zeta\in(0,1/2]$.
Define
\[
\Sigma:=\sigma^2(I-D^2),
\qquad
Y':=DY,
\qquad
X:=Y'+\varepsilon,
\qquad
\varepsilon\sim\mathcal{N}(0,\Sigma)
\text{ independent of }Y.
\]
Let $\tilde{q}:=\law(X)$ and
$\tilde{s}(u):=\nabla\log\tilde{q}(u)$.
Let $\bLambda=\diag(\lambda_1,\dots,\lambda_d)$ with
all $\lambda_i>0$, and let
$\kappa:=\lambda_{\max}/\lambda_{\min}$.
If
\begin{equation}
16\sqrt{\kappa}\,L\sigma^2\zeta\le 1,
\label{eq:small-step-weighted}
\end{equation}
then for every $u\in\R^d$,
\begin{equation}
\|\tilde{s}(u)-s(u)\|_{\bLambda}
\le C(\kappa)\Bigl(
 L\zeta\|u\|_{\bLambda}
 +(1+L\sigma^2)\zeta\|s(u)\|_{\bLambda}
 +L\sqrt{\tr(\bLambda\Sigma)}
\Bigr),
\label{eq:weighted-perturb}
\end{equation}
where $C(\kappa)>0$ is a constant depending only on
$\kappa$ and universal constants. If moreover
\[
 \zeta\le\min\{1/8,\,1/(128\sqrt\kappa\,L\sigma^2)\},
\]
then, after increasing \(C(\kappa)\), one may replace
\(\|s(u)\|_\Lambda\) by the later-time score
\(\|\tilde s(u)\|_\Lambda\):
\begin{equation}
\|\tilde{s}(u)-s(u)\|_{\bLambda}
\le C(\kappa)\Bigl(
 L\zeta\|u\|_{\bLambda}
 +(1+L\sigma^2)\zeta\|\tilde s(u)\|_{\bLambda}
 +L\sqrt{\tr(\bLambda\Sigma)}
\Bigr).
\label{eq:weighted-perturb-later}
\end{equation}
\end{lemma}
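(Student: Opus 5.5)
The plan is to repeat the skeleton of the proof of Lemma~\ref{lem:score-perturb}, changing only the norm, and to pay for the switch from the Euclidean to the $\bLambda$-weighted norm with factors of $\sqrt\kappa$. Two facts are used throughout. First, diagonal matrices commute with $\bLambda$. Second, for any matrix $A$ we have $\|Az\|_{\bLambda}\le\sqrt{\kappa}\,\|A\|_{\op}\|z\|_{\bLambda}$, because $\lambda_{\min}\|z\|^2\le\|z\|_{\bLambda}^2\le\lambda_{\max}\|z\|^2$.

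\emph{Change of variables.} As before, $s_{Y'}(u)=D^{-1}s(D^{-1}u)$. Since $D^{-1}$ is diagonal, $\|D^{-1}z\|_{\bLambda}\le2\|z\|_{\bLambda}$ and $\|(D^{-1}-I)z\|_{\bLambda}\le2\zeta\|z\|_{\bLambda}$. The Lipschitz step is where the first $\sqrt\kappa$ enters:
\[
\|s(D^{-1}u)-s(u)\|_{\bLambda}\le\sqrt{\lambda_{\max}}\,L\,\|D^{-1}u-u\|\le\sqrt\kappa\,L\,\|(D^{-1}-I)u\|_{\bLambda}\le2\sqrt\kappa\,L\zeta\|u\|_{\bLambda}.
\]
This gives the weighted analogue of \eqref{eq:syprime-minus-s}, namely $\|s_{Y'}(u)-s(u)\|_{\bLambda}\lesssim\sqrt\kappa L\zeta\|u\|_{\bLambda}+\zeta\|s(u)\|_{\bLambda}$, together with the corresponding bound on $\|s_{Y'}(u)\|_{\bLambda}$. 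In the same way, $s_{Y'}$ is $L'$-Lipschitz with $L'\le4L$ in Euclidean norm, and hence $\sqrt\kappa L'$-Lipschitz from $\|\cdot\|_{\bLambda}$ to $\|\cdot\|_{\bLambda}$.

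\emph{Posterior step.} The identities \eqref{eq:posterior-score} and \eqref{eq:denoising-identity} do not depend on the norm. The Brascamp-Lieb step still yields $\Cov_{\pi_u}(\varepsilon)\preceq2\Sigma$, since $L'\|\Sigma\|_{\op}\le8L\sigma^2\zeta\le1/2$, and this is implied by \eqref{eq:small-step-weighted}. Taking the trace against $\bLambda$ gives $\E_{\pi_u}\|\varepsilon-\bar\varepsilon\|_{\bLambda}^2\le2\tr(\bLambda\Sigma)$, and this is exactly where $\sqrt{\tr(\bLambda\Sigma)}$ comes from. Next I bound $\bar\varepsilon=-\Sigma\,\E_{\pi_u}[s_{Y'}(u-\varepsilon)]$. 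Because $\Sigma$ is diagonal, $\|\Sigma z\|_{\bLambda}\le\|\Sigma\|_{\op}\|z\|_{\bLambda}$, so
\[
\|\bar\varepsilon\|_{\bLambda}\le\|\Sigma\|_{\op}\bigl(\|s_{Y'}(u)\|_{\bLambda}+\sqrt\kappa L'\,\E\|\varepsilon\|_{\bLambda}\bigr).
\]
The absorption argument needs $\sqrt\kappa L'\|\Sigma\|_{\op}\le8\sqrt\kappa L\sigma^2\zeta\le1/2$, and this is precisely the role of the hypothesis $16\sqrt\kappa L\sigma^2\zeta\le1$.

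\emph{Conclusion and main obstacle.} Solving the absorption gives
\[
\E\|\varepsilon\|_{\bLambda}\lesssim\|\Sigma\|_{\op}\|s_{Y'}(u)\|_{\bLambda}+\sqrt{\tr(\bLambda\Sigma)}.
\]
Then $\|\tilde s(u)-s_{Y'}(u)\|_{\bLambda}\le\sqrt\kappa L'\,\E\|\varepsilon\|_{\bLambda}$, and combining with the change-of-variables bound yields \eqref{eq:weighted-perturb} with $C(\kappa)=O(\sqrt\kappa)$. The step I expect to be delicate is keeping the coefficient of $\|s(u)\|_{\bLambda}$ of the form $(1+L\sigma^2)\zeta$ without an uncontrolled extra power of $\kappa$. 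The product $\sqrt\kappa L'\|\Sigma\|_{\op}$ produces $\sqrt\kappa L\sigma^2\zeta$; I keep the $\sqrt\kappa$ inside $C(\kappa)$ rather than trying to eliminate it.

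\emph{Later-time form.} As at the end of Lemma~\ref{lem:score-perturb}, write the explicit bound with coefficient $b=c(\sqrt\kappa L\sigma^2+1)\zeta$ on $\|s(u)\|_{\bLambda}$, and insert $\|s(u)\|_{\bLambda}\le\|\tilde s(u)\|_{\bLambda}+\|\tilde s(u)-s(u)\|_{\bLambda}$. The assumption $\zeta\le\min\{1/8,1/(128\sqrt\kappa L\sigma^2)\}$ forces $b\le1/2$ provided the numerical constant in $b$ is tracked carefully. I would check that constant explicitly, since this is the likeliest place for an off-by-constant failure. Dividing by $1-b\ge1/2$ then gives \eqref{eq:weighted-perturb-later}.
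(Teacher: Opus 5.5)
Your proposal is correct and follows the paper's proof essentially step for step: the change of variables with the $\sqrt\kappa$ cost from norm equivalence, the $\sqrt\kappa L'$-Lipschitz bound for $s_{Y'}$ in the $\Lambda$-norm, the Brascamp--Lieb bound $\Cov_{\pi_u}(\varepsilon)\preceq2\Sigma$ traced against $\Lambda$, and the absorption step under $8\sqrt\kappa L\sigma^2\zeta\le1/2$. The constant you were worried about in the later-time form works out: the bookkeeping gives exactly $b=(32\sqrt\kappa\,L\sigma^2+2)\zeta$, and the conditions $\zeta\le1/(128\sqrt\kappa L\sigma^2)$ and $\zeta\le1/8$ make each summand at most $1/4$, so $b\le1/2$.
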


\begin{proof}

\medskip\noindent

Let $q_{Y'}$ be the density of $Y'=DY$.
By the change-of-variables formula,
$q_{Y'}(u)=|\det D|^{-1}q(D^{-1}u)$, hence
$s_{Y'}(u):=\nabla\log q_{Y'}(u)=D^{-1}s(D^{-1}u)$.
Since $\rho_i\in[1-\zeta,1)$ and $\zeta\le 1/2$,
we have $\rho_i\ge 1/2$, so
$\|D^{-1}\|_{\op}\le 2$ and
$\|D^{-1}-I\|_{\op}\le 2\zeta$.

We bound $\|s_{Y'}(u)-s(u)\|_{\bLambda}$.
Since $s$ is $L$-Lipschitz in the Euclidean norm,
converting to the $\bLambda$-norm using
$\|v\|_{\bLambda}\le\sqrt{\lambda_{\max}}\|v\|$
and $\|v\|\le\|v\|_{\bLambda}/\sqrt{\lambda_{\min}}$:
\[
\|s(D^{-1}u)-s(u)\|_{\bLambda}
\le\sqrt{\lambda_{\max}}\|s(D^{-1}u)-s(u)\|
\le\sqrt{\lambda_{\max}}\,L\,\|D^{-1}u-u\|
\le\sqrt{\kappa}\,L\,\|(D^{-1}-I)u\|_{\bLambda}.
\]
Since $D^{-1}-I$ is diagonal with entries
$|\rho_i^{-1}-1|\le 2\zeta$:
\[
\|(D^{-1}-I)u\|_{\bLambda}^2
=\sum_i\lambda_i(\rho_i^{-1}-1)^2 u_i^2
\le 4\zeta^2\|u\|_{\bLambda}^2,
\]
so $\|(D^{-1}-I)u\|_{\bLambda}\le 2\zeta\|u\|_{\bLambda}$.
For the second term,
$\|D^{-1}s(u)-s(u)\|_{\bLambda}
\le\|D^{-1}-I\|_{\op}\|s(u)\|_{\bLambda}
\le 2\zeta\|s(u)\|_{\bLambda}$.
Therefore:
\begin{align}
\|s_{Y'}(u)-s(u)\|_{\bLambda}
&\le\|D^{-1}\|_{\op}
 \|s(D^{-1}u)-s(u)\|_{\bLambda}
 +\|D^{-1}-I\|_{\op}\|s(u)\|_{\bLambda}
\nonumber\\
&\le 2\cdot 2\sqrt{\kappa}\,L\zeta\|u\|_{\bLambda}
 +2\zeta\|s(u)\|_{\bLambda}
\nonumber\\
&= 4\sqrt{\kappa}\,L\zeta\|u\|_{\bLambda}
 +2\zeta\|s(u)\|_{\bLambda}.
\label{eq:step1-w}
\end{align}
In particular:
\begin{equation}
\|s_{Y'}(u)\|_{\bLambda}
\le 4\sqrt{\kappa}\,L\zeta\|u\|_{\bLambda}
 +(1+2\zeta)\|s(u)\|_{\bLambda}.
\label{eq:syprime-bound-w}
\end{equation}

For all $u,v\in\R^d$:
\[
\|s_{Y'}(u)-s_{Y'}(v)\|_{\bLambda}
\le\sqrt{\lambda_{\max}}
 \|s_{Y'}(u)-s_{Y'}(v)\|
\le\sqrt{\lambda_{\max}}\,L'
 \|u-v\|
\le\sqrt{\kappa}\,L'\|u-v\|_{\bLambda},
\]
where $L'\le 4L$ from the Euclidean proof.
Denote
\begin{equation}
L'_\Lambda:=4\sqrt{\kappa}\,L,
\label{eq:Lprime-w}
\end{equation}
so $s_{Y'}$ is $L'_\Lambda$-Lipschitz in the
$\bLambda$-norm.

Since $\Sigma\succ 0$, differentiating
$\tilde{q}(u)=\int q_{Y'}(y)\varphi_\Sigma(u-y)\,dy$
gives:
\[
\tilde{s}(u)
=-\Sigma^{-1}\bar\varepsilon,
\qquad
\tilde{s}(u)=\E_{\pi_u}[s_{Y'}(u-\varepsilon)],
\]
where $\bar\varepsilon:=\E_{\pi_u}[\varepsilon]$
and $\pi_u$ is the posterior of $\varepsilon:=u-Y'$
given $X=u$.
These are identical to Lemma~\ref{lem:score-perturb}.

The potential $V_u(\varepsilon)
:=-\log q_{Y'}(u-\varepsilon)
+\frac{1}{2}\varepsilon^\top\Sigma^{-1}\varepsilon$
satisfies $\nabla^2 V_u\succeq\Sigma^{-1}-L'I$.
Using $L'_\Lambda\|\Sigma\|_{\op}
\le 4\sqrt{\kappa}\,L\cdot 2\sigma^2\zeta
=8\sqrt{\kappa}\,L\sigma^2\zeta\le\tfrac{1}{2}$
(from assumption~\eqref{eq:small-step-weighted}),
and $L'\le 4L\le L'_\Lambda/\sqrt{\kappa}$,
we have $L'\|\Sigma\|_{\op}\le 1/2$, so
$\nabla^2 V_u\succeq\frac{1}{2}\Sigma^{-1}$.
By the Brascamp-Lieb inequality,
$\Cov_{\pi_u}(\varepsilon)\preceq 2\Sigma$.
In the $\bLambda$-norm, using the diagonal structure
of both $\bLambda$ and $\Sigma$:
\[
\E_{\pi_u}\|\varepsilon-\bar\varepsilon\|_{\bLambda}^2
=\sum_i\lambda_i\E_{\pi_u}
 [(\varepsilon_i-\bar\varepsilon_i)^2]
\le 2\sum_i\lambda_i\Sigma_{ii}
=2\tr(\bLambda\Sigma).
\]
By Jensen:
\begin{equation}
\E_{\pi_u}\|\varepsilon\|_{\bLambda}
\le\|\bar\varepsilon\|_{\bLambda}
 +\sqrt{2\tr(\bLambda\Sigma)}.
\label{eq:Eeps-w}
\end{equation}

Since $\bLambda$ and $\Sigma$ are both diagonal,
they commute:
\[
\|\Sigma z\|_{\bLambda}
=\|\bLambda^{1/2}\Sigma z\|
=\|\Sigma\bLambda^{1/2}z\|
\le\|\Sigma\|_{\op}\|\bLambda^{1/2}z\|
=\|\Sigma\|_{\op}\|z\|_{\bLambda}.
\]
Using $\bar\varepsilon
=-\Sigma\E_{\pi_u}[s_{Y'}(u-\varepsilon)]$
and the $\bLambda$-Lipschitz bound on $s_{Y'}$:
\[
\|\bar\varepsilon\|_{\bLambda}
\le\|\Sigma\|_{\op}
 \E_{\pi_u}\|s_{Y'}(u-\varepsilon)\|_{\bLambda}
\le\|\Sigma\|_{\op}\Bigl(
 \|s_{Y'}(u)\|_{\bLambda}
 +L'_\Lambda\E_{\pi_u}\|\varepsilon\|_{\bLambda}
\Bigr).
\]
Set $x:=\E_{\pi_u}\|\varepsilon\|_{\bLambda}$
and $b:=L'_\Lambda\|\Sigma\|_{\op}
\le 8\sqrt{\kappa}\,L\sigma^2\zeta\le\tfrac{1}{2}$.
Substituting $\|\bar\varepsilon\|_{\bLambda}$ into
\eqref{eq:Eeps-w}:
\[
x\le\|\Sigma\|_{\op}\|s_{Y'}(u)\|_{\bLambda}
 +b\,x+\sqrt{2\tr(\bLambda\Sigma)}.
\]
Since $b\le\tfrac{1}{2}$, rearranging $(1-b)x\le a+c$
with $a=\|\Sigma\|_{\op}\|s_{Y'}(u)\|_{\bLambda}$
and $c=\sqrt{2\tr(\bLambda\Sigma)}$:
\begin{equation}
\E_{\pi_u}\|\varepsilon\|_{\bLambda}
\le 2\|\Sigma\|_{\op}\|s_{Y'}(u)\|_{\bLambda}
 +2\sqrt{2\tr(\bLambda\Sigma)}.
\label{eq:Eeps-final-w}
\end{equation}

Using the denoising identity,
the $\bLambda$-Lipschitz bound~\eqref{eq:Lprime-w},
and~\eqref{eq:Eeps-final-w}:
\begin{align*}
\|\tilde{s}(u)-s_{Y'}(u)\|_{\bLambda}
&=\|\E_{\pi_u}[s_{Y'}(u-\varepsilon)
 -s_{Y'}(u)]\|_{\bLambda}\\
&\le L'_\Lambda\E_{\pi_u}\|\varepsilon\|_{\bLambda}\\
&\le 2L'_\Lambda\|\Sigma\|_{\op}
 \|s_{Y'}(u)\|_{\bLambda}
 +2L'_\Lambda\sqrt{2\tr(\bLambda\Sigma)}.
\end{align*}
Retaining the full $\zeta$ factor:
\[
2L'_\Lambda\|\Sigma\|_{\op}
\le 16\sqrt{\kappa}\,L\sigma^2\zeta,
\qquad
2L'_\Lambda
\le 8\sqrt{\kappa}\,L.
\]
Therefore:
\begin{equation}
\|\tilde{s}(u)-s_{Y'}(u)\|_{\bLambda}
\le 16\sqrt{\kappa}\,L\sigma^2\zeta
 \|s_{Y'}(u)\|_{\bLambda}
 +C_1\sqrt{\kappa}\,L\sqrt{\tr(\bLambda\Sigma)},
\label{eq:step6-w}
\end{equation}
for a universal constant $C_1>0$.

Substituting~\eqref{eq:syprime-bound-w}
into~\eqref{eq:step6-w}:
\begin{align*}
\|\tilde{s}(u)-s_{Y'}(u)\|_{\bLambda}
&\le 16\sqrt{\kappa}\,L\sigma^2\zeta\Bigl(
 4\sqrt{\kappa}\,L\zeta\|u\|_{\bLambda}
 +(1+2\zeta)\|s(u)\|_{\bLambda}
 \Bigr)
 +C_1\sqrt{\kappa}\,L\sqrt{\tr(\bLambda\Sigma)}\\
&= 64\kappa\,L^2\sigma^2\zeta^2\|u\|_{\bLambda}
 +16\sqrt{\kappa}\,L\sigma^2(1+2\zeta)\zeta
 \|s(u)\|_{\bLambda}
 +C_1\sqrt{\kappa}\,L\sqrt{\tr(\bLambda\Sigma)}.
\end{align*}
Using $16\sqrt{\kappa}\,L\sigma^2\zeta\le 1$
and $\zeta\le 1/2$:
\[
64\kappa\,L^2\sigma^2\zeta^2
=4\sqrt{\kappa}\,L\zeta
 \cdot 16\sqrt{\kappa}\,L\sigma^2\zeta
\le 4\sqrt{\kappa}\,L\zeta,
\]
\[
16\sqrt{\kappa}\,L\sigma^2(1+2\zeta)\zeta
\le 32\sqrt{\kappa}\,L\sigma^2\zeta.
\]
Adding~\eqref{eq:step1-w} via the triangle inequality:
\begin{align*}
\|\tilde{s}(u)-s(u)\|_{\bLambda}
&\le\|\tilde{s}(u)-s_{Y'}(u)\|_{\bLambda}
 +\|s_{Y'}(u)-s(u)\|_{\bLambda}\\
&\le\Bigl(
 4\sqrt{\kappa}\,L\zeta\|u\|_{\bLambda}
 +32\sqrt{\kappa}\,L\sigma^2\zeta
  \|s(u)\|_{\bLambda}
 +C_1\sqrt{\kappa}\,L\sqrt{\tr(\bLambda\Sigma)}
 \Bigr)\\
&\quad+\Bigl(
 4\sqrt{\kappa}\,L\zeta\|u\|_{\bLambda}
 +2\zeta\|s(u)\|_{\bLambda}
 \Bigr)\\
&= 8\sqrt{\kappa}\,L\zeta\|u\|_{\bLambda}
 +(32\sqrt{\kappa}\,L\sigma^2+2)\zeta
 \|s(u)\|_{\bLambda}
 +C_1\sqrt{\kappa}\,L\sqrt{\tr(\bLambda\Sigma)}.
\end{align*}
Since $32\sqrt{\kappa}\,L\sigma^2+2
\le C(\kappa)(1+L\sigma^2)$ for a constant
$C(\kappa)$ depending only on $\kappa$ (one can take
$C(\kappa)=c\sqrt\kappa$ for a universal $c$, so $C(\kappa)$ does
not decrease in $\kappa$),
and absorb $8\sqrt{\kappa}$ and $C_1\sqrt{\kappa}$
into $C(\kappa)$:
\[
\|\tilde{s}(u)-s(u)\|_{\bLambda}
\le C(\kappa)\Bigl(
 L\zeta\|u\|_{\bLambda}
 +(1+L\sigma^2)\zeta\|s(u)\|_{\bLambda}
 +L\sqrt{\tr(\bLambda\Sigma)}
\Bigr).
\]
For the later-time-score version, write the display before the last one
as $\|\tilde s(u)-s(u)\|_{\bLambda}\le8\sqrt\kappa L\zeta\|u\|_{\bLambda}
+b\|s(u)\|_{\bLambda}+C_1\sqrt\kappa L\sqrt{\tr(\bLambda\Sigma)}$ with
$b:=(32\sqrt\kappa L\sigma^2+2)\zeta$, and substitute
$\|s(u)\|_{\bLambda}\le\|\tilde s(u)\|_{\bLambda}
+\|\tilde s(u)-s(u)\|_{\bLambda}$. Under
$\zeta\le\min\{1/8,1/(128\sqrt\kappa L\sigma^2)\}$ we have $b\le1/2$,
so the last term can be moved to the left-hand side at the cost of a factor
$2$, which is absorbed into $C(\kappa)$.
\end{proof}

\section{Proofs of Theorems~\ref{thm:tv-dim}-\ref{thm:tv-trace}, and covariance perturbation lemmas}\label{sec:app:C}

Both theorems bound $\mathrm{TV}(p_{\mathrm{out}}, q_r)$ via the same path-measure
triangle inequality and mixing-term estimate, and differ only in how the Girsanov
drift-mismatch term is controlled: in the unweighted Euclidean norm for
Theorem~\ref{thm:tv-dim}, and in the $\Lambda$-weighted norm for
Theorem~\ref{thm:tv-trace}. We give the shared setup once, then each theorem's
proof in its own subsection (Appendices~\ref{sec:thm5-proof}-\ref{sec:thm6-proof}).
Appendix~\ref{app:lemmas} collects the covariance-concentration and
principal-component perturbation lemmas used for the estimated-basis results,
Theorems~\ref{thm:empirical} and~\ref{thm:decouple}.

\subsection{Shared setup: path measures, mixing term, and the Girsanov mismatch}

We work in a generic coefficient space \(\mathbb R^r\). Thus
\(q_r\) denotes the initial coefficient-space data law, \(q_{r,t}\) denotes the
forward OU marginal at time \(t\), and
\[
 \gamma_r:=\mathcal N(0,\sigma^2 I_r)
\]
is the stationary Gaussian reference law.

Define path measures on
\[
 \Omega=C([0,T];\mathbb R^r).
\]
Let \(Q_T^\leftarrow\) be the law of the exact reverse SDE
\[
 \mathrm dU_t
 =
 \left\{
  \Lambda U_t
  +
  2\sigma^2\Lambda\nabla\log q_{r,T-t}(U_t)
 \right\}\mathrm dt
 +
 \sigma\sqrt{2\Lambda}\,\mathrm dB_t,
 \qquad
 U_0\sim q_{r,T}.
\]
Let \(P_T^{q_{r,T}}\) be the law of the discretized-score reverse sampler
initialized at \(q_{r,T}\), and let \(P_T\) be the law of the same discretized
sampler initialized at \(\gamma_r\).

By the triangle inequality and data processing on path space,
\begin{align}
\TV(p_{\mathrm{out}},q_r)
&\le
\TV(P_T,P_T^{q_{r,T}})
+
\TV(P_T^{q_{r,T}},Q_T^\leftarrow)
\nonumber\\
&\le
\TV(\gamma_r,q_{r,T})
+
\TV(P_T^{q_{r,T}},Q_T^\leftarrow).
\label{eq:triangle-thm-dim}
\end{align}

\medskip
\noindent\textbf{Mixing term.}
The diagonal OU semigroup $(P_t)_{t \geq 0}$ with rates
$\lambda_1, \ldots, \lambda_r$ satisfies the log-Sobolev inequality
with constant $\varrho = 2\lambda_{\min}$,
which gives KL contraction at rate $e^{-2\lambda_{\min}T}$:
\begin{equation}\label{eq:kl-contraction}
 \mathrm{KL}(q_{r,T} \| \gamma_r)
 \;\leq\;
 e^{-2\lambda_{\min} T}\,\mathrm{KL}(q_r \| \gamma_r).
\end{equation}
This follows from the tensorisation of log-Sobolev inequalities:
each coordinate $i$ evolves as an independent OU process with rate $\lambda_i$,
which satisfies a log-Sobolev inequality with constant $2\lambda_i$
\citep[Theorem~5.2.1]{bakry2014analysis};
the product structure gives constant $\min_i 2\lambda_i = 2\lambda_{\min}$.
By Pinsker's inequality and Assumption~\ref{ass:finite-kl},
\begin{equation}\label{eq:mixing-tv}
 \mathrm{TV}(\gamma_r,\, q_{r,T})
 \;\leq\;
 \sqrt{\tfrac{1}{2}\,\mathrm{KL}(q_{r,T}\|\gamma_r)}
 \;\leq\;
 e^{-\lambda_{\min} T}\,\sqrt{\tfrac{1}{2}\,\mathrm{KL}(q_r\|\gamma_r)}.
\end{equation}
This bounds the first term in \eqref{eq:triangle-thm-dim} identically
in both theorems; it remains to bound the second, Girsanov, term.

\medskip
\noindent\textbf{Girsanov mismatch.}
Set
\[
 G:=\sigma\sqrt{2\Lambda}.
\]
Under \(Q_T^\leftarrow\),
\[
 \mathrm dU_t=b(t,U_t)\,\mathrm dt+G\,\mathrm dB_t,
 \qquad
 b(t,u)
 :=
 \Lambda u
 +
 2\sigma^2\Lambda\nabla\log q_{r,T-t}(u).
\]
Under \(P_T^{q_{r,T}}\), for
\(t\in[t_k^\leftarrow,t_{k+1}^\leftarrow)\),
\[
 \mathrm dU_t=\widetilde b(t,\omega)\,\mathrm dt+G\,\mathrm dB_t,
 \qquad
 \widetilde b(t,\omega)
 :=
 \Lambda U_t(\omega)
 +
 2\sigma^2\Lambda
 \widehat s_{t_{N-k}}
 \bigl(U_{t_k^\leftarrow}(\omega)\bigr).
\]
Hence, writing
\[
 e_t
 :=
 \nabla\log q_{r,T-t}(U_t)
 -
 \widehat s_{t_{N-k}}(U_{t_k^\leftarrow}),
\]
the drift mismatch is $b(t,U_t)-\widetilde b(t,\cdot)=2\sigma^2\Lambda e_t$,
and the normalized mismatch is
\[
 \beta_t
 :=
 G^{-1}\bigl[b(t,U_t)-\widetilde b(t,\cdot)\bigr]
 =
 \sqrt{2\Lambda}\,\sigma\, e_t.
\]
By Assumption~\ref{ass:girsanov}, the stochastic exponential of $\beta_t$is a true martingale, so Girsanov's theorem applies and the change of measure is valid in both proofs below. The two proofs differ only in the norm used to control $e_t$: the Euclidean norm for Theorem~\ref{thm:tv-dim} and the $\Lambda$-weighted norm for Theorem~\ref{thm:tv-trace}.

\subsection{Proof of Theorem~\ref{thm:tv-dim} (dimension-dependent bound)}\label{sec:thm5-proof}

Since $\|\beta_t\|^2=2\sigma^2\|e_t\|_\Lambda^2 \le 2\sigma^2\lambda_{\max}
\|e_t\|^2$, Girsanov's theorem gives
\begin{align}
\KL(Q_T^\leftarrow\|P_T^{q_{r,T}})
&\le
\sigma^2\lambda_{\max}
\int_0^T
\mathbb E_{Q_T^\leftarrow}
\left[
 \|e_t\|^2
\right]\mathrm dt
\nonumber\\
&=
\sigma^2\lambda_{\max}
\sum_{k=0}^{N-1}
\int_{t_k^\leftarrow}^{t_{k+1}^\leftarrow}
\mathbb E_{Q_T^\leftarrow}
\left[
 \|e_t\|^2
\right]\mathrm dt.
\label{eq:KL-thm-dim}
\end{align}

\medskip
\noindent\textbf{Decomposition of \(e_t\).}
For \(t\in[t_k^\leftarrow,t_{k+1}^\leftarrow)\), write
\begin{align*}
e_t
&=
\underbrace{
 \widehat s_{t_{N-k}}(U_{t_k^\leftarrow})
 -
 \nabla\log q_{r,t_{N-k}}(U_{t_k^\leftarrow})
}_{\mathrm{(I)}}
\\
&\quad+
\underbrace{
 \nabla\log q_{r,t_{N-k}}(U_{t_k^\leftarrow})
 -
 \nabla\log q_{r,T-t}(U_{t_k^\leftarrow})
}_{\mathrm{(II)}}
\\
&\quad+
\underbrace{
 \nabla\log q_{r,T-t}(U_{t_k^\leftarrow})
 -
 \nabla\log q_{r,T-t}(U_t)
}_{\mathrm{(III)}}.
\end{align*}
Using
\[
 \|a+b+c\|^2\le 3(\|a\|^2+\|b\|^2+\|c\|^2),
\]
we bound the three terms separately.

\medskip
\noindent\emph{Term (I): score estimation error.}
The score-error assumption is an average over the grid, not a pointwise bound
for every \(k\). Since under the exact reverse law
\(U_{t_k^\leftarrow}\sim q_{r,t_{N-k}}\), the integrated contribution of Term~(I)
on a uniform grid satisfies, after the substitution $j=N-k$, which maps
$k=0,\ldots,N-1$ onto $j=N,\ldots,1$,
\begin{equation}
\begin{aligned}
\sum_{k=0}^{N-1} 
\int_{t_k^\leftarrow}^{t_{k+1}^\leftarrow} 
 \mathbb{E}_{Q_T^\leftarrow} \!\left[
  \left\|
   \widehat{s}_{t_{N-k}}(U_{t_k^\leftarrow})
   - \nabla \log q_{r,t_{N-k}}(U_{t_k^\leftarrow})
  \right\|^2
 \right] dt
&= \frac{T}{N} 
 \sum_{j=1}^{N} 
 \mathbb{E}_{U \sim q_{r,t_j}} \!\left[
  \left\|
   \widehat{s}_{t_j}(U)
   - \nabla \log q_{r,t_j}(U)
  \right\|^2
 \right] \\
&\le T \, \varepsilon_{\mathrm{score}}^2 .
\end{aligned}
\end{equation}

\medskip
\noindent\emph{Term (II): time discretization.}
Let
\[
 \Delta\tau:=t_{N-k}-(T-t).
\]
The forward OU transition from time \(T-t\) to time \(t_{N-k}\) has the form
\[
 U_{t_{N-k}}
 =
 D U_{T-t}
 +
 \xi,
 \qquad
 D:=\diag(e^{-\lambda_i\Delta\tau}),
\]
where
\[
 \xi\sim\mathcal N(0,\sigma^2(I-D^2)).
\]
Set
\[
 \zeta:=\max_i(1-e^{-\lambda_i\Delta\tau})\le \delta.
\]
If $\Delta\tau=0$ (i.e.\ $t=t_k^\leftarrow$), Term~(II) vanishes; otherwise
$\zeta>0$ and every $\rho_i<1$, as the perturbation lemma requires.
By the one-step score perturbation lemma in dimension \(r\)
(Lemma~\ref{lem:score-perturb}), in its later-time form, which applies
because $\zeta\le\delta\le\min\{1/8,1/(128L\sigma^2)\}$,
\[
\left\|
 \nabla\log q_{r,T-t}(u)
 -
 \nabla\log q_{r,t_{N-k}}(u)
\right\|
\le
C
\left(
 L\zeta\|u\|
 +
 (1+L\sigma^2)\zeta
 \|\nabla\log q_{r,t_{N-k}}(u)\|
 +
 L\sigma\sqrt{r\zeta}
\right).
\]
Evaluating this at \(u=U_{t_k^\leftarrow}\), using \(\zeta\le\delta\), and
squaring gives
\begin{align}
\mathbb E_{Q_T^\leftarrow}
\left[
 \|e_t^{(\mathrm{II})}\|^2
\right]
&\le
C
\Bigl(
 L^2\delta^2
 \mathbb E\|U_{t_k^\leftarrow}\|^2
\nonumber\\
&\qquad
 +
 (1+L\sigma^2)^2\delta^2
 \mathbb E
 \|\nabla\log q_{r,t_{N-k}}(U_{t_k^\leftarrow})\|^2
\nonumber\\
&\qquad
 +
 L^2\sigma^2 r\delta
\Bigr).
\label{eq:termII-dim}
\end{align}
By the forward second-moment bound in dimension \(r\)
(Lemma~\ref{lem:moment}),
\[
 \mathbb E\|U_{t_k^\leftarrow}\|^2
 \le
 m_{2,r}^2+r\sigma^2.
\]
By Lemma~\ref{lem:fisher} (taking $M=I_r$, so $C_F^{(0)}=L$),
\[
 \mathbb E
 \|\nabla\log q_{r,t_{N-k}}(U_{t_k^\leftarrow})\|^2
 \le
 C_F^{(0)}r.
\]
Therefore, using \(\delta\le 1\) to absorb
\(L^2\delta^2 r\sigma^2\) into \(L^2\sigma^2 r\delta\), we obtain the
explicit bound
\begin{equation}
\mathbb E_{Q_T^\leftarrow}
\left[
 \|e_t^{(\mathrm{II})}\|^2
\right]
\le
C
\left(
 L^2\delta^2m_{2,r}^2
 +
 (1+L\sigma^2)^2C_F^{(0)}r\delta^2
 +
 L^2\sigma^2 r\delta
\right).
\label{eq:termII-final-dim}
\end{equation}

\medskip
\noindent\emph{Term (III): spatial discretization.}
By the Lipschitz regularity of the score,
\[
\left\|
 \nabla\log q_{r,T-t}(U_{t_k^\leftarrow})
 -
 \nabla\log q_{r,T-t}(U_t)
\right\|
\le
L\|U_{t_k^\leftarrow}-U_t\|.
\]
Under the exact reverse law \(Q_T^\leftarrow\), the process
\((U_s)_{0\le s\le T}\) has the same finite-dimensional distributions as the
time reversal of the forward OU process \((U^{\mathrm{forw}}_{T-s})_{0\le s\le T}\).
Thus the increment
\(U_t-U_{t_k^\leftarrow}\) under \(Q_T^\leftarrow\) has the same law as a
forward OU increment between the forward times \(T-t\) and \(t_{N-k}\).
Applying the forward movement bound in dimension \(r\)
(Lemma~\ref{lem:move-forward}) gives
\[
\mathbb E_{Q_T^\leftarrow}
\|U_{t_k^\leftarrow}-U_t\|^2
\le
4\lambda_{\max}^2\Delta\tau^2
(m_{2,r}^2+2r\sigma^2)
+
4\sigma^2r\lambda_{\max}\Delta\tau.
\]
Since
\[
 \delta
 =
 \max_{i,k}(1-e^{-\lambda_i\Delta\tau_k}),
\]
and \(\lambda_{\max}\Delta\tau_{\max}\le1\), we have
\[
 \lambda_{\max}\Delta\tau\le\frac{\delta}{1-e^{-1}}\le1.6\,\delta,
\]
because $x\mapsto(1-e^{-x})/x$ is decreasing and equals $1-e^{-1}$ at
$x=1$.
Consequently,
\begin{equation}
\mathbb E_{Q_T^\leftarrow}
\left[
 \|e_t^{(\mathrm{III})}\|^2
\right]
\le
C
\left(
 L^2\delta^2m_{2,r}^2
 +
 L^2\sigma^2 r\delta
\right).
\label{eq:termIII-final-dim}
\end{equation}

\medskip
\noindent\textbf{Combining the three terms.}
Combining the integrated Term~(I) estimate with the uniform bounds
\eqref{eq:termII-final-dim} and \eqref{eq:termIII-final-dim} gives
\[
\int_0^T
\mathbb E_{Q_T^\leftarrow}
\left[
 \|e_t\|^2
\right]dt
\le
CT
\left(
 \varepsilon_{\mathrm{score}}^2
 +
 L^2\delta^2m_{2,r}^2
 +
 (1+L\sigma^2)^2C_F^{(0)}r\delta^2
 +
 L^2\sigma^2 r\delta
\right).
\]
Substituting into \eqref{eq:KL-thm-dim} yields
\[
\KL(Q_T^\leftarrow\|P_T^{q_{r,T}})
\le
C\sigma^2\lambda_{\max}T
\left(
 \varepsilon_{\mathrm{score}}^2
 +
 L^2\delta^2m_{2,r}^2
 +
 (1+L\sigma^2)^2C_F^{(0)}r\delta^2
 +
 L^2\sigma^2 r\delta
\right).
\]
By Pinsker's inequality and
$\sqrt{a_1^2+a_2^2+a_3^2+a_4^2} \le a_1+a_2+a_3+a_4$,
\[
\TV(P_T^{q_{r,T}},Q_T^\leftarrow)
\le
C\sigma\sqrt{\lambda_{\max}T}
\left(
 \varepsilon_{\mathrm{score}}
 +
 Lm_{2,r}\delta
 +
 (1+L\sigma^2)\sqrt{C_F^{(0)}r}\,\delta
 +
 L\sigma\sqrt{r\delta}
\right).
\]
Adding the mixing term \eqref{eq:mixing-tv} via \eqref{eq:triangle-thm-dim}
gives
\[
\TV(p_{\mathrm{out}},q_r)
\le
e^{-\lambda_{\min}T}
\sqrt{\frac12\,\KL(q_r\|\gamma_r)}
+
C\sigma\sqrt{\lambda_{\max}T}
\left(
 \varepsilon_{\mathrm{score}}
 +
 Lm_{2,r}\delta
 +
 (1+L\sigma^2)\sqrt{C_F^{(0)}r}\,\delta
 +
 L\sigma\sqrt{r\delta}
\right),
\]
which is the claimed bound. \hfill$\blacksquare$

\subsection{Proof of Theorem~\ref{thm:tv-trace} (trace-dependent bound)}\label{sec:thm6-proof}

Here we keep the Girsanov term in the $\Lambda$-weighted norm
throughout, rather than converting to the unweighted norm via
$\lambda_{\max}$ as above; this is what replaces the explicit
dimension dependence by the trace quantities $\tr(\Lambda)$ and
$\tr(\Lambda^2)$. Girsanov's theorem gives directly
\begin{equation}
 \KL(Q_T^\leftarrow\|P_T^{q_{r,T}})
 \le
 \sigma^2\int_0^T
 \mathbb E_{Q_T^\leftarrow}\|e_t\|_\Lambda^2\,dt.
 \label{eq:KL-thm-trace}
\end{equation}

For \(t\in[t_k^\leftarrow,t_{k+1}^\leftarrow)\), decompose $e_t$ exactly
as in the proof of Theorem~\ref{thm:tv-dim} above into Terms
(I), (II), (III). For Term (I), since
\(U_{t_k^\leftarrow}\sim q_{r,t_{N-k}}\), integrating the averaged
score-error assumption over reverse time and substituting $j=N-k$ gives
\[
\sum_{k=0}^{N-1}
\int_{t_k^\leftarrow}^{t_{k+1}^\leftarrow}
\mathbb E\|e_t^{(\mathrm I)}\|_\Lambda^2\,dt
=
\frac{T}{N}
\sum_{j=1}^{N}
\mathbb E_{U\sim q_{r,t_j}}
\left[
\left\|
\widehat s_{t_j}(U)-\nabla\log q_{r,t_j}(U)
\right\|_\Lambda^2
\right]
\le
T\varepsilon_{\mathrm{score},\Lambda}^2.
\]

For Term~(II), let
\[
 \Delta\tau:=t_{N-k}-(T-t),\qquad
 D=\diag(e^{-\lambda_i\Delta\tau}),\qquad
 \Sigma=\sigma^2(I-D^2),
\]
and set
\[
 \zeta:=\max_i(1-e^{-\lambda_i\Delta\tau})\le\delta.
\]
As before, Term~(II) vanishes if $\Delta\tau=0$.
The weighted one-step perturbation lemma
(Lemma~\ref{lem:trace_aware_weighted}) gives, under the stated
small-step condition (which gives
$\zeta\le\delta\le\min\{1/8,1/(128\sqrt\kappa L\sigma^2)\}$, as needed for
the later-time form),
\[
\|e_t^{(\mathrm{II})}\|_\Lambda
\le
C(\kappa)\left(
L\zeta\|U_{t_k^\leftarrow}\|_\Lambda
+
(1+L\sigma^2)\zeta
\|\nabla\log q_{r,t_{N-k}}(U_{t_k^\leftarrow})\|_\Lambda
+
L\sqrt{\tr(\Lambda\Sigma)}
\right).
\]
Using the weighted OU second-moment bound
\[
 \mathbb E\|U_\tau\|_\Lambda^2
 \le
 m_{2,\Lambda}^2+\sigma^2\tr(\Lambda),
\]
the weighted Fisher bound from Lemma~\ref{lem:fisher} (taking
$M=\Lambda$, so $C_F=L$),
\[
 \mathbb E\|\nabla\log q_{r,\tau}(U)\|_\Lambda^2
 \le
 C_F\tr(\Lambda),
\]
and the scale-invariant estimate
\[
 \tr(\Lambda\Sigma)
 =
 \sigma^2\sum_i\lambda_i(1-e^{-2\lambda_i\Delta\tau})
 \le
 2\sigma^2\delta\,\tr(\Lambda),
\]
we obtain
\[
\mathbb E\|e_t^{(\mathrm{II})}\|_\Lambda^2
\le
C(\kappa)\left[
L^2\delta^2\!\left(m_{2,\Lambda}^2+\sigma^2\tr(\Lambda)\right)
+
(1+L\sigma^2)^2C_F\delta^2\tr(\Lambda)
+
L^2\sigma^2\delta\tr(\Lambda)
\right].
\]

For Term~(III), the Lipschitz assumption and norm equivalence give
\[
 \|e_t^{(\mathrm{III})}\|_\Lambda
 \le
 L\sqrt{\kappa}\,
 \|U_{t_k^\leftarrow}-U_t\|_\Lambda .
\]
As above, under \(Q_T^\leftarrow\) this reverse-time increment has the same
law as the corresponding forward OU increment between the forward times
\(T-t\) and \(t_{N-k}\), so the weighted forward-movement bound
(Lemma~\ref{lem:move-forward-weighted}) applies. Together with
\(\lambda_{\max}\Delta\tau\lesssim\delta\), it yields
\[
\mathbb E_{Q_T^\leftarrow}\|e_t^{(\mathrm{III})}\|_\Lambda^2
\le
C(\kappa)\left[
L^2\delta^2\!\left(m_{2,\Lambda}^2+\sigma^2\tr(\Lambda)\right)
+
L^2\sigma^2\delta\tr(\Lambda)
\right].
\]

Combining the integrated Term~(I) estimate with the uniform bounds for
Terms~(II) and~(III), and substituting into \eqref{eq:KL-thm-trace},
\[
\begin{aligned}
\KL(Q_T^\leftarrow\|P_T^{q_{r,T}})
\le
C(\kappa)\sigma^2T
\Big[
&\varepsilon_{\mathrm{score},\Lambda}^2
+
L^2\delta^2\!\left(m_{2,\Lambda}^2+\sigma^2\tr(\Lambda)\right)
\\
&+
(1+L\sigma^2)^2C_F\delta^2\tr(\Lambda)
+
L^2\sigma^2\delta\tr(\Lambda)
\Big].
\end{aligned}
\]
Pinsker's inequality gives
\[
\begin{aligned}
\TV(P_T^{q_{r,T}},Q_T^\leftarrow)
\le
C(\kappa)\sigma\sqrt{T}
\Big[
&\varepsilon_{\mathrm{score},\Lambda}
+
L\delta
\sqrt{m_{2,\Lambda}^2+\sigma^2\tr(\Lambda)}
\\
&+
(1+L\sigma^2)\sqrt{C_F}\,\delta\sqrt{\tr(\Lambda)}
+
L\sigma\sqrt{\delta\tr(\Lambda)}
\Big].
\end{aligned}
\]
Adding the mixing term \eqref{eq:mixing-tv} via
\eqref{eq:triangle-thm-dim} proves the claim. \hfill$\blacksquare$

\subsection{Empirical Covariance and PCA Perturbation Lemmas}
\label{app:lemmas}

\begin{lemma}[Gaussian sample covariance concentration]
\label{lem:bernstein}
Let $x^{(1)},\ldots,x^{(n_{\mathrm{patch}})}$ be independent samples from
$\mathcal N(0,\Sigma)$ in $\mathbb R^{D_p}$, and let
\[
 \widehat{\Sigma}
 :=
 \frac{1}{n_{\mathrm{patch}}}
 \sum_{j=1}^{n_{\mathrm{patch}}}
 x^{(j)}(x^{(j)})^\top .
\]
Let $\sigma_1^2 := \|\Sigma\|_{\mathrm{op}}$. Then there exists a universal
constant $C>0$ such that, for every $t\geq 0$, with probability at least
$1-2e^{-t}$,
\[
 \|\widehat{\Sigma}-\Sigma\|_{\mathrm{op}}
 \leq
 C\sigma_1^2
 \left(
 \sqrt{\frac{D_p+t}{n_{\mathrm{patch}}}}
 +
 \frac{D_p+t}{n_{\mathrm{patch}}}
 \right).
\]
In particular, taking $t=D_p$ gives, with probability at least $1-2e^{-D_p}$,
\[
 \|\widehat{\Sigma}-\Sigma\|_{\mathrm{op}}
 \leq
 C\sigma_1^2
 \left(
 \sqrt{\frac{D_p}{n_{\mathrm{patch}}}}
 +
 \frac{D_p}{n_{\mathrm{patch}}}
 \right).
\]
If, in addition, $n_{\mathrm{patch}}\geq D_p$, then
\[
 \|\widehat{\Sigma}-\Sigma\|_{\mathrm{op}}
 \leq
 C'\sigma_1^2
 \sqrt{\frac{D_p}{n_{\mathrm{patch}}}}
 =: \Delta_n
\]
with probability at least $1-2e^{-D_p}$, where $C'>0$ is a universal constant.
\end{lemma}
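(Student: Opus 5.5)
Since $\widehat\Sigma-\Sigma$ is a sum of $n_{\mathrm{patch}}$ independent centered random matrices, I will prove Lemma~\ref{lem:bernstein} by whitening and invoking the standard operator-norm deviation bound for the sample covariance of isotropic Gaussian vectors. Write $x^{(j)}=\Sigma^{1/2}z^{(j)}$ with $z^{(j)}\sim\N(0,I_{D_p})$ independent, and set $W:=\frac1{n_{\mathrm{patch}}}\sum_j z^{(j)}(z^{(j)})^\top-I$. Then $\widehat\Sigma-\Sigma=\Sigma^{1/2}W\Sigma^{1/2}$, so $\|\widehat\Sigma-\Sigma\|_{\op}\le\|\Sigma\|_{\op}\|W\|_{\op}=\sigma_1^2\|W\|_{\op}$. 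This reduces everything to the isotropic case, with $\sigma_1^2$ factored out.

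For the isotropic case I will use an $\varepsilon$-net argument. Fix a $1/4$-net $\mathcal N$ of the unit sphere $S^{D_p-1}$ with $|\mathcal N|\le 9^{D_p}$. Since $W$ is symmetric, $\|W\|_{\op}\le 2\max_{u\in\mathcal N}|u^\top Wu|$. For fixed $u$, $u^\top Wu=\frac1{n_{\mathrm{patch}}}\sum_j\bigl((u^\top z^{(j)})^2-1\bigr)$ is an average of i.i.d.\ centered $\chi^2_1$ variables, which are sub-exponential with a universal parameter. Bernstein's inequality therefore gives
\[
\Pr\bigl(|u^\top Wu|\ge s\bigr)\le 2\exp\bigl(-c\,n_{\mathrm{patch}}\min\{s^2,s\}\bigr).
\]
Choose $s=C_0\bigl(\sqrt{(D_p+t)/n_{\mathrm{patch}}}+(D_p+t)/n_{\mathrm{patch}}\bigr)$ with $C_0$ large. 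Then $n_{\mathrm{patch}}\min\{s^2,s\}\ge C_0(D_p+t)$. A union bound over the $9^{D_p}$ net points gives failure probability at most $2\cdot 9^{D_p}e^{-cC_0(D_p+t)}\le 2e^{-t}$ once $cC_0\ge 1+\log 9$. This proves the first display with $C=2C_0$.

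The remaining two claims are immediate. Setting $t=D_p$ turns $D_p+t$ into $2D_p$; the factor $2$ is absorbed into $C$, and the probability becomes $1-2e^{-D_p}$. If moreover $n_{\mathrm{patch}}\ge D_p$, then $D_p/n_{\mathrm{patch}}\le\sqrt{D_p/n_{\mathrm{patch}}}$, so the linear term is absorbed and we may take $C'=2C$, which defines $\Delta_n$.

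The only delicate step is checking that $\min\{s^2,s\}$ dominates $(D_p+t)/n_{\mathrm{patch}}$ in both regimes: when $(D_p+t)/n_{\mathrm{patch}}\le 1$ the quadratic term governs, and otherwise the linear term does. Both cases follow from $s\ge C_0\max\{a,a^2\}^{1/2}$-type elementary inequalities with $a=(D_p+t)/n_{\mathrm{patch}}$. Everything else is standard; one could equally cite Vershynin's covariance estimation theorem directly.
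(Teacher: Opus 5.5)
Your proof is correct and follows the paper's route. You whiten via $x^{(j)}=\Sigma^{1/2}z^{(j)}$ to get $\|\widehat\Sigma-\Sigma\|_{\op}\le\sigma_1^2\|W\|_{\op}$, then apply the isotropic Gaussian sample-covariance bound (which the paper simply cites and you derive by the standard $1/4$-net plus Bernstein argument), and finish with the same absorption of constants for $t=D_p$ and $n_{\mathrm{patch}}\ge D_p$. The step you flag as delicate needs no case split: $s\ge C_0a$ and $s^2\ge C_0^2a$ give $\min\{s^2,s\}\ge C_0a$ as soon as $C_0\ge1$.
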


\begin{proof}[Proof of Lemma~\ref{lem:bernstein}]
Write $\Sigma^{1/2}$ for the positive semidefinite square root of $\Sigma$.
Since $x^{(j)}\sim \mathcal N(0,\Sigma)$, we may write
\[
 x^{(j)}=\Sigma^{1/2}g^{(j)},
 \qquad
 g^{(j)}\sim\mathcal N(0,I_{D_p}),
\]
with the vectors $g^{(j)}$ independent. Define the Gaussian data matrix
\[
 G :=
 \begin{bmatrix}
 (g^{(1)})^\top\\
 \vdots\\
 (g^{(n_{\mathrm{patch}})})^\top
 \end{bmatrix}
 \in \mathbb R^{n_{\mathrm{patch}}\times D_p}.
\]
Then
\[
 \widehat{\Sigma}
 =
 \Sigma^{1/2}
 \left(
  \frac{1}{n_{\mathrm{patch}}}G^\top G
 \right)
 \Sigma^{1/2}.
\]
Therefore
\[
 \widehat{\Sigma}-\Sigma
 =
 \Sigma^{1/2}
 \left(
  \frac{1}{n_{\mathrm{patch}}}G^\top G-I_{D_p}
 \right)
 \Sigma^{1/2}.
\]
Taking operator norms gives
\[
 \|\widehat{\Sigma}-\Sigma\|_{\mathrm{op}}
 \leq
 \|\Sigma\|_{\mathrm{op}}
 \left\|
  \frac{1}{n_{\mathrm{patch}}}G^\top G-I_{D_p}
 \right\|_{\mathrm{op}}
 =
 \sigma_1^2
 \left\|
  \frac{1}{n_{\mathrm{patch}}}G^\top G-I_{D_p}
 \right\|_{\mathrm{op}}.
\]

We now use the standard non-asymptotic concentration bound for the sample
covariance of isotropic Gaussian vectors: there exists a universal constant
$C>0$ such that, for every $t\geq 0$, with probability at least $1-2e^{-t}$,
\[
 \left\|
  \frac{1}{n_{\mathrm{patch}}}G^\top G-I_{D_p}
 \right\|_{\mathrm{op}}
 \leq
 C
 \left(
  \sqrt{\frac{D_p+t}{n_{\mathrm{patch}}}}
  +
  \frac{D_p+t}{n_{\mathrm{patch}}}
 \right).
\]
Substituting this into the previous display yields
\[
 \|\widehat{\Sigma}-\Sigma\|_{\mathrm{op}}
 \leq
 C\sigma_1^2
 \left(
 \sqrt{\frac{D_p+t}{n_{\mathrm{patch}}}}
 +
 \frac{D_p+t}{n_{\mathrm{patch}}}
 \right)
\]
with probability at least $1-2e^{-t}$.

Taking $t=D_p$ gives
\[
 \|\widehat{\Sigma}-\Sigma\|_{\mathrm{op}}
 \leq
 C\sigma_1^2
 \left(
 \sqrt{\frac{2D_p}{n_{\mathrm{patch}}}}
 +
 \frac{2D_p}{n_{\mathrm{patch}}}
 \right).
\]
Absorbing the numerical constants into $C$ gives
\[
 \|\widehat{\Sigma}-\Sigma\|_{\mathrm{op}}
 \leq
 C\sigma_1^2
 \left(
 \sqrt{\frac{D_p}{n_{\mathrm{patch}}}}
 +
 \frac{D_p}{n_{\mathrm{patch}}}
 \right)
\]
with probability at least $1-2e^{-D_p}$.

Finally, if $n_{\mathrm{patch}}\geq D_p$, then
\[
 \frac{D_p}{n_{\mathrm{patch}}}
 \leq
 \sqrt{\frac{D_p}{n_{\mathrm{patch}}}},
\]
so
\[
 \sqrt{\frac{D_p}{n_{\mathrm{patch}}}}
 +
 \frac{D_p}{n_{\mathrm{patch}}}
 \leq
 2\sqrt{\frac{D_p}{n_{\mathrm{patch}}}}.
\]
Absorbing the factor $2$ into a new universal constant $C'=:C_\Delta$ gives
\[
 \|\widehat\Sigma-\Sigma\|_{\mathrm{op}}
 \le
 C_\Delta\sigma_1^2\sqrt{\frac{D_p}{n_{\mathrm{patch}}}}
 =\Delta_n
\]
with probability at least $1-2e^{-D_p}$.
\end{proof}

\begin{lemma}[Weyl eigenvalue stability]
\label{lem:weyl}
On the event of Lemma~\ref{lem:bernstein}, for all $i = 1,\ldots,K$:
\[
 | \widehat\sigma_i^2-\sigma_i^2 |
\le
\Delta_n
=
C_\Delta\sigma_1^2
\sqrt{\frac{D_p}{n_{\mathrm{patch}}}}.
\]
\end{lemma}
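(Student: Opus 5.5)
The plan is to reduce the claim to Weyl's inequality for symmetric matrices. The eigenvalues $\widehat\sigma_i^2$ are the ordered eigenvalues of the empirical covariance $\widehat\Sigma$. The eigenvalues $\sigma_i^2$ are the ordered eigenvalues of the population covariance $\Sigma$ (equal to $C$ in the patch application). Both matrices are real symmetric, and positive semidefinite in the case of $\widehat\Sigma$.

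\textbf{Step 1.} I would recall Weyl's inequality. For symmetric $A,B\in\R^{D_p\times D_p}$ with eigenvalues ordered decreasingly, $|\lambda_i(A)-\lambda_i(B)|\le\|A-B\|_{\op}$ for every $i$. This follows from the Courant--Fischer min-max characterization
\[
\lambda_i(A)=\max_{\dim S=i}\ \min_{x\in S,\ \|x\|=1}x^\top Ax .
\]
Since $x^\top Ax\le x^\top Bx+\|A-B\|_{\op}$ for every unit vector $x$, we get $\lambda_i(A)\le\lambda_i(B)+\|A-B\|_{\op}$. Swapping the roles of $A$ and $B$ gives the reverse inequality.

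\textbf{Step 2.} I would apply this with $A=\widehat\Sigma$ and $B=\Sigma$. On the concentration event of Lemma~\ref{lem:bernstein} we have $\|\widehat\Sigma-\Sigma\|_{\op}\le\Delta_n=C_\Delta\sigma_1^2\sqrt{D_p/n_{\mathrm{patch}}}$. Taking $i=1,\ldots,K$ then gives the claim.

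There is no real obstacle. The only point needing care is consistency of conventions: $\widehat\sigma_i^2$ must denote the $i$-th largest eigenvalue of $\widehat\Sigma$, sorted in the same order as the $\sigma_i^2$. The event must also be the one from Lemma~\ref{lem:bernstein} with $n_{\mathrm{patch}}\ge D_p$, where the bound takes the single-term form $\Delta_n$.
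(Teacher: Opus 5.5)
Your proposal is correct and matches the paper's proof: both apply Weyl's inequality to the pair $\Sigma$ and $\widehat\Sigma$, then use the bound $\|\widehat\Sigma-\Sigma\|_{\mathrm{op}}\le\Delta_n$ that holds on the event of Lemma~\ref{lem:bernstein}. The only difference is that you derive Weyl's inequality from the Courant--Fischer characterization, whereas the paper simply cites it.
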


\begin{proof}[Proof of Lemma~\ref{lem:weyl}]
Weyl's inequality states that for symmetric $A, B \in \mathbb{R}^{m\times m}$,
$|\lambda_i(A+B) - \lambda_i(A)| \leq \|B\|_{\mathrm{op}}$ for all $i$.
Apply with $A = \Sigma$, $B = \widehat{\Sigma} - \Sigma$, and invoke 
Lemma~\ref{lem:bernstein} to obtain
\[
 |\hat{\sigma}_i^2 - \sigma_i^2| 
 \leq \|\widehat{\Sigma} - \Sigma\|_{\mathrm{op}} 
 \leq \Delta_n.
\]
\end{proof}

\begin{lemma}[Davis-Kahan subspace perturbation]
\label{lem:dk}
Let $\delta_{\mathrm{gap}} := \sigma_K^2 - \sigma_{K+1}^2 > 0$.
Under $\Delta_n \leq \delta_{\mathrm{gap}}/2$, on the event of
Lemma~\ref{lem:bernstein}, the standard Davis-Kahan bounds give
\[
 \|\hat{V}_K\hat{V}_K^\top - V_K V_K^\top\|_{\mathrm{op}}
 \leq \frac{2\Delta_n}{\delta_{\mathrm{gap}}},
\]
and
\[
 \|\hat{V}_K\hat{V}_K^\top - V_K V_K^\top\|_F
 \leq \frac{2\sqrt{2K}\,\Delta_n}{\delta_{\mathrm{gap}}}.
\]
The operator-norm estimate keeps the empirical active covariance
uniformly non-degenerate, while the Frobenius estimate is used for the
full-space Gaussian covariance mismatch.
\end{lemma}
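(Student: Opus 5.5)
The plan is to reduce both bounds to classical symmetric-matrix perturbation theory applied to $A=\Sigma$ and $B=\widehat\Sigma$, with perturbation $E:=\widehat\Sigma-\Sigma$. On the event of Lemma~\ref{lem:bernstein} we have $\|E\|_{\op}\le\Delta_n$. Write $P_K=V_KV_K^\top$ and $\widehat P_K=\widehat V_K\widehat V_K^\top$. We will use the Davis-Kahan $\sin\Theta$ theorem in the form
\[
 \|\sin\Theta(\widehat V_K,V_K)\|\le\frac{\|E\|}{\mathrm{gap}},
\]
valid for any unitarily invariant norm. Here the gap separates the population top-$K$ eigenvalues $\{\sigma_1^2,\ldots,\sigma_K^2\}$ from the empirical bottom eigenvalues $\{\widehat\sigma_{K+1}^2,\ldots\}$.

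\textbf{Step 1: effective gap.} By Weyl's inequality (Lemma~\ref{lem:weyl}, which holds for every index and not only $i\le K$), $\widehat\sigma_{K+1}^2\le\sigma_{K+1}^2+\Delta_n$. Hence
\[
 \sigma_K^2-\widehat\sigma_{K+1}^2\ge\delta_{\mathrm{gap}}-\Delta_n\ge\delta_{\mathrm{gap}}/2,
\]
using the hypothesis $\Delta_n\le\delta_{\mathrm{gap}}/2$. The same condition also guarantees that $\widehat\sigma_K^2>\widehat\sigma_{K+1}^2$, so $\widehat V_K$ is well defined as the empirical top-$K$ eigenspace.

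\textbf{Step 2: operator-norm bound.} Apply the $\sin\Theta$ theorem in operator norm. This gives $\|\sin\Theta\|_{\op}\le\|E\|_{\op}/(\delta_{\mathrm{gap}}/2)\le2\Delta_n/\delta_{\mathrm{gap}}$. For two projectors of equal rank $K$, the difference $\widehat P_K-P_K$ has nonzero eigenvalues $\pm\sin\theta_j$, where the $\theta_j$ are the principal angles. Therefore $\|\widehat P_K-P_K\|_{\op}=\|\sin\Theta\|_{\op}$, which gives the first bound.

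\textbf{Step 3: Frobenius bound.} Using the same spectral description, $\|\widehat P_K-P_K\|_F^2=2\|\sin\Theta\|_F^2$. Apply the Frobenius version of $\sin\Theta$ in the variant that only involves $E$ acting on the $K$-dimensional subspace, namely $\|\sin\Theta\|_F\le\|EV_K\|_F/(\delta_{\mathrm{gap}}/2)$. Since $EV_K$ has rank at most $K$, $\|EV_K\|_F\le\sqrt K\,\|E\|_{\op}\le\sqrt K\,\Delta_n$. Combining, $\|\widehat P_K-P_K\|_F\le\sqrt2\cdot2\sqrt K\,\Delta_n/\delta_{\mathrm{gap}}=2\sqrt{2K}\,\Delta_n/\delta_{\mathrm{gap}}$, as claimed.

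An alternative route to the same Frobenius bound is the crude rank argument. Since $\widehat P_K-P_K$ has rank at most $2K$, we have $\|\cdot\|_F\le\sqrt{2K}\,\|\cdot\|_{\op}$, which with Step 2 gives the identical constant.

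\textbf{Main obstacle.} The delicate point is Step 1. The classical Davis-Kahan theorem needs a separation between the spectrum of one matrix restricted to the subspace and the spectrum of the other matrix on the complement. It must not be applied with the population gap $\delta_{\mathrm{gap}}$ directly. Weyl's inequality is what converts $\delta_{\mathrm{gap}}$ into a usable effective gap of at least $\delta_{\mathrm{gap}}/2$, and this is exactly where the hypothesis $\Delta_n\le\delta_{\mathrm{gap}}/2$ is consumed. Everything after that is routine.
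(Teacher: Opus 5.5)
Your proposal is correct and follows the paper's argument. The paper applies the $\sin\Theta$ theorem with denominator $\delta_{\mathrm{gap}}-\|\widehat\Sigma-\Sigma\|_{\op}\ge\delta_{\mathrm{gap}}/2$, which is your explicit Weyl step, and then uses $\|\widehat P_K-P_K\|_F\le\sqrt{2K}\,\|\sin\Theta\|_{\op}$, which is your ``alternative route''.

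One small slip: $\Delta_n\le\delta_{\mathrm{gap}}/2$ gives only $\widehat\sigma_K^2\ge\widehat\sigma_{K+1}^2$, so ties can occur at equality. This is harmless, because the bound holds for any choice of empirical top-$K$ eigenbasis once $\sigma_K^2-\widehat\sigma_{K+1}^2\ge\delta_{\mathrm{gap}}/2$.
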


\begin{proof}[Proof of Lemma~\ref{lem:dk}]
The Wedin $\sin\Theta$ theorem gives
\[
 \|\sin\Theta(\hat{V}_K, V_K)\|_{\mathrm{op}} 
 \leq \frac{\|\widehat{\Sigma} - \Sigma\|_{\mathrm{op}}}
 {\delta_{\mathrm{gap}} - \|\widehat{\Sigma} - \Sigma\|_{\mathrm{op}}}
 \leq \frac{\Delta_n}{\delta_{\mathrm{gap}} - \Delta_n}
 \leq \frac{2\Delta_n}{\delta_{\mathrm{gap}}},
\]
where the last inequality uses $\Delta_n \leq \delta_{\mathrm{gap}}/2$.
By the CS decomposition, $\|\hat{V}_K\hat{V}_K^\top - V_K V_K^\top\|_F \leq 
\sqrt{2K}\|\sin\Theta\|_{\mathrm{op}}$, giving
\[
 \|\hat{V}_K\hat{V}_K^\top - V_K V_K^\top\|_F 
 \leq \frac{2\sqrt{2K}\,\Delta_n}{\delta_{\mathrm{gap}}}.
\]
\end{proof}

\begin{lemma}[Lipschitz stability of Wiener gain]
\label{lem:lipgain}
For all $i = 1,\ldots,K$:
\[
 |\hat\lambda_i-\lambda_i|
\le
\lambda_0\frac{\Delta_n}{\sigma^2}
=: \varepsilon_\lambda,
\qquad
\Delta_n
=
C_\Delta\sigma_1^2
\sqrt{\frac{D_p}{n_{\mathrm{patch}}}}.
\]
\end{lemma}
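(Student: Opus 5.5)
The plan is to compare $\widehat\lambda_i$ and $\lambda_i$ through the map $x\mapsto \lambda_0 x/(x+\sigma^2)$ on $[0,\infty)$, which has derivative
\[
 \frac{\mathrm d}{\mathrm dx}\,\frac{\lambda_0x}{x+\sigma^2}
 =\frac{\lambda_0\sigma^2}{(x+\sigma^2)^2}
 \le\frac{\lambda_0}{\sigma^2}
 \qquad(x\ge0).
\]
Hence the map is $\lambda_0/\sigma^2$-Lipschitz on the nonnegative half-line. Both $\widehat\sigma_i^2$ and $\sigma_i^2$ lie in this domain, since $\widehat\Sigma$ and $\Sigma$ are positive semidefinite, so their eigenvalues are nonnegative.

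Next, I will write the difference explicitly rather than relying only on the mean value theorem:
\[
 \widehat\lambda_i-\lambda_i
 =\lambda_0\Bigl(\frac{\widehat\sigma_i^2}{\widehat\sigma_i^2+\sigma^2}-\frac{\sigma_i^2}{\sigma_i^2+\sigma^2}\Bigr)
 =\frac{\lambda_0\sigma^2(\widehat\sigma_i^2-\sigma_i^2)}{(\widehat\sigma_i^2+\sigma^2)(\sigma_i^2+\sigma^2)}.
\]
Each factor in the denominator is at least $\sigma^2$, so
\[
 |\widehat\lambda_i-\lambda_i|\le\frac{\lambda_0}{\sigma^2}\,|\widehat\sigma_i^2-\sigma_i^2|.
\]

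Finally, on the event of Lemma~\ref{lem:bernstein}, Lemma~\ref{lem:weyl} gives $|\widehat\sigma_i^2-\sigma_i^2|\le\Delta_n$ for every $i\le K$, and substituting yields the claim $|\widehat\lambda_i-\lambda_i|\le\lambda_0\Delta_n/\sigma^2=\varepsilon_\lambda$.

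The argument has no real obstacle. The only points to check are the nonnegativity of the empirical eigenvalues, which keeps both denominators bounded below by $\sigma^2$, and that the statement is meant on the concentration event, which is implicit in the use of $\Delta_n$.
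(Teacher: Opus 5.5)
Your proof is correct and is essentially the paper's argument: a $1/\sigma^2$ Lipschitz bound for the Wiener-gain map $s\mapsto s/(s+\sigma^2)$, combined with Weyl's inequality (Lemma~\ref{lem:weyl}) on the concentration event. The only difference is cosmetic: you write the difference as an exact algebraic identity where the paper uses the mean value theorem, and in doing so you make explicit the nonnegativity of $\widehat\sigma_i^2$ and $\sigma_i^2$, which the paper's derivative bound uses tacitly.
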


\begin{proof}[Proof of Lemma~\ref{lem:lipgain}]
The Wiener gain map $s \mapsto g(s) = s/(s + \sigma^2)$ has derivative
\[
 g'(s) = \frac{\sigma^2}{(s+\sigma^2)^2} \leq \frac{1}{\sigma^2}.
\]
By the mean value theorem and Lemma~\ref{lem:weyl}:
\[
 |g(\hat{\sigma}_i^2) - g(\sigma_i^2)| 
 \leq \frac{|\hat{\sigma}_i^2 - \sigma_i^2|}{\sigma^2} 
 \leq \frac{\Delta_n}{\sigma^2}.
\]
Multiplying by $\lambda_0$ gives $|\hat{\lambda}_i - \lambda_i| \leq 
\lambda_0\Delta_n/\sigma^2 =: \varepsilon_\lambda$.
\end{proof}

\begin{lemma}[Trace perturbation bounds]
\label{lem:trace}
On the event of Lemma~\ref{lem:bernstein},
\[
 |\mathrm{tr}(\hat{\Lambda}) - \mathrm{tr}(\Lambda)|
 \leq K\varepsilon_\lambda,
 \qquad
 |\mathrm{tr}(\hat{\Lambda}^2) - \mathrm{tr}(\Lambda^2)|
 \leq 2\lambda_0 K\varepsilon_\lambda,
\]
where
\[
\varepsilon_\lambda
:=
\lambda_0\frac{\Delta_n}{\sigma^2}
=
\lambda_0 C_\Delta
\frac{\sigma_1^2}{\sigma^2}
\sqrt{\frac{D_p}{n_{\mathrm{patch}}}}.
\]
Unlike $\tr(\hat\Lambda)$ and $\tr(\hat\Lambda^2)$, which depend only
on the (diagonal) empirical rate matrix itself, the empirical weighted
second moment
\[
 m_{2,\hat\Lambda}^2 := \tr\bigl(\hat\Lambda\,\hat V_K^\top C\hat V_K\bigr)
\]
involves the true covariance $C$ projected onto the empirical basis
$\hat V_K$, which is generally non-diagonal in the empirical
eigenbasis (Lemma~\ref{lem:nondiag}). It is therefore \emph{not} a
perturbation of the diagonal quantity $\sum_i\lambda_i\sigma_i^2$;
instead it satisfies the direct bound
\[
 m_{2,\hat\Lambda}^2
 \;\le\;
 \lambda_0\,\tr(C)
 \;\le\;
 \lambda_0\,C_{\mathrm{sp}}\zeta(\beta),
\]
uniformly in the patch ambient dimension $d$, which is the bound used
in the proof of Theorem~\ref{thm:empirical}.
\end{lemma}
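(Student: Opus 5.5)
The statement to prove is Lemma~\ref{lem:trace}: two trace perturbation bounds, and a direct bound on the empirical weighted second moment. We work on the event of Lemma~\ref{lem:bernstein} throughout. The trace bounds then follow coordinatewise from Lemma~\ref{lem:lipgain}; the second-moment bound is a Ky Fan / trace argument that needs no perturbation theory.

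\emph{First trace bound.} By Lemma~\ref{lem:lipgain}, $|\widehat\lambda_i-\lambda_i|\le\varepsilon_\lambda$ for each $i\le K$. Since both $\Lambda$ and $\widehat\Lambda$ are diagonal,
\[
|\tr(\widehat\Lambda)-\tr(\Lambda)|\le\sum_{i\le K}|\widehat\lambda_i-\lambda_i|\le K\varepsilon_\lambda .
\]

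\emph{Second trace bound.} Factor each term as $\widehat\lambda_i^2-\lambda_i^2=(\widehat\lambda_i-\lambda_i)(\widehat\lambda_i+\lambda_i)$. Both rates are of the form $\lambda_0$ times a Wiener gain in $[0,1)$, so $\widehat\lambda_i+\lambda_i\le2\lambda_0$. Summing over $i\le K$ gives $|\tr(\widehat\Lambda^2)-\tr(\Lambda^2)|\le2\lambda_0K\varepsilon_\lambda$. Inserting the formula for $\Delta_n$ from Lemma~\ref{lem:bernstein} into $\varepsilon_\lambda=\lambda_0\Delta_n/\sigma^2$ gives the stated explicit expression.

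\emph{Weighted second moment.} Write $m_{2,\widehat\Lambda}^2=\tr(\widehat\Lambda\,\widehat V_K^\top C\widehat V_K)$. The matrix $\widehat V_K^\top C\widehat V_K$ is positive semidefinite, and $\widehat\Lambda\preceq\lambda_0 I_K$ because the empirical gains are below one. For PSD $A$ and $0\preceq B\preceq\lambda_0I$ we have $\tr(BA)=\tr(A^{1/2}BA^{1/2})\le\lambda_0\tr(A)$. This yields
\[
m_{2,\widehat\Lambda}^2\le\lambda_0\tr(\widehat V_K^\top C\widehat V_K)=\lambda_0\tr(C\widehat P_K).
\]
Since $0\preceq\widehat P_K\preceq I_d$ and $C\succeq0$, we get $\tr(C\widehat P_K)\le\tr(C)$. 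Under the exact power law, $\tr(C)=\sum_{i\le d}C_{\mathrm{sp}}i^{-\beta}\le C_{\mathrm{sp}}\zeta(\beta)$, which is independent of $d$ because $\beta>1$.

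\emph{Main obstacle.} There is no real difficulty here. The only point requiring care is to avoid treating $m_{2,\widehat\Lambda}^2$ as a perturbation of the diagonal quantity $\sum_i\lambda_i\sigma_i^2$. Any such comparison would bring in the eigengap through Davis--Kahan, whereas the trace inequality avoids it entirely.
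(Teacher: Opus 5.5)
Your proposal is correct and follows the paper's proof essentially step for step. You apply Lemma~\ref{lem:lipgain} coordinatewise for both trace bounds, using $\hat\lambda_i^2-\lambda_i^2=(\hat\lambda_i-\lambda_i)(\hat\lambda_i+\lambda_i)$ with $\hat\lambda_i,\lambda_i\le\lambda_0$, and you bound the second moment directly by $\tr(\hat\Lambda\hat V_K^\top C\hat V_K)\le\lambda_0\tr(C\hat P_K)\le\lambda_0\tr(C)$. Your justifications via $A^{1/2}BA^{1/2}\preceq\lambda_0A$ and $0\preceq\hat P_K\preceq I_d$ are just a more explicit version of the paper's one-line remark.
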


\begin{proof}[Proof of Lemma~\ref{lem:trace}]
The first two bounds follow from $|\hat{\lambda}_i - \lambda_i| \leq 
\varepsilon_\lambda$ (Lemma~\ref{lem:lipgain}) and the triangle 
inequality.

\noindent\textit{Bound on $|\mathrm{tr}(\hat{\Lambda}) - \mathrm{tr}(\Lambda)|$:}
\[
 |\mathrm{tr}(\hat{\Lambda}) - \mathrm{tr}(\Lambda)| 
 \leq \sum_{i=1}^K |\hat{\lambda}_i - \lambda_i| 
 \leq K\varepsilon_\lambda.
\]

\noindent\textit{Bound on $|\mathrm{tr}(\hat{\Lambda}^2) - \mathrm{tr}(\Lambda^2)|$:}
\[
 |\hat{\lambda}_i^2 - \lambda_i^2| 
 = |\hat{\lambda}_i + \lambda_i||\hat{\lambda}_i - \lambda_i| 
 \leq 2\lambda_0\varepsilon_\lambda,
\]
since $\hat{\lambda}_i, \lambda_i \leq \lambda_0$. Summing over 
$i = 1,\ldots,K$ gives $|\mathrm{tr}(\hat{\Lambda}^2) - 
\mathrm{tr}(\Lambda^2)| \leq 2\lambda_0 K\varepsilon_\lambda$.

\noindent\textit{Bound on $m_{2,\hat\Lambda}^2$:} Since $\hat\Lambda
\preceq \lambda_0 I_K$ and $\hat V_K$ has orthonormal columns,
\[
 m_{2,\hat\Lambda}^2
 =
 \tr(\hat\Lambda\,\hat V_K^\top C\hat V_K)
 \le
 \lambda_0\,\tr(\hat V_K^\top C\hat V_K)
 =
 \lambda_0\,\tr(C\hat P_K)
 \le
 \lambda_0\,\tr(C)
 =
 \lambda_0\sum_{i=1}^d\sigma_i^2,
\]
using $\hat V_K^\top C \hat V_K \preceq C$ in trace after projection.
Under the exact power-law spectrum and $\beta>1$,
$\sum_{i=1}^d\sigma_i^2 \le C_{\mathrm{sp}}\sum_{i=1}^\infty i^{-\beta}
= C_{\mathrm{sp}}\zeta(\beta)$, giving $m_{2,\hat\Lambda}^2 \le
\lambda_0 C_{\mathrm{sp}}\zeta(\beta)$, independent of $d$.
\end{proof}

\begin{lemma}[Minimum empirical rate stability]
\label{lem:minrate}
Assume the exact Gaussian power-law spectrum $\sigma_i^2=C_{\mathrm{sp}}
i^{-\beta}$, $C_{\mathrm{sp}}>0$, $\beta>1$, and let
\[
 K\le d_{\mathrm{eff}}^{\sigma}
 :=
 \min\left\{d,\left\lfloor
 \left(\frac{C_{\mathrm{sp}}}{\sigma^2}\right)^{1/\beta}
 \right\rfloor\right\}.
\]
Suppose the covariance concentration event
$\|\widehat\Sigma-\Sigma\|_{\mathrm{op}}\le \Delta_n$ holds with
$\Delta_n\le \sigma^2/2$. Then, for every $i\le K$,
\[
 \widehat\lambda_i
 \ge
 \frac{\lambda_0}{3},
 \qquad
 \widehat\kappa
 :=
 \frac{\widehat\lambda_{\max}}{\widehat\lambda_{\min}}
 \le
 3.
\]
\end{lemma}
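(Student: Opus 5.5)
The plan is to reduce both claims to a lower bound on each empirical Wiener gain $\widehat g_i:=\widehat\sigma_i^2/(\widehat\sigma_i^2+\sigma^2)$, using Weyl's inequality (Lemma~\ref{lem:weyl}) and monotonicity of the map $s\mapsto s/(s+\sigma^2)$. The upper side is automatic: every gain lies in $[0,1)$, so $\widehat\lambda_i\le\lambda_0$ and hence $\widehat\lambda_{\max}\le\lambda_0$.

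First I will show that each retained population variance sits above the diffusion level. Since $K\le d_{\mathrm{eff}}^{\sigma}\le(C_{\mathrm{sp}}/\sigma^2)^{1/\beta}$, the exact power law gives, for every $i\le K$,
\[
 \sigma_i^2=C_{\mathrm{sp}}i^{-\beta}\ge C_{\mathrm{sp}}K^{-\beta}\ge\sigma^2,
\]
exactly as in the proof of Lemma~\ref{lem:active-sigma}. Next, on the concentration event, Weyl's inequality gives $\widehat\sigma_i^2\ge\sigma_i^2-\Delta_n$. Combined with the hypothesis $\Delta_n\le\sigma^2/2$, this yields $\widehat\sigma_i^2\ge\sigma^2/2$; in particular $\widehat\sigma_i^2$ is positive. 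Because $s\mapsto s/(s+\sigma^2)$ is increasing on $[0,\infty)$, we then get
\[
 \widehat g_i\ge\frac{\sigma^2/2}{\sigma^2/2+\sigma^2}=\frac13,
\]
so $\widehat\lambda_i=\lambda_0\widehat g_i\ge\lambda_0/3$ for every $i\le K$.

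Finally, dividing the upper bound by the lower bound gives $\widehat\kappa\le\lambda_0/(\lambda_0/3)=3$. The only point needing care is indexing. Weyl's inequality compares the $i$-th largest eigenvalues of $\widehat\Sigma$ and $\Sigma$, and $\widehat\lambda_i$ is defined from the $i$-th largest empirical eigenvalue, so the two indexings match and no step is a real obstacle. The argument uses only the lower side of the spectrum on $i\le K$ and makes no use of an eigengap.
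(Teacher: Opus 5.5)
Your proposal is correct and follows the paper's proof essentially step for step: you use $K\le d_{\mathrm{eff}}^{\sigma}$ and the exact power law to get $\sigma_i^2\ge\sigma^2$, then Weyl with $\Delta_n\le\sigma^2/2$ to get $\widehat\sigma_i^2\ge\sigma^2/2$, then monotonicity of $s\mapsto s/(s+\sigma^2)$ to get $\widehat\lambda_i\ge\lambda_0/3$. Combining this with the trivial bound $\widehat\lambda_i\le\lambda_0$ yields $\widehat\kappa\le3$.
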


\begin{proof}[Proof of Lemma~\ref{lem:minrate}]
Since
\[
 K\le d_{\mathrm{eff}}^{\sigma}
 =
 \left\lfloor
 \left(\frac{C_{\mathrm{sp}}}{\sigma^2}\right)^{1/\beta}
 \right\rfloor,
\]
we have, for every \(i\le K\),
\[
 i
 \le
 \left(\frac{C_{\mathrm{sp}}}{\sigma^2}\right)^{1/\beta}.
\]
Therefore
\[
 i^{-\beta}
 \ge
 \frac{\sigma^2}{C_{\mathrm{sp}}},
\]
and hence, by the exact power-law spectrum,
\[
 \sigma_i^2
 =
 C_{\mathrm{sp}}i^{-\beta}
 \ge
 \sigma^2.
\]

By Weyl's inequality and the covariance concentration event,
\[
 |\widehat\sigma_i^2-\sigma_i^2|
 \le
 \|\widehat\Sigma-\Sigma\|_{\mathrm{op}}
 \le
 \Delta_n.
\]
Thus, using \(\Delta_n\le\sigma^2/2\),
\[
 \widehat\sigma_i^2
 \ge
 \sigma_i^2-\Delta_n
 \ge
 \sigma^2-\frac{\sigma^2}{2}
 =
 \frac{\sigma^2}{2}.
\]

The empirical Wiener-gain rate is
\[
 \widehat\lambda_i
 =
 \lambda_0
 \frac{\widehat\sigma_i^2}
 {\widehat\sigma_i^2+\sigma^2}.
\]
Since the map
\[
 s\mapsto \frac{s}{s+\sigma^2}
\]
is increasing for \(s\ge0\), we obtain
\[
 \widehat\lambda_i
 \ge
 \lambda_0
 \frac{\sigma^2/2}{\sigma^2/2+\sigma^2}
 =
 \frac{\lambda_0}{3}.
\]
Also,
\[
 \widehat\lambda_i\le \lambda_0
\]
for all \(i\), because
\[
 \frac{\widehat\sigma_i^2}
 {\widehat\sigma_i^2+\sigma^2}
 \le 1.
\]
Therefore
\[
 \widehat\kappa
 =
 \frac{\widehat\lambda_{\max}}{\widehat\lambda_{\min}}
 \le
 \frac{\lambda_0}{\lambda_0/3}
 =
 3.
\]
\end{proof}

\begin{lemma}[Empirical-coordinate covariance perturbation]
\label{lem:nondiag}
On the event of Lemma~\ref{lem:bernstein},
\[
 \bigl\|\hat{V}_K^\top C\,\hat{V}_K - \hat{\Sigma}_K\bigr\|_F
 \leq \sqrt{K}\,\Delta_n,
\]
where $\hat{\Sigma}_K := \operatorname{diag}(\hat{\sigma}_1^2,\ldots,\hat{\sigma}_K^2)$.
This bounds the discrepancy between the true covariance in empirical
coordinates and the diagonal covariance obtained from the empirical
eigenvalues; it does not say the true empirical-coordinate covariance
is diagonal.
\end{lemma}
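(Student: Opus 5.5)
The plan is to express the empirical-coordinate covariance through the projector $\hat P_K:=\hat V_K\hat V_K^\top$ and the empirical covariance $\widehat\Sigma$. Since $\hat V_K$ consists of the top-$K$ eigenvectors of $\widehat\Sigma$, we have the exact identity $\hat V_K^\top\widehat\Sigma\,\hat V_K=\hat\Sigma_K$, because $\widehat\Sigma\hat v_i=\hat\sigma_i^2\hat v_i$ and the $\hat v_i$ are orthonormal. Here $C$ is the population patch covariance $\Sigma$. This identity lets us write
\[
 \hat V_K^\top C\,\hat V_K-\hat\Sigma_K
 =\hat V_K^\top\bigl(\Sigma-\widehat\Sigma\bigr)\hat V_K ,
\]
so the discrepancy is a compression of the covariance error $E:=\Sigma-\widehat\Sigma$ onto a $K$-dimensional subspace. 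No eigengap or Davis–Kahan argument is needed.

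Next, we bound the Frobenius norm of this $K\times K$ matrix by $\sqrt K$ times its operator norm, using $\|A\|_F\le\sqrt{\operatorname{rank}A}\,\|A\|_{\mathrm{op}}\le\sqrt K\|A\|_{\mathrm{op}}$ for $A\in\R^{K\times K}$. Because $\hat V_K$ has orthonormal columns, $\|\hat V_K^\top E\hat V_K\|_{\mathrm{op}}\le\|\hat V_K\|_{\mathrm{op}}^2\|E\|_{\mathrm{op}}=\|E\|_{\mathrm{op}}$. On the event of Lemma~\ref{lem:bernstein}, $\|E\|_{\mathrm{op}}\le\Delta_n$. Chaining these inequalities gives $\sqrt K\,\Delta_n$.

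There is essentially no obstacle. The only point needing care is the first step: it uses the exact eigen-relation for $\widehat\Sigma$, not for $C$. Consequently the diagonal entries of $\hat V_K^\top C\hat V_K$ need not equal $\hat\sigma_i^2$, and the off-diagonal entries need not vanish, in agreement with the remark in the statement. If $\widehat\Sigma$ has repeated eigenvalues among the top $K$, any orthonormal eigenbasis choice works, since the identity holds for each of them.
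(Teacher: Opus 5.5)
Your proof is correct and matches the paper's: both use the eigen-identity $\hat V_K^\top\widehat\Sigma\,\hat V_K=\hat\Sigma_K$ to rewrite the discrepancy as $\hat V_K^\top(C-\widehat\Sigma)\hat V_K$, and then bound its Frobenius norm by $\sqrt K\,\Delta_n$ on the concentration event. The only cosmetic difference is where the $\sqrt K$ comes from: the paper uses $\|\hat V_K\|_F=\sqrt K$ with mixed operator/Frobenius submultiplicativity, while you use $\|A\|_F\le\sqrt K\,\|A\|_{\mathrm{op}}$ for the $K\times K$ matrix.
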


\begin{proof}[Proof of Lemma~\ref{lem:nondiag}]
Since the columns of $\hat{V}_K$ are eigenvectors of $\hat{\Sigma}$, we have
$\hat{V}_K^\top\hat{\Sigma}\hat{V}_K = \hat{\Sigma}_K$, and therefore
\[
 \hat{V}_K^\top C\,\hat{V}_K - \hat{\Sigma}_K
 \;=\; \hat{V}_K^\top(C - \hat{\Sigma})\hat{V}_K.
\]
By submultiplicativity, $\|\hat{V}_K\|_{\mathrm{op}}=1$ and $\|\hat{V}_K\|_F=\sqrt{K}$,
so
\[
 \bigl\|\hat{V}_K^\top(C-\hat{\Sigma})\hat{V}_K\bigr\|_F
 \;\leq\; \|C-\hat{\Sigma}\|_{\mathrm{op}}\,\|\hat{V}_K\|_F
 \;\leq\; \Delta_n\sqrt{K}.
\]
\end{proof}

\begin{proof}[Proof of the diagonal-approximation bound]
Set $\Delta_C := \hat{V}_K^\top(C-\hat{\Sigma})\hat{V}_K$ and
$E := \hat{\Sigma}_K^{-1/2}\Delta_C\,\hat{\Sigma}_K^{-1/2}$, so that
$\hat{\Sigma}_K^{-1/2}(\hat{V}_K^\top C\hat{V}_K)\hat{\Sigma}_K^{-1/2} = I+E$.
Since $\hat{\sigma}_K^2$ is the smallest diagonal entry of $\hat{\Sigma}_K$,
\[
 \|E\|_{\mathrm{op}}
 \;\leq\; \hat{\sigma}_K^{-2}\|\Delta_C\|_{\mathrm{op}}
 \;\leq\; \frac{\Delta_n}{\hat{\sigma}_K^2}
 \;\leq\; \frac{1}{2},
\]
so all eigenvalues $\mu_i$ of $I+E$ lie in $[1/2,3/2]$. For $s\in[1/2,3/2]$
the bound $s-1-\log s \leq (s-1)^2$ gives
\[
 2\,\mathrm{KL}\!\left(\mathcal{N}(0,\hat{V}_K^\top C\hat{V}_K)\,\big\|\,
 \mathcal{N}(0,\hat{\Sigma}_K)\right)
 \;=\; \sum_i(\mu_i - 1 - \log\mu_i)
 \;\leq\; \|E\|_F^2.
\]
By Lemma~\ref{lem:nondiag} and $\|\hat{\Sigma}_K^{-1}\|_{\mathrm{op}}\leq\hat{\sigma}_K^{-2}$,
\[
 \|E\|_F
 \;\leq\; \hat{\sigma}_K^{-2}\|\Delta_C\|_F
 \;\leq\; \frac{\sqrt{K}\,\Delta_n}{\hat{\sigma}_K^2},
\]
hence $\mathrm{KL} \leq K\Delta_n^2/(2\hat{\sigma}_K^4)$.
Pinsker's inequality yields the stated bound.
\end{proof}

\section{Proofs of the Empirical and Decoupling Results}
\label{app:proofs-empirical}

\subsection{Proof of Theorem~\ref{thm:empirical}}

The proof controls sampling in the estimated coordinates and then compares the empirical and population subspaces. Keeping these two comparisons separate isolates the covariance-estimation error.
\label{app:proof_empirical}

\begin{proof}
Define the population smoothed truncated Gaussian law
\[
 \widetilde q_K^\eta
 :=
 \mathcal N(0, P_K\mathcal C P_K+\eta^2I_d),
\]
and the empirical smoothed truncated Gaussian law
\[
 \widetilde q_{\widehat K}^\eta
 :=
 \mathcal N(0,\widehat P_K\mathcal C\widehat P_K+\eta^2I_d).
\]
The empirical sampler output law is denoted by
$\widetilde p_{\mathrm{emp}}^\eta$. By the triangle inequality,
\begin{equation}\label{eq:empirical-triangle}
 \TV(\widetilde p_{\mathrm{emp}}^\eta,q^\eta)
 \le
 \TV(\widetilde p_{\mathrm{emp}}^\eta,\widetilde q_{\widehat K}^\eta)
 +
 \TV(\widetilde q_{\widehat K}^\eta,\widetilde q_K^\eta)
 +
 \TV(\widetilde q_K^\eta,q^\eta).
\end{equation}
We bound the three terms separately.

The empirical active-space process has rate matrix
\[
 \widehat\Lambda
 =
 \diag(\widehat\lambda_1,\ldots,\widehat\lambda_K),
 \qquad
 \widehat\lambda_i
 =
 \lambda_0
 \frac{\widehat\sigma_i^2}{\widehat\sigma_i^2+\sigma^2}.
\]
On the covariance concentration event, Weyl's inequality gives
\[
 |\widehat\sigma_i^2-\sigma_i^2|
 \le
 \Delta_n,
 \qquad
 i=1,\ldots,K.
\]
Since $K\le d_{\mathrm{eff}}^{\sigma}$ and the spectrum is exact power-law,
\[
 \sigma_i^2\ge \sigma^2,
 \qquad
 i\le K.
\]
Using $\Delta_n\le c_0\sigma^2$ with $c_0\le 1/2$, we get
\[
 \widehat\sigma_i^2
 \ge
 \sigma_i^2-\Delta_n
 \ge
 \frac{\sigma^2}{2}.
\]
Therefore
\[
 \widehat\lambda_i
 =
 \lambda_0
 \frac{\widehat\sigma_i^2}
 {\widehat\sigma_i^2+\sigma^2}
 \ge
 \lambda_0
 \frac{\sigma^2/2}{\sigma^2/2+\sigma^2}
 =
 \frac{\lambda_0}{3}.
\]
Also $\widehat\lambda_i\le\lambda_0$, and hence
\[
 \widehat\kappa
 :=
 \frac{\widehat\lambda_{\max}}{\widehat\lambda_{\min}}
 \le 3.
\]
Thus the empirical rate matrix is uniformly conditioned with constants
independent of $K$ and $d$.

It remains to verify that the constants in the top-$K$ sampling theorem remain
independent of the patch ambient dimension when applied with the empirical rate matrix $\widehat\Lambda$.
Since $0<\widehat\lambda_i\le \lambda_0$, we have
\[
 \tr(\widehat\Lambda)
 =
 \sum_{i=1}^K \widehat\lambda_i
 \le
 \lambda_0K,
\]
and
\[
 \tr(\widehat\Lambda^2)
 =
 \sum_{i=1}^K \widehat\lambda_i^2
 \le
 \lambda_0^2K.
\]
For the empirical weighted second moment
$m_{2,\widehat\Lambda}^2 := \tr(\widehat\Lambda\,\widehat V_K^\top\mathcal
C\widehat V_K)$, Lemma~\ref{lem:trace} gives directly
\[
 m_{2,\widehat\Lambda}^2
 \le
 \lambda_0 C_{\mathrm{sp}}\zeta(\beta),
\]
independent of the patch ambient dimension $d$.

We also verify the Gaussian score regularity and weighted Fisher bound in the
empirical coordinates. Let
\[
 \widehat C_K:=\widehat V_K^\top\mathcal C\widehat V_K.
\]
For any unit vector $a\in\mathbb R^K$, set $u=\widehat V_Ka$. Since
$\widehat P_Ku=u$,
\[
 \|(I-P_K)u\|
 =\|(I-P_K)\widehat P_Ku\|
 \le \|(I-P_K)\widehat P_K\|_{\mathrm{op}}
 \le \|\widehat P_K-P_K\|_{\mathrm{op}}.
\]
By the operator-norm Davis-Kahan bound in Lemma~\ref{lem:dk} and the
assumption $\Delta_n\le c_0\delta_{\mathrm{gap}}$, we have
\[
 \|\widehat P_K-P_K\|_{\mathrm{op}}
 \le
 2c_0.
\]
Choose the universal constant $c_0>0$ small enough that
\((2c_0)^2\le1/2\); for instance, $c_0\le1/(2\sqrt2)$ suffices under the
constant convention of Lemma~\ref{lem:dk}. Then
\[
 \|P_Ku\|^2
 =1-\|(I-P_K)u\|^2
 \ge \frac12.
\]
Since $P_K$ commutes with $\mathcal C$ and the eigenvalues of
$\mathcal C$ on $\operatorname{col}(P_K)$ are at least $\sigma_K^2$, we get
\[
\begin{aligned}
 a^\top\widehat C_Ka
 &=u^\top\mathcal C u \\
 &\ge \sigma_K^2\|P_Ku\|^2 \\
 &=\sigma_K^2\bigl(1-\|(I-P_K)u\|^2\bigr)
 \ge \frac12\sigma_K^2.
\end{aligned}
\]
Because $K\le d_{\mathrm{eff}}^{\sigma}$, the exact power law gives
$\sigma_K^2\ge\sigma^2$, and therefore
\[
 \widehat C_K\succeq \frac12\sigma^2 I_K.
\]
For the empirical OU covariance
\[
 \widehat D_t=e^{-\widehat\Lambda t}\widehat C_K e^{-\widehat\Lambda t}
 +\sigma^2(I_K-e^{-2\widehat\Lambda t}),
\]
we consequently have, for every unit vector $a$,
\[
\begin{aligned}
 a^\top\widehat D_t a
 &\ge \frac{\sigma^2}{2}\|e^{-\widehat\Lambda t}a\|^2
  +\sigma^2\langle a,(I_K-e^{-2\widehat\Lambda t})a\rangle \\
 &=\sigma^2\sum_{i=1}^K
  \left(1-\frac12 e^{-2\widehat\lambda_i t}\right)a_i^2
 \ge \frac{\sigma^2}{2}.
\end{aligned}
\]
Thus $\widehat D_t\succeq (\sigma^2/2)I_K$ for all $t$, and the empirical score
$\nabla\log\widehat q_{K,t}(u)=-\widehat D_t^{-1}u$ is uniformly Lipschitz with
constant at most $2/\sigma^2$. Moreover,
\[
\begin{aligned}
 \mathbb E_{U\sim\widehat q_{K,t}}
 \|\nabla\log\widehat q_{K,t}(U)\|_{\widehat\Lambda}^2
 &=\tr(\widehat\Lambda\widehat D_t^{-1}) \\
 &\le \frac{2}{\sigma^2}\tr(\widehat\Lambda).
\end{aligned}
\]
Hence the coefficient-space Lipschitz and Fisher assumptions hold for the
empirical Gaussian OU process with constants independent of $d$.

Thus all quantities entering the top-$K$ sampling theorem with
$\widehat\Lambda$ satisfy
\[
 \widehat\kappa\le 3,
 \qquad
 \tr(\widehat\Lambda)=O(K),
 \qquad
 \tr(\widehat\Lambda^2)=O(K),
 \qquad
 m_{2,\widehat\Lambda}^2=O(1),
\]
with constants independent of $d$.

It remains to bound the mixing term of the top-$K$ theorem, which involves
$\KL(\N(0,\widehat C_K)\,\|\,\gamma_K)$ with $\gamma_K=\N(0,\sigma^2I_K)$.
Let $\mu_1,\ldots,\mu_K$ be the eigenvalues of $\widehat C_K/\sigma^2$. Since
$\widehat C_K\succeq\frac12\sigma^2I_K$, every $\mu_i\ge1/2$, and for
$\mu\ge1/2$ we have $-1-\log\mu\le\log2-1<0$, hence
$\mu-1-\log\mu\le\mu$. Therefore
\[
 \KL\bigl(\N(0,\widehat C_K)\,\|\,\gamma_K\bigr)
 =\frac12\sum_{i=1}^K(\mu_i-1-\log\mu_i)
 \le\frac{\tr(\widehat C_K)}{2\sigma^2}
 =\frac{\tr(\mathcal C\widehat P_K)}{2\sigma^2}
 \le\frac{\tr(\mathcal C)}{2\sigma^2}
 \le\frac{C_{\mathrm{sp}}\zeta(\beta)}{2\sigma^2},
\]
the same dimension-free bound as in the population case. Since
$\widehat\lambda_{\min}\ge\lambda_0/3$, the mixing term is at most
$e^{-\lambda_0T/3}\sqrt{C_{\mathrm{sp}}\zeta(\beta)}/(2\sigma)$, which a
horizon $T=O(\lambda_0^{-1}\log(1/\varepsilon))$ makes negligible, with a
constant independent of $K$ and $d$. Finally, the oracle scores are linear,
so Assumption~\ref{ass:girsanov} holds by the linear-growth criterion stated
after it.

By the oracle-score assumption,
\[
 \widehat s_t(u)
 =
 \nabla\log\widehat q_{K,t}(u),
\]
the empirical score error vanishes:
\[
 \varepsilon_{\mathrm{score},\widehat\Lambda}=0.
\]
Applying the top-$K$ sampling theorem in the empirical active space therefore
gives
\begin{equation}\label{eq:empirical-sampling-error}
 \TV(\widetilde p_{\mathrm{emp}}^\eta,\widetilde q_{\widehat K}^\eta)
 \le
 \widetilde O\!\left(\sqrt{\frac K N}\right),
\end{equation}
with constants independent of the patch ambient dimension.

We compare
\[
 \widetilde q_{\widehat K}^\eta
 =
 \mathcal N(0,\widehat P_K\mathcal C\widehat P_K+\eta^2I_d)
\]
and
\[
 \widetilde q_K^\eta
 =
 \mathcal N(0,P_K\mathcal CP_K+\eta^2I_d).
\]
Let
\[
 A
 :=
 \widehat P_K\mathcal C\widehat P_K+\eta^2I_d,
 \qquad
 B
 :=
 P_K\mathcal CP_K+\eta^2I_d.
\]
Then
\[
 A\succeq \eta^2I_d,
 \qquad
 B\succeq \eta^2I_d.
\]
Moreover,
\[
 A-B
 =
 \widehat P_K\mathcal C\widehat P_K-P_K\mathcal CP_K.
\]
Using the identity
\[
 \widehat P_K\mathcal C\widehat P_K-P_K\mathcal CP_K
 =
 (\widehat P_K-P_K)\mathcal C\widehat P_K
 +
 P_K\mathcal C(\widehat P_K-P_K),
\]
we obtain
\[
\begin{aligned}
 \|A-B\|_F
 &\le
 \|(\widehat P_K-P_K)\mathcal C\widehat P_K\|_F
 +
 \|P_K\mathcal C(\widehat P_K-P_K)\|_F \\
 &\le
 2\|\mathcal C\|_{\mathrm{op}}
 \|\widehat P_K-P_K\|_F.
\end{aligned}
\]
Since $\|\mathcal C\|_{\mathrm{op}}=\sigma_1^2$,
\[
 \|A-B\|_F
 \le
 2\sigma_1^2\|\widehat P_K-P_K\|_F.
\]
By the Davis-Kahan theorem, using
$\Delta_n\le c_0\delta_{\mathrm{gap}}$ with $c_0\le1/2$,
\[
 \|\widehat P_K-P_K\|_F
 \le
 \frac{2\sqrt{2K}\Delta_n}{\delta_{\mathrm{gap}}}.
\]
Therefore
\begin{equation}\label{eq:cov-perturb-bound}
 \|A-B\|_F
 \le
 C
 \frac{\sigma_1^2\sqrt K}{\delta_{\mathrm{gap}}}
 \Delta_n.
\end{equation}

The additional assumption
\[
 \Delta_n
 \le
 c_0
 \frac{\eta^2\delta_{\mathrm{gap}}}{\sigma_1^2\sqrt K}
\]
with $c_0>0$ sufficiently small ensures
\[
 \|B^{-1/2}(A-B)B^{-1/2}\|_{\mathrm{op}}
 \le
 \frac12.
\]
Indeed, since $B\succeq \eta^2I_d$,
\[
\begin{aligned}
 \|B^{-1/2}(A-B)B^{-1/2}\|_{\mathrm{op}}
 &\le
 \frac{\|A-B\|_{\mathrm{op}}}{\eta^2} \\
 &\le
 \frac{\|A-B\|_F}{\eta^2} \\
 &\le
 \frac{C\sigma_1^2\sqrt K}{\eta^2\delta_{\mathrm{gap}}}\Delta_n
 \le
 \frac12.
\end{aligned}
\]

We now bound the Gaussian TV distance. For centered Gaussians,
\[
 \KL(\mathcal N(0,A)\,\|\,\mathcal N(0,B))
 =
 \frac12
 \left[
  \tr(B^{-1}A)-d-\log\det(B^{-1}A)
 \right].
\]
Let
\[
 H
 :=
 B^{-1/2}(A-B)B^{-1/2}.
\]
Then
\[
 B^{-1/2}AB^{-1/2}
 =
 I+H.
\]
Since $\|H\|_{\mathrm{op}}\le1/2$, all eigenvalues of $I+H$ lie in
$[1/2,3/2]$. The scalar inequality
\[
 s-1-\log s
 \le
 C(s-1)^2,
 \qquad
 s\in[1/2,3/2],
\]
implies
\[
 \KL(\mathcal N(0,A)\,\|\,\mathcal N(0,B))
 \le
 C\|H\|_F^2.
\]
Furthermore,
\[
 \|H\|_F
 =
 \|B^{-1/2}(A-B)B^{-1/2}\|_F
 \le
 \frac{\|A-B\|_F}{\eta^2}.
\]
Hence
\[
 \KL(\mathcal N(0,A)\,\|\,\mathcal N(0,B))
 \le
 C
 \frac{\|A-B\|_F^2}{\eta^4}.
\]
By Pinsker's inequality,
\[
 \TV(\mathcal N(0,A),\mathcal N(0,B))
 \le
 C
 \frac{\|A-B\|_F}{\eta^2}.
\]
Using \eqref{eq:cov-perturb-bound}, we obtain
\begin{equation}\label{eq:empirical-basis-error}
 \TV(\widetilde q_{\widehat K}^\eta,\widetilde q_K^\eta)
 \le
 C_1
 \frac{\sigma_1^2\sqrt K}{\eta^2\delta_{\mathrm{gap}}}
 \Delta_n.
\end{equation}

The third term of \eqref{eq:empirical-triangle},
$\TV(\widetilde q_K^{\,\eta},q^\eta)$, is the truncation error of
Section~\ref{sec:sharp}; we keep it as it is. Theorem~\ref{thm:num-directions}(a) bounds it by
$C_{\mathrm{sp}}K^{-(2\beta-1)/2}/(\eta^2\sqrt{8(2\beta-1)})$.

\paragraph{Assembly.}
Substituting
\eqref{eq:empirical-sampling-error}
and
\eqref{eq:empirical-basis-error}
into
\eqref{eq:empirical-triangle}
gives
\[
 \TV(\widetilde p_{\mathrm{emp}}^\eta,q^\eta)
 \le
 \widetilde O\!\left(\sqrt{\frac K N}\right)
 +
 C_1
 \frac{\sigma_1^2\sqrt K}{\eta^2\delta_{\mathrm{gap}}}
 \Delta_n
 +
 \TV\bigl(\widetilde q_K^{\,\eta},q^\eta\bigr).
\]
The bound depends on $K,N,\eta,\beta,\sigma_1,\delta_{\mathrm{gap}}$ and
$\Delta_n$, but not on the ambient dimension $d$.
\end{proof}

\subsection{Proof of Theorem~\ref{thm:decouple}}

We verify that the training-data condition controls the perturbation uniformly over all admissible retained dimensions. The population choices then follow after reserving the stated error budget for basis estimation.
\label{app:proof_decouple}

\begin{proof}
We first show that the empirical perturbation assumptions and the empirical
basis/covariance mismatch bound hold uniformly for every $K\le K_{\max}$.
Under the exact power-law spectrum,
\[
 \sigma_K^2=C_{\mathrm{sp}}K^{-\beta},
 \qquad
 \delta_{\mathrm{gap}}(K)
 =C_{\mathrm{sp}}\bigl(K^{-\beta}-(K+1)^{-\beta}\bigr).
\]
The quantities \(\sigma_K^2\), \(\delta_{\mathrm{gap}}(K)\), and
\(\eta^2\delta_{\mathrm{gap}}(K)/(\sigma_1^2\sqrt K)\) are decreasing in
\(K\), so the small-perturbation threshold \(M_K\) is minimized at
\(K_{\max}\). We also need uniform control of the empirical mismatch factor.
By the mean-value theorem, there exist constants
\(0<c_\beta\le C_\beta<\infty\), depending only on \(\beta\) and
\(C_{\mathrm{sp}}\), such that for every \(K\ge1\),
\[
 c_\beta K^{-\beta-1}
 \le
 \delta_{\mathrm{gap}}(K)
 \le
 C_\beta K^{-\beta-1}.
\]
Consequently, for every \(K\le K_{\max}\),
\[
 \frac{\sqrt K}{\delta_{\mathrm{gap}}(K)}
 \le
 C_\beta'
 \frac{\sqrt{K_{\max}}}{\delta_{\mathrm{gap}}(K_{\max})},
\]
where \(C_\beta'>0\) is another constant depending only on the spectral
power-law constants. Therefore the most restrictive smallness and
basis-mismatch requirements over \(K\le K_{\max}\) are controlled by their
values at \(K_{\max}\), up to fixed constants.

By Assumption~\ref{ass:sufficient_data} and
\[
 \Delta_n=C_\Delta\sigma_1^2\sqrt{\frac{D_p}{n_{\mathrm{patch}}}},
\]
the first term in the maximum defining the sample-size condition ensures
\[
 \Delta_n\le c_0 M_{K_{\max}},
\]
after adjusting the hidden universal constant. Since $M_K\ge M_{K_{\max}}$
for all $K\le K_{\max}$ under the monotonicity above, the small-perturbation
conditions in Theorem~\ref{thm:empirical} hold uniformly for all such $K$.

The second term in Assumption~\ref{ass:sufficient_data} gives, again after
absorbing universal constants,
\[
 \Delta_n
 \le
 c\,\frac{\varepsilon\eta^2\delta_{\mathrm{gap}}(K_{\max})}
 {\sigma_1^2\sqrt{K_{\max}}}.
\]
Consequently, for every $K\le K_{\max}$,
\[
\begin{aligned}
 C_1\frac{\sigma_1^2\sqrt K}{\eta^2\delta_{\mathrm{gap}}(K)}\Delta_n
 &\le
 C_1\frac{\sigma_1^2\sqrt{K_{\max}}}
 {\eta^2\delta_{\mathrm{gap}}(K_{\max})}\Delta_n \\
 &\le \frac{\varepsilon}{4},
\end{aligned}
\]
by choosing the hidden constant in the sample-size lower bound sufficiently
large. This proves the claimed uniform control of the empirical
basis/covariance mismatch term.

Substituting this uniform estimate into Theorem~\ref{thm:empirical} gives
\eqref{eq:reduced}; the remaining two terms are exactly the population
sampling and truncation terms, which proves the last claim.
\end{proof}

\section{Empirical Results}\label{app:exp}
\subsection{Empirical check of assumption \ref{ass:powerlaw}}\label{app:e1}

Assumption \ref{ass:powerlaw} requires a two-sided power-law \emph{envelope}
$c_{\mathrm{sp}}\, i^{-\beta}\le \sigma_i^2\le C_{\mathrm{sp}}\, i^{-\beta}$ with $\beta>1$; it does not require the
eigenvalues to follow one exact power law. This appendix checks the envelope, and the sign condition on $\beta$, on the
covariance spectra of natural-image patches. We ask three questions: (i) is the decay of power-law type rather than
exponential (low-rank-like); (ii) how large is the envelope ratio $C_{\mathrm{sp}}/c_{\mathrm{sp}}$, and does it hold on
data not used to estimate $\beta$; and (iii) is the departure from a single power law, which the data clearly show,
compatible with the way Assumption \ref{ass:powerlaw} is used in Sections~3-4. The short answers are: yes; $C_{\mathrm{sp}}/c_{\mathrm{sp}}\le
2.42$ on the fitted range and $\le 4.11$ on the full spectral bulk, out of sample; and yes, because the
departure is a \emph{broken} power law, which satisfies the envelope by construction
(Proposition~\ref{prop:broken-envelope}), followed by a high-frequency roll-off that only affects the side of the
envelope the analysis does not need there (Section~\ref{app:a1-compat}).

\subsubsection{Data and Protocol}
\label{app:a1-protocol}

\paragraph{Patches.} We use CIFAR-10 and CIFAR-100(\cite{krizhevsky2009learning}), DIV2K (\cite{Agustsson_2017_CVPR_Workshops}) and BSDS500 (\cite{MartinFTM01}). A patch is a random $p\times p$ RGB crop with values in $[0,1]$, vectorised to $d=3p^2$ coordinates, for
$p\in\{8,16\}$ (CIFAR) and $p\in\{8,16,32\}$ (DIV2K, BSDS500). Train, validation and test patches come from disjoint
images (Table~\ref{tab:a1-data}): CIFAR uses one patch per image; DIV2K and BSDS500 use $16$ patches from each of $100$
images per split ($80$ for the DIV2K validation split). BSDS500 uses its official split; for DIV2K the original training
images are split $90/10$ into train/validation and the original validation images form the test set.

\paragraph{Spectra and fitting window.} On each split we compute the exact eigenvalues
$\hat\sigma_1^2\ge\dots\ge\hat\sigma_d^2$ of the sample covariance (global mean removed). Exponents are fitted by least
squares on $\log\hat\sigma_i^2$ at $60$ log-spaced ranks in a window $[\ell,h]$ with $\ell=8$ and
$h=\min\{d/4,\;(n_{\min}-1)/4,\;r_{\mathrm{num}}/2\}$, where $n_{\min}$ is the smallest split size and $r_{\mathrm{num}}$
the numerically resolved rank. The cap at $d/4$ and $n/4$ keeps the window where sample eigenvalues estimate population
eigenvalues reliably; ranks below $8$ (mean luminance and colour directions) are examined separately.

\paragraph{Models.} On the common window we compare a power law $\log\sigma_i^2=a-\beta\log i$, an exponential law
$\log\sigma_i^2=a-\lambda i$, and the continuous broken power law of Definition~\ref{def:broken}. The broken law's
breakpoint is chosen among $9$ log-spaced candidates by validation error, and all three models are scored by mean squared
error of $\log\hat\sigma_i^2$ on the test split.

\paragraph{Envelope ratio.} With $\hat\beta$ the power-law exponent fitted on the training split, we define the
\emph{compensated spectrum} $\hat\sigma_i^2\, i^{\hat\beta}$ and report, on each split separately,
\[
\frac{C}{c}\;=\;\frac{\max_{i\in\mathcal I}\hat\sigma_i^2\, i^{\hat\beta}}{\min_{i\in\mathcal I}\hat\sigma_i^2\, i^{\hat\beta}},
\]
the smallest possible ratio $C_{\mathrm{sp}}/c_{\mathrm{sp}}$ of Assumption \ref{ass:powerlaw} on a rank set $\mathcal I$ with exponent
$\hat\beta$. A value of $1$ is an exact power law. Because $\hat\beta$ is estimated on training patches only, the
validation and test ratios are out-of-sample checks. We use two rank sets: the fitting window $\mathcal I=[\ell,h]$, and
the \emph{spectral bulk} $\mathcal I=[8,m]$, where the bulk end $m$ is the largest rank such that
$\hat\sigma_i^2 i^{\hat\beta}\ge c/2$ for all $8\le i\le m$, with $c$ the training-window minimum.

\begin{table*}[t]
\centering
\caption{Data used for the check. Vectors and independent sources (images; artists for FMA(\cite{fma_dataset})) per train/val/test split;
fitting window $[\ell,h]$; fitted breakpoint relative to dimension, $\hat b/d$. $^\dagger$Audio, outside the scope of the
paper.}
\label{tab:a1-data}
\small
\begin{tabular}{llrcccc}
\toprule
Dataset & Patch & $d$ & Vectors & Sources & Window $[\ell,h]$ & $\hat b/d$ \\
\midrule
CIFAR-10 & 8$\times$8 & 192 & 6000/2000/2000 & 6000/2000/2000 & $[8,48]$ & 0.09 \\
CIFAR-10 & 16$\times$16 & 768 & 6000/2000/2000 & 6000/2000/2000 & $[8,192]$ & 0.10 \\
CIFAR-100 & 8$\times$8 & 192 & 6000/2000/2000 & 6000/2000/2000 & $[8,48]$ & 0.17 \\
CIFAR-100 & 16$\times$16 & 768 & 6000/2000/2000 & 6000/2000/2000 & $[8,192]$ & 0.10 \\
DIV2K & 8$\times$8 & 192 & 1600/1280/1600 & 100/80/100 & $[8,48]$ & 0.17 \\
DIV2K & 16$\times$16 & 768 & 1600/1280/1600 & 100/80/100 & $[8,192]$ & 0.10 \\
DIV2K & 32$\times$32 & 3072 & 1600/1280/1600 & 100/80/100 & $[8,319]$ & 0.05 \\
BSDS500 & 8$\times$8 & 192 & 1600/1600/1600 & 100/100/100 & $[8,48]$ & 0.17 \\
BSDS500 & 16$\times$16 & 768 & 1600/1600/1600 & 100/100/100 & $[8,192]$ & 0.13 \\
BSDS500 & 32$\times$32 & 3072 & 1600/1600/1600 & 100/100/100 & $[8,399]$ & 0.09 \\
\midrule
FMA$^\dagger$ & 512 & 512 & 1600/1600/1600 & 96/66/52 & $[8,128]$ & 0.08 \\
FMA$^\dagger$ & 2048 & 2048 & 1600/1600/1600 & 96/66/52 & $[8,399]$ & 0.04 \\
\bottomrule
\end{tabular}
\end{table*}

\subsubsection{Results}
\label{app:a1-results}

\paragraph{The decay is of power-law type.} Figure~\ref{fig:a1-spectra} shows the spectra on a log-log scale. Across
the ten image cases, the exponential model has $2.4$-$20$ times the test error of the power law
(Table~\ref{tab:a1-fits}), so the spectra are not low-rank-like: every direction carries variance and the decay is
polynomial, the regime Assumption \ref{ass:powerlaw} describes (see also Appendix~A.2).

\paragraph{The exponent satisfies $\beta>1$.} The fitted exponents lie in $\hat\beta\in[1.24,1.85]$ across
all image cases and splits, and the three independent splits agree to within $0.11$
(Table~\ref{tab:a1-fits}), so $\beta>1$, needed for the dimension-free moment bound of Section~4, holds throughout; the
weaker condition $\beta>1/2$ of the truncation analysis holds with a wide margin. $\hat\beta$ decreases with patch size
(Figure~\ref{fig:a1-summary}a), from about $1.65$-$1.84$ at $8\times8$ to $1.24$-$1.27$ at $32\times32$, approaching
the value $\beta\approx1$ suggested by the Fourier argument of Appendix~B.5. Assumption \ref{ass:powerlaw} is stated for a fixed
dimension $d$, so we report its constants per patch size.

\paragraph{The envelope holds with small constants, out of sample.} On the fitting window the envelope ratio is at most
$2.42$ for every image case and split (Table~\ref{tab:a1-envelope}): over a range of up to $50$ in rank, every
eigenvalue lies within a factor of about two of a single power law. Figure~\ref{fig:a1-compensated} shows the compensated
spectra; they are flat to within the two horizontal lines on the window and well beyond it, and the validation and test
curves follow the training curve although $\hat\beta$ was not fitted on them. Extending to the spectral bulk $[8,m]$,
which covers ranks up to $m/d=0.33$-$0.50$ of the dimension for $8\times8$ and $16\times16$
patches, the ratio is at most $4.11$.

\paragraph{Low and high ranks.} The two ends of the spectrum behave differently, and both are visible in
Figure~\ref{fig:a1-compensated}. The first few eigenvalues (mean luminance and the colour-opponent directions) lie
\emph{above} the bulk envelope; including all resolved ranks raises the upper constant $C$ by at most a factor
$6.9$ (last column of Table~\ref{tab:a1-envelope}), while the lower side holds there with margin
($\hat\sigma_i^2 i^{\hat\beta}\ge 0.60\, c$ for $i<8$). Beyond the bulk end $m$, the spectrum rolls off faster than
any power law with these exponents. This is the high-spatial-frequency end of the patch, where optical blur,
anti-aliasing, demosaicing and JPEG quantisation remove energy; it lies \emph{below} the envelope, so it violates only
the lower side.

\begin{table*}[t]
\centering
\caption{Model comparison on the common window. $\hat\beta$: power-law exponent fitted separately on each split. Test
log-MSE of the power, exponential and broken power laws (smallest in bold). $\hat\beta_1,\hat\beta_2,\hat b$:
broken-law head and tail exponents and breakpoint (training split). }
\label{tab:a1-fits}
\small
\begin{tabular}{llcccccccccc}
\toprule
 & & \multicolumn{3}{c}{$\hat\beta$} & \multicolumn{3}{c}{Test log-MSE} & \multicolumn{3}{c}{Broken power law} \\
\cmidrule(lr){3-5}\cmidrule(lr){6-8}\cmidrule(lr){9-11}
Dataset & Patch & train & val & test & power & exp. & broken & $\hat\beta_1$ & $\hat\beta_2$ & $\hat b$ \\
\midrule
CIFAR-10 & 8$\times$8 & 1.82 & 1.85 & 1.83 & 0.009 & 0.022 & \textbf{0.004} & 1.28 & 2.04 & 16 \\
CIFAR-10 & 16$\times$16 & 1.57 & 1.60 & 1.60 & 0.018 & 0.161 & \textbf{0.003} & 1.42 & 2.05 & 74 \\
CIFAR-100 & 8$\times$8 & 1.84 & 1.83 & 1.83 & 0.009 & 0.031 & \textbf{0.005} & 1.74 & 2.43 & 34 \\
CIFAR-100 & 16$\times$16 & 1.56 & 1.57 & 1.58 & 0.016 & 0.167 & \textbf{0.004} & 1.43 & 1.96 & 74 \\
DIV2K & 8$\times$8 & 1.65 & 1.68 & 1.70 & 0.011 & 0.029 & \textbf{0.010} & 1.51 & 2.43 & 34 \\
DIV2K & 16$\times$16 & 1.49 & 1.55 & 1.52 & 0.031 & 0.135 & \textbf{0.009} & 1.29 & 2.12 & 74 \\
DIV2K & 32$\times$32 & 1.27 & 1.38 & 1.34 & 0.015 & 0.279 & \textbf{0.010} & 1.19 & 1.87 & 153 \\
BSDS500 & 8$\times$8 & 1.64 & 1.57 & 1.63 & 0.010 & 0.023 & \textbf{0.006} & 1.52 & 2.40 & 34 \\
BSDS500 & 16$\times$16 & 1.40 & 1.38 & 1.43 & 0.025 & 0.123 & \textbf{0.006} & 1.26 & 2.41 & 102 \\
BSDS500 & 32$\times$32 & 1.24 & 1.26 & 1.27 & 0.015 & 0.304 & \textbf{0.008} & 1.20 & 2.54 & 270 \\
\midrule
FMA$^\dagger$ & 512 & 1.51 & 1.55 & 1.42 & 0.184 & 0.207 & \textbf{0.164} & 1.28 & 1.85 & 42 \\
FMA$^\dagger$ & 2048 & 1.24 & 1.19 & 1.21 & 0.177 & 0.207 & \textbf{0.106} & 0.93 & 1.77 & 84 \\
\bottomrule
\end{tabular}
\end{table*}

\begin{table*}[t]
\centering
\caption{Envelope ratios $C/c$ of Assumption \ref{ass:powerlaw} with the training exponent $\hat\beta$. Window: observed ratio per split,
and the ratio implied by the fitted broken power law alone (Proposition~\ref{prop:broken-envelope}). Bulk: end $m$ on
the test split, and the ratio on $[8,m]$ per split. $U$: factor by which including every resolved rank (in particular
$i<8$) raises the upper constant, maximum over splits.}
\label{tab:a1-envelope}
\small
\begin{tabular}{llccccrcccccc}
\toprule
 & & \multicolumn{4}{c}{Window $[\ell,h]$} & \multicolumn{5}{c}{Bulk $[8,m]$} & \\
\cmidrule(lr){3-6}\cmidrule(lr){7-11}
Dataset & Patch & train & val & test & broken law & $m$ & $m/d$ & train & val & test & $U$ \\
\midrule
CIFAR-10 & 8$\times$8 & 1.66 & 1.68 & 1.55 & 1.47 & 89 & 0.46 & 3.21 & 3.20 & 3.28 & 1.0 \\
CIFAR-10 & 16$\times$16 & 1.63 & 1.70 & 1.79 & 1.57 & 295 & 0.38 & 3.25 & 3.18 & 3.30 & 1.0 \\
CIFAR-100 & 8$\times$8 & 1.62 & 1.64 & 1.53 & 1.23 & 96 & 0.50 & 3.11 & 3.29 & 3.05 & 1.0 \\
CIFAR-100 & 16$\times$16 & 1.55 & 1.61 & 1.68 & 1.46 & 306 & 0.40 & 3.12 & 3.05 & 3.21 & 1.1 \\
DIV2K & 8$\times$8 & 1.51 & 1.37 & 1.39 & 1.32 & 76 & 0.40 & 2.93 & 2.21 & 2.57 & 5.1 \\
DIV2K & 16$\times$16 & 2.01 & 2.42 & 2.04 & 1.82 & 265 & 0.35 & 3.97 & 3.22 & 3.73 & 5.0 \\
DIV2K & 32$\times$32 & 1.85 & 2.42 & 1.92 & 1.56 & 484 & 0.16 & 3.69 & 3.00 & 3.48 & 6.9 \\
BSDS500 & 8$\times$8 & 1.35 & 1.31 & 1.39 & 1.31 & 66 & 0.34 & 2.58 & 2.66 & 2.34 & 3.3 \\
BSDS500 & 16$\times$16 & 1.95 & 1.76 & 2.11 & 1.90 & 251 & 0.33 & 3.86 & 4.11 & 3.98 & 4.6 \\
BSDS500 & 32$\times$32 & 1.97 & 1.84 & 1.99 & 1.66 & 604 & 0.20 & 3.93 & 4.05 & 3.97 & 4.7 \\
\midrule
FMA$^\dagger$ & 512 & 1.68 & 1.92 & 2.09 & 1.45 & 283 & 0.55 & 3.36 & 3.42 & 5.34 & 1.6 \\
FMA$^\dagger$ & 2048 & 2.41 & 2.95 & 2.68 & 2.26 & 648 & 0.32 & 4.83 & 4.68 & 6.70 & 1.4 \\
\bottomrule
\end{tabular}
\end{table*}

\begin{figure*}[t]
\centering
\includegraphics[width=\textwidth]{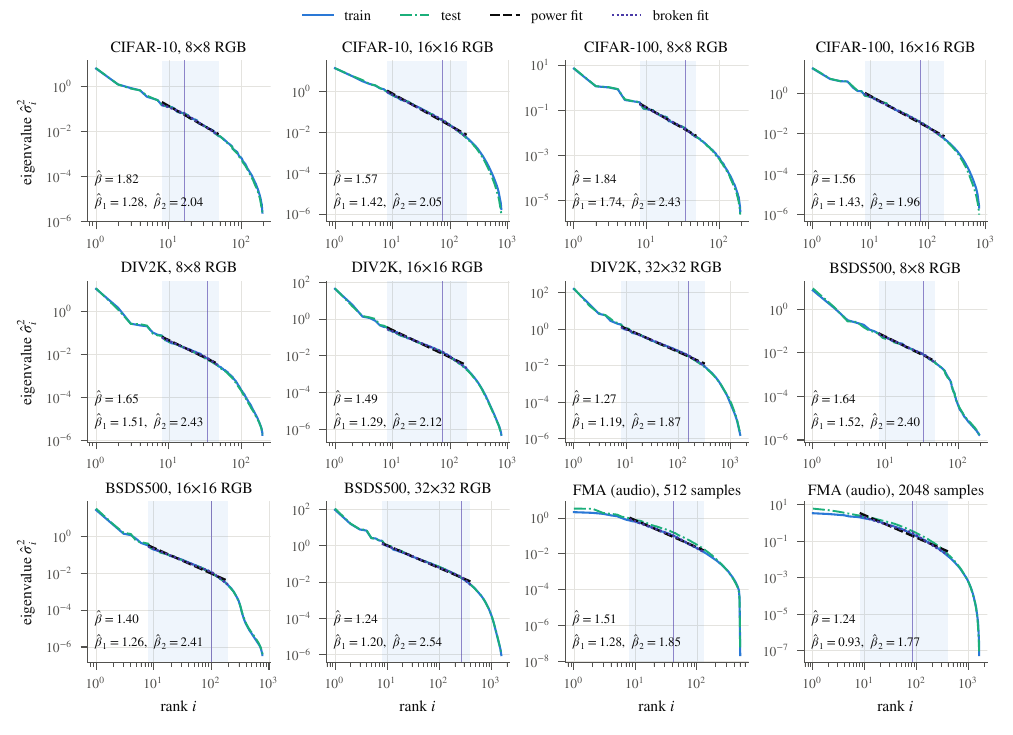}
\caption{Sample covariance spectra of image patches (train and test splits; FMA audio in the last two panels). Shading:
fitting window $[\ell,h]$. Dashed: power-law fit; dotted: broken power-law fit; thin vertical line: fitted breakpoint
$\hat b$. Ranks beyond the sample rank are not shown.}
\label{fig:a1-spectra}
\end{figure*}

\begin{figure*}[t]
\centering
\includegraphics[width=\textwidth]{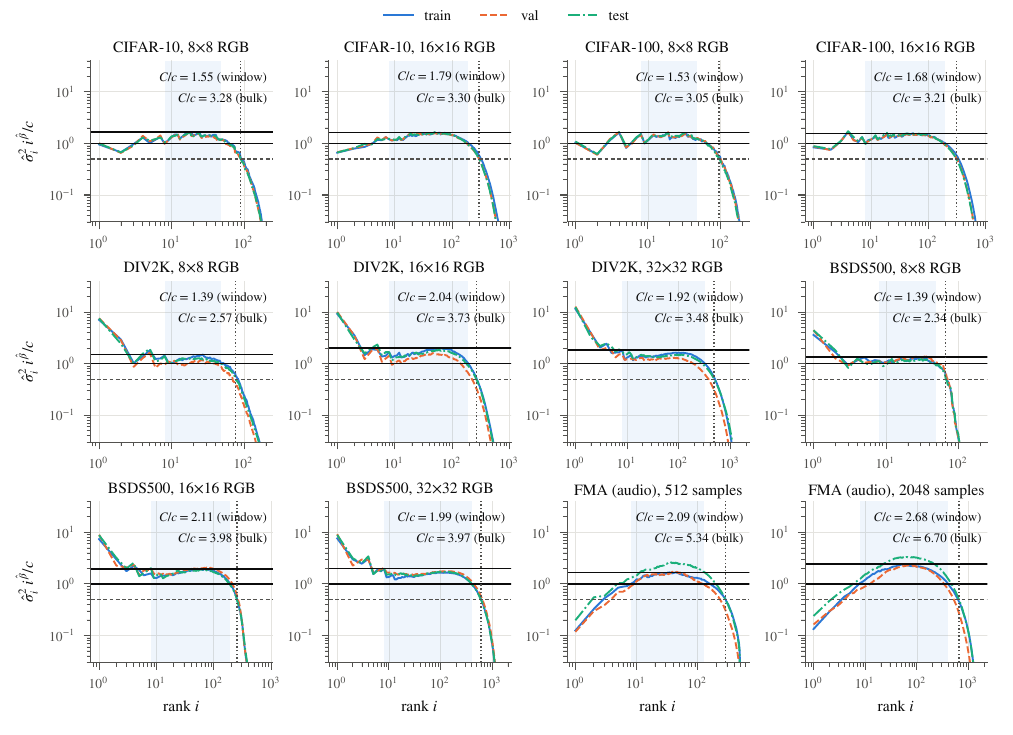}
\caption{Compensated spectra $\hat\sigma_i^2\, i^{\hat\beta}$, normalised by the training-window minimum $c$, for the
three disjoint splits; $\hat\beta$ is fitted on the training split only. An exact power law is a horizontal line.
Solid horizontal lines: training envelope $[c,C]$ on the window (shaded). Dashed line: $c/2$, which defines the bulk end
$m$ (dotted vertical line, test split). Values below $0.03$ are clipped.}
\label{fig:a1-compensated}
\end{figure*}

\begin{figure*}[t]
\centering
\includegraphics[width=\textwidth]{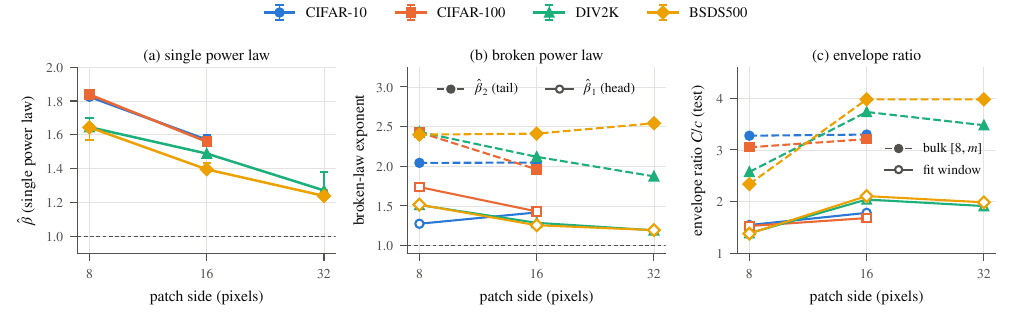}
\caption{Dependence on patch size (image datasets). (a) Single power-law exponent; bars span the estimates on the
three disjoint splits. (b) Broken-law head and tail exponents. (c) Envelope ratio on the test split, over the fitting
window and over the bulk $[8,m]$. Dashed horizontal lines in (a,b): $\beta=1$.}
\label{fig:a1-summary}
\end{figure*}

\subsubsection{The Broken Power Law and Why It Is Compatible with Assumption \ref{ass:powerlaw}}
\label{app:a1-broken}

In every case the broken power law has the smallest validation error, and on the test split it improves on the single
power law by a factor $1.1$-$6.8$ in log-MSE (Table~\ref{tab:a1-fits}). The spectra therefore steepen
gradually within the window; a single exponent is an idealisation. This does not conflict with Assumption \ref{ass:powerlaw}, which only
asks for a two-sided envelope.

\begin{definition}[Continuous broken power law]
\label{def:broken}
For $A>0$, a breakpoint $b\ge1$ and exponents $\beta_1\le\beta_2$, the continuous broken power law is
\[
\sigma_i^2 \;=\;
\begin{cases}
A\, i^{-\beta_1}, & i\le b,\\[2pt]
A\, b^{\beta_2-\beta_1}\, i^{-\beta_2}, & i> b,
\end{cases}
\]
or equivalently
$\log\sigma_i^2=\log A-\beta_1\log i-(\beta_2-\beta_1)(\log i-\log b)_+$.
It decays like $i^{-\beta_1}$ before the breakpoint and like $i^{-\beta_2}$ after it, and is continuous at $i=b$. The
case $\beta_1=\beta_2$ is the exact power law~(2).
\end{definition}

\begin{proposition}[Envelope of a broken power law]
\label{prop:broken-envelope}
Let $\sigma_i^2$ be a continuous broken power law and $1\le\ell\le b\le h$. For every $\beta\in[\beta_1,\beta_2]$,
$c\, i^{-\beta}\le\sigma_i^2\le C\, i^{-\beta}$ for $\ell\le i\le h$, where
\[
\frac{C}{c}=\max\Bigl\{\Bigl(\frac{b}{\ell}\Bigr)^{\beta-\beta_1},\;\Bigl(\frac{h}{b}\Bigr)^{\beta_2-\beta}\Bigr\}
\]
and $C=A\,b^{\beta-\beta_1}$. In particular, with $\beta=\beta_2$ the ratio is $(b/\ell)^{\beta_2-\beta_1}$, which
does not depend on $h$: a broken power law with a fixed breakpoint satisfies Assumption \ref{ass:powerlaw} with exponent $\beta_2$ and
constants independent of the dimension.
\end{proposition}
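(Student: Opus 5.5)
The plan is to work with the compensated spectrum $\sigma_i^2 i^{\beta}$. The two-sided envelope on $[\ell,h]$ holds with constants $c$ and $C$ exactly when $c\le\sigma_i^2 i^{\beta}\le C$ there. So the smallest admissible ratio is
\[
\frac{\max_{\ell\le i\le h}\sigma_i^2 i^{\beta}}{\min_{\ell\le i\le h}\sigma_i^2 i^{\beta}}.
\]
I would compute the maximum and the minimum directly from Definition~\ref{def:broken}, treating $i$ as a continuous variable on $[\ell,h]$. The extrema of a piecewise monotone function of this kind sit at the breakpoint or at the endpoints, so the integer case follows at once.

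First I would write the compensated spectrum piecewise. For $i\le b$ it equals $f(i)=A\,i^{\beta-\beta_1}$, which is nondecreasing because $\beta\ge\beta_1$. For $i>b$ it equals $f(i)=A\,b^{\beta_2-\beta_1}i^{\beta-\beta_2}$, which is nonincreasing because $\beta\le\beta_2$. By continuity at $i=b$, $f$ is unimodal with its peak at the breakpoint, and $f(b)=A\,b^{\beta-\beta_1}$. This gives the stated value $C=f(b)=A\,b^{\beta-\beta_1}$, and it is attained because $\ell\le b\le h$.

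Next I would locate the minimum. A unimodal function on $[\ell,h]$ attains its minimum at one of the endpoints, so $c=\min\{f(\ell),f(h)\}$. Dividing gives
\[
\frac{f(b)}{f(\ell)}=\Bigl(\frac{b}{\ell}\Bigr)^{\beta-\beta_1}
\qquad\text{and}\qquad
\frac{f(b)}{f(h)}=\Bigl(\frac{h}{b}\Bigr)^{\beta_2-\beta},
\]
and $C/c$ is the larger of these two ratios, which is the claimed formula.

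For the final assertion, set $\beta=\beta_2$. The second ratio becomes $1$, so $C/c=(b/\ell)^{\beta_2-\beta_1}$, which does not depend on $h$. Taking $h=d$ then yields Assumption~\ref{ass:powerlaw} with exponent $\beta_2$ on all $i\ge\ell$, with constants independent of $d$. For ranks $i<\ell$ the envelope holds with $\ell=1$, or after enlarging $C$ by a finite factor depending only on $b$ and $\ell$.

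I do not expect a real obstacle. The only point needing care is the case distinction at $i=b$: the formula for $C$ requires $b$ to lie inside the window, which the hypothesis $\ell\le b\le h$ guarantees. The formula for $c$ requires only the monotonicity directions, which follow from $\beta_1\le\beta\le\beta_2$.
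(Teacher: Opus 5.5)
Your argument is correct and is essentially the paper's proof. In both, the compensated spectrum $f(i)=\sigma_i^2 i^{\beta}$ is nondecreasing up to $b$ and nonincreasing after it, so $C=f(b)$ and $c=\min\{f(\ell),f(h)\}$. There is one small slip in your last paragraph: for ranks $i<\ell$, $f$ is increasing and already below $f(b)$, so it is the lower constant that must be decreased (to $f(1)=A$, i.e.\ the case $\ell=1$), not $C$ that must be enlarged.
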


\begin{proof}
Let $f(i)=\sigma_i^2\, i^{\beta}$. For $i\le b$, $f(i)=A\,i^{\beta-\beta_1}$ is nondecreasing because
$\beta\ge\beta_1$; for $i\ge b$, $f(i)=A\,b^{\beta_2-\beta_1} i^{\beta-\beta_2}$ is nonincreasing because
$\beta\le\beta_2$. Hence $\max_{[\ell,h]}f=f(b)=A\,b^{\beta-\beta_1}$ and $\min_{[\ell,h]}f=\min\{f(\ell),f(h)\}$, and
the ratio $f(b)/\min\{f(\ell),f(h)\}$ is the stated maximum.
\end{proof}

\paragraph{What the data show.} Every fitted broken law satisfies $\hat\beta_1\le\hat\beta\le\hat\beta_2$, so
Proposition~\ref{prop:broken-envelope} applies with the single-law exponent. The ratio it predicts from the bend alone
(column ``broken law'' in Table~\ref{tab:a1-envelope}) accounts for $44$-$96\%$ of the observed
$\log(C/c)$; the remainder is the small scatter of individual eigenvalues around the smooth law. The head exponents
satisfy $\hat\beta_1\ge1.19 > 1$ and the tail exponents $\hat\beta_2\in[1.87,2.54]$, so both pieces of
the broken law individually satisfy the exponent condition of Assumption \ref{ass:powerlaw}. The breakpoint sits at a roughly constant
fraction of the dimension, $\hat b/d\in[0.05,0.17]$ (Table~\ref{tab:a1-data}), rather than at a fixed rank: the
bend is the onset of the high-frequency roll-off described in Section~\ref{app:a1-results}, not a change of regime among
low frequencies. The breakpoint is selected from a coarse grid of $9$ candidates and should be read as a location, not a
precise estimate.

\subsubsection{Which Side of the Envelope the Analysis Uses}
\label{app:a1-compat}

The departures from an exact power law found above are of two kinds: a bounded multiplicative wiggle and bend in the
bulk, and a roll-off below the envelope at the highest frequencies. The first is absorbed by the envelope constants
(Proposition~\ref{prop:broken-envelope}, Table~\ref{tab:a1-envelope}). For the second, note that the two sides of~(1)
enter the analysis in different places.
\begin{itemize}
\item The \emph{upper} side $\sigma_i^2\le C_{\mathrm{sp}}i^{-\beta}$ is used for all ranks: it bounds the total
variance by $C_{\mathrm{sp}}\zeta(\beta)$ (Section~4.1, Lemma~2) and the tail sums $\sum_{i>K}x_i^2$ in the
sufficiency part of Theorem~2, whose proof uses no other property of the spectrum. A spectrum that falls faster than
$i^{-\beta}$ only decreases these quantities, so the high-frequency roll-off keeps every upper bound valid and makes the
sufficient number of retained directions conservative. Empirically, the upper side holds on every resolved rank once
$C$ is enlarged by the factor $U\le6.9$ of Table~\ref{tab:a1-envelope}.
\item The \emph{lower} side $\sigma_i^2\ge c_{\mathrm{sp}}i^{-\beta}$ is used only for the retained directions
$i\le K\le d^{\sigma}_{\mathrm{eff}}$, where it keeps the rates of Theorem~3 bounded below, and on the block
$K<i\le2K$, where it gives the necessity part of Theorem~2 and Corollary~1. Empirically, it holds with $c/2$ on all ranks
$i\le m$, including $i<8$.
\end{itemize}
Consequently the data support Assumption \ref{ass:powerlaw} in the form in which the analysis uses it: an upper envelope on all resolved
ranks, and a lower envelope on the spectral bulk, which suffices whenever $2K\le m$. The sharp statements of Section~3
(Theorem~2, Corollary~1) and Theorem~4 are formulated for the exact power law~(2); in the light of
Table~\ref{tab:a1-fits}, they should be read as describing the bulk exponent, with the constants of the envelope in
place of $C_{\mathrm{sp}}$. Theorem~1 needs no spectral model at all and can be evaluated directly on the empirical
eigenvalues.
\subsection{Empirical check of the truncation theory}
\label{app:e2}

This appendix checks the results of Sections~3-4 on the model for which they are stated: centered Gaussian data
with the exact power law~(2). For this model every law involved is Gaussian, so total variation can be estimated
directly from the densities, without a learned discriminator. We test, in order, the two-sided characterisation of the
truncation error (Theorem~1), the number of retained directions and its exponents (Theorem~2), the cut-off rule
(Theorem~2(c)), the failure of truncating at the noise level (Corollary~1), the absence of a dependence on $d$, the
lower bound for imperfect samplers (Theorem~1(b)) and the decomposition~(10), and the exact-score sampler of
Theorems~3-4. In short: every bound holds at every tested point; the truncation error equals
$0.28$-$0.30$ times the Wiener-gain tail norm; the fitted exponents agree with $2/(2\beta-1)$ to within $1\%$; and
the explicit sufficient choice $K_\varepsilon(\eta)$ is within a factor $1.5$ of the true minimum.

\subsubsection{Setup}
\label{app:e2-setup}

\paragraph{Model.} We take $C_{\mathrm{sp}}=1$, $\sigma_i^2=i^{-\beta}$ for $i\le d$, with $\beta\in\{1.25, 1.5, 2\}$ and
$d=2^{18}$. The evaluation level is set through the number of directions above it,
$k_\eta=\lfloor(C_{\mathrm{sp}}/\eta^2)^{1/\beta}\rfloor$: we use $\eta^2=k_\eta^{-\beta}$ with
$k_\eta\in\{4, 8, 16, 32, 64\}$, and accuracies $\varepsilon\in[0.001,0.3]$ (nine values).

\paragraph{Estimating TV.} For two laws $P,Q$ with densities,
$\mathrm{TV}(P,Q)=\mathbb E_{Y\sim(P+Q)/2}\bigl|\tanh\bigl(\tfrac12\log\tfrac{dP}{dQ}(Y)\bigr)\bigr|$, which we
average over $10{,}000$ draws. For the ideal truncated sampler, $\log(d\tilde q^\eta_K/dq^\eta)$ is a sum over the
discarded coordinates, so a single pass with common random numbers gives the error at every $K$; each of the
$15$ curves is evaluated at about $953$ values of $K$ (all $K\le600$ and a fine logarithmic grid above). The
relative Monte Carlo standard error is at most $0.8\%$. The estimator reproduces the closed-form TV between
$\mathcal N(0,1)$ and $\mathcal N(0,4)$.

\paragraph{Required dimension.} $K^\star(\varepsilon)$ is the first crossing of $\mathrm{TV}=\varepsilon$, interpolated
log-linearly between grid points. A point is called \emph{asymptotic} if $4k_\eta\le K^\star\le d/8$: well past the
noise level and well inside the ambient dimension, which is the regime of Theorem~2 ($2K\le d$).

\subsubsection{Theorem 1: the Truncation Error Is a Function of the Wiener-Gain Tail}
\label{app:e2-thm1}

Figure~\ref{fig:e2-thm1}(a) shows the truncation error against $K$ for $\beta=1.5$; panel~(b) plots all
$14261$ evaluated points, for every $\beta$ and $\eta$, against $\|(g_i)_{i>K}\|_2$. They fall on a single
curve. Below saturation, the ratio $\mathrm{TV}/\|(g_i)_{i>K}\|_2$ lies in $[0.276,0.302]$
(Table~\ref{tab:e2-thm1}), inside the interval $[1/100,3/2]$ of Theorem~1(a); no point violates either bound or the
explicit Pinsker bound $(\tfrac18\sum_{i>K}x_i^2/(1+x_i))^{1/2}$, which overestimates the error by a factor of only
1.22-1.30. The lower constant $1/100$, inherited from the worst case over all Gaussian pairs, is
therefore conservative by a factor of about $28$ for the truncation structure studied here; the dependence on the
Wiener gains is exact.

\begin{table}[t]
\centering
\caption{Theorem~1 on all truncation curves. Points: evaluated $(\eta,K)$ pairs; ratio range over the points with
$10^{-5}<\|(g_i)_{i>K}\|_2<\tfrac12$ and a Monte Carlo signal-to-noise ratio above $5$. No point violates the
bounds $\tfrac1{100}G_K\le\mathrm{TV}\le\tfrac32G_K$ or the Pinsker bound.}
\label{tab:e2-thm1}
\small
\begin{tabular}{rrrcc}
\toprule
$\beta$ & points & used & $\mathrm{TV}/\|(g_i)_{i>K}\|_2$ & TV/Pinsker \\
\midrule
1.25 & 4779 & 3242 & 0.276-0.290 & 0.78-0.82 \\
1.5 & 4757 & 3763 & 0.276-0.290 & 0.77-0.81 \\
2 & 4725 & 3838 & 0.276-0.302 & 0.77-0.81 \\
\bottomrule
\end{tabular}
\end{table}

\begin{figure*}[t]
\centering
\includegraphics[width=\textwidth]{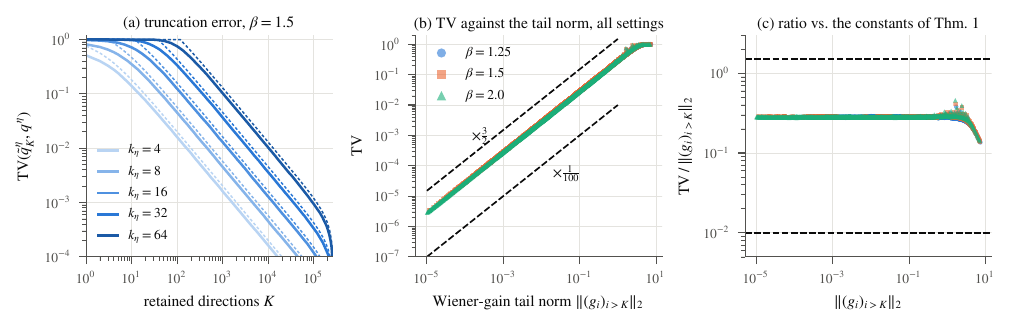}
\caption{Theorem~1. (a) Truncation error $\mathrm{TV}(\tilde q^\eta_K,q^\eta)$ against $K$ for $\beta=1.5$ and five
resolutions (solid), with the explicit Pinsker bound of Theorem~1(a) (dashed). (b) All points for all $\beta$ and
$\eta$ against the Wiener-gain tail norm; dashed lines are the constants $3/2$ and $1/100$ of Theorem~1(a).
(c) Their ratio, which is constant until the error saturates at one.}
\label{fig:e2-thm1}
\end{figure*}

\subsubsection{Theorem 2: How Many Directions Must Be Kept}
\label{app:e2-thm2}

Theorem~2 predicts $K^\star=\Theta_\beta\bigl((C_{\mathrm{sp}}/(\eta^2\varepsilon))^{2/(2\beta-1)}\bigr)$. For each
$\beta$ we regress $\log K^\star$ on $\log(1/\varepsilon)$ and $\log(C_{\mathrm{sp}}/\eta^2)$ jointly over the
$97$ asymptotic points. Both fitted exponents match $2/(2\beta-1)$ (Table~\ref{tab:e2-thm2},
Figure~\ref{fig:e2-thm2}(c)). The largest deviation, about $1\%$ at $\beta=1.25$, is pre-asymptotic: restricted to
$K^\star\ge32k_\eta$ the exponents become $1.334$ and $1.332$, against $1.333$. Plotted against
$(C_{\mathrm{sp}}/(\eta^2\varepsilon))^{2/(2\beta-1)}$, the required dimensions for all $\beta$, $\eta$ and $\varepsilon$
lie on parallel lines of slope one (Figure~\ref{fig:e2-thm2}(b)).

The two parts of Theorem~2 differ in sharpness. The sufficient choice of Theorem~2(a), the smallest $K$ for which the
explicit bound~(11) is at most $\varepsilon$ (that is, $K_{2\varepsilon}(\eta)$ in the notation of Theorem~2), exceeds $K^\star$ by a factor of only 1.15-1.51
(Figure~\ref{fig:e2-thm2}(a), dashed). The necessary bound of Theorem~2(b), stated for $\varepsilon<1/200$, holds at
every point but lies a factor 26-528 below $K^\star$ (dotted). The gap comes from the constants $1/100$ (Theorem~1) and $2^{-\beta}$ used in its proof, consistent with the
remark after Theorem~2 that only the dependence on $\eta$ and $\varepsilon$ matches.

\begin{table*}[t]
\centering
\caption{Theorem~2. Fitted exponents of $1/\varepsilon$ and of $C_{\mathrm{sp}}/\eta^2$ in $K^\star$ (joint
least squares, $95\%$ intervals), over asymptotic points ($4k_\eta\le K^\star\le d/8$) and over
$K^\star\ge16k_\eta$; sharpness of the sufficient choice $K_\varepsilon$ of Theorem~2(a) and of the necessary bound
$K_{\rm nec}$ of Theorem~2(b) ($\varepsilon<1/200$); and the signal-to-noise ratio of the first discarded direction at
the required cut, $x_{K^\star+1}\sqrt{K^\star}/\varepsilon$ (Theorem~2(c)).}
\label{tab:e2-thm2}
\small
\begin{tabular}{rcccrcccccc}
\toprule
 & & \multicolumn{3}{c}{Asymptotic points} & \multicolumn{2}{c}{$K^\star\ge16k_\eta$} & & & \\
\cmidrule(lr){3-5}\cmidrule(lr){6-7}
$\beta$ & $\frac{2}{2\beta-1}$ & $1/\varepsilon$ & $C_{\mathrm{sp}}/\eta^2$ & $n$ & $1/\varepsilon$ & $C_{\mathrm{sp}}/\eta^2$ &
$K_\varepsilon/K^\star$ & $K^\star/K_{\rm nec}$ & $x_{K^\star+1}\sqrt{K^\star}/\varepsilon$ \\
\midrule
1.25 & 1.333 & $1.347\pm0.005$ & $1.340\pm0.007$ & 32 & 1.337 & 1.333 & 1.32-1.51 & 521-528 & 4.24-4.57 \\
1.5 & 1.000 & $1.005\pm0.003$ & $1.001\pm0.003$ & 34 & 0.997 & 0.999 & 1.24-1.33 & 110-114 & 4.77-5.21 \\
2 & 0.667 & $0.670\pm0.002$ & $0.669\pm0.001$ & 31 & 0.668 & 0.668 & 1.15-1.21 & 26-26 & 5.60-6.18 \\
\bottomrule
\end{tabular}
\end{table*}

\begin{figure*}[t]
\centering
\includegraphics[width=\textwidth]{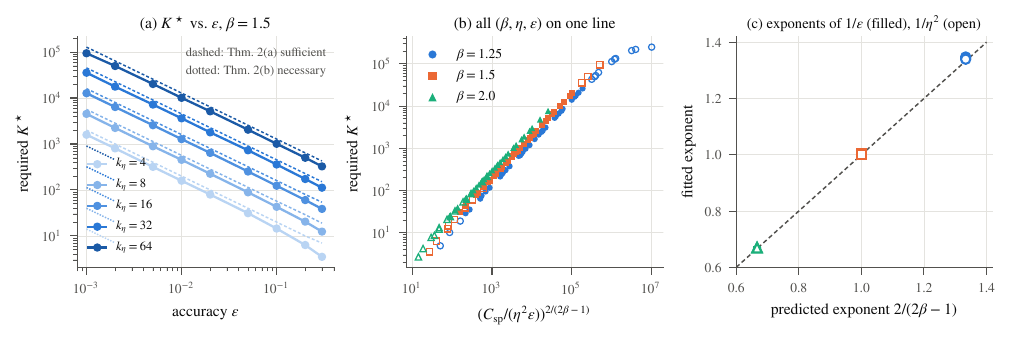}
\caption{Theorem~2. (a) Required dimension against accuracy for $\beta=1.5$ and five resolutions (markers), with the
sufficient choice of Theorem~2(a) (dashed) and the necessary bound of Theorem~2(b) where it applies,
$\varepsilon<1/200$ (dotted). (b) Required dimension for all settings against the predicted variable; open markers are
outside the asymptotic range. (c) Fitted exponents against the prediction $2/(2\beta-1)$.}
\label{fig:e2-thm2}
\end{figure*}

\subsubsection{Where to Cut, and Why the Noise Level Is the Wrong Place}
\label{app:e2-cut}

Theorem~2(c) says that the first discarded direction should have signal-to-noise ratio of order
$\varepsilon/\sqrt{K}$. At the required cut, $x_{K^\star+1}\sqrt{K^\star}/\varepsilon$ is constant to within about
$\pm5\%$ across a $300$-fold range of $\varepsilon$ and a $16$-fold range of $k_\eta$, with a value that depends only on
$\beta$ (Figure~\ref{fig:e2-cut}(a), last column of Table~\ref{tab:e2-thm2}). The cut therefore sits far below the
evaluation level: $x_{K^\star+1}\approx c_\beta\,\varepsilon/\sqrt{K^\star}$ with $c_\beta$ between 4.2 and 6.2.

Truncating at the evaluation level instead ($K=k_\eta$, Corollary~1) leaves an error that grows with resolution
(Figure~\ref{fig:e2-cut}(b), $1\le k_\eta\le 1024$, $d=2^{16}$): for $\beta=1.5$ it is $0.11$ at $k_\eta=1$,
$0.57$ at $k_\eta=16$, and at least $0.99$ for every $\beta$ at $k_\eta=256$, however accurately the
retained directions are sampled. Corollary~1 guarantees at least $1/100$; the observed error is close to one, as
predicted by the $\sqrt{k_\eta}$ growth of the Wiener-gain tail.

\paragraph{Independence of the ambient dimension.} For $\beta=1.5$ and $k_\eta=16$ we repeat the computation with
$d\in\{2^{12},2^{14},2^{16},2^{18}\}$ (Figure~\ref{fig:e2-cut}(c)). Wherever $2K^\star\le d$, the required dimension
differs from its value at $d=2^{18}$ by at most $6.7\%$: it is set by $\eta$ and $\varepsilon$, not by $d$.
Only when $K^\star$ approaches $d$ does the finite spectrum lower it.

\begin{figure*}[t]
\centering
\includegraphics[width=\textwidth]{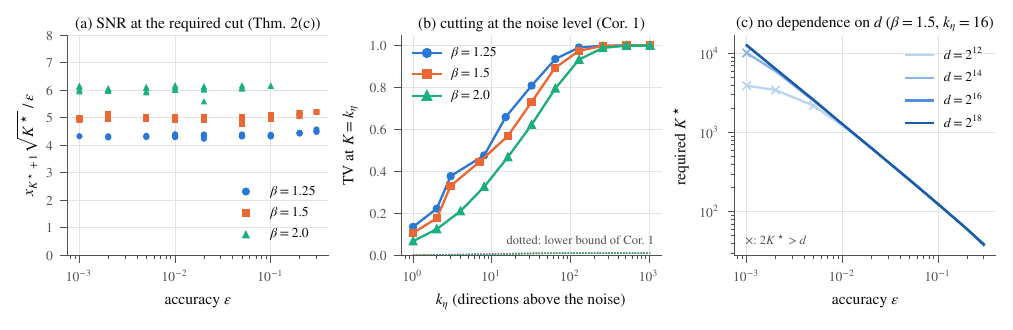}
\caption{(a) Signal-to-noise ratio of the first discarded direction at the required cut, scaled by
$\sqrt{K^\star}/\varepsilon$ (Theorem~2(c)), for all asymptotic points. (b) Error of truncating exactly at the noise
level, $K=k_\eta$, against $k_\eta$ (Corollary~1); dotted lines, the guaranteed lower bound. (c) Required dimension
for four ambient dimensions; crosses mark $2K^\star>d$.}
\label{fig:e2-cut}
\end{figure*}

\subsubsection{Practical Selection Rules}
\label{app:e2-select}

Theorem~2 recommends choosing $K$ from an aggregate spectral-tail budget. We compare this with the rules that
are common in practice, over all 135 combinations of $\beta$, $k_\eta$ and $\varepsilon$
(Table~\ref{tab:e2-select}, Figure~\ref{fig:e2-select}). Both rules based on theory, the explicit Pinsker bound of
Theorem~1(a) and the closed form of Theorem~2(a), meet the target in all cases while retaining at most $1.6$ and $2$
times the minimum. Truncating at the noise level meets it in one case of $135$. Explained-variance thresholds do
not depend on $\eta$ or $\varepsilon$, so they are simultaneously too aggressive for fine resolutions and wasteful for
coarse ones: the $95\%$ rule meets the target in $36\%$ of cases, and the $99.9\%$ rule, although usually
accurate, retains up to $4.7\times10^{4}$ times more directions than needed.

\begin{table}[t]
\centering
\caption{Selection rules over all $(\beta,k_\eta,\varepsilon)$: share of cases meeting $\mathrm{TV}\le\varepsilon$ (upper
end of the Monte Carlo interval), and retained dimension relative to the minimum $K^\star$. Variance rules keep the
smallest $K$ explaining the stated share of $\sum_i\sigma_i^2$; the Theorem~2(a) rule is the smallest $K$ whose explicit
bound~(11) is at most $\varepsilon$; the tail-budget rule is the smallest $K$ whose Pinsker bound in Theorem~1(a) is
at most $\varepsilon$.}
\label{tab:e2-select}
\small
\setlength{\tabcolsep}{4pt}
\begin{tabular}{lccc}
\toprule
Rule & Meets & \multicolumn{2}{c}{$K/K^\star$: median, range} \\
\midrule
Noise level, $K=k_\eta$ & 1\% & 0.034 & 0.000-1.33 \\
95\% variance & 36\% & 0.35 & 0.002-2{,}047 \\
99\% variance & 64\% & 4.10 & 0.008-21{,}479 \\
99.9\% variance & 90\% & 30 & 0.078-47{,}499 \\
Theorem~2(a), $K_{2\varepsilon}(\eta)$ & 100\% & 1.26 & 1.04-2.00 \\
Tail budget, Theorem~1(a) & 100\% & 1.25 & 1.00-1.60 \\
\bottomrule
\end{tabular}
\end{table}

\begin{figure*}[t]
\centering
\includegraphics[width=\textwidth]{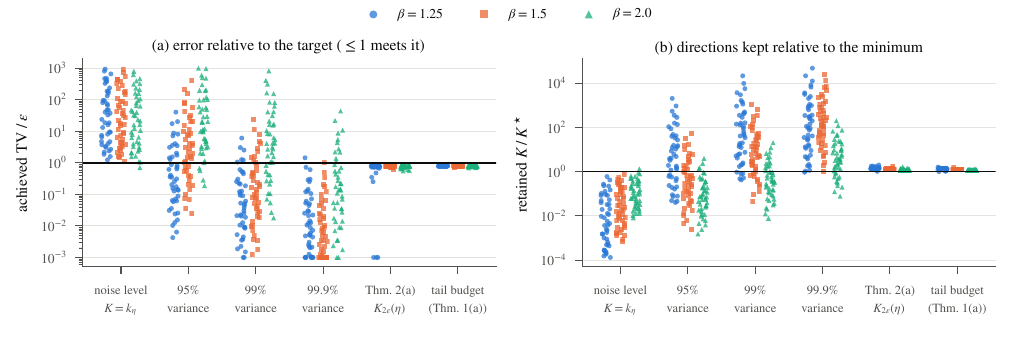}
\caption{Selection rules over all $(\beta,k_\eta,\varepsilon)$. (a) Achieved truncation error relative to the target
(values below $10^{-3}$ are drawn at $10^{-3}$). (b) Retained dimension relative to the minimum.}
\label{fig:e2-select}
\end{figure*}

\subsubsection{Imperfect Samplers and the Exact-Score Sampler}
\label{app:e2-sampler}

\paragraph{Theorem 1(b) and decomposition (10).} We perturb the retained variances of the ideal sampler,
$p=\mathcal N(0,\mathrm{diag}(\sigma_i^2(1+\delta s_i))_{i\le K})$ with fixed random signs $s_i=\pm1$, for
$\beta=1.5$, $k_\eta=16$, $K\in\{16,64,256\}$ and $\delta\in[0,0.5]$ (Figure~\ref{fig:e2-samplers}(a)). In all
18 cases the total error stays above the truncation error (Theorem~1(b)) and below the sum in~(10), within Monte
Carlo error. The sum is loose: (10) measures the sampling error before smoothing, $\mathrm{TV}(p,q_K)$, while the
target smooths every direction at level $\eta$, which hides errors in retained directions with
$\sigma_i^2\ll\eta^2$.

\paragraph{End-to-end sampler (Theorems 3-4).} With the exact Gaussian score, the exponential-integrator
update~(14) maps Gaussians to Gaussians, so the output law of the sampler~(7) is computed exactly by a
variance recursion. We use the settings of Theorem~4 for $\beta=1.5$, $k_\eta=16$, $\lambda_0=1$:
$K=K_\varepsilon(\eta)$, $\sigma^2=C_{\mathrm{sp}}K^{-\beta}$, the stated horizon $T$ and a uniform grid
(Table~\ref{tab:e2-e2e}, Figure~\ref{fig:e2-samplers}(b)). The total error falls to the truncation floor and stays
there: no number of steps removes it, as Theorem~1(b) requires. The target $\mathrm{TV}\le\varepsilon$ is reached
after $32$-$128$ steps, far fewer than the step condition of Theorem~4 requires. At those step counts the
unsmoothed sampling error $\mathrm{TV}(p,q_K)$ is still 0.57-0.78; it is the smoothing at level $\eta$, not
accounted for in the bound, that makes the sampler accurate so early.

Finally, Figure~\ref{fig:e2-samplers}(c) shows the smallest $N$ with unsmoothed sampling error
$\mathrm{TV}(p,q_K)\le\varepsilon/2$ ($\varepsilon=0.05$), the quantity Theorem~3 bounds by
$\tilde O(K/\varepsilon^2)$, as $K$ grows with $\sigma^2=C_{\mathrm{sp}}K^{-\beta}$. It grows like $K^{0.58}$, more
slowly than the linear dependence of the bound. Wiener-gain rates and equal rates on the same subspace have the same
exponent ($0.58$ and $0.58$) and differ by a constant factor ($0.59$-$0.60$), as
Proposition~3 states: the choice of retained directions, not the rates, sets the order of the cost.

\begin{table*}[t]
\centering
\caption{Exact-score sampler in the settings of Theorem~4 ($\beta=1.5$, $k_\eta=16$, Wiener-gain
rates). Trunc.: truncation error $\mathrm{TV}(\tilde q^\eta_K,q^\eta)$; $N$: fewest grid steps with total
$\mathrm{TV}\le\varepsilon$; total: $\mathrm{TV}(\tilde p^\eta,q^\eta)$ at that $N$; $\mathrm{TV}(p,q_K)$: unsmoothed
sampling error at that $N$.}
\label{tab:e2-e2e}
\small
\begin{tabular}{rrcccccc}
\toprule
$\varepsilon$ & $K$ & $\sigma^2$ & $T$ & trunc. & $N$ & total & $\mathrm{TV}(p,q_K)$ \\
\midrule
0.1 & 320 & $1.7\times10^{-4}$ & 17.0 & 0.039 & 32 & 0.074 & 0.78 \\
0.05 & 640 & $6.2\times10^{-5}$ & 19.4 & 0.020 & 64 & 0.027 & 0.63 \\
0.02 & 1600 & $1.6\times10^{-5}$ & 22.6 & 0.008 & 128 & 0.010 & 0.57 \\
\bottomrule
\end{tabular}
\end{table*}

\begin{figure*}[t]
\centering
\includegraphics[width=\textwidth]{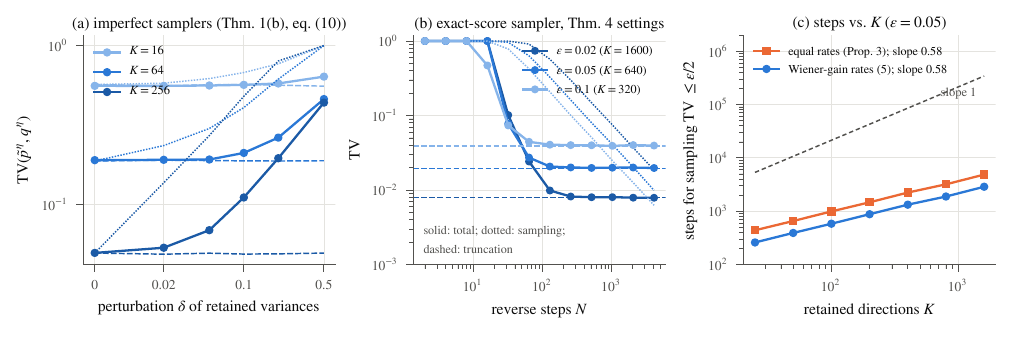}
\caption{(a) Samplers with perturbed retained variances: total error (markers), truncation error (dashed), and the
upper bound of~(10) (dotted). (b) Exact-score sampler in the settings of Theorem~4: total error (solid), truncation
error (dashed), unsmoothed sampling error (dotted). (c) Steps needed for unsmoothed sampling error $\le\varepsilon/2$
against $K$, for Wiener-gain and equal rates; the dashed line has the slope of the bound of Theorem~3.}
\label{fig:e2-samplers}
\end{figure*}

\subsection{Numerical study on image patches}
\label{app:experiments}

Theorems~1 and~2 are proved for Gaussian data with an exact power law,
and Section~5 gives only upper bounds for mixtures. This appendix asks how
well the Gaussian predictions describe a standard model of natural-image
patches, for which every truncation error can be evaluated exactly. No
sampler is trained: the retained coordinates are drawn from the data law
itself, so all errors below are truncation errors
$\mathrm{TV}(\widetilde q^{\,\eta}_K, q^\eta)$ of the ideal truncated
sampler (Definition~1 with $p = q_K$). Following
Zoran and Weiss (2012), we model patches by a Gaussian mixture. All
statements in this appendix are about the fitted mixture, not about the
empirical distribution of image patches. All experiments were run on a single Apple M1 Pro MacBook, without a GPU.

\subsubsection{Data and mixture fit}
\label{app:exp-data}

\paragraph{Patches.}
For each of CIFAR-10 and CIFAR-100 we hold out $5{,}000$ of the
$50{,}000$ training images for validation and fit on the remaining
$45{,}000$; the $10{,}000$ test images are used only for evaluation. From
each image we draw two $8\times8\times3$ patches at random positions,
giving $90{,}000$ training, $10{,}000$ validation and $20{,}000$ test
patches of dimension $d=192$. Pixel values are dequantised by adding
uniform noise on each quantisation bin, and the mean training patch is
subtracted. 

\paragraph{Mixtures.}
We fit zero-mean Gaussian mixtures
$q=\sum_{m=1}^{M}\pi_m\mathcal N(0,C_m)$ with $M\in\{1,2,4,8,16,32\}$
components by EM. EM is initialised by $8$ preliminary iterations and run
for at most $100$ iterations, or until the relative change of the training
log-likelihood falls below $10^{-5}$. A ridge of relative size $10^{-5}$
is added to every covariance, and components that collapse are reseeded.
Table~\ref{tab:exp-gmm} reports the held-out log-likelihood gain over
$M=1$ in bits per dimension. With $M=32$ the held-out log-likelihood
exceeds that of a single Gaussian by $0.57$ (CIFAR-10) and $0.64$
(CIFAR-100) bits per dimension. Validation and test gains agree to within
$0.01$ bits per dimension for every $M$, so the larger mixtures do not
overfit. The gain is still increasing at $M=32$, but with diminishing
returns ($+0.02$ and $+0.03$ bits per dimension from $M=16$ to $32$). For
$M\ge8$ on CIFAR-10 and $M\ge16$ on CIFAR-100, EM reached the iteration
cap before meeting the tolerance. We use the $M=32$ mixture as the data
law throughout. Slower convergence affects how well the mixture
approximates the data, not the validity of the comparisons below, which
all treat the fitted mixture as the data law.

\begin{table}[t]
 \centering
 \caption{Held-out log-likelihood gain over a single Gaussian (bits per
 dimension) and number of EM iterations; $^\ddagger$~iteration cap
 reached, (\,$\cdot$\,)~number of reseeded components.}
 \label{tab:exp-gmm}
 \small
 \setlength{\tabcolsep}{4pt}
\begin{tabular}{@{}rrrrrrr@{}}
\toprule
 & \multicolumn{3}{c}{CIFAR-10} & \multicolumn{3}{c}{CIFAR-100}\\
\cmidrule(lr){2-4}\cmidrule(lr){5-7}
$M$ & val. & test & EM iter. & val. & test & EM iter.\\
\midrule
1 & 0.000 & 0.000 & - & 0.000 & 0.000 & -\\
2 & 0.294 & 0.293 & 16 & 0.333 & 0.333 & 17\\
4 & 0.416 & 0.420 & 58 & 0.453 & 0.452 & 50\\
8 & 0.488 & 0.493 & 100$^{\ddagger}$ & 0.550 & 0.549 & 72\\
16 & 0.549 & 0.558 & 100$^{\ddagger}$ & 0.611 & 0.611 & 100$^{\ddagger}$\\
32 & 0.567 & 0.573 & 100$^{\ddagger}$\,(3) & 0.641 & 0.642 & 100$^{\ddagger}$\,(7)\\
\bottomrule
\end{tabular}
\end{table}

\paragraph{Spectra and Assumption~1.}
For each component, and for the pooled covariance
$\Sigma=\sum_m\pi_mC_m$, we fit $\log\sigma_i^2=\log C-\beta\log i$ by
least squares on the ranks $8\le i\le48=d/4$. We also record the envelope
ratio $\hat C_{\mathrm{sp}}/\hat c_{\mathrm{sp}}$ of the smallest
two-sided bound~(1) with exponent $\hat\beta$ on the same ranks.
The lowest ranks are excluded because the few leading directions deviate
from the power law, and the highest ranks because the spectrum bends
downwards there (Figure~\ref{fig:exp-model}b,c).
Table~\ref{tab:exp-spectra} summarises the fits.

Every component satisfies Assumption~1 on this window. Its fitted
exponent lies in $[1.56,2.57]$ (weighted medians $2.04$ and $2.00$), its
envelope ratio is at most $1.89$, and all mixture weight lies on
components with $\hat\beta_m>1$, the condition used by the convergence
results of Section~4. The pooled covariance has $\hat\beta=1.82$
(CIFAR-10) and $1.84$ (CIFAR-100). It is flatter than the typical
component, as expected when components with different principal
directions are mixed. These exponents exceed the value $\beta\approx1$
suggested in Appendix~B.5 for a single luminance channel with a
$|f|^{-2}$ power spectrum. They are, however, specific to the rank window,
the colour patches and the resolution of CIFAR images, so the exponent
should still be estimated for each application.

\begin{table}[t]
 \centering
 \caption{Power-law fits on ranks $8$-$48$ for the $M=32$ mixtures.
 Weighted quantiles use the mixture weights $\pi_m$.}
 \label{tab:exp-spectra}
 \small
\begin{tabular}{@{}lrr@{}}
\toprule
 & CIFAR-10 & CIFAR-100\\
\midrule
pooled $\hat\beta$ & 1.82 & 1.84\\
pooled $\hat C_{\mathrm{sp}}/\hat c_{\mathrm{sp}}$ & 1.60 & 1.65\\
components: $\hat\beta_m$, weighted median & 2.04 & 2.00\\
\quad weighted 10\%-90\% quant. & 1.70-2.27 & 1.72-2.25\\
\quad range over components & 1.57-2.57 & 1.56-2.57\\
\quad weight with $\hat\beta_m>1$ & 1.00 & 1.00\\
components: $\hat C_{\mathrm{sp}}/\hat c_{\mathrm{sp}}$, w.\ median & 1.44 & 1.44\\
\quad weighted 90\% quant. & 1.78 & 1.65\\
$2/(2\hat\beta-1)$ (pooled) & 0.757 & 0.749\\
\bottomrule
\end{tabular}
\end{table}

\begin{figure*}[t]
 \centering
 \includegraphics[width=\textwidth]{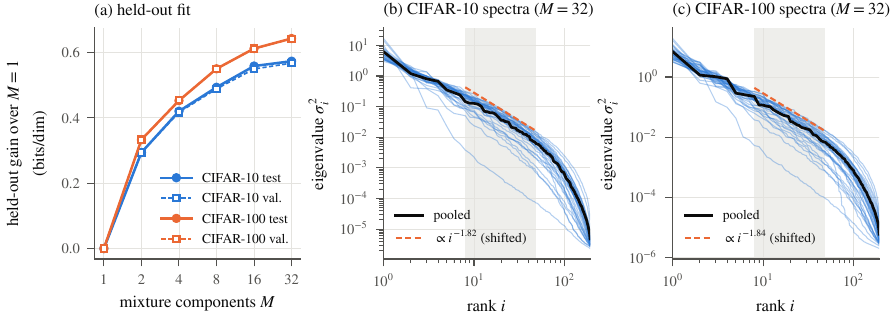}
 \caption{\textbf{(a)}~Held-out log-likelihood gain of the patch mixtures
 over a single Gaussian. \textbf{(b,c)}~Eigenvalues of the $32$ component
 covariances (blue) and of the pooled covariance $\Sigma$ (black). The
 shaded band is the rank window $[8,48]$ used for the power-law fits; the
 dashed line has the fitted pooled slope, shifted vertically for
 visibility.}
 \label{fig:exp-model}
\end{figure*}

\subsubsection{Resolutions, samplers and error estimates}
\label{app:exp-truncation}

\paragraph{Resolution.}
Let $\sigma_1^2\ge\dots\ge\sigma_d^2$ be the eigenvalues of $\Sigma$ and
$v_1,\dots,v_d$ its eigenvectors. For $k_\eta\in\{2,4,8,16,32\}$ we set
$\eta^2=\sigma_{k_\eta}^2$, so that $k_\eta$ is the number of pooled
directions with signal-to-noise ratio at least one, as in Section~3.2. On
CIFAR-10 this gives $\eta^2/\sigma_1^2$ between $0.19$ ($k_\eta=2$) and
$0.0025$ ($k_\eta=32$). Tables~\ref{tab:exp-requiredK}
and~\ref{tab:exp-requiredK-c100} list all values.

\paragraph{Shared-basis truncation.}
Unless stated otherwise, samplers use the shared principal-component
basis of $\Sigma$. For $K=0,\dots,d$ the ideal truncated sampler retains
$U=V_K^\top X$ with $X\sim q$, adds $\mathcal N(0,\eta^2I_K)$, and fills
the remaining $d-K$ coordinates with $\mathcal N(0,\eta^2)$
(Definition~1 with $p=q_K$ and $V$ the eigenbasis of $\Sigma$). Its law
$\widetilde q^{\,\eta}_K$ is a Gaussian mixture with component
covariances
$V\bigl(\begin{smallmatrix}V_K^\top C_mV_K+\eta^2I_K&0\\0&\eta^2I_{d-K}\end{smallmatrix}\bigr)V^\top$.
The target $q^\eta$ has component covariances $C_m+\eta^2I_d$. As a
benchmark we apply the same construction to the moment-matched Gaussian
$\mathcal N(0,\Sigma)$. For it, $\widetilde q^{\,\eta}_K$ and $q^\eta$
are the two Gaussians of Theorem~1, so the benchmark is what Theorem~1
predicts from the covariance alone.

\paragraph{Total variation.}
Both laws have closed-form densities. We estimate
$\mathrm{TV}(\widetilde q^{\,\eta}_K,q^\eta)
=\mathbb E_{Y\sim q^\eta}\bigl[(1-\widetilde q^{\,\eta}_K(Y)/q^\eta(Y))_+\bigr]$
by Monte Carlo with $6{,}000$ samples per configuration.
The standard error is below $3.3\%$ of the estimate in all $1{,}930$
configurations. The same estimator gives the non-Gaussianity
$\mathrm{TV}_{\mathrm{NG}}=\mathrm{TV}(q^\eta,\mathcal N(0,\Sigma+\eta^2I))$
reported in Table~\ref{tab:exp-requiredK}. The required numbers of
directions are
$K^\star=\min\{K:\mathrm{TV}(\widetilde q^{\,\eta}_K,q^\eta)\le\varepsilon\}$
for the mixture and $K_{\mathrm G}$, defined in the same way, for
$\mathcal N(0,\Sigma)$. Using the upper end of a $95\%$ confidence interval
instead of the point estimate increases $K^\star$ by at most two.

\subsubsection{Truncation error and the Gaussian prediction}
\label{app:exp-gaussian}

\begin{figure*}[t]
 \centering
 \includegraphics[width=\textwidth]{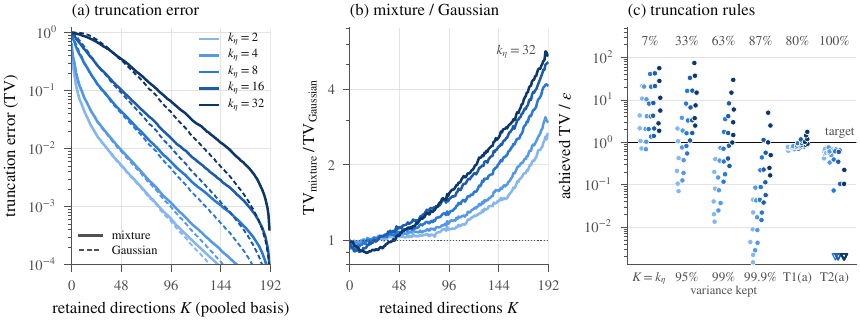}
 \caption{Truncation error on CIFAR-10 patches ($M=32$ mixture, shared
 basis); lighter colours are coarser resolutions $k_\eta$.
 \textbf{(a)}~Mixture error (solid) and the error predicted by
 Theorem~1 for the moment-matched Gaussian (dashed).
 \textbf{(b)}~Ratio of the two.
 \textbf{(c)}~Error achieved by six truncation rules
 (Appendix~\ref{app:exp-rules}), relative to the target
 $\varepsilon\in\{0.3,0.2,0.1,0.05,0.02,0.01\}$, with the percentage of
 the 30 (resolution, $\varepsilon$) pairs that meet the target:
 $K=k_\eta$; the smallest $K$ keeping $95$, $99$ or $99.9\%$ of the
 variance; the smallest $K$ at which the explicit bound of Theorem~1(a)
 for $\mathcal N(0,\Sigma)$ is at most $\varepsilon$ (T1(a)); and the
 smallest $K$ at which bound~(11) with fitted
 $(\hat\beta,\hat C_{\mathrm{sp}})$ is at most $\varepsilon$ (T2(a)).
 Triangles mark T2(a) choices with $K=d$, i.e.\ no truncation.
 CIFAR-100 is in Figure~\ref{fig:exp-truncation-c100}.}
 \label{fig:exp-truncation}
\end{figure*}

\begin{table*}[t]
 \centering
 \caption{Retained directions needed on CIFAR-10 patches.
 $\eta^2/\sigma_1^2$: evaluation level relative to the top pooled
 eigenvalue. $\mathrm{TV}_{\mathrm{NG}}$: distance
 $\mathrm{TV}(q^\eta,\mathcal N(0,\Sigma+\eta^2 I))$ between the smoothed
 mixture and its moment-matched Gaussian. $\mathrm{TV}(k_\eta)$: error
 when exactly the directions above the evaluation level are retained.
 $K^\star$ and $K_{\mathrm G}$: smallest $K$ with error at most
 $\varepsilon$ for the mixture and for $\mathcal N(0,\Sigma)$.
 $K_{\mathrm{T1}}$, $K_{\mathrm{T2}}$: rules T1(a) and T2(a) of
 Appendix~\ref{app:exp-rules}; $^\dagger$~misses the target,
 $^\ast$~no truncation ($K=d$). $\bar K_{\mathrm{comp}}$: expected number
 of retained directions, $\sum_m\pi_mK_m$, when each component uses its
 own basis with per-component budget $\varepsilon$, which guarantees error
 at most $\varepsilon$ by Proposition~4
 (Appendix~\ref{app:exp-components}). CIFAR-100 is in
 Table~\ref{tab:exp-requiredK-c100}.}
 \label{tab:exp-requiredK}
 \small
 \setlength{\tabcolsep}{4.2pt}
\begin{tabular}{@{}rrrr|rrrrr|rrrrr@{}}
\toprule
 & & & & \multicolumn{5}{c}{$\varepsilon=0.1$} & \multicolumn{5}{c}{$\varepsilon=0.01$} \\
$k_\eta$ & $\eta^2/\sigma_1^2$ & $\mathrm{TV}_{\mathrm{NG}}$ & $\mathrm{TV}(k_\eta)$ & $K^\star$ & $K_{\mathrm{G}}$ & $K_{\mathrm{T1}}$ & $K_{\mathrm{T2}}$ & $\bar K_{\mathrm{comp}}$ & $K^\star$ & $K_{\mathrm{G}}$ & $K_{\mathrm{T1}}$ & $K_{\mathrm{T2}}$ & $\bar K_{\mathrm{comp}}$ \\
\midrule
2 & 0.1922 & 0.039 & 0.22 & 6 & 6 & 7 & 9 & 7.3 & 35 & 34 & 39 & 46 & 32.3 \\
4 & 0.1081 & 0.064 & 0.21 & 10 & 10 & 12 & 13 & 11.3 & 47 & 45 & 51 & 71 & 41.4 \\
8 & 0.0234 & 0.199 & 0.41 & 31 & 30 & 35 & 40 & 28.8 & 84 & 79 & 84 & 192$^{\ast}$ & 67.4 \\
16 & 0.0102 & 0.330 & 0.43 & 49 & 46 & 52 & 75 & 41.8 & 108 & 97 & 102$^{\dagger}$ & 192$^{\ast}$ & 82.1 \\
32 & 0.0025 & 0.602 & 0.56 & 84 & 78 & 83$^{\dagger}$ & 192$^{\ast}$ & 66.1 & 152 & 128 & 134$^{\dagger}$ & 192$^{\ast}$ & 107.5 \\
\bottomrule
\end{tabular}
\end{table*}

For the Gaussian benchmark the error is very close to proportional to the
Wiener-gain tail. The ratio $\mathrm{TV}/\|(g_i)_{i>K}\|_2$ has median
$0.281$ over all settings, and stays in $[0.272,0.288]$ whenever
$10^{-3}\le\|(g_i)_{i>K}\|_2\le0.1$. This is the small-gain constant
$1/(2\sqrt\pi)\approx0.282$ (Appendix~\ref{app:exp-constant}), far
inside the universal constants $1/100$ and $3/2$ of Theorem~1(a).

The mixture follows the same curve when the resolution is coarse
(Figure~\ref{fig:exp-truncation}a,b). Over the range where the Gaussian
error lies in $(10^{-3},0.3)$, the ratio of mixture to Gaussian error has
median $1.03$ (CIFAR-10) and $1.05$ (CIFAR-100) at $k_\eta=2$. It grows
with resolution, to a median of $1.80$ and a maximum of $4.0$ at
$k_\eta=32$, and reaches $5.7$ when only a few directions are discarded.
The excess tracks how non-Gaussian the smoothed data are:
$\mathrm{TV}_{\mathrm{NG}}$ rises from $0.04$ to $0.60$ over the same
range of resolutions (Table~\ref{tab:exp-requiredK}). The mixture error
falls below the Gaussian one only when few directions are retained
($K\le36$, errors above $0.07$), and then by at most $13\%$.

Consistent with Proposition~1, the explicit Gaussian upper bound of
Theorem~1(a) does not carry over to the mixture: it is exceeded for
between $69$ and $127$ of the $192$ values of $K$ at each resolution. The
two-sided bound with universal constants,
$G_K/100\le\mathrm{TV}\le\tfrac32G_K$ with
$G_K=\min\{1,\|(g_i)_{i>K}\|_2\}$ computed from $\Sigma$, nevertheless holds for the mixture
in every configuration, although the largest ratio, $1.49\,G_K$ at
$k_\eta=32$ with $K$ close to $d$, leaves little margin.

\subsubsection{How many directions}
\label{app:exp-directions}

Despite the excess error, the number of directions predicted from the
covariance is accurate. The ratio $K^\star/K_{\mathrm G}$ lies in
$[0.98,1.06]$ for all $\varepsilon\le0.1$ and $k_\eta\le8$, and in
$[0.88,1.19]$ overall, the largest gap being at $k_\eta=32$ and
$\varepsilon=0.01$ (Tables~\ref{tab:exp-requiredK}
and~\ref{tab:exp-requiredK-c100}). Because the error decays steeply in
$K$, the excess error of the mixture translates into at most $19\%$ more
directions.

At $\varepsilon=0.01$, the mixture needs between $4.8$ and $18$ times more
directions than the $k_\eta$ that lie above the evaluation level.
Retaining only those $k_\eta$ directions leaves an error between $0.16$
and $0.56$, which increases with $k_\eta$ as Corollary~1 predicts.
Formally, the hypotheses of Corollary~1 and of Theorem~2(b) are not met
here: the spectrum is not an exact power law, Corollary~1 requires
$k_\eta\ge4^{\beta+1}\approx50$, and Theorem~2(b) requires
$\varepsilon<1/200$. The observations are therefore consistent with these
results rather than a test of them.

At coarse resolution the dependence on accuracy matches Theorem~2. Over
$\varepsilon\le0.1$, the fitted slope of $\log K^\star$ against
$\log(1/\varepsilon)$ is $0.80$ (CIFAR-10) and $0.79$ (CIFAR-100) at
$k_\eta=2$, against $2/(2\hat\beta-1)=0.757$ and $0.749$. At finer
resolution the slope decreases, to $0.26$ at $k_\eta=32$. This is because
$K^\star$ then reaches ranks beyond about $100$, where the spectrum falls
faster than any power law (Figure~\ref{fig:exp-model}b,c), and the finite
dimension $d=192$ matters.

\subsubsection{Truncation rules}
\label{app:exp-rules}

We compare rules that use only $\Sigma$, $\eta$ and $\varepsilon$, all of
which are available before training.
\begin{itemize}
 \item \emph{Noise level}: $K=k_\eta$.
 \item \emph{Variance fraction $\rho\in\{0.95,0.99,0.999\}$}: the smallest
 $K$ with $\sum_{i\le K}\sigma_i^2\ge\rho\sum_i\sigma_i^2$. This gives
 $K=21,53,103$ on CIFAR-10 and $20,53,107$ on CIFAR-100, independently of
 $\eta$ and $\varepsilon$.
 \item \emph{T1(a)}: the smallest $K$ with
 $\bigl(\tfrac18\sum_{i>K}x_i^2/(1+x_i)\bigr)^{1/2}\le\varepsilon$, where
 $x_i=\sigma_i^2/\eta^2$ (the explicit bound of Theorem~1(a)).
 \item \emph{T2(a)}: the smallest $K<d$ at which the right-hand side
 of~(11), evaluated with the pooled fit $(\hat\beta,\hat C_{\mathrm{sp}})$,
 is at most $\varepsilon$, and $K=d$ if there is none. Because~(11) bounds
 the error by $\varepsilon$ rather than $\varepsilon/2$, this is
 $K_{2\varepsilon}(\eta)$ in the notation of Theorem~2(a).
\end{itemize}

Figure~\ref{fig:exp-truncation}c shows the outcome.
\begin{itemize}
 \item \emph{Noise level.} It meets the target in $6$ of $60$ cases, all
 with $\varepsilon\ge0.2$.
 \item \emph{Variance fraction.} These rules ignore $\eta$. They miss the
 target at fine resolution and retain far too many directions at coarse
 resolution, with achieved errors down to $0.0015\varepsilon$.
 \item \emph{T1(a).} This is the most economical rule. It meets the target
 in $47$ of $60$ cases, with a median of $1.11K^\star$ directions. Its
 misses occur at $k_\eta\ge16$ and $\varepsilon\le0.1$, exactly where the
 mixture exceeds the Gaussian bound, and the achieved error is then at
 most $1.78\varepsilon$.
 \item \emph{T2(a).} It always meets the target but is conservative, with
 a median of $1.5K^\star$ directions and up to $4K^\star$. In $12$ of
 $60$ cases it retains all $d$ directions, because extrapolating the
 fitted power law overstates the tail beyond rank $100$.
\end{itemize}
The last two rules illustrate the two ways in which the Gaussian theory
can be used. T1(a) evaluates the bound on the measured spectrum. It is
nearly tight, but it inherits the gap between mixture and Gaussian at fine
resolution. T2(a) replaces the spectrum by a fitted power law, which is
simpler but conservative whenever the true spectrum decays faster than
the fit.

\subsubsection{Component bases}
\label{app:exp-components}

The component-basis sampler of Proposition~4 draws $m$ with probability
$\pi_m$ and runs the ideal truncated sampler for $\mathcal N(0,C_m)$ in
the eigenbasis of $C_m$. It retains the smallest $K_m$ for which the
explicit bound of Theorem~1(a) for that component is at most a common
budget $b$. Proposition~4 then bounds the mixture error by
$\sum_m\pi_m\varepsilon_m\le b$, where $\varepsilon_m$ is the truncation
error of component~$m$. We use $24$ log-spaced budgets
$b\in[10^{-3},0.5]$, together with the values of $\varepsilon$.

Proposition~4 is never violated. Since $\sum_m\pi_m\varepsilon_m\le b$
for every budget, every budget meets its guarantee. The mixture error is
between $0.58$ and $0.88$ times the bound
(Figure~\ref{fig:exp-components}a), and the bound is tightest at the
finest resolution. Choosing $b=\varepsilon$ certifies the target without
any Monte Carlo evaluation of the mixture, and achieves errors between
$0.44\varepsilon$ and $0.64\varepsilon$.

Figure~\ref{fig:exp-components}b compares the expected number of
retained directions with the shared-basis requirement $K^\star$. At
$\varepsilon=0.01$, the budget $b=\varepsilon$ uses $8$-$31\%$ fewer
directions per sample than the shared basis needs
($\bar K_{\mathrm{comp}}$ against $K^\star$ in
Tables~\ref{tab:exp-requiredK} and~\ref{tab:exp-requiredK-c100}). Tuning
the budget to the largest value that still meets $\varepsilon$ gives
median savings of $31\%$ (CIFAR-10) and $33\%$ (CIFAR-100). Component
bases help most at small errors and fine resolution, where the shared
basis pays for the mismatch between components. At coarse resolution and
large $\varepsilon$ the two approaches are comparable. These savings
require sampling the component label and storing $M$ bases. The number of
directions also varies widely across components, from $4$ to $63$ at
$k_\eta=2$ and $\varepsilon=0.01$ on CIFAR-10.

\begin{figure*}[t]
 \centering
 \includegraphics[width=\textwidth]{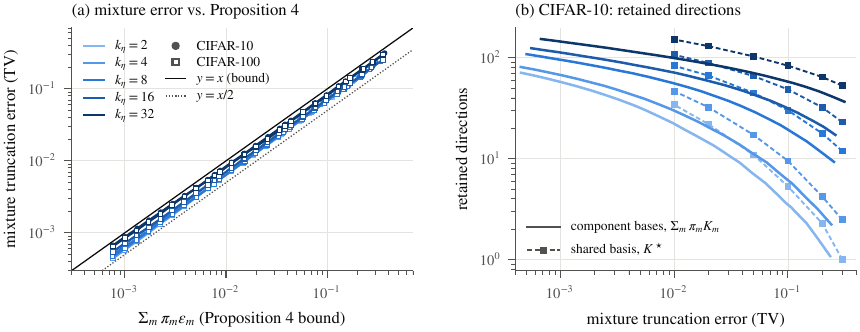}
 \caption{Component-basis truncation. \textbf{(a)}~Mixture truncation
 error against the bound of Proposition~4 for all budgets, resolutions
 and both datasets. \textbf{(b)}~CIFAR-10: expected number of retained
 directions $\sum_m\pi_mK_m$ for component bases (solid), and the number
 $K^\star$ needed with the shared basis (dashed), as functions of the
 achieved mixture error.}
 \label{fig:exp-components}
\end{figure*}

\subsubsection{CIFAR-100}
\label{app:exp-c100}

Figure~\ref{fig:exp-truncation-c100} and
Table~\ref{tab:exp-requiredK-c100} repeat Figure~\ref{fig:exp-truncation}
and Table~\ref{tab:exp-requiredK} for CIFAR-100, and all conclusions above
hold with nearly identical numbers. The non-Gaussianity at each
resolution, the mixture-to-Gaussian error ratios, the ratio
$K^\star/K_{\mathrm G}$ and the success rates of the truncation rules agree
with CIFAR-10 to within the variation across resolutions. The only
differences are in individual cells: at $k_\eta=4$ CIFAR-100 needs fewer
directions, and the noise-level rule meets the target in $4$ of $30$
cases instead of $2$.

\begin{figure*}[t]
 \centering
 \includegraphics[width=\textwidth]{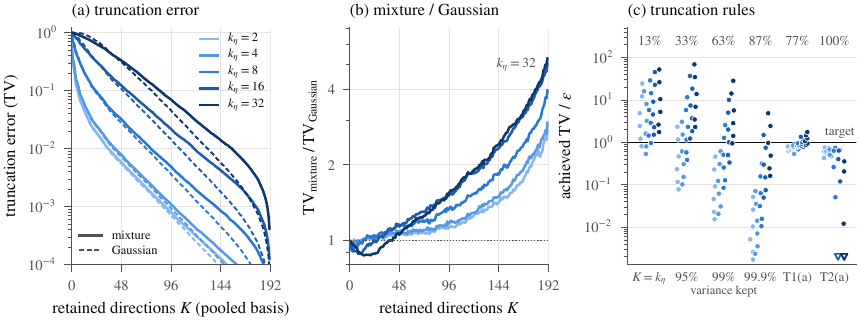}
 \caption{As Figure~\ref{fig:exp-truncation}, for CIFAR-100 patches.}
 \label{fig:exp-truncation-c100}
\end{figure*}

\begin{table*}[t]
 \centering
 \caption{As Table~\ref{tab:exp-requiredK}, for CIFAR-100 patches.}
 \label{tab:exp-requiredK-c100}
 \small
 \setlength{\tabcolsep}{4.2pt}
\begin{tabular}{@{}rrrr|rrrrr|rrrrr@{}}
\toprule
 & & & & \multicolumn{5}{c}{$\varepsilon=0.1$} & \multicolumn{5}{c}{$\varepsilon=0.01$} \\
$k_\eta$ & $\eta^2/\sigma_1^2$ & $\mathrm{TV}_{\mathrm{NG}}$ & $\mathrm{TV}(k_\eta)$ & $K^\star$ & $K_{\mathrm{G}}$ & $K_{\mathrm{T1}}$ & $K_{\mathrm{T2}}$ & $\bar K_{\mathrm{comp}}$ & $K^\star$ & $K_{\mathrm{G}}$ & $K_{\mathrm{T1}}$ & $K_{\mathrm{T2}}$ & $\bar K_{\mathrm{comp}}$ \\
\midrule
2 & 0.1656 & 0.050 & 0.24 & 6 & 6 & 8 & 9 & 7.5 & 36 & 35 & 40 & 47 & 32.6 \\
4 & 0.1255 & 0.063 & 0.16 & 8 & 8 & 10 & 11 & 9.1 & 42 & 41 & 46 & 58 & 37.0 \\
8 & 0.0324 & 0.169 & 0.29 & 23 & 23 & 27 & 29 & 22.5 & 77 & 73 & 78 & 158 & 61.0 \\
16 & 0.0082 & 0.385 & 0.46 & 54 & 50 & 56 & 79 & 43.6 & 119 & 105 & 110$^{\dagger}$ & 192$^{\ast}$ & 86.2 \\
32 & 0.0026 & 0.610 & 0.53 & 85 & 78 & 83$^{\dagger}$ & 188 & 64.6 & 155 & 132 & 137$^{\dagger}$ & 192$^{\ast}$ & 107.6 \\
\bottomrule
\end{tabular}
\end{table*}

\subsubsection{The small-gain constant}
\label{app:exp-constant}

The ratio $\mathrm{TV}/\|(g_i)_{i>K}\|_2\approx0.282$ observed for the
Gaussian benchmark has a simple explanation. Under $q^\eta$, write the
discarded coordinates as $\sqrt{\sigma_i^2+\eta^2}\,z_i$ with
$z_i\sim\mathcal N(0,1)$ independent. The log-likelihood ratio of
$\widetilde q^{\,\eta}_K$ to $q^\eta$ is then
\begin{align*}
 L&=\tfrac12\sum_{i>K}\Bigl[-\log(1-g_i)-\tfrac{g_i}{1-g_i}\,z_i^2\Bigr]\\
  &=-\tfrac12\sum_{i>K}g_i\,(z_i^2-1)+R,
\end{align*}
where the remainder $R$ is of order $\sum_{i>K}g_i^2(1+z_i^2)$.
When every discarded gain is small, the central limit theorem makes $L$
approximately $\mathcal N(0,\|(g_i)_{i>K}\|_2^2/2)$. Since
$\mathrm{TV}=\tfrac12\mathbb E|e^L-1|\approx\tfrac12\mathbb E|L|$ for small
$L$, this gives
\[
 \mathrm{TV}(\widetilde q^{\,\eta}_K,q^\eta)\approx
 \frac{\|(g_i)_{i>K}\|_2}{2\sqrt\pi}\approx0.282\,\|(g_i)_{i>K}\|_2 .
\]
Making this rigorous, with a Berry-Esseen bound on $L$, would sharpen the
constants of Theorem~1(a) to $1/(2\sqrt\pi)+o(1)$ in the regime
$\max_{i>K}g_i\to0$, $\|(g_i)_{i>K}\|_2\to0$. Under a power law with
$K\ge k_\eta$, this is the regime relevant for small $\varepsilon$.




\end{document}

%% file: roadmap_figure.tex
\begin{figure}[!p]
\centering
\begin{tikzpicture}[
  font=\scriptsize,
  >={Stealth[length=2.2mm]},
  box/.style={draw, rounded corners=2pt, align=center, inner sep=3pt,
              text width=#1, fill=white},
  box/.default=7.4cm,
  main/.style={box=#1, very thick, fill=blue!6},
  main/.default=7.4cm,
  tool/.style={box=#1, fill=gray!8},
  tool/.default=7.4cm,
  group/.style={draw, dashed, rounded corners=4pt, inner sep=6pt},
  grouplabel/.style={font=\scriptsize\bfseries, fill=white, inner sep=1pt},
  arr/.style={->, thick},
]
\node[box=16.4cm, fill=yellow!10] (setup) at (8.4,0) {%
  \textbf{Setup (Section~\ref{sec:setup})}:
  power-law spectrum (Assumption~\ref{ass:powerlaw}),
  truncated samplers (Definition~\ref{def:truncated-class}),
  error decomposition \eqref{eq:error-decomposition}\\[2pt]
  $\TV(\widetilde p^\eta,q^\eta)\;\le\;
   \underbrace{\TV(p,q_K)}_{\text{sampling}}
   \;+\;
   \underbrace{\TV(\widetilde q_K^{\,\eta},q^\eta)}_{\text{truncation}}$};

\node[tool] (L1) at (4.2,-2.1) {%
  Exact divergence and self-contained bounds
  (Lemma~\ref{lem:sharp-residual});\\
  two-sided Gaussian TV bound \citep{devroye2018total}};
\node[main] (L2) [below=5mm of L1] {%
  \textbf{Theorem~\ref{thm:truncation-cost}} (cost of truncation)\\
  $\TV\asymp\min\{1,\|(g_i)_{i>K}\|_2\}$ in the Wiener gains;\\
  lower bound holds for \emph{every} truncated sampler};
\node[main] (L3) [below=5mm of L2] {%
  \textbf{Theorem~\ref{thm:num-directions}} (number of retained directions)\\
  (a) sufficient: $K_\varepsilon(\eta)$ directions\\
  (b) necessary: $K\gtrsim_\beta\bigl(C_{\mathrm{sp}}/(\eta^2\varepsilon)\bigr)^{2/(2\beta-1)}$\\
  (c) cut where the signal-to-noise ratio is $\asymp\varepsilon/\sqrt K$};
\node[box] (L4) [below=5mm of L3] {%
  \textbf{Corollary~\ref{cor:noise-floor}}:
  cutting at the noise level leaves $\TV\ge1/100$};

\node[tool] (R1) at (12.6,-2.1) {%
  Assumptions~\ref{ass:lipschitz}--\ref{ass:finite-kl};
  Girsanov and score-perturbation lemmas
  (Appendix~\ref{sec:app:lemmas})};
\node[tool] (R2) [below=5mm of R1] {%
  Theorems~\ref{thm:tv-dim}--\ref{thm:tv-trace} (Appendix~\ref{app:proofs-convergence}):
  TV bounds for arbitrary diagonal rates,\\
  dimension enters via $\tr(\Lambda)$ and $\E\|U\|_\Lambda^2$};
\node[main] (R3) [below=5mm of R2] {%
  \textbf{Theorem~\ref{thm:convergence}} (convergence on retained coordinates)\\
  Wiener-gain rates and $K\le d_{\mathrm{eff}}^{\sigma}$ give\\
  $N=\widetilde O(K/\varepsilon^2)$ steps, independent of $d$\\
  (equal rates give the same order, Proposition~\ref{prop:isotropic})};
\node[tool] (R4) [below=5mm of R3] {%
  Lemma~\ref{lem:active-sigma}: sampling error with explicit
  dependence on $\sigma$ (Gaussian data)};

\begin{scope}[on background layer]
  \node[group, draw=blue!60, fit=(L1)(L4), inner ysep=9pt] (GL) {};
  \node[group, draw=red!60, fit=(R1)(R4), inner ysep=9pt] (GR) {};
\end{scope}
\node[grouplabel, text=blue!70!black, anchor=north west]
  at ($(GL.north west)+(0.12,-0.06)$)
  {Truncation error, Gaussian data (Section~\ref{sec:sharp})};
\node[grouplabel, text=red!70!black, anchor=north west]
  at ($(GR.north west)+(0.12,-0.06)$)
  {Sampling error, any data (Section~\ref{sec:convergence})};

\path let \p1=(GL.south), \p2=(GR.south) in
  node[main=10.5cm, fill=green!8, anchor=north] (E2E)
  at (8.4,{min(\y1,\y2)-0.75cm}) {%
  \textbf{Theorem~\ref{thm:end-to-end}} (end-to-end complexity, exact scores)\\
  $K=K_\varepsilon(\eta)$,\quad $\sigma^2=C_{\mathrm{sp}}K^{-\beta}$
  \;$\Longrightarrow$\;
  $\TV(\widetilde p^\eta,q^\eta)\le\varepsilon$ with
  $N=\widetilde O\bigl(\varepsilon^{-4\beta/(2\beta-1)}\bigr)$};

\node[box=5.1cm, anchor=north] (X2) at ($(E2E.south)+(0,-1.05)$) {%
  \textbf{Theorems~\ref{thm:empirical}--\ref{thm:decouple}} (estimated basis)\\
  extra error $\propto\Delta_n/(\eta^2\delta_{\mathrm{gap}})$;
  oracle score; lemmas in Appendix~\ref{app:lemmas}};
\node[box=5.1cm, anchor=north] (X1) at ($(X2.north)+(-5.45,0)$) {%
  \textbf{Proposition~\ref{prop:full-bound}} (non-Gaussian data)\\
  truncation error $\varepsilon_{\mathrm{PCA}}\le\sqrt{I/2}$;
  upper bound only};
\node[box=5.1cm, anchor=north] (X3) at ($(X2.north)+(5.45,0)$) {%
  \textbf{Proposition~\ref{prop:gmm-lift}} (patch Gaussian mixtures)\\
  $\TV\le\sum_m\pi_m\varepsilon_m$, componentwise;
  upper bound only};
\begin{scope}[on background layer]
  \node[group, draw=gray!70, fit=(X1)(X2)(X3), inner ysep=9pt] (GX) {};
\end{scope}
\node[grouplabel, text=black!70, anchor=north west]
  at ($(GX.north west)+(0.12,-0.06)$)
  {Extensions (Section~\ref{sec:extensions})};

\node[box=16.4cm, fill=orange!8, anchor=north] (take) at ($(GX.south)+(0,-0.7)$) {%
  \textbf{Takeaway (Section~\ref{sec:conclusion})}:
  set the truncation level by the target accuracy, not by the noise level;
  run the diffusion below the evaluation level;
  a better score network reduces only the sampling error.};

\draw[arr] ($(setup.south -| L1.north east)+(-0.9,0)$) --
  node[left, font=\tiny]{truncation term}
  ($(L1.north east)+(-0.9,0)$);
\draw[arr] ($(setup.south -| R1.north east)+(-0.9,0)$) --
  node[left, font=\tiny]{sampling term}
  ($(R1.north east)+(-0.9,0)$);
\draw[arr] (L1) -- (L2);  \draw[arr] (L2) -- (L3);  \draw[arr] (L3) -- (L4);
\draw[arr] (R1) -- (R2);  \draw[arr] (R2) -- (R3);  \draw[arr] (R3) -- (R4);
\draw[arr] (L3.east) -- ($(GL.east |- L3.east)+(0.2,0)$)
  |- node[pos=0.75, above, font=\tiny]{$K=K_\varepsilon(\eta)$ from (a)} ($(E2E.north west)+(1.2,0.35)$)
  -- ($(E2E.north west)+(1.2,0)$);
\draw[arr] (R4.south) -- node[right, font=\tiny]{sampling error}
  (R4.south |- E2E.north);
\draw[arr] (E2E.south) -- (GX.north -| E2E.south);
\draw[arr] (GX.south -| take.north) -- (take.north);
\end{tikzpicture}
\caption{Roadmap of the results.  The error decomposition of
Section~\ref{sec:setup} splits the problem into a truncation error, computed
exactly for Gaussian data in Section~\ref{sec:sharp}, and a sampling error on
the retained coordinates, bounded for general data in
Section~\ref{sec:convergence}; the two combine in
Theorem~\ref{thm:end-to-end}.  Grey boxes are supporting results in the
appendix; bold boxes are the main theorems.}
\label{fig:roadmap}
\end{figure}